\definecolor{linkcolor}{HTML}{005050}
\theoremstyle{plain}
\newtheorem{thm}{Theorem}[section]
\newtheorem{lem}[thm]{Lemma}
\newtheorem{prop}[thm]{Proposition}
\newtheorem{cor}[thm]{Corollary}
\theoremstyle{definition}
\newtheorem{defin}[thm]{Definition}
\newtheorem{notation}[thm]{Notation}
\theoremstyle{remark}
\newtheorem*{rem*}{Remark}
\newtheorem{rem}[thm]{Remark}
\numberwithin{equation}{section}
\newcommand{\C}{\mathbb C}
\newcommand{\Z}{\mathbb Z}
\newcommand{\rB}{\mathrm B}
\newcommand{\rH}{\mathrm H}
\newcommand{\rL}{\mathrm L}
\newcommand{\rR}{\mathrm R}
\newcommand{\cA}{\mathcal A}
\newcommand{\cC}{\mathcal C}
\newcommand{\cD}{\mathcal D}
\newcommand{\cE}{\mathcal E}
\newcommand{\cF}{\mathcal F}
\newcommand{\cG}{\mathcal G}
\newcommand{\cJ}{\mathcal J}
\newcommand{\cK}{\mathcal K}
\newcommand{\cO}{\mathcal O}
\newcommand{\cS}{\mathcal S}
\newcommand{\cT}{\mathcal T}
\newcommand{\cU}{\mathcal U}
\newcommand{\cV}{\mathcal V}
\newcommand{\cW}{\mathcal W}
\newcommand{\cX}{\mathcal X}
\newcommand{\cY}{\mathcal Y}
\newcommand{\cZ}{\mathcal Z}
\DeclareFontFamily{U}{BOONDOX-calo}{\skewchar\font=45 }
\DeclareFontShape{U}{BOONDOX-calo}{m}{n}{<-> s*[1.05] BOONDOX-r-calo}{}
\DeclareFontShape{U}{BOONDOX-calo}{b}{n}{<-> s*[1.05] BOONDOX-b-calo}{}
\DeclareMathAlphabet{\mathcalboondox}{U}{BOONDOX-calo}{m}{n}
\newcommand{\bD}{\mathbf D}
\newcommand{\bP}{\mathbf P}
\newcommand{\bQ}{\mathbf Q}
\newcommand{\bDelta}{\bm{\Delta}}
\let\save@mathaccent\mathaccent
\newcommand*\if@single[3]{%
	\setbox0\hbox{${\mathaccent"0362{#1}}^H$}%
	\setbox2\hbox{${\mathaccent"0362{\kern0pt#1}}^H$}%
	\ifdim\ht0=\ht2 #3\else #2\fi
}
\newcommand*\rel@kern[1]{\kern#1\dimexpr\macc@kerna}
\newcommand*\widebar[1]{\@ifnextchar^{{\wide@bar{#1}{0}}}{\wide@bar{#1}{1}}}
\newcommand*\wide@bar[2]{\if@single{#1}{\wide@bar@{#1}{#2}{1}}{\wide@bar@{#1}{#2}{2}}}
\newcommand*\wide@bar@[3]{%
	\begingroup
	\def\mathaccent##1##2{%
		\let\mathaccent\save@mathaccent
		\if#32 \let\macc@nucleus\first@char \fi
		\setbox\z@\hbox{$\macc@style{\macc@nucleus}_{}$}%
		\setbox\tw@\hbox{$\macc@style{\macc@nucleus}{}_{}$}%
		\dimen@\wd\tw@
		\advance\dimen@-\wd\z@
		\divide\dimen@ 3
		\@tempdima\wd\tw@
		\advance\@tempdima-\scriptspace
		\divide\@tempdima 10
		\advance\dimen@-\@tempdima
		\ifdim\dimen@>\z@ \dimen@0pt\fi
		\rel@kern{0.6}\kern-\dimen@
		\if#31
		\overline{\rel@kern{-0.6}\kern\dimen@\macc@nucleus\rel@kern{0.4}\kern\dimen@}%
		\advance\dimen@0.4\dimexpr\macc@kerna
		\let\final@kern#2%
		\ifdim\dimen@<\z@ \let\final@kern1\fi
		\if\final@kern1 \kern-\dimen@\fi
		\else
		\overline{\rel@kern{-0.6}\kern\dimen@#1}%
		\fi
	}%
	\macc@depth\@ne
	\let\math@bgroup\@empty \let\math@egroup\macc@set@skewchar
	\mathsurround\z@ \frozen@everymath{\mathgroup\macc@group\relax}%
	\macc@set@skewchar\relax
	\let\mathaccentV\macc@nested@a
	\if#31
	\macc@nested@a\relax111{#1}%
	\else
	\def\gobble@till@marker##1\endmarker{}%
	\futurelet\first@char\gobble@till@marker#1\endmarker
	\ifcat\noexpand\first@char A\else
	\def\first@char{}%
	\fi
	\macc@nested@a\relax111{\first@char}%
	\fi
	\endgroup
}
\newcommand{\PSh}{\mathrm{PSh}}
\newcommand{\Sh}{\mathrm{Sh}}
\newcommand{\Tuupperp}{\tensor*[^\cT]{u}{^p}}
\newcommand{\Tulowerp}{\tensor*[^\cT]{u}{_p}}
\newcommand{\Tuuppers}{\tensor*[^\cT]{u}{^s}}
\newcommand{\Tulowers}{\tensor*[^\cT]{u}{_s}}
\newcommand{\Tulowersw}{\tensor*[^\cT]{u}{_s^\wedge}}
\newcommand{\pu}{\tensor*[_p]{u}{}}
\newcommand{\su}{\tensor*[_s]{u}{}}
\newcommand{\Tpu}{\tensor*[^\cT_p]{u}{}}
\newcommand{\Tsu}{\tensor*[^\cT_s]{u}{}}
\newcommand{\Tsuw}{\tensor*[^\cT_s]{u}{^\wedge}}
\newcommand{\Dfpull}{\tensor*[^\cD]{f}{^{-1}}}
\newcommand{\Dfpush}{\tensor*[^\cD]{f}{_*}}
\newcommand{\Duuppers}{\tensor*[^\cD]{u}{^s}}
\newcommand{\Geom}{\mathrm{Geom}}
\newcommand{\LPr}{\mathcal{P}\mathrm{r}^\rL}
\newcommand{\RPr}{\mathcal{P}\mathrm{r}^\rR}
\newcommand{\CX}{\cC_{/X}}
\newcommand{\CXP}{(\cC_{/X})_{\bP}}
\newcommand{\GeomXP}{(\mathrm{Geom}_{/X})_\bP}
\newcommand{\GeomYP}{(\mathrm{Geom}_{/Y})_\bP}
\newcommand{\infcat}{$\infty$-category\xspace}
\newcommand{\infcats}{$\infty$-categories\xspace}
\newcommand{\infsite}{$\infty$-site\xspace}
\newcommand{\infsites}{$\infty$-sites\xspace}
\newcommand{\Grpd}{\mathrm{Grpd}}
\newcommand{\sSet}{\mathrm{sSet}}
\newcommand{\Ab}{\mathrm{Ab}}
\newcommand{\DAb}{\cD(\Ab)}
\newcommand{\tauan}{\tau_\mathrm{an}}
\newcommand{\tauet}{\tau_\mathrm{\acute{e}t}}
\newcommand{\tauqet}{\tau_\mathrm{q\acute{e}t}}
\newcommand{\bPsm}{\bP_\mathrm{sm}}
\newcommand{\bPqsm}{\bP_\mathrm{qsm}}
\newcommand{\Modh}{\textrm{-}\mathrm{Mod}^\heartsuit}
\newcommand{\Mod}{\textrm{-}\mathrm{Mod}}
\newcommand{\Coh}{\mathrm{Coh}}
\newcommand{\Cohb}{\mathrm{Coh}^\mathrm{b}}
\newcommand{\Cohh}{\mathrm{Coh}^\heartsuit}
\newcommand{\RcHom}{\rR\!\mathcal H\!\mathit{om}}
\newcommand{\Stn}{\mathrm{Stn}}
\newcommand{\Aff}{\mathrm{Aff}}
\newcommand{\Afflfp}{\mathrm{Aff}^{\mathrm{lfp}}}
\newcommand{\An}{\mathrm{An}}
\newcommand{\Afd}{\mathrm{Afd}}
\newcommand{\wiotaC}{\iota_\cC^\wedge}
\newcommand{\wiotaD}{\iota_\cD^\wedge}
\newcommand{\wLC}{\rL_\cC^\wedge}
\newcommand{\wLD}{\rL_\cD^\wedge}
\newcommand{\an}{^\mathrm{an}}
\newcommand{\alg}{^\mathrm{alg}}
\newcommand{\inv}{^{-1}}
\newcommand{\canal}{$\mathbb C$-analytic\xspace}
\newcommand{\kanal}{$k$-analytic\xspace}
\newcommand{\op}{^\mathrm{op}}
\newcommand{\Cech}{\check{\mathcal C}}
\providecommand{\abs}[1]{\lvert#1\rvert}
\tikzset{
  closed/.style = {decoration = {markings, mark = at position 0.5 with { \node[transform shape, xscale = .8, yscale=.4] {/}; } }, postaction = {decorate} },
  open/.style = {decoration = {markings, mark = at position 0.5 with { \node[transform shape, scale = .7] {$\circ$}; } }, postaction = {decorate} }
}
\DeclareMathOperator{\cosk}{cosk}
\DeclareMathOperator{\Fun}{Fun}
\DeclareMathOperator{\FunR}{Fun^R}
\DeclareMathOperator{\Int}{Int}
\DeclareMathOperator{\Map}{Map}
\DeclareMathOperator{\Sp}{Sp}
\DeclareMathOperator{\Spec}{Spec}
\DeclareMathOperator*{\colim}{colim}
\begin{document}
\title{Higher analytic stacks and GAGA theorems}

\author{Mauro PORTA}
\address{Mauro PORTA, Institut de Math\'ematiques de Jussieu - Paris Rive Gauche, CNRS-UMR 7586, Case 7012, Universit\'e Paris Diderot - Paris 7, B\^atiment Sophie Germain 75205 Paris Cedex 13 France}
\email{mauro.porta@imj-prg.fr}

\author{Tony Yue YU}
\address{Tony Yue YU, Institut de Math\'ematiques de Jussieu - Paris Rive Gauche, CNRS-UMR 7586, Case 7012, Universit\'e Paris Diderot - Paris 7, B\^atiment Sophie Germain 75205 Paris Cedex 13 France}
\email{yuyuetony@gmail.com}

\date{December 16, 2014 (Revised on July 6, 2016)}
\subjclass[2010]{Primary 14A20; Secondary 14G22 32C35 14F05}
\keywords{analytic stack, higher stack, Grauert's theorem, analytification, GAGA, rigid analytic geometry, Berkovich space, infinity category}

\begin{abstract}
We develop the foundations of higher geometric stacks in complex analytic geometry and in non-archimedean analytic geometry.
We study coherent sheaves and prove the analog of Grauert's theorem for derived direct images under proper morphisms.
We define analytification functors and prove the analog of Serre's GAGA theorems for higher stacks.
We use the language of infinity category to simplify the theory.
In particular, it enables us to circumvent the functoriality problem of the lisse-étale sites for sheaves on stacks.
Our constructions and theorems cover the classical 1-stacks as a special case.
\end{abstract}

\maketitle

\tableofcontents

\section{Introduction}\label{sec:intro_stacks}

A moduli space is a space which classifies certain objects.
Due to non-trivial automorphisms of the objects, a moduli space often carries the structure of a stack.
Moreover, a moduli space often has a geometric structure.
For example, the theory of algebraic stacks was developed in order to study moduli spaces in algebraic geometry \cite{Deligne_Irreducibility_1969,Artin_Versal_1974}.
Likewise, for moduli spaces in analytic geometry, we need the theory of analytic stacks.

The definition of algebraic stacks carries over easily to analytic geometry, and so do many constructions and theorems.
However, there are aspects which do not have immediate translations.
In this paper, we begin by the study of proper morphisms, coherent sheaves and their derived direct images in the setting of analytic stacks.
They are treated in a different way from their algebraic counterparts.
After that, we study the analytification of algebraic stacks and compare algebraic coherent sheaves with analytic coherent sheaves by proving analogs of Serre's GAGA theorems \cite{Serre_GAGA}.

Before stating the theorems, let us make precise what we mean by ``analytic'' and what we mean by ``stacks''.

By ``analytic geometry'', we mean both complex analytic geometry and non-archimedean analytic geometry.
We use the theory of Berkovich spaces \cite{Berkovich_Spectral_1990,Berkovich_Etale_1993} for non-archimedean analytic geometry.
We will write \emph{\canal} to mean complex analytic; we will write \emph{\kanal} to mean non-archimedean analytic for a non-archimedean ground field $k$.
We will simply say \emph{analytic} when the statements apply to both \canal and \kanal situations simultaneously.

By ``stacks'', we mean higher stacks in the sense of Simpson \cite{Simpson_Algebraic_1996}.
It is a vast generalization of the classical notion of stacks usually defined as categories fibered in groupoids satisfying certain conditions \cite{Deligne_Irreducibility_1969,Artin_Versal_1974,Laumon_Champs_2000,stacks-project}.
We choose to work with higher stacks not only because it is more general, but also because it makes many constructions and proofs clearer both technically and conceptually.

Our approach is based on the theory of \infcat (cf.\ Joyal \cite{Joyal_Quasi-categories_2002} and Lurie \cite{HTT}).
It has many advantages.
In particular, it enables us to circumvent the classical functoriality problem of lisse-étale sites in a natural way (see Olsson \cite{Olsson_Sheaves_2007} for a description of the problem and a different solution).

We begin in \cref{sec:higher_geometric_stacks} by introducing the notion of higher geometric stacks for a general geometric context.
They are specialized to algebraic stacks, complex analytic stacks and non-archimedean analytic stacks in \cref{sec:examples}.

In \cref{sec:proper_morphisms}, we introduce the notion of weakly proper pairs of analytic stacks, which is then used to define proper morphisms of analytic stacks.
In \cref{sec:direct_images}, we define coherent sheaves and their derived direct images using an analog of the classical lisse-étale site.

For a higher analytic stack (or a higher algebraic stack) $X$, we denote by $\Coh(X)$ (resp.\ $\Coh^+(X)$) the unbounded (resp.\ bounded below) derived \infcat of coherent sheaves on $X$ (cf.\ \cref{def:coherent_sheaf_on_stacks}).

The following theorem is the analog of Grauert's direct image theorem \cite{Grauert_Theorem_der_analytischen_1960} for higher stacks.

\begin{thm}[Theorems \ref{thm:proper_direct_image_alg}, \ref{thm:proper_direct_image_anal}]
Let $f\colon X\to Y$ be a proper morphism of higher analytic stacks (or locally noetherian higher algebraic stacks).
Then the derived pushforward functor $\rR f_*$ sends objects in $\Coh^+(X)$ to $\Coh^+(Y)$.
\end{thm}

Algebraic stacks and analytic stacks are related via the analytification functor in \cref{sec:analytification}.
We prove the analogs of Serre's GAGA theorems in \cref{sec:GAGA}.

Let $A$ be either the field of complex numbers or a $k$-affinoid algebra.

\begin{thm}[GAGA-1, \cref{thm:GAGA1}]
Let $f\colon X\to Y$ be a proper morphism of higher algebraic stacks locally finitely presented over $\Spec A$.
The canonical comparison morphism
\[ (\rR f_*\cF)\an\longrightarrow \rR f\an_*\cF\an\]
in $\Coh^+(Y\an)$ is an equivalence for all $\cF \in\Coh^+(X)$.
\end{thm}

\begin{thm}[GAGA-2, \cref{cor:unbounded_GAGA2}]
Let $X$ be a higher algebraic stack proper over $\Spec A$.
The analytification functor on coherent sheaves induces an equivalence of categories
\[\Coh(X)\xrightarrow{\ \sim\ } \Coh(X\an).\]
\end{thm}

\begin{rem}
The two theorems above are stated for the absolute case in complex geometry, while for the relative case in non-archimedean geometry.
The proof for the relative case in complex geometry would be more involved because Stein algebras are not noetherian in general.
\end{rem}

\medskip

\paragraph{\textbf{Related works}}

In the classical sense, complex analytic stacks were considered in \cite{Behrend_Uniformization_2006}, and non-archimedean analytic stacks were considered in \cite{Yu_Gromov_2014,Ulirsch_Geometric_2014} to the best of our knowledge. 

The general theory of higher stacks was studied extensively by Simpson \cite{Simpson_Algebraic_1996}, Lurie \cite{DAG-V} and Toën-Vezzosi \cite{HAG-I,HAG-II}.
Our \cref{sec:higher_geometric_stacks} follows mainly \cite{HAG-II}.
However, we do not borrow directly the HAG context of \cite{HAG-II}, because the latter is based on symmetric monoidal model categories which is not suitable for analytic geometry.

Our definition of properness for analytic stacks follows an idea of Kiehl in rigid analytic geometry \cite{Kiehl_Endlichkeitssatz_1967}.
The coherence of derived direct images under proper morphisms (i.e.\ Grauert's theorem) was proved in \cite{Grauert_Theorem_der_analytischen_1960,Kiehl_Verdier_1971,Forster_Knorr_1971,Houzel_Espaces_analytiques_relatifs_1973,Levy_Grauert_1987}  for complex analytic spaces and in \cite{Kiehl_Endlichkeitssatz_1967} for rigid analytic spaces.

Analytification of algebraic spaces and classical algebraic stacks was studied in \cite{Artin_Algebraization_II_1970,Lurie_Tannaka_duality,Toen_Algebrisation_2008,Conrad_Non-archimedean_analytification_2009}.
Analogs and generalizations of Serre's GAGA theorems are found in \cite{SGA1,Kopf_Eigentliche_1974,Lutkebohmert_Formal_1990,Berkovich_Spectral_1990,Conrad_Relative_2006,Conrad_Formal_GAGA,Hall_GAGA_2011}.
Our proofs use induction on the geometric level of higher stacks. We are very much inspired by the strategies of Brian Conrad in \cite{Conrad_Formal_GAGA}.

In \cite{Porta_DCAGI}, Mauro Porta deduced from this paper a GAGA theorem for derived complex analytic stacks.
Several results in this paper are also used in the work \cite{Porta_Yu_DNAnG_I}.

In \cite{Yu_Enumeration_cylinders_2015}, Tony Yue Yu applied the GAGA theorem for non-archimedean analytic stacks to the enumerative geometry of log Calabi-Yau surfaces.

\medskip
\paragraph{\textbf{Acknowledgements}}
We are grateful to Antoine Chambert-Loir, Antoine Ducros, Maxim Kontsevich, Yves Laszlo, Valerio Melani, François Petit, Marco Robalo, Matthieu Romagny, Pierre Schapira, Michael Temkin and Gabriele Vezzosi for very useful discussions.
The authors would also like to thank each other for the joint effort.

\section{Higher geometric stacks}\label{sec:higher_geometric_stacks}

\subsection{Notations}

We refer to Lurie \cite{HTT,Lurie_Higher_algebra} for the theory of \infcat.
The symbol $\mathcal S$ denotes the $\infty$-category of spaces.

\begin{defin}[cf.\ {\cite[00VH]{stacks-project}}]
An \emph{\infsite} $(\cC,\tau)$ consists of a small \infcat $\cC$ and a set $\tau$ of families of morphisms with fixed target $\{U_i\to U\}_{i\in I}$, called \emph{$\tau$-coverings} of $\cC$, satisfying the following axioms:
\begin{enumerate}[(i)]
\item If $V\to U$ is an equivalence then $\{V\to U\}\in\tau$.
\item If $\{U_i\to U\}_{i\in I}\in\tau$ and for each $i$ we have $\{V_{ij}\to U_i\}_{j\in J_i}\in\tau$, then $\{V_{ij}\to U\}_{i\in I,j\in J_i}\in\tau$.
\item If $\{U_i\to U\}_{i\in I}\in\tau$ and $V\to U$ is a morphism of $\cC$, then $U_i\times_U V$ exists for all $i$ and $\{U_i\times_U V\to V\}_{i\in I}\in\tau$.
\end{enumerate}
\end{defin}

Let $(\cC,\tau)$ be an \infsite.
Let $\cT$ be a presentable \infcat.
Let $\PSh_\cT(\cC)$ denote the \infcat of $\cT$-valued presheaves on $\cC$.
Let $\Sh_\cT(\cC,\tau)$ denote the \infcat of $\cT$-valued sheaves on the \infsite $(\cC,\tau)$ (cf.\ \cite[\S 1.1]{DAG-V}).
We will refer to $\cS$-valued presheaves (resp.\ sheaves) simply as presheaves (resp.\ sheaves).
We denote $\PSh(\cC) \coloneqq \PSh_\cS(\cC)$, $\Sh(\cC,\tau) \coloneqq \Sh_\cS(\cC,\tau)$.
We denote the Yoneda embedding by
\[ h\colon\cC\to\PSh(\cC),\qquad X\mapsto h_X.\]
We denote by $\Sh_\cT(\cC,\tau)^\wedge$ the full subcategory of $\Sh_\cT(\cC,\tau)$ spanned by hypercomplete $\cT$-valued sheaves (cf.\ \cite[\S 6.5]{HTT} and \cite[\S 5]{DAG-VII} for the notion of hypercompleteness and hypercoverings).

Let $\iota_\cC\colon\Sh_\cT(\cC,\tau)\to\PSh_\cT(\cC)$ and $\wiotaC \colon \Sh_\cT(\cC,\tau)^\wedge \to \PSh_\cT(\cC)$ denote the inclusion functors.
Let $\rL_\cC\colon \PSh_\cT(\cC)\to\Sh_\cT(\cC,\tau)$ denote the sheafification functor.
Let $\wLC \colon \PSh_\cT(\cC) \to \Sh(\cC, \tau)^\wedge$ denote the composition of sheafification and hypercompletion.

\subsection{Geometric contexts}\label{sec:general_context}

In this section, we introduce the notion of geometric context.
It can be regarded as the minimum requirement to work with geometric stacks in various situations (compare Toën-Vezzosi \cite[\S 1.3.2]{HAG-II}, Toën-Vaquié \cite[\S 2.2]{Toen_Algebrisation_2008}).

\begin{defin}\label{def:context}
A \emph{geometric context} $(\cC,\tau,\bP)$ consists of an \infsite $(\cC,\tau)$ and a class $\bP$ of morphisms in $\cC$ satisfying the following conditions:
\begin{enumerate}[(i)]
\item \label{context:subcanonical} Every representable presheaf is a hypercomplete sheaf.
\item \label{context:P} The class $\bP$ is closed under equivalence, composition and pullback.
\item \label{context:tau_in_P} Every $\tau$-covering consists of morphisms in $\bP$.
\item \label{context:P_tau_local} For any morphism $f\colon X\to Y$ in $\cC$, if there exists a $\tau$-covering $\{U_i\to X\}$ such that each composite morphism $U_i\to Y$ belongs to $\bP$, then $f$ belongs to $\bP$.
\end{enumerate}
\end{defin}

\begin{rem}
In all the examples that we will consider in this paper, the site $(\cC,\tau)$ is a classical Grothendieck site, i.e.\ the category $\cC$ is a 1-category.
We state \cref{def:context} for general \infsites because it will serve as a geometric context for derived stacks in our subsequent works (cf.\ \cite{Porta_DCAGI,Porta_Yu_DNAnG_I}).
\end{rem}

\begin{prop} \label{prop:descent_vs_hyperdescent}
	Let $(\cC, \tau)$ be an $\infty$-site and let $\cD$ be an $(n+1,1)$-category for $n\ge 0$ (cf.\ \cite[\S 2.3.4]{HTT}).
	Then a functor $F \colon \cC^{\mathrm{op}} \to \cD$ satisfies descent for coverings if and only if it satisfies descent for hypercoverings.
\end{prop}
\begin{proof}
	Let $D \in \cD$ be any object and let $c_D \colon \cD \to \cS$ be the functor $\Map_\cC(D,-)$ corepresented by $D$.
	Then $F$ satisfies descent for coverings (resp.\ hypercoverings) if and only if $c_D \circ F$ does for every $D$.
	Since $\cD$ is an $(n+1,1)$-category, we see that $c_D \circ F$ takes values in $\tau_{\le n} \cS$.
	Therefore, we may replace $\cD$ with $\cS$ and assume that $F$ takes values in $\tau_{\le n}\cS$.
	For every $U \in \cC$, we have
	\[ \Map_{\Sh_{\tau_{\le n}\cS}(\cC, \tau)}(\tau_{\le n} \rL_\cC(h_U), F) \simeq \Map_{\Sh(\cC, \tau)}(\rL_{\cC}(h_U), F) \simeq \Map_{\PSh(\cC)}(h_U,F) \simeq F(U) . \]
	Therefore, it suffices to show that for every hypercovering $U^\bullet \to U$ in $\cC$, the augmented simplicial diagram
	\[ \tau_{\le n} \rL_\cC(h_{U^\bullet}) \to \tau_{\le n} \rL_{\cC}(h_U) \]
	is a colimit diagram in $\Sh_{\tau_{\le n}\cS}(\cC, \tau)$.
	Since $\tau_{\le n}$ is a left adjoint, we see that in $\Sh_{\tau_{\le n}\cS}(\cC, \tau)$ the relation
	\[ |\tau_{\le n} \rL_{\cC}(h_{U^\bullet})| \simeq \tau_{\le n} | \rL_{\cC} (h_{U^\bullet})| \]
	holds, where $\abs{\cdot}$ denotes the geometric realization of a simplicial object.
	Moreover, since $U^\bullet \to U$ is a hypercovering, the morphism $|\rL_{\cC}(h_{U^\bullet})| \to \rL_{\cC}(h_U)$ is $\infty$-connected in virtue of \cite[6.5.3.11]{HTT}.
	Since $\tau_{\le n}$ commutes with $\infty$-connected morphisms,  we conclude that
	\[ \tau_{\le n} |\rL_{\cC}(h_{U^\bullet})| \to \tau_{\le n} \rL_{\cC}(h_U) \]
	is an $\infty$-connected morphism between $n$-truncated objects.
	Therefore it is an equivalence in $\Sh(\cC, \tau)$.
	In conclusion, the morphism $|\tau_{\le n} \rL_{\cC}(h_{U^\bullet})| \to \tau_{\le n} \rL_{\cC}(h_U)$ is an equivalence in $\Sh_{\tau_{\le n}\cS}(\cC, \tau)$, completing the proof.
\end{proof}

\begin{cor}\label{cor:hyperdescent}
Let $(\cC,\tau)$ be a classical Grothendieck site.
Then a representable presheaf is a sheaf if and only if it is a hypercomplete sheaf.
\end{cor}
\begin{proof}
Since $\cC$ is a 1-category, for every object $X\in\cC$, the representable presheaf $h_X$ takes values in the category of sets.
Therefore, by \cref{prop:descent_vs_hyperdescent}, the representable presheaf $h_X$ is a sheaf if and only it is a hypercomplete sheaf.
\end{proof}

\subsection{Higher geometric stacks}\label{sec:definition_geometric_stacks}

We fix a geometric context $(\cC,\tau,\bP)$ for this section.

\begin{defin}\label{def:stack}
A \emph{stack} is a hypercomplete sheaf on the site $(\cC,\tau)$.
\end{defin}

\begin{rem}\label{rem:comparison}
Let us explain briefly how \cref{def:stack} is related to the classical notion of stacks in terms of categories fibered in groupoids as in \cite{Deligne_Irreducibility_1969,Artin_Versal_1974,Laumon_Champs_2000,stacks-project}.
Firstly, a stack in terms of a category fibered in groupoids over a site is equivalent to a stack in terms of a sheaf of groupoids on the site (cf.\ \cite{Vistoli_Grothendieck_2005}).
Secondly, we have an adjunction
\[\Pi_1\colon\cS\to\Grpd \qquad \mathrm{N}\colon\Grpd\to \cS,\]
where $\Grpd$ denotes the \infcat of groupoids.
The nerve functor $\mathrm{N}$ induces an embedding of sheaves of groupoids on $\cC$ into sheaves of spaces on $\cC$, whose image consists of all 1-truncated objects.
In other words, classical stacks in groupoids are 1-truncated stacks in \cref{def:stack}.
We remark that the sheaf condition for a stack, usually presented as two separated descent conditions (see \cite[Definition 4.1(ii)(iii)]{Deligne_Irreducibility_1969}), 
is now combined into one single descent condition via $\infty$-categorical limits (i.e.\ homotopy limits) instead of 1-categorical limits.
We refer to \cite{Hollander_Homotopy_2008} and \cite[§2.1.2]{HAG-II} for more detailed discussions on the comparison.
\end{rem}

We introduce the notion of $n$-geometric stacks following \cite[\S 1.3.3]{HAG-II} (see also \cite{Simpson_Algebraic_1996}).

\begin{defin}\label{def:geometric_stack}
We define the following notions by induction on $n$.
We call $n$ the geometric level.
Base step:
\begin{enumerate}[(i)]
\item A stack is said to be \emph{$(-1)$-geometric} if it is representable.
\item A morphism of stacks $F\to G$ is said to be \emph{$(-1)$-representable} if for any representable stack $X$ and any morphism $X\to G$, the pullback $F\times_G X$ is representable.
\item A morphism of stacks $F\to G$ is said to be in $(-1)$-$\bP$ if it is $(-1)$-representable and if for any representable stack $X$ and any morphism $X\to G$, the morphism $F\times_G X\to X$ is in $\bP$.
\end{enumerate}
Now let $n\geq 0$:
\begin{enumerate}[(i)]
\item An \emph{$n$-atlas}  of a stack $F$ is a family of representable stacks $\{U_i\}_{i\in I}$ equipped with $(n\!-\!1)$-$\bP$-morphisms $U_i\to F$ such that the total morphism $\coprod_{i\in I} h_{U_i}\to F$ is an effective epimorphism of sheaves (cf.\ \cite[§6.2.3]{HTT} for the notion of effective epimorphism).
\item A stack $F$ is said to be \emph{$n$-geometric} if
\begin{itemize}
\item the diagonal morphism $F\to F\times F$ is $(n\!-\!1)$-representable, and
\item the stack $F$ admits an $n$-atlas.
\end{itemize}
\item A morphism of stacks $F\to G$ is said to be \emph{$n$-representable} if for any representable stack $X$ and any morphism $X\to G$, the pullback $F\times_G X$ is $n$-geometric.
\item \label{item:n-P} A morphism of stacks $F\to G$ is said to be in $n$-$\bP$ if it is $n$-representable and if for any representable stack $X$ and any morphism $X\to G$, there exists an $n$-atlas $\{U_i\}$ of $F\times_G X$ such that each composite morphism $U_i\to X$ is in $\bP$.
\end{enumerate}
A stack $F$ is said to be \emph{geometric} if it is $n$-geometric for some $n$.
\end{defin}

The following proposition summarizes the basic properties.
We refer to \cite[Proposition 1.3.3.3]{HAG-I} for the proof.

\begin{prop}\label{prop:basic_properties}
\begin{enumerate}[(i)]
\item A stack $F$ is $n$-geometric if and only if the morphism $F\to *$ is $n$-representable.
\item An $(n\!-\!1)$-representable morphism is $n$-representable.
\item An $(n\!-\!1)$-$\bP$ morphism is an $n$-$\bP$ morphism.
\item \label{item:geometric_stacks_basic_stabilities} The class of $n$-representable morphisms is closed under equivalences, compositions and pullbacks.
\item The class of $n$-$\bP$ morphisms is closed under equivalences, compositions and pullbacks.
\end{enumerate}
\end{prop}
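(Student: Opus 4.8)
The plan is to prove all five items simultaneously by induction on $n$, since the inductive definitions of $n$-geometric, $n$-representable and $n$-$\bP$ are entangled. The base case $n=0$ (or $n=-1$) is handled directly from \cref{def:geometric_stack}: for instance $(-1)$-representability of $F\to *$ means exactly that $F\times_* X$ is representable for every representable $X$, i.e.\ $F$ is $(-1)$-geometric, giving (i); items (ii)–(iii) at level $-1$ are trivially vacuous or immediate; items (iv)–(v) at level $-1$ follow from axiom \eqref{context:P} of \cref{def:context}, i.e.\ $\bP$ is closed under equivalence, composition and pullback, together with the stability of representability under pullback (which holds because pullbacks of sheaves commute). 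So the real content is the inductive step.

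The key steps, in order, are as follows. First I would establish (i) at level $n$: this is essentially a reformulation unwinding the definition — $F\to *$ being $n$-representable means $F\times_* X \simeq F\times X$ is $n$-geometric for all representable $X$; taking $X=*$ gives one direction, and for the converse one uses that if $F$ is $n$-geometric then so is $F\times X$ (the diagonal of $F\times X$ is a pullback of the diagonal of $F$, hence $(n-1)$-representable by the inductive form of (iv), and an $n$-atlas of $F$ pulls back to one of $F\times X$). Second, (ii) and (iii): an $(n-1)$-representable morphism $f\colon F\to G$ has each pullback $F\times_G X$ being $(n-1)$-geometric, hence $n$-geometric once we know $(n-1)$-geometric $\Rightarrow$ $n$-geometric — which itself follows from the inductive instances of (ii) applied to the diagonal and \cref{prop:basic_properties}(iii) applied to the atlas maps. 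Third, and most substantially, (iv): closure of $n$-representable morphisms under pullback is formal (pullback of sheaves is associative), under equivalence is clear, and under composition requires showing that if $F\to G\to H$ with both $n$-representable then $F\times_H X$ is $n$-geometric; one writes $F\times_H X = F\times_G(G\times_H X)$ and needs that $F\times_G Y$ is $n$-geometric for $Y = G\times_H X$ an $n$-geometric (not merely representable!) stack. Finally (v) is deduced from (iv) together with the analogous bootstrapping for atlases and $\bP$.

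The main obstacle will be the composition case of (iv): passing from "$F\times_G X$ is $n$-geometric for representable $X$" to "$F\times_G Y$ is $n$-geometric for $Y$ itself $n$-geometric". The standard device is to choose an $n$-atlas $\{V_j\to Y\}$, observe $F\times_G V_j$ is $n$-geometric by hypothesis (the $V_j$ are representable), assemble the $V_j$ and their pullbacks into atlases, and check that $\coprod h_{F\times_G V_j}\to F\times_G Y$ is an effective epimorphism (using that effective epimorphisms are stable under pullback, \cref{prop:effective_epimorphism_of_sheaves_via_pi_0}, and composable) while controlling the diagonal of $F\times_G Y$ via the fiber sequence relating it to the diagonals of $F$ and $Y$. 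I would carry this out by the same argument as in \cite[Proposition 1.3.3.3]{HAG-I}, to which the statement already refers for the proof; the role of this plan is to record which facts from the excerpt — axioms \eqref{context:P}, \eqref{context:tau_in_P}, \eqref{context:P_tau_local} of \cref{def:context}, \cref{cor:lifting}, and \cref{prop:effective_epimorphism_of_sheaves_via_pi_0} — are the load-bearing inputs at each stage of the induction.
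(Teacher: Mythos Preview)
Your plan is correct and matches the paper's approach: the paper offers no independent proof, simply referring to \cite[Proposition 1.3.3.3]{HAG-I} (in fact \cite{HAG-II}, \S1.3.3), and your sketch is precisely the simultaneous induction on $n$ carried out there. The only minor caveat is that your argument for (i) at the base step tacitly assumes $\cC$ has finite products (so that $F\times X$ is representable when $F$ and $X$ are) and that $*$ is representable, which holds in all of the paper's examples but is not formally part of \cref{def:context}.
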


\begin{rem} \label{rem:property_of_morphisms}
The collection of 0-atlases over representable stacks generates another Grothendieck topology on the category $\cC$, which we call the topology $\bP$.

Let $\bQ$ be a property of morphisms in $\cC$ which is stable under equivalence, composition and pullback, and which is local on the target with respect to the topology $\bP$.
We can use it to define a corresponding property for $(-1)$-representable morphisms of stacks.
Namely, a $(-1)$-representable morphism $f\colon X\to Y$ of stacks is said to have property $\bQ$ if there exists an atlas $\{U_i\}_{i\in I}$ of $Y$ (or equivalently for any atlas $\{U_i\}_{i\in I}$ of $Y$), such that the pullback morphisms $X\times_Y U_i\to U_i$ have property $\bQ$.
In this way, we will be able to speak of closed immersions, open immersions, dense open immersions, etc.\ later for algebraic stacks and analytic stacks.

If the property $\bQ$ is moreover local on the source with respect to the topology $\bP$, then we can use it to define a corresponding property for morphisms of geometric stacks.
Namely, a morphism $f\colon X\to Y$ of geometric stacks is said to have property $\bQ$ if there exists an atlas $\{U_i\}_{i\in I}$ of $Y$ (or equivalently for any atlas $\{U_i\}_{i\in I}$ of $Y$), and for every $i\in I$ there exists an atlas $\{V_{ij}\}_{j\in J_i}$ of $X\times_Y U_i$ (or equivalently for any atlas $\{V_{ij}\}_{j\in J_i}$ of $X\times_Y U_i$), such that the composite morphisms $V_{ij}\to X\times_Y U_i\to U_i$ have property $\bQ$.
\end{rem}

\subsection{Functorialities of the category of sheaves} \label{sec:functoriality}

In this section, we discuss the functorialities of the \infcat of sheaves.
We refer to \cite{SGA4,stacks-project} for the classical 1-categorical case.

Fix a presentable \infcat $\cT$ in which our (pre)sheaves will take values.

Let $u\colon\cC\to\cD$ be a functor between two small \infcats.
It induces a functor
\begin{align*}
\Tuupperp\colon\PSh_\cT(\cD)&\longrightarrow\PSh_\cT(\cC)\\
F&\longmapsto F\circ u.
\end{align*}
We denote by $\Tulowerp$ the left Kan extension along $u$,
and by $\Tpu$ the right Kan extension along $u$.
By definition, $\Tulowerp$ is left adjoint to $\Tuupperp$ and $\Tpu$ is right adjoint to $\Tuupperp$.
Their existences follow from \cite[\S 5]{HTT}.
When $\cT$ equals the \infcat of spaces $\cS$, we will omit the left superscripts~$\tensor*[^\cT]{}{}$.

\begin{lem}\label{lem:u_p_preserves_representables}
For any object $U\in\cC$, we have $u_p h_U \simeq h_{u(U)}$.
\end{lem}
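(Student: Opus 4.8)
The plan is to unwind the definition of the left Kan extension $u_p$ and compare it with the representable presheaf $h_{u(U)}$ by testing both against an arbitrary object of $\cD$, using the Yoneda lemma. First I would recall that $u_p = u^{\cS}_p$ is, by construction, the left adjoint to the restriction functor $u^p \colon \PSh(\cD) \to \PSh(\cC)$, hence is computed as a pointwise left Kan extension along $u$: for $V \in \cD$,
\[
(u_p F)(V) \simeq \colim_{(U' \to V) \in (\cC_{/V})} F(U'),
\]
where the colimit is taken over the comma \infcat $\cC_{/V}$ whose objects are pairs $(U', \alpha\colon u(U') \to V)$. This colimit formula is the standard pointwise formula for left Kan extensions in the \infcat setting (see \cite[4.3.3]{HTT}), and its existence in $\PSh(\cC) = \PSh_\cS(\cC)$ is exactly what was invoked right before the statement.

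Next I would substitute $F = h_U$. The key observation is that $h_U(U') = \Map_\cC(U', U)$, so the colimit becomes a colimit of mapping spaces indexed by $\cC_{/V}$. The cleanest way to evaluate this is to note that $u_p$, being a left adjoint between presheaf \infcats that preserves colimits, is determined by its restriction along the Yoneda embedding: since every presheaf is a colimit of representables and $h \colon \cC \to \PSh(\cC)$ is the free cocompletion, the left adjoint $u_p$ is forced to send $h_U$ to (a presheaf equivalent to) $h_{u(U)}$ provided $u^p$ is right adjoint to "$h_U \mapsto h_{u(U)}$". Concretely, for any $G \in \PSh(\cD)$ one has the chain of natural equivalences
\[
\Map_{\PSh(\cD)}(h_{u(U)}, G) \simeq G(u(U)) \simeq (u^p G)(U) \simeq \Map_{\PSh(\cC)}(h_U, u^p G) \simeq \Map_{\PSh(\cD)}(u_p h_U, G),
\]
where the first and third equivalences are the Yoneda lemma in $\cD$ and $\cC$ respectively, the second is the definition of $u^p$ as precomposition with $u$, and the last is the adjunction $(u_p \dashv u^p)$. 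By the Yoneda lemma applied once more (now to $\PSh(\cD)$), this natural equivalence of corepresented functors yields an equivalence $u_p h_U \simeq h_{u(U)}$.

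I expect the only delicate point to be making the naturality of the above chain precise at the level of \infcats — that is, promoting the objectwise equivalences to an equivalence of functors $\PSh(\cD) \to \cS$ — but this is exactly the content of the \infcategorical Yoneda lemma and the adjunction $(u_p, u^p)$, both of which are available from \cite{HTT}, so there is no real obstacle. Alternatively, one may avoid the corepresentability argument entirely and instead identify the indexing \infcat $\cC_{/V}$ in the colimit formula: the functor $(U' \to V) \mapsto \Map_\cC(U', U)$ is corepresented, and $\colim_{(\cC_{/V})} \Map_\cC(-, U) \simeq \Map_\cD(u(-), V)$ evaluated appropriately collapses to $\Map_\cD(u(U), V) = h_{u(U)}(V)$ by a cofinality argument, since $(U, \id_{u(U)})$ is... not quite initial, so the corepresentability route above is the safer one to write up.
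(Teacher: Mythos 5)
Your core argument — the chain $\Map_{\PSh(\cD)}(u_p h_U, G) \simeq \Map_{\PSh(\cC)}(h_U, u^p G) \simeq (u^p G)(U) = G(u(U))$ followed by Yoneda — is exactly the paper's proof, just written in the reverse direction. The preliminary discussion of the pointwise Kan extension formula and the abandoned cofinality route are unnecessary but harmless; the proposal is correct and takes essentially the same approach.
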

\begin{proof}
Since $u_p$ is left adjoint to $u^p$, we have
\[\Map_{\PSh(\cD)}(u_p h_U, G) \simeq \Map_{\PSh(\cC)}(h_U,u^p G) \simeq u^p G(U) \simeq G(u(U))\]
for any $G\in\PSh(\cD)$.
Hence $u_p h_U \simeq h_{u(U)}$ by the Yoneda lemma.
\end{proof}

Let $(\cC,\tau)$, $(\cD,\sigma)$ be two \infsites.
For a functor $u\colon\cC\to\cD$, we define
\begin{align*}
\Tulowers &\coloneqq \rL_\cD\circ \Tulowerp\circ\iota_\cC\colon\Sh_\cT(\cC)\to\Sh_\cT(\cD),\\
\Tulowersw &\coloneqq \wLD \circ \Tulowerp \circ \wiotaC \colon \Sh_\cT(\cC)^\wedge \to \Sh_\cT(\cD)^\wedge , \\
\Tuuppers &\coloneqq \rL_\cC\circ \Tuupperp \circ\iota_\cD\colon \Sh_\cT(\cD)\to\Sh_\cT(\cC),\\
\big(\Tuuppers\big)^\wedge & \coloneqq \wLC \circ \Tuupperp \circ \wiotaD \colon \Sh_\cT(\cD)^\wedge \to \Sh_\cT(\cC)^\wedge , \\
\Tsu &\coloneqq \rL_\cD\circ \Tpu \circ\iota_\cC\colon\Sh_\cT(\cC)\to\Sh_\cT(\cD),\\
\Tsuw & \coloneqq \wLD \circ \Tpu \circ \wiotaC \colon \Sh_\cT(\cC)^\wedge \to \Sh_\cT(\cD)^\wedge.
\end{align*}
When $\cT$ equals the \infcat of spaces $\cS$, we will omit the left superscripts~$\tensor*[^\cT]{}{}$.

\begin{defin}[cf.\ {\cite[Tag 00WV]{stacks-project}}]\label{def:continuous}
A functor $u\colon\cC\to\cD$ is called \emph{continuous} if for every $\tau$-covering $\{U_i\to U\}$ we have
\begin{enumerate}[(i)]
\item $\{u(U_i)\to u(U)\}$ is a $\sigma$-covering, and
\item for any morphism $S\to U$ in $\cC$ the morphism $u(S\times_U U_i)\to u(S)\times_{u(U)} u(U_i)$ is an equivalence.
\end{enumerate}
\end{defin}

\begin{lem} \label{lem:u^p_preserves_sheaves}
Let $u\colon\cC\to\cD$ be a continuous functor.
The functor $\Tuupperp$ sends $\cT$-valued sheaves (resp.\ hypercomplete sheaves) on the site $(\cD,\sigma)$ to $\cT$-valued sheaves (resp.\ hypercomplete sheaves) on the site $(\cC,\tau)$.
In particular, the functor $\Tuuppers$ (resp.\ $\big(\Tuuppers\big)^\wedge$) equals the restriction of the functor $\Tuupperp$ to $\Sh_\cT(\cD,\sigma)$ (resp.\ to $\Sh_\cT(\cD,\sigma)^\wedge$).
\end{lem}
\begin{proof}
	For every $\tau$-covering $\{U_i \to X\}$ consider the total morphism $f \colon \coprod h_{U_i} \to h_X$.
	Since $u$ is continuous, \cref{lem:u_p_preserves_representables} shows that $u_p$ commutes with the \v{C}ech nerve of $f$, in the sense that the natural morphism
	\[ \Cech(u_p(f)) \to u_p \circ \Cech(f) \]
	is an equivalence of simplicial objects.
	Moreover, \cref{lem:u_p_preserves_representables} also shows that $u_p$ takes $\tau$-hypercoverings to $\sigma$-hypercoverings.
	
	Let $F \in \Sh_{\cT}(\cD, \sigma)$ (resp.\ $\Sh_{\cT}(\cD,\sigma)^\wedge$).
	The presheaf $\Tuupperp(F) \in \PSh_{\cT}(\cC)$ is a sheaf (resp.\ hypercomplete sheaf) for the topology $\tau$ if and only if for every $\tau$-covering $\{U_i \to X\}$ (resp.\ for every $\tau$-hypercovering $U^\bullet \to X$) the morphism
	\[ \Tuupperp(F)(X) \to \lim_{\mathbf \Delta} \Tuupperp(F) \circ \Cech(f) \qquad \text{(resp.\ } \Tuupperp(F)(X) \to \lim_{\mathbf \Delta} \Tuupperp(F)(U^\bullet) \text{)} \]
	is an equivalence in $\cT$.
	By the definition of $\Tuupperp$ and the previous paragraph, we see that this is equivalent to the condition that
	\[ F(u(X)) \to \lim_{\mathbf \Delta} F \circ \Cech(u_p(f)) \qquad \text{(resp.\ } F(u(X)) \to \lim_{\mathbf \Delta} F(u_p(U^\bullet)) \text{)}  \]
	is an equivalence.
	Since $u$ is continuous, we see that $u_p(f)$ is the total morphism of a $\sigma$-covering (resp.\ that $u_p(U^\bullet) \to u(X)$ is a $\sigma$-hypercovering).
	Since $F$ belongs to $\Sh_{\cT}(\cD, \sigma)$ (resp.\ to $\Sh_\cT(\cD, \sigma)^\wedge$) by assumption, the condition above holds.
\end{proof}

\begin{lem} \label{lem:adjunction_u_s_u^s}
Let $u\colon\cC\to\cD$ be a continuous functor.
The functor $\Tulowers$ (resp.\ $\Tulowersw$) is left adjoint to the functor $\Tuuppers$ (resp.\ $(\Tuuppers)^\wedge$).
Assume moreover that both $\cC$ and $\cD$ admit finite limits and that $u \colon \cC \to \cD$ preserves them.
Then $\Tulowers$ and $\Tulowersw$ are left exact.
In particular, the pairs $(u_s, u^s)$ and $(u_s^\wedge, (u^s)^\wedge)$ are geometric morphisms of $\infty$-topoi.
\end{lem}

\begin{proof}
Since $\Tulowerp$ is left adjoint to $\Tuupperp$ and $\rL_\cD$ is left adjoint to $\iota_\cD$,
the composition $\rL_\cD\circ \Tulowerp$ is left adjoint to $\Tuupperp\circ\iota_\cD$.
By \cref{lem:u^p_preserves_sheaves}, restricting to the categories of $\cT$-valued sheaves, we obtain that $\Tulowers$ is left adjoint to $\Tuuppers$.
This shows the first statement in the non-hypercomplete case.
The same proof works in the hypercomplete case.
The second statement is a consequence of \cite[6.2.2.7, 6.1.5.2]{HTT} in the non-hypercomplete case and of \cite[6.5.1.16, 6.2.1.1]{HTT} in the hypercomplete case.
\end{proof}

\begin{lem} \label{lem:u_s_applied_to_presheaf}
	Let $u\colon\cC\to\cD$ be a continuous functor.
	For any $\cT$-valued presheaf $F$ on $\cC$, we have $\rL_\cD \Tulowerp F \simeq \Tulowers \rL_\cC F$ and $\wLD \Tulowerp F \simeq \Tulowersw \wLC F$.
\end{lem}
\begin{proof}
For any $\cT$-valued sheaf $G$ on $(\cD,\sigma)$, we have
\begin{align*}
\Map_{\Sh_\cT(\cD,\sigma)}(\Tulowers \rL_\cC F,G) & \simeq \Map_{\Sh_\cT(\cC,\tau)} (\rL_\cC F, \Tuuppers G)\\
& \simeq \Map_{\PSh_\cT(\cC)}(F, \Tuupperp\iota_\cD G)\\
& \simeq \Map_{\PSh_\cT(\cD)}(\Tulowerp F, \iota_\cD G)\\
& \simeq \Map_{\Sh_\cT(\cD,\sigma)}(\rL_\cD \Tulowerp F, G).
\end{align*}
So the statement in the non-hypercomplete case follows from the Yoneda lemma.
The same proof works in the hypercomplete case.
\end{proof}

\begin{lem} \label{lem:u_s_preserves_representables}
	Let $u\colon\cC\to\cD$ be a continuous functor.
	For any object $U\in \cC$, we have $u_s \rL_\cC h_U \simeq \rL_\cD h_{u(U)}$ and $u_s^\wedge \wLC h_U \simeq \wLD h_{u(U)}$.
\end{lem}
\begin{proof}
It follows from \cref{lem:u_p_preserves_representables} and \cref{lem:u_s_applied_to_presheaf}.
\end{proof}

\begin{defin}[cf.\ {\cite[Tag 00XJ]{stacks-project}}]\label{def:cocontinuous}
A functor $u\colon\cC\to\cD$ is called \emph{cocontinuous} if for every $U\in \cC$ and every $\sigma$-covering $\{V_j\to u(U)\}_{j\in J}$, there exists a $\tau$-covering $\{U_i\to U\}_{i\in I}$ such that the family of maps $\{u(U_i)\to u(U)\}_{i\in I}$ refines the covering $\{V_j\to u(U)\}_{j\in J}$.
\end{defin}

\begin{lem} \label{lem:pu_preserves_sheaves}
Let $u\colon\cC\to\cD$ be a cocontinuous functor.
The functor $\pu\colon\PSh(\cC)\allowbreak\to\PSh(\cD)$ sends sheaves (resp.\ hypercomplete sheaves) on the site $(\cC,\tau)$ to sheaves (resp.\ hypercomplete sheaves) on the site $(\cD,\sigma)$.
In particular, the functor $\su$ (resp.\ $\su^\wedge$) equals the restriction of the functor $\pu$ to $\Sh(\cC,\tau)$ (resp.\ to $\Sh(\cC, \tau)^\wedge$).
\end{lem}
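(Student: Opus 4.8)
The plan is to deduce the statement from two facts: that $\pu$ is right adjoint to the restriction functor $u^p$, which — being precomposition with $u$ — preserves all limits and all colimits of presheaves; and that $u^p$ carries $\sigma$-hypercoverings on $\cD$ to $\tau$-hypercoverings on $\cC$, this last point being the only place where cocontinuity of $u$ is used.

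First I would record how $\pu F$ looks once restriction is inverted. Under the equivalence \eqref{eq:presheaves}, the presheaf $F$ on $\cC$ corresponds to the limit-preserving functor $\widetilde F=\Map_{\PSh(\cC)}(-,F)\colon\PSh(\cC)\op\to\cS$, and $\pu F$ corresponds to the functor $G\mapsto\Map_{\PSh(\cD)}(G,\pu F)=\Map_{\PSh(\cC)}(u^p G,F)$ on $\PSh(\cD)\op$; this functor is limit-preserving because $u^p$ preserves colimits, and it agrees with $\widetilde F$ on representables, so it is indeed the correct limit-preserving extension of $\pu F$. In particular, for any augmented simplicial presheaf $V_+^\bullet$ on $\cD$, the diagram $(\pu F)\circ V_+^\bullet\colon\bDelta_+\to\cS$ is a limit diagram if and only if $F\circ(u^p V_+^\bullet)$ is a limit diagram, where $u^p V_+^\bullet$ denotes the levelwise image under $u^p$.

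Next I would prove that if $V^\bullet$ is a $\sigma$-hypercovering of $X\in\PSh(\cD)$, then $u^p V^\bullet$ is a $\tau$-hypercovering of $u^p X$. Since $u^p$ preserves limits it is compatible with the passage to the overcategories $\PSh(\cD)_{/X}\to\PSh(\cC)_{/u^p X}$ and with the formation of relative coskeleta — exactly as in the proof of \cref{lem:u^p_preserves_sheaves} — so for each $n\ge 0$ the map $u^p V^n\to(\cosk_{n-1}(u^p V^\bullet/u^p X))^n$ is $u^p$ applied to $V^n\to(\cosk_{n-1}(V^\bullet/X))^n$. It therefore suffices to show that $u^p$ takes a local epimorphism of presheaves of sets on $\cD$ (for $\sigma$) to a local epimorphism of presheaves of sets on $\cC$ (for $\tau$): applying this to $\pi_0^p$ of the maps just mentioned, and using that $u^p$ commutes with objectwise $\pi_0$, gives the hypercovering condition for $u^p V^\bullet$. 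Unwinding, $g\colon G'\to G$ being a $\sigma$-local epimorphism means that for every $V\in\cD$ and every section $s$ of $G$ over $V$ there is a $\sigma$-covering $\{V_i\to V\}$ over which $s$ lifts along $g$; given $U\in\cC$ and a section $s$ of $u^p G$ over $U$, that is, a section of $G$ over $u(U)$, choose such a $\sigma$-covering $\{W_j\to u(U)\}$, and invoke cocontinuity of $\bar u\colon\hC\to\hD$: the $u$-preimage of the $\sigma$-covering sieve of $u(U)$ generated by the $W_j$ is a $\tau$-covering sieve of $U$, so there is a $\tau$-covering $\{U_k\to U\}$ in $\cC$ with each $u(U_k)\to u(U)$ factoring through some $W_j$, and then $s$ lifts along $u^p g$ over each $U_k$.

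Combining the two steps: given a sheaf $F$ on $(\cC,\tau)$ and an arbitrary $\sigma$-hypercovering $V^\bullet$ of an arbitrary $X\in\PSh(\cD)$, the presheaf $u^p V^\bullet$ is a $\tau$-hypercovering of $u^p X$, hence $F\circ(u^p V_+^\bullet)$ is a limit diagram by \cref{def:sheaves}, hence $(\pu F)\circ V_+^\bullet$ is a limit diagram; by \cref{def:sheaves} again, $\pu F$ is a sheaf. The final assertion of the lemma is then automatic, since $\rL_\cD$ is the identity on presheaves that are already sheaves. The main obstacle is the middle step, and within it the single use of cocontinuity, namely the transfer of a $\sigma$-covering sieve of $u(U)$ to a $\tau$-covering sieve of $U$; everything else is formal manipulation of the adjunctions $u_p\dashv u^p\dashv\pu$ together with the equivalence \eqref{eq:presheaves}.
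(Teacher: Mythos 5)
Your proof is correct and follows essentially the same route as the paper's: both reduce, via the adjunction $u^p \dashv \pu$ and the fact that $u^p$ preserves limits (hence commutes with relative coskeleta), to showing that $u^p$ carries $\sigma$-hypercoverings to $\tau$-hypercoverings, and both invoke cocontinuity at exactly the same point, to refine a $\sigma$-covering of $u(U)$ by the images of a $\tau$-covering of $U$. The only cosmetic difference is that you check the hypercovering condition directly as a local epimorphism of presheaves of sets, whereas the paper phrases the same step as an effective epimorphism of sheaves via \cref{cor:lifting} together with \cite[6.5.3.12]{HTT}.
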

\begin{proof}
Let $F$ be a sheaf (resp.\ a hypercomplete sheaf) on $(\cC,\tau)$.
Let us prove that $\pu(F)$ is a sheaf (resp.\ a hypercomplete sheaf) on $(\cD,\sigma)$.
Let $\{V_i \to X\}$ be a covering in $(\cD, \sigma)$.
Consider the total morphism
\[ f \colon V \coloneqq \coprod h_{V_i} \to h_X \]
and form the \v{C}ech nerve $V^\bullet \coloneqq \Cech(p)$ in $\PSh(\cD)$, (respectively, let $V^\bullet \to h_X$ be a $\sigma$-hypercovering).
We have to show that the canonical map
\[ \Map(h_X, \pu(F)) \to \lim \Map(V^\bullet, \pu(F)) \]
is an equivalence.
By adjunction, this is equivalent to the fact that
\[ \Map(u^p(h_X), F) \to \lim \Map(u^p(V^\bullet), F) \]
is an equivalence.
For the latter, it suffices to show that the canonical map $u^p(h_V) \to \colim u^p(h_{V^\bullet})$ becomes an equivalence after sheafification.

We will first deal with the case where $V^\bullet = \Cech(p)$.
Since $u^p$ is both a left and a right adjoint, we can identify $u^p(V^\bullet)$ with the \v{C}ech nerve of the morphism
\[ u^p(f) \colon u^p(V) = \coprod u^p(h_{V_i}) \to u^p(h_X). \]
Then it suffices to show that this morphism is an effective epimorphism.
Given any object $U \in \cC$, using the adjunction $(u_p, u^p)$,
a morphism
\[ \alpha \colon h_U \to u^p(h_X) \]
is uniquely determined by a morphism
\[ \alpha' \colon u_p(h_U) \simeq h_{u(U)} \to h_X. \]
Since $u$ is cocontinuous, we can find a $\tau$-covering $\{U_i \to U\}$ such that each composite morphism $h_{u(U_i)} \to h_{u(U)} \to h_X$ factors through $V \to X$.
By adjunction, we see that each composite morphism $h_{U_i} \to h_{U} \to u^p(h_X)$ factors as
\[ \begin{tikzcd}
	{} & {} & u^p(V) \arrow{d} \\
	h_{U_i} \arrow{r} \arrow[dashed]{urr} & h_U \arrow{r} & u^p(h_X) .
\end{tikzcd} \]
Therefore $\pi_0(u^p(V)) \to \pi_0(u^p(h_X))$ is an effective epimorphism of sheaves of sets.
We conclude by \cite[7.2.1.14]{HTT} that $u^p(V) \to u^p(h_X)$ is an effective epimorphism.

We now turn to the case of a general hypercovering $V^\bullet$ of $X$.
By \cite[6.5.3.12]{HTT}, it suffices to show that
\[u^p(h_{V^n}) \to \big(\cosk_{n-1} \big(u^p(h_{V^\bullet})/u^p(h_V)\big)\big)^n\]
is an effective epimorphism.

Given any object in $U\in\cC$ and any morphism
\[\alpha\colon h_U\to\big(\cosk_{n-1} \big(u^p(h_{V^\bullet})/u^p(h_V)\big)\big)^n,\]
by adjunction, we obtain a morphism
\[\alpha'\colon u(U)\to(\cosk_{n-1} (h_{V^\bullet}/h_V))^n.\]
Since the morphism $V^n\to(\cosk_{n-1} (h_{V^\bullet}/h_V))^n$ is a covering in $(\cD,\sigma)$, the pullback $u(U)\times_{(\cosk_{n-1} (h_{V^\bullet}/h_V))^n} V^n \to u(U)$ is also a covering in $(\cD,\sigma)$.
Since the functor $u$ is cocontinuous, there exists a $\tau$-covering $\{U_i\to U\}_{i\in I}$ such that the family of maps $\{u(U_i)\to u(U)\}_{i\in I}$ refines the covering $u(U)\times_{(\cosk_{n-1} (h_{V^\bullet}/h_V))^n} V^n \to u(U)$.
By construction, every morphism $h_{U_i}\to \big(\cosk_{n-1} \big(u^p(h_{V^\bullet})/u^p(h_V)\big)\big)^n$ factors through $u^p(h_{V^n})$,
\[
\begin{tikzcd}
	{} & {} & u^p(h_{V^n}) \arrow{d}{\alpha} \\
	h_{U_i} \arrow{urr} \arrow{r} & h_U \arrow{r} & \big(\cosk_{n-1} \big(u^p(h_{V^\bullet})/u^p(h_V)\big)\big)^n.
	\end{tikzcd}
\]
Therefore
\[u^p(h_{V^n}) \to \big(\cosk_{n-1} \big(u^p(h_{V^\bullet})/u^p(h_V)\big)\big)^n\]
is an effective epimorphism.
\end{proof}

\begin{lem} \label{lem:adjunction_u^s_su}
Let $u\colon\cC\to\cD$ be a cocontinuous functor.
The functor $u^s$ (resp.\ $(\big(u^s\big)^\wedge$) is left adjoint to the functor $\su$ (resp.\ to $\su^\wedge$).
\end{lem}
\begin{proof}
Using \cref{lem:pu_preserves_sheaves}, the same argument in the proof of \cref{lem:adjunction_u_s_u^s} applies.
\end{proof}


\begin{lem} \label{lem:Tpu_preserves_sheaves}
Let $u\colon\cC\to\cD$ be a cocontinuous functor.
The functor $\Tpu\colon\PSh_\cT(\cC)\allowbreak\to\PSh_\cT(\cD)$ sends $\cT$-valued sheaves (resp.\ hypercomplete sheaves) on the site $(\cC,\tau)$ to $\cT$-valued sheaves (resp.\ hypercomplete sheaves) on the site $(\cD,\sigma)$.
In other words, the functor $\Tsu$ (resp.\ $\Tsu^\wedge$) equals the restriction of the functor $\Tpu$ to $\Sh_\cT(\cC,\tau)$ (resp.\ to $\Sh_\cT(\cC, \tau)^\wedge$).
Moreover, we have the following commutative diagrams
\[
\begin{tikzcd}
\Sh_{\cT}(\cD) & \Sh_{\cT}(\cC) \arrow{l}[swap]{\Tsu} \\
\FunR(\Sh(\cD)^\mathrm{op}, \cT) \arrow{u}[sloped,above]{\sim} & \FunR(\Sh(\cC)^\mathrm{op}, \cT), \arrow{u}[sloped,above]{\sim} \arrow{l}[swap]{-\circ u^s}
\end{tikzcd}
\]
and
\[ \begin{tikzcd}
	\Sh_{\cT}(\cD)^\wedge & \Sh_{\cT}(\cC)^\wedge \arrow{l}[swap]{\Tsu^\wedge} \\
	\FunR((\Sh(\cD)^\wedge)^\mathrm{op}, \cT) \arrow{u}[sloped,above]{\sim} & \FunR((\Sh(\cC)^\wedge)^\mathrm{op}, \cT). \arrow{u}[sloped,above]{\sim} \arrow{l}[swap]{-\circ (u^s)^\wedge}
\end{tikzcd} \]
\end{lem}
\begin{proof}
Let us explain the proof in the non-hypercomplete case.
The proof in the hypercomplete case is similar.
To simply notations, we set
\begin{gather*}
\cX\coloneqq\Sh(\cC,\tau),\ \cY\coloneqq\Sh(\cD,\sigma),\ \cX_\cT \coloneqq \FunR(\cX^\mathrm{op}, \cT),\ \cY_\cT \coloneqq \FunR(\cY^\mathrm{op}, \cT),\\
\cX'_\cT\coloneqq\FunR(\PSh(\cC)\op,\cT),\quad \cY'_\cT\coloneqq\FunR(\PSh(\cD)\op,\cT).
\end{gather*}

We claim the commutativity of the following diagram
\[
\begin{tikzcd}
\PSh_\cT(\cD) & \PSh_\cT(\cC) \arrow{l}[swap]{\Tpu} \\
\cY'_\cT \arrow{u}[sloped,above]{\sim} & \cX'_\cT \arrow{l}[swap]{- \circ u^p} \arrow{u}[sloped,above]{\sim} .
\end{tikzcd}
\]
Indeed, since $\Tpu$ and $- \circ u^p$ are right adjoint respectively to $\Tuupperp$ and to $- \circ u_p$, it suffices to show that the diagram
\[
\begin{tikzcd}
\PSh_\cT(\cD) \arrow{r}{\Tuupperp} & \PSh_\cT(\cC) \\
\cY'_\cT \arrow{u}[sloped,above]{\sim} \arrow{r}{- \circ u_p} & \cX'_\cT \arrow{u}[sloped,above]{\sim}
\end{tikzcd}
\]
commutes. Since the vertical morphisms are the compositions with the Yoneda embedding, and $u^p$ is the composition with $u$, it suffices to show that the diagram
\[
\begin{tikzcd}
\cC \arrow{r}{u} \arrow{d}{y_\cC} & \cD \arrow{d}{y_\cD} \\
\PSh(\cC) \arrow{r}{u_p} & \PSh(\cD)
\end{tikzcd}
\]
commutes.
This follows from \cref{lem:u_p_preserves_representables}.

Now we consider the diagram
\[
\begin{tikzcd}
\PSh_\cT(\cD) & \PSh_\cT(\cC) \arrow{l}[swap]{\Tpu} \\
\cY'_\cT \arrow{u} & \cX'_\cT \arrow{l}[swap]{- \circ u^p} \arrow{u} \\
\cY_\cT \arrow{u} & \cX_\cT \arrow{l}[swap]{- \circ u^s} \arrow{u} .
\end{tikzcd}
\]
The bottom square commutes.
We have just shown that the top square commutes as well.
We observe that the morphism $\cY_\cT \to \PSh_\cT(\cD)$ factors as
\[
\cY_\cT \to \Sh_\cT(\cD) \to \PSh_\cT(\cD)
\]
and the same goes for the morphism $\cX_\cT \to \PSh_\cT(\cC)$.
Since the morphism $\cY_\cT \to \Sh_\cT(\cD)$ is an equivalence, we obtain a morphism $\Sh_\cT(\cC) \to \Sh_\cT(\cD)$ which fits into the commutative diagram
\[
\begin{tikzcd}
\PSh_\cT(\cD) & \PSh_\cT(\cC) \arrow{l}[swap]{\Tpu} \\
\Sh_\cT(\cD) \arrow{u} & \Sh_\cT(\cC) \arrow{l} \arrow{u} \\
\cY_\cT \arrow{u}[sloped,above]{\sim} & \cX_\cT. \arrow{l}[swap]{- \circ u^s} \arrow{u}[sloped,above]{\sim}
\end{tikzcd}
\]
This shows that the functor $\Tpu$ preserves $\cT$-valued sheaves.
So the morphism $\Sh_\cT(\cC) \to \Sh_\cT(\cD)$ in the commutative diagram above coincides with $\Tsu$.
So we have proved the commutative diagram in the statement of the lemma.
\end{proof}

\begin{lem} \label{lem:adjunction_Tu^s_Tsu}
Let $u\colon\cC\to\cD$ be a cocontinuous functor.
Then the functor $\Tuuppers$ (resp.\ $\big(\Tuuppers\big)^\wedge$) is left adjoint to the functor $\Tsu$ (resp.\ $\Tsuw$).
\end{lem}
\begin{proof}
Using \cref{lem:Tpu_preserves_sheaves}, the same argument in the proof of \cref{lem:adjunction_u_s_u^s} applies.
\end{proof}

\begin{prop} 
	\label{prop:equivalence_of_topoi}
	Let $(\cC,\tau)$, $(\cD,\sigma)$ be two \infsites.
	Let $u\colon\cC\to\cD$ be a functor.
	Assume that
	\begin{enumerate}[(i)]
		\item \label{item:continuity} $u$ is continuous;
		\item \label{item:cocontinuity} $u$ is cocontinuous;
		\item $u$ is fully faithful;
		\item \label{item:cofinality} for every object $V \in \cD$ there exists a $\sigma$-covering of $V$ in $\mathcal{D}$ of the form $\{u(U_i) \to V\}_{i \in I}$.
		\item For every object $D \in \cD$, the representable presheaf $h_D$ is a hypercomplete sheaf.
	\end{enumerate}
	Then the induced functor
	\[ u_s^\wedge \colon \Sh(\cC, \tau)^\wedge \simeq \Sh(\cD, \sigma)^\wedge . \]
	is an equivalence of $\infty$-categories.
\end{prop}

\begin{proof}
	We start by proving that $u_s^\wedge$ is fully faithful.
	Consider the commutative square
	\begin{equation} \label{eq:right_adjointable_square}
		\begin{tikzcd}
			\PSh(\cC) \arrow{r}{u_p} \arrow{d}{\rL_\cC^\wedge} & \PSh(\cD) \arrow{d}{\rL_\cD^\wedge} \\
			\Sh(\cC, \tau)^\wedge \arrow{r}{u_s^\wedge} & \Sh(\cD, \sigma)^\wedge .
		\end{tikzcd}
	\end{equation}
	We claim that it is right adjointable (cf.\ \cite[7.3.1.2]{HTT}).
	In order to prove this claim, we have to show that the induced natural transformation
	\[ \rL_\cC^\wedge \circ u^p \to (u^s)^\wedge \circ \rL_\cD^\wedge \]
	is an equivalence.
	Since $u$ is cocontinuous, \cref{lem:adjunction_u^s_su} shows that $(u^s)^\wedge$ has a right adjoint, given by $\su^\wedge$.
	In particular, $(u^s)^\wedge$ commutes with colimits.
	It is therefore enough to prove that for every representable presheaf $h_D \in \PSh(\cD)$ the induced transformation
	\[ \rL_\cC^\wedge(u^p(h_D)) \to (u^s)^\wedge(\rL_\cD^\wedge(h_D)) \]
	is an equivalence.
	This follows from the definition of $(u^s)^\wedge$ and the fact that $h_D$ is a hypercomplete sheaf by assumption.
	
	Since the square \eqref{eq:right_adjointable_square} is right adjointable, we see that the image via $\rL_\cC^\wedge$ of the unit of the adjunction $(u_p, u^p)$ coincides with the unit of the adjunction $(u_s^\wedge, (u^s)^\wedge)$.
	Since $u$ is fully faithful, the same goes for $u_p$ (cf.\ \cite[4.3.3.5]{HTT}).
	Thus, the unit of $(u_p, u^p)$ is an equivalence.
	This shows that the unit of $(u_s^\wedge, (u^s)^\wedge)$ is an equivalence as well.
	In other words, we proved that $u_s^\wedge$ is fully faithful.
	
	Next we show that $u_s^\wedge$ is essentially surjective.
	Since $u_s^\wedge$ is a left adjoint and fully faithful, its essential image is closed under colimits.
	Since $\Sh(\cD,\sigma)^\wedge$ is generated by colimits of representable sheaves and $u_s^\wedge$ is fully faithful, it suffices to show that representable sheaves are in the essential image of $u_s^\wedge$.
	Let $V$ be an object in $\cD$.
	By Assumption (\ref{item:cofinality}), there exists a $\sigma$-covering of $V$ of the form $\{u(U_i)\to V\}_{i\in I}$.
	Iterating this application of Assumption (\ref{item:cofinality}) we can construct a hypercover $V^\bullet$ in $\Sh(\cD, \sigma)^\wedge$ of $h_V$ such that for every $n \in \Delta$, $V^n$ belongs to the essential image of $u_s^\wedge$.
	Since $\Sh(\cD, \sigma)^\wedge$ is a hypercomplete $\infty$-topos, we see that
	\[ h_V \simeq | V^\bullet | . \]
		
	Using fully faithfulness of $u_s^\wedge$, we can find a simplicial object $U^\bullet$ in $\Sh(\cC, \tau)^\wedge$ such that $u_s(U^\bullet) \simeq V^\bullet$.
	Finally, from the fact that $u_s^\wedge$ commutes with colimits, we conclude:
	\[ u_s^\wedge(|U^\bullet|) \simeq |u_s^\wedge(U^\bullet)| \simeq |V^\bullet| \simeq h_V . \]
	Thus, $h_V$ is also in the essential image of $u_s^\wedge$.
\end{proof}

\subsection{Change of geometric contexts}

In this section, we study how geometric stacks behave with respect to changes of geometric contexts.

\begin{defin} \label{def:morphism_of_contexts}
Let $(\cC, \tau, \bP)$, $(\cD, \sigma, \bQ)$ be two geometric contexts.
A \emph{morphism of geometric contexts from $(\cC, \tau, \bP)$ to $(\cD, \sigma, \bQ)$} is a continuous functor $u \colon \cC \to \cD$ sending morphisms in $\bP$ to morphisms in $\bQ$.
\end{defin}

\begin{lem}\label{lem:u_s_preserves_geometric_stacks}
Let $u \colon (\cC, \tau, \bP) \to (\cD, \sigma, \bQ)$ be a morphism of geometric contexts.
Assume that $\cC$ and $\cD$ admit pullbacks, and that $u$ commutes with them.
Then the functor $u_s^\wedge \colon\Sh(\cC,\tau)^\wedge \to\Sh(\cD,\sigma)^\wedge$ sends $n$-geometric stacks with respect to $(\cC,\tau,\bP)$ to $n$-geometric stacks with respect to $(\cD,\sigma,\bQ)$ for every $n\ge -1$.
\end{lem}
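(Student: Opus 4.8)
The plan is to proceed by induction on $n \ge -1$, proving simultaneously that $u_s$ sends $(\mathrm a_n)$ $n$-geometric stacks over $(\cC,\tau,\bP)$ to $n$-geometric stacks over $(\cD,\sigma,\bQ)$, $(\mathrm b_n)$ $n$-representable morphisms to $n$-representable morphisms, and $(\mathrm c_n)$ morphisms in $n$-$\bP$ to morphisms in $n$-$\bQ$. Before starting, I would record the properties of $u_s$ on which everything rests. First, $u_s$ carries representable stacks to representable stacks and carries the morphism of representables attached to $g\colon U\to V$ in $\cC$ to the one attached to $u(g)$: this is \cref{lem:u_s_preserves_representables}, and $u(g)$ lies in $\bQ$ whenever $g$ lies in $\bP$ because $u$ is a morphism of geometric contexts. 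Second, being a left adjoint (\cref{lem:adjunction_u_s_u^s}), $u_s$ preserves small colimits, in particular coproducts. Third, $u_s$ preserves effective epimorphisms: by \cref{prop:effective_epimorphism_of_sheaves_via_pi_0} these are the morphisms that $\pi_0^s$ turns into epimorphisms of sheaves of sets, by \cref{lem:u_s_and_pi_0} the functor $u_s$ is compatible with $\pi_0^s$, and $\tensor*[^\rSet]{u}{_s}$ is a left adjoint, since $\bar u$ is continuous, hence preserves epimorphisms of sheaves of sets. Fourth, since $u$ commutes with pullbacks, the analysis of slices carried out in the proof of \cref{lem:u^p_preserves_sheaves} shows that $u_s$ is compatible with the fiber products of representables and the relative coskeleta that enter \cref{def:geometric_stack}.

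The base case $(\mathrm a_{-1})$ is \cref{lem:u_s_preserves_representables}, and $(\mathrm b_{-1})$, $(\mathrm c_{-1})$ follow from the fourth property together with $u(\bP)\subseteq\bQ$ (and the descent statement discussed below). For the inductive step, assume the three statements at level $n-1$. Let $F$ be $n$-geometric with an $n$-atlas $\{U_i\}_{i\in I}$, $(n{-}1)$-$\bP$ morphisms $p_i\colon\rL_\cC h_{U_i}\to F$, and effective epimorphism $\coprod_i h_{U_i}\to F$. Then $\{\rL_\cD h_{u(U_i)}\}_{i\in I}$, with the morphisms $u_s(p_i)$, should be an $n$-atlas of $u_s(F)$: the total morphism $\coprod_i\rL_\cD h_{u(U_i)}\simeq u_s\!\bigl(\coprod_i\rL_\cC h_{U_i}\bigr)\to u_s(F)$ is an effective epimorphism because $u_s$ preserves coproducts and effective epimorphisms, and each $u_s(p_i)$ lies in $(n{-}1)$-$\bQ$ by $(\mathrm c_{n-1})$. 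It remains to see that the diagonal $u_s(F)\to u_s(F)\times u_s(F)$ is $(n{-}1)$-representable; granting the descent statement below, this is checked after base change along the atlas $\{\rL_\cD h_{u(U_i)}\}$ of $u_s(F)$, where — $u_s$ being compatible with the relevant fiber products — it reduces to the $(n{-}1)$-geometricity of $u_s\bigl(\rL_\cC h_{U_i}\times_F\rL_\cC h_{U_j}\bigr)$, which holds by $(\mathrm a_{n-1})$ since $\rL_\cC h_{U_i}\times_F\rL_\cC h_{U_j}$ is $(n{-}1)$-geometric (because $p_i$ lies in $(n{-}1)$-$\bP$). The statements $(\mathrm b_n)$ and $(\mathrm c_n)$ are handled the same way: $n$-representability and membership in $n$-$\bQ$ of a morphism with geometric target are tested along an atlas of that target (using \cref{prop:basic_properties} for stability under base change), and one concludes using $(\mathrm a_n)$, $(\mathrm c_{n-1})$ and the fourth property of $u_s$.

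The main obstacle is exactly the passages "along an atlas of the target" used above. The notions of $n$-geometric stack and of $n$-representable (or $n$-$\bQ$) morphism over $\cD$ are defined by base change against \emph{all} representable stacks of $\cD$, and since we do \emph{not} assume the cofinality condition~(v) of \cref{lem:equivalence_of_topoi}, the functor $u$ need not have dense image in $\cD$, so these conditions cannot be recovered merely by transporting the corresponding data over $\cC$. What lets the induction close is the fact that $n$-geometricity of a stack, and $n$-representability (resp.\ membership in $n$-$\bQ$) of a morphism into a geometric stack, may be verified $\sigma$-locally on the target, hence on any atlas of it — in particular on $\{\rL_\cD h_{u(U_i)}\}$, whose charts all come from $\cC$. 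I would establish this locality/descent statement as a preliminary lemma, by a parallel induction on the geometric level, in the spirit of \cite[§1.3.3]{HAG-I} but adapted to the present hypotheses where $\cC$ and $\cD$ are only assumed to admit pullbacks rather than arbitrary finite limits; this is where the substance of the argument lies.
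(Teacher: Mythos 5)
Your overall strategy coincides with the paper's: the printed proof records exactly your four preservation properties of $u_s$ (representables via \cref{lem:u_s_preserves_representables}, colimits since $u_s$ is a left adjoint, effective epimorphisms and pullbacks since $u_s$ is left exact) and then delegates the induction on $n$ to the argument of Toën's Proposition 2.8, which is essentially the induction on $(\mathrm a_n)$, $(\mathrm b_n)$, $(\mathrm c_n)$ that you spell out. You have also put your finger on the right difficulty --- the quantification over \emph{all} representables of $\cD$, not only those in the image of $u$ --- which the paper passes over in silence.

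However, the ``preliminary locality lemma'' you defer to cannot be established at the bottom of the induction, and this is a genuine gap rather than a routine adaptation of \cite[\S 1.3.3]{HAG-I}. For $m\ge 0$ the properties ``$m$-geometric'' and ``$m$-representable'' are indeed $\sigma$-local on the target, because an $m$-atlas can be assembled from locally given ones, and membership in $\bQ$ is local on the source by axiom (iv) of \cref{def:context}. But your step for $n=0$ needs the statements at level $m=-1$: the diagonal of $u_s(F)$ must be $(-1)$-representable and the atlas maps $h_{u(U_i)}\to u_s(F)$ must lie in $(-1)$-$\bQ$, i.e.\ for an \emph{arbitrary} representable $Y\in\cD$ mapping to $u_s(F)$ the fiber product $h_{u(U_i)}\times_{u_s(F)}h_Y$ must itself be representable. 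Your localization argument only produces a $\sigma$-covering $\{Y_k\to Y\}$ over which this fiber product is representable, and \cref{def:context} contains no axiom asserting that representability descends along $\sigma$-coverings; the authors deliberately omitted such an axiom because it fails for the category of Stein spaces. So ``tested along an atlas'' is not available at level $-1$ in this generality. To close the argument one must either add a descent hypothesis, or verify the level-$(-1)$ assertions by hand in each geometric context of \cref{sec:examples} (for instance, that a sheaf on $\Stn_\C$ which is locally the analytification of an affine scheme over a Stein base is again representable by a Stein space). The paper's own proof, by deferring to Toën, does not address this point either.
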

\begin{proof}
Since $u_s$ is a left adjoint, it commutes with colimits, in particular, with disjoint unions.
It is moreover left exact, so it preserves effective epimorphisms.
By \cref{lem:u_s_preserves_representables}, it sends $(-1)$-geometric stacks with respect to $(\cC,\tau,\bP)$ to $(-1)$-geometric stacks with respect to $(\cD,\sigma,\bQ)$.
Now the same arguments in \cite[Proposition 2.8]{Toen_Algebrisation_2008} imply the statement for every $n\ge -1$.
\end{proof}

\begin{prop}\label{prop:u^s_preserves_geometric_stacks}
Let $(\cC, \tau, \bP)$, $(\cD, \sigma, \bQ)$ be two geometric contexts.
Let $u\colon \cC\to\cD$ be a functor satisfying the assumptions of \cref{prop:equivalence_of_topoi}.
Assume that there exists a non-negative integer $m$ such that for any $X\in\cD$, the sheaf $(u^s)^\wedge(X)$ is an $(m\!-\!1)$-geometric stack with respect to the context $(\mathcal{C},\tau,\bP)$.
Then the functor $(u^s)^\wedge \colon\Sh(\cD,\sigma)^\wedge\to\Sh(\cC,\tau)^\wedge$ sends $n$-geometric stacks with respect to $(\cD,\sigma,\bQ)$ to $(n\!+\!m)$-geometric stacks with respect to $(\cC,\tau,\bP)$ for every $n\ge -1$.
\end{prop}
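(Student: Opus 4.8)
The plan is to argue by induction on $n\ge -1$, proving simultaneously that $u^s$ sends $n$-geometric stacks over $(\cD,\sigma,\bQ)$ to $(n+m)$-geometric stacks over $(\cC,\tau,\bP)$, $n$-representable morphisms to $(n+m)$-representable morphisms, and $n$-$\bQ$-morphisms to $(n+m)$-$\bP$-morphisms. The standing input is \cref{lem:equivalence_of_topoi}: under the hypotheses $u^s$ is an equivalence of \infcats, so it preserves all limits and colimits, in particular coproducts, fibre products and effective epimorphisms, while its inverse $u_s$ takes the representable sheaf $\rL_\cC h_U$ to the representable sheaf $\rL_\cD h_{u(U)}$ (\cref{lem:u_s_preserves_representables}). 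The recurring reduction is that, for a representable $Y=\rL_\cC h_{Y'}$ of $\cC$ and any morphism $Y\to u^s(G)$, the adjunction and the left-exactness of $u^s$ identify $u^s(F)\times_{u^s(G)}Y$ with $u^s(F\times_G u_s Y)$, where $u_s Y$ is again representable in $\cD$; so every computation over $\cC$ against a representable test object is the $u^s$-image of the same computation over $\cD$. We shall also use that a $k$-$\bP$-morphism between two representables is automatically in $\bP$ (apply the definition with the target itself as test object and the trivial atlas).

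In the base case $n=-1$, the geometricity assertion is the hypothesis. If $f\colon F\to G$ is $(-1)$-representable, then for representable $Y$ over $\cC$ the sheaf $u^s(f)\times_{u^s(G)}Y\simeq u^s(F\times_G u_s Y)$ is the $u^s$-image of a representable sheaf of $\cD$, hence $(m-1)$-geometric, so $u^s(f)$ is $(m-1)$-representable. If $f$ is moreover in $(-1)$-$\bQ$, then $F\times_G u_s Y\to u_s Y$ lies in $\bQ$, hence $F\times_G u_s Y$ is a representable $h_Z$ with $Z\to u(Y')$ in $\bQ$, and one must show that the $(m-1)$-geometric stack $u^s(Z)$ admits an $(m-1)$-atlas whose structure maps, composed with the canonical map $u^s(Z)\to u^s(u(Y'))\simeq Y$, lie in $\bP$. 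This is the delicate point: one covers $Z$ by objects $u(A_l)$ using assumption (\ref{item:cofinality}) of \cref{lem:equivalence_of_topoi}, passes to a $\bQ$-subcovering by axiom (\ref{context:tau_in_P}) of $(\cD,\sigma,\bQ)$, transports the covering through $u^s$ and through the full faithfulness of $u$, refines it to a bona fide $(m-1)$-atlas of $u^s(Z)$, and then uses axiom (\ref{context:P_tau_local}) of $(\cC,\tau,\bP)$ together with the compatibility of $u$ with the distinguished classes to conclude that the composite structure maps land in $\bP$; this gives $(m-1)$-$\bP$ for $u^s(f)$.

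For the inductive step, assume the three assertions at all levels below $n$ and let $F$ be $n$-geometric with $n$-atlas $\{X_i\}$, the maps $h_{X_i}\to F$ being $(n-1)$-$\bQ$ and $\coprod_i h_{X_i}\to F$ an effective epimorphism. Applying $u^s$ keeps $\coprod_i u^s(X_i)\to u^s(F)$ an effective epimorphism; each $u^s(X_i)$ is $(m-1)$-geometric, so it has an $(m-1)$-atlas $\{W_{ik}\}$ with $W_{ik}\to u^s(X_i)$ in $(m-2)$-$\bP$, and the $W_{ik}$ together map onto $u^s(F)$ by an effective epimorphism. By the inductive hypothesis applied to the $(n-1)$-$\bQ$-morphism $h_{X_i}\to F$, the morphism $u^s(X_i)\to u^s(F)$ is in $(n+m-1)$-$\bP$; promoting $W_{ik}\to u^s(X_i)$ to level $n+m-1$ and composing (\cref{prop:basic_properties}), each $W_{ik}\to u^s(F)$ is in $(n+m-1)$-$\bP$, so $\{W_{ik}\}$ is an $(n+m)$-atlas of $u^s(F)$. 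The diagonal $u^s(F)\to u^s(F)\times u^s(F)\simeq u^s(F\times F)$ is the $u^s$-image of the $(n-1)$-representable diagonal of $F$, hence $(n+m-1)$-representable by induction; therefore $u^s(F)$ is $(n+m)$-geometric. The assertions for $n$-representable and $n$-$\bQ$-morphisms at level $n$ then follow from this, together with the base-case $\bP$-atlas construction, by pulling back along representables of $\cC$ via the recurring reduction.

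The main obstacle is not the inductive step — which is a careful but mechanical manipulation of atlases and of \cref{prop:basic_properties} — but the base case for the $\bQ$-to-$\bP$ assertion: showing that $u^s$ actually transports the distinguished class $\bQ$ to the distinguished class $\bP$ at the level of geometric stacks. That is where the full faithfulness of $u$, its cocontinuity, the covering hypothesis (\ref{item:cofinality}), and the locality axioms of both geometric contexts are all genuinely used, and it is the reason the three assertions must be carried through the induction together.
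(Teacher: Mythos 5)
Your overall strategy is the same as the paper's: induct on $n$, use that $u^s$ is an equivalence (hence preserves coproducts, pullbacks against $u_s$ of representables, and effective epimorphisms), take an $(n{+}1)$-atlas $\{U_i\}$ of $X$, refine each $u^s(U_i)$ by an $(m{-}1)$-atlas, and compose. Where you differ is that you run a finer simultaneous induction that also tracks $n$-representable and $n$-$\bQ$ morphisms, and you correctly isolate the one step that is not mechanical: the composite maps $W_{ik}\to u^s(F)$ must be shown to lie in $(n{+}m)$-$\bP$, not merely to be $(n{+}m)$-representable, since \cref{def:geometric_stack} requires atlas maps to be $\bP$-morphisms. (The paper's own proof in fact elides exactly this point: it verifies $(n{+}m)$-representability of the maps $V_{ij}\to u^s(X)$ and then declares them an atlas.)

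However, your resolution of that step is where the gap lies. In the base case you need: for $Z\to u(Y')$ in $\bQ$, the $(m{-}1)$-geometric stack $u^s(h_Z)$ admits an $(m{-}1)$-atlas whose structure maps, composed with $u^s(h_Z)\to h_{Y'}$, lie in $\bP$. Your sketch closes by invoking ``the compatibility of $u$ with the distinguished classes'', but no such compatibility is among the hypotheses: the proposition assumes only the conditions of \cref{lem:equivalence_of_topoi} (which say nothing about $\bP$ or $\bQ$) together with the $(m{-}1)$-geometricity of each $u^s(X)$. Axioms (iii) and (iv) of \cref{def:context} relate each class to its own topology, not $\bQ$ to $\bP$, and they cannot bridge the two: take $\cC=\cD=\Aff$, $\tau=\sigma=\tauet$, $u=\id$, $\bQ$ the smooth morphisms and $\bP$ the étale morphisms; all hypotheses hold with $m=0$, yet $B\mathbb{G}_m$ is $1$-geometric for $\bQ$ and not geometric at all for $\bP$. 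So the step cannot be completed from the stated hypotheses; some assumption tying $\bQ$ to $\bP$ (for instance, that every $\bQ$-morphism $Z\to u(Y')$ in $\cD$ is refined by a family $\{u(A_l)\to Z\}$ with each $A_l\to Y'$ in $\bP$ --- which does hold in all of the paper's applications, where $\bP$ and $\bQ$ are the ``same'' class on the two categories) must be added and is used at exactly this point. To be fair, this is a defect of the statement and of the paper's own proof as much as of yours; but as written, your base case for the $\bQ$-to-$\bP$ assertion is not proved.
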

\begin{proof}
We prove by induction on $n$.
The statement for $n=-1$ is the assumption.
Assume that the statement holds for $n$-geometric stacks with respect to $(\mathcal{D},\sigma,\bQ)$.
Let $X$ be an $(n+1)$-geometric stack with respect to $(\mathcal{D},\sigma,\bQ)$.
By the induction hypothesis, the diagonal morphism of $(u^s)^\wedge(X)$ is $(n+m)$-representable.
Now let $\{U_i\}_{i\in I}$ be an $(n+1)$-atlas of $X$.
For every $i\in I$, let $\{V_{ij}\}_{j\in J_i}$ be an $(m-1)$-atlas of $(u^s)^\wedge(U_i)$ with respect to $(\mathcal{C},\tau,\bP)$.
By the induction hypothesis, the morphisms $(u^s)^\wedge(U_i\to X)$ are $(n+m)$-representable with respect to $(\mathcal{C},\tau,\bP)$.
By assumption, the morphisms $V_{ij}\to (u^s)^\wedge(U_i)$ are $(m-1)$-representable with respect to $(\mathcal{C},\tau,\bP)$.
So the morphisms $V_{ij}\to (u^s)^\wedge(X)$ are $(n+m)$-representable with respect to $(\mathcal{C},\tau,\bP)$ by composition.
Therefore, $\{V_{ij}\}_{i\in I, j\in J_i}$ constitutes an $(n+m+1)$-atlas of $(u^s)^\wedge(X)$ with respect to $(\mathcal{C},\tau,\bP)$.
In other words, the stack $X$ is $(n+m+1)$-geometric with respect to $(\mathcal{C},\tau,\bP)$.
\end{proof}

\begin{cor} \label{cor:equivalence_of_geometric_stacks}
Let $u \colon (\cC, \tau, \bP) \to (\cD, \sigma, \bQ)$ be a morphism of geometric contexts satisfying the assumptions of \cref{prop:u^s_preserves_geometric_stacks}.
Then the functors
\[u_s^\wedge \colon\Sh(\cC,\tau)^\wedge \leftrightarrows\Sh(\cD,\sigma)^\wedge \colon (u^s)^\wedge\]
form an equivalence of $\infty$-categories which preserves the subcategories of geometric stacks.
\end{cor}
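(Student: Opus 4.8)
The plan is to read off the corollary from \cref{lem:equivalence_of_topoi}, \cref{lem:u_s_preserves_geometric_stacks} and \cref{prop:u^s_preserves_geometric_stacks}; the only work is to check that all the hypotheses are in force. First I would note that, since $u$ is assumed to satisfy the hypotheses of \cref{prop:u^s_preserves_geometric_stacks}, it in particular satisfies those of \cref{lem:equivalence_of_topoi}: it commutes with pullbacks, it is continuous and cocontinuous, it is fully faithful, and every object of $\cD$ is covered in the topology $\sigma$ by objects in the image of $u$. Therefore \cref{lem:equivalence_of_topoi} yields that $u_s \colon \Sh(\cC,\tau) \to \Sh(\cD,\sigma)$ and $u^s \colon \Sh(\cD,\sigma) \to \Sh(\cC,\tau)$ are mutually inverse equivalences of \infcats, which is the equivalence asserted in the statement.

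Next I would verify that this equivalence carries geometric stacks to geometric stacks in both directions. Since $u$ is a morphism of geometric contexts and $\cC$, $\cD$ admit pullbacks which $u$ preserves, \cref{lem:u_s_preserves_geometric_stacks} applies and shows that $u_s$ sends $n$-geometric stacks with respect to $(\cC,\tau,\bP)$ to $n$-geometric stacks with respect to $(\cD,\sigma,\bQ)$ for every $n \ge -1$, hence geometric stacks to geometric stacks. Conversely, \cref{prop:u^s_preserves_geometric_stacks}, whose hypotheses we are assuming verbatim, shows that $u^s$ sends $n$-geometric stacks to $(n+m)$-geometric stacks, hence again geometric stacks to geometric stacks. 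Because $u_s$ and $u^s$ are mutually inverse and the subcategories of geometric stacks are full, their restrictions exhibit an equivalence between the full subcategory of geometric stacks in $\Sh(\cC,\tau)$ and that in $\Sh(\cD,\sigma)$.

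The hard part, such as it is, is purely organizational: all the substance already lives in the three cited results, and the only point deserving a moment's attention is confirming that the phrase ``morphism of geometric contexts satisfying the assumptions of \cref{prop:u^s_preserves_geometric_stacks}'' already encodes everything needed to invoke both \cref{lem:equivalence_of_topoi} and \cref{lem:u_s_preserves_geometric_stacks}.
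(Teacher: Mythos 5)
Your proposal is correct and follows exactly the paper's own argument: the paper's proof is a one-line citation of \cref{lem:equivalence_of_topoi}, \cref{lem:u_s_preserves_geometric_stacks} and \cref{prop:u^s_preserves_geometric_stacks}, and you have merely made explicit the (correct) verification that the hypotheses of \cref{prop:u^s_preserves_geometric_stacks} subsume those of the other two results.
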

\begin{proof}
It follows from \cref{prop:equivalence_of_topoi}, \cref{lem:u_s_preserves_geometric_stacks} and \cref{prop:u^s_preserves_geometric_stacks}.
\end{proof}

\section{Examples of higher geometric stacks} \label{sec:examples}

In this section, we define higher geometric stacks in four concrete geometrical settings,
namely, in algebraic geometry, in complex analytic geometry, in non-archimedean analytic geometry (or rigid analytic geometry), and in a relative algebraic setting.

\subsection{Higher algebraic stacks} \label{sec:algebraic_stacks}

Let $(\mathrm{Aff},\tauet)$ denote the category of affine schemes endowed with the étale topology.
The étale topology $\tauet$ consists of coverings of the form $\{U_i\to X\}$ where every morphism $U_i\to X$ is étale and $\coprod U_i\to X$ is surjective.
Let $\bPsm$ denote the class of smooth morphisms.
The triple $(\mathrm{Aff},\tauet,\bPsm)$ is a geometric context in the sense of \cref{def:context}.
Geometric stacks with respect to the context $(\mathrm{Aff},\tauet,\bPsm)$ are called \emph{higher algebraic stacks}.
We will simply say \emph{algebraic stacks} afterwards.

We note that in the usual treatment of stacks \cite{Deligne_Irreducibility_1969,Artin_Versal_1974,Laumon_Champs_2000,stacks-project},
one uses the category of all schemes instead of just affine schemes.
Let $(\mathrm{Sch}, \tauet)$ denote the category of schemes endowed with the étale topology.
If we consider geometric stacks with respect to the context $(\mathrm{Sch},\tauet,\bPsm)$,
\cref{cor:equivalence_of_geometric_stacks} ensures that we obtain an equivalent definition of algebraic stacks.

\subsection{Higher complex analytic stacks}\label{sec:canal_stacks}

Let $(\Stn_\C, \tauan)$ denote the category of Stein complex analytic spaces endowed with the analytic topology.
The analytic topology $\tauan$ on $\Stn_\C$ consists of coverings of the form $\{U_i\to X\}$ where every $U_i\to X$ is an open immersion and $\coprod U_i\to X$ is surjective.
Let $\bPsm$ denote the class of smooth morphisms.
The triple $(\Stn_\C, \tauan, \bPsm)$ is a geometric context in the sense of \cref{def:context}.
Geometric stacks with respect to the context $(\Stn_\C, \tauan,\bPsm)$ are called \emph{higher \canal stacks}.
We will simply say \emph{\canal stacks}.

Besides the analytic topology $\tauan$, we can consider the étale topology $\tauet$ on $\Stn_\C$.
It consists of coverings of the form $\{U_i\to X\}$ where every $U_i\to X$ is a local biholomorphism and $\coprod U_i\to X$ is surjective.
If we consider geometric stacks with respect to the context $(\Stn_\C,\tauet,\bPsm)$, \cref{cor:equivalence_of_geometric_stacks} ensures that we obtain an equivalent definition of \canal stacks.

Moreover, we can consider the whole category $\An_\C$ of complex analytic spaces.
Similarly, we have geometric contexts $(\An_\C,\tauan,\bPsm)$ and $(\An_\C,\tauet,\bPsm)$.
By \cref{cor:equivalence_of_geometric_stacks}, they all give rise to the same notion of \canal stacks.

\subsection{Higher non-archimedean analytic stacks}\label{sec:kanal_stacks}

Let $k$ be a non-archimedean field with non-trivial valuation.
Let $(\Afd_k, \tauqet)$ denote the category of strictly $k$-affinoid spaces (\cite{Berkovich_Spectral_1990,Berkovich_Etale_1993}) endowed with the quasi-étale topology.
In the affinoid case, the quasi-étale topology consists of coverings of the form $\{U_i\to X\}_{i\in I}$ where every $U_i\to X$ is quasi-étale, $\coprod U_i\to X$ is surjective and $I$ is finite.
We refer to \cite[§3]{Berkovich_Vanishing_1994} for the definition of quasi-étale topology in the general case.
Let $\bPqsm$ denote the class of quasi-smooth morphisms\footnote{We refer to Antoine Ducros' work \cite{Ducros_Families_2011} for the notion of quasi-smooth morphism in non-archimedean analytic geometry.
	In the literature of derived geometry, local complete intersection morphisms are sometimes called quasi-smooth morphisms, which is a coincidence of terminology.
	In order to avoid confusion, we will use the terminology quasi-smooth only in the context of non-archimedean analytic geometry.}.
The triple $(\Afd_k, \tauqet,\bPqsm)$ is a geometric context in the sense of \cref{def:context}.
Geometric stacks with respect to the context $(\Afd_k, \tauqet,\bPqsm)$ are called \emph{higher \kanal stacks}.
We will simply say \emph{\kanal stacks}.

Let $(\An_k, \tauqet)$ denote the category of strictly $k$-analytic spaces endowed with the quasi-étale topology.
\cref{cor:equivalence_of_geometric_stacks} ensures that the triple $(\An_k,\tauqet,\bPqsm)$ induces an equivalent definition of \kanal stacks.

\begin{rem}
The notion of strictly $k$-analytic 1-stacks was defined and applied to study a non-archimedean analog of Gromov's compactness theorem by Tony Yue Yu in \cite{Yu_Gromov_2014}.
\end{rem}

\subsection{Relative higher algebraic stacks} \label{sec:relative_algebraic}

It is useful to consider the following relatively algebraic setting.

Let $A$ be either a Stein algebra (i.e.\ the algebra of functions on a Stein complex analytic space), 
or a $k$-affinoid algebra.
Let $(\Afflfp_A, \tauet)$ denote the category of affine schemes locally finitely presented over $\Spec(A)$ endowed with the étale topology.
Let $\bPsm$ denote the class of smooth morphisms.
The triple $(\Afflfp_A, \tauet, \bPsm)$ is a geometric context in the sense of \cref{def:context}.
Geometric stacks with respect to this context are called \emph{higher algebraic stacks relative to $A$}.
We will simply say \emph{algebraic stacks relative to $A$}.


\section{Proper morphisms of analytic stacks} \label{sec:proper_morphisms}

In this section, we introduce the notion of weakly proper pairs of analytic stacks.
Weakly proper pairs are then used to define proper morphisms of analytic stacks.

We use the geometric context $(\mathrm{Aff},\tauet,\bPsm)$ for algebraic stacks, $(\Stn_\C, \tauan, \bPsm)$ for \canal stacks and $(\Afd_k, \tauqet,\bPqsm)$ for \kanal stacks introduced in \cref{sec:examples}.
We simply say \emph{analytic stacks} whenever a statement applies to both \canal stacks and \kanal stacks.

\begin{defin} \label{def:canal_relatively_compact}
Let $U\to V$ be a morphism of representable \canal stacks over a representable \canal stack $S$.
We say that $U$ is \emph{relatively compact} in $V$ over $S$, and denote $U\Subset_S V$,  if
\begin{enumerate}[(i)]
\item $U\to V$ is an open immersion;
\item the closure of $U$ in $V$ is proper over $S$.
\end{enumerate}
\end{defin}

\begin{defin} \label{def:kanal_relatively_compact}
Let $U\to V$ be a morphism of representable \kanal stacks over a representable \kanal stack $S$.
We say that $U$ is \emph{relatively compact} in $V$ over $S$, and denote $U\Subset_S V$,  if
\begin{enumerate}[(i)]
\item $U\to V$ is an embedding of an affinoid domain;
\item $U$ is contained in the relative interior of $V$ with respect to $S$, i.e.\ $U\subset\Int(V/S)$.
\end{enumerate}
\end{defin}

\begin{defin}\label{def:canal_weakly_proper_pair}
A morphism $X\to Y$ of \canal stacks over a representable \canal stack $S$ is said to define a \emph{weakly proper pair} if
\begin{enumerate}[(i)]
\item it is representable by open immersions;
\item there exists a finite atlas $\{V_i\}_{i\in I}$ of $Y$, an open subset $U_i$ of $V_i\times_Y X$ for every $i\in I$, such that the composition $U_i\to V_i\times_Y X\to V_i$ satisfies $U_i\Subset_S V_i$, and that $\{U_i\}_{i\in I}$ is an atlas of $X$.
\end{enumerate}
\end{defin}

\begin{defin}\label{def:kanal_weakly_proper_pair}
A morphism $X\to Y$ of \kanal stacks over a representable \kanal stack $S$ is said to define a \emph{weakly proper pair} if
\begin{enumerate}[(i)]
\item it is representable by embeddings of affinoid domain;
\item there exists a finite atlas $\{V_i\}_{i\in I}$ of $Y$, an affinoid domain $U_i$ of $V_i\times_Y X$ for every $i\in I$, such that the composition $U_i\to V_i\times_Y X\to V_i$ satisfies $U_i\Subset_S V_i$, and that $\{U_i\}_{i\in I}$ is an atlas of $X$.
\end{enumerate}
\end{defin}

\begin{defin}\label{def:analytic_weakly_proper}
A morphism $f\colon X\to Y$ of analytic stacks is said to be \emph{weakly proper} if there exists an atlas $\{Y_i\}_{i\in I}$ of $Y$ such that the identity $X\times_Y Y_i\to X\times_Y Y_i$ defines a weakly proper pair over $Y_i$ for every $i\in I$.
\end{defin}

\begin{defin}
A morphism $X\to Y$ of algebraic stacks (or analytic stacks) is said to be \emph{surjective} if there exists an atlas $\{Y_i\}_{i\in I}$ of $Y$, an atlas $\{U_{ij}\}_{j\in J_i}$ of $X\times_Y Y_i$ for every $i\in I$, such that the induced morphism $\coprod_{j\in J_i} U_{ij}\to Y_i$ is surjective.
\end{defin}

\begin{defin}\label{def:algebraic_weakly_proper}
A morphism $f\colon X\to Y$ of algebraic stacks is said to be \emph{weakly proper} if there exists an atlas $\{Y_i\}_{i\in I}$ of $Y$ such that for every $i\in I$,
there exists a scheme $P_i$ proper over $Y_i$ and a surjective $Y_i$-morphism from $P_i$ to $X\times_Y Y_i$.
\end{defin}

\begin{defin}
We define by induction on $n\geq 0$.
\begin{enumerate}[(i)]
\item An $n$-representable morphism of analytic stacks (or algebraic stacks) is said to be \emph{separated} if its diagonal being an $(n\!-\!1)$-representable morphism is proper.
\item An $n$-representable morphism of analytic stacks (or algebraic stacks) is said to be \emph{proper} if it is separated and weakly proper.
\end{enumerate}
\end{defin}

\begin{lem} \label{lemma wpp representable case}
Let $X,Y$ be representable analytic stacks over a representable analytic stack $S$.
An $S$-morphism $f \colon X \to Y$ defines a weakly proper pair over $S$ if and only if $X$ is relatively compact in $Y$ over $S$.
\end{lem}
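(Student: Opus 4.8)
The plan is to unwind \cref{def:canal_weakly_proper_pair}/\cref{def:kanal_weakly_proper_pair} against \cref{def:canal_relatively_compact}/\cref{def:kanal_relatively_compact}, using crucially that $X,Y,S$ are honest analytic spaces. The ``if'' direction is immediate: if $X\Subset_S Y$, take the one-element atlas $\{\id\colon Y\to Y\}$ of the representable stack $Y$ and set $U_1\coloneqq X$ inside $V_1\times_Y X=X$; then the composite $U_1\to V_1\times_Y X\to V_1$ is $f$ itself, so $U_1\Subset_S V_1$ holds by hypothesis, $\{\id\colon X\to X\}$ is an atlas of $X$, and $f$ is an open immersion (resp.\ an embedding of an affinoid domain) by condition~(i) of $X\Subset_S Y$, hence representable by such. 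So $f$ defines a weakly proper pair over $S$.

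For the ``only if'' direction, suppose $f$ defines a weakly proper pair over $S$ with data $\{V_i\}_{i\in I}$, $\{U_i\}_{i\in I}$, and write $g_i\colon V_i\to Y$ for the atlas morphisms, which are (quasi-)smooth and jointly surjective since $Y$ is representable. Testing condition~(i) against the one-element atlas $\{\id\colon Y\to Y\}$ shows $f$ is an open immersion (resp.\ an embedding of an affinoid domain), which is condition~(i) of $X\Subset_S Y$. For condition~(ii), observe that the composite $U_i\to V_i\times_Y X\to V_i\xrightarrow{g_i}Y$ agrees with $U_i\to V_i\times_Y X\to X\xrightarrow{f}Y$, and that $\coprod_i U_i\to X$ is surjective because $\{U_i\}$ is an atlas; hence $\bigcup_i g_i(U_i)$ is the image of $X$ in $Y$. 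In the complex analytic case I would let $\overline{U_i}$ be the closure of $U_i$ in $V_i$, proper over $S$ by hypothesis; since $Y\to S$ is separated, the $S$-morphism $\overline{U_i}\hookrightarrow V_i\xrightarrow{g_i}Y$ is proper, with closed image $W_i\subseteq Y$ which is again proper over $S$. I would then prove $\overline{X}=\bigcup_{i\in I}W_i$ (closure in $Y$): since $g_i$ is open, $g_i^{-1}(\overline{X})=\overline{g_i^{-1}(X)}$ in $V_i$, so $\overline{U_i}\subseteq g_i^{-1}(\overline{X})$ and $W_i\subseteq\overline{X}$; conversely $\bigcup_i W_i$ is closed and contains $\bigcup_i g_i(U_i)$, the image of $X$, hence contains $\overline{X}$. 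Thus $\overline{X}$ is a finite union of closed analytic subspaces of $Y$ each proper over $S$, so it is proper over $S$, giving condition~(ii) of $X\Subset_S Y$.

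In the non-archimedean case, each $g_i$ is quasi-smooth, hence without relative boundary over $Y$, so that $\Int(V_i/S)=g_i^{-1}(\Int(Y/S))$ by the functoriality of the relative interior and its G-locality on the target (after Temkin). Then $U_i\Subset_S V_i$ gives $U_i\subseteq\Int(V_i/S)=g_i^{-1}(\Int(Y/S))$, hence $g_i(U_i)\subseteq\Int(Y/S)$; taking the union over $i\in I$ shows the image of $X$ in $Y$ lies in $\Int(Y/S)$, which is condition~(ii) of $X\Subset_S Y$. I expect the main obstacle to be the ``only if'' direction: in the complex analytic case, the topological identity $\overline{X}=\bigcup_i W_i$ --- reassembling the closures of the $U_i$ inside the $V_i$ into the closure of $X$ inside $Y$ across the atlas maps, which are only smooth and not open immersions --- together with the stability of ``proper over $S$'' under finite unions of closed subspaces; in the non-archimedean case, the behaviour of the relative interior under quasi-smooth base change and its G-locality, which rests on Temkin's analysis of the relative boundary.
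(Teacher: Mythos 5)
Your proof is correct and follows essentially the same route as the paper's: the ``if'' direction via the trivial atlas, the \canal ``only if'' by identifying $\overline X$ with the image of $\coprod_i \overline{U_i}$ (the paper phrases this as $v(\overline U)=\overline X$ and argues the reverse inclusion by a point-set contradiction using compactness, whereas you argue via properness of $\overline{U_i}\to Y$ and closedness of the finite union --- your version is if anything slightly cleaner, since it works directly with properness over $S$ rather than compactness), and the \kanal ``only if'' via the transitivity formula for relative interiors.

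One assertion in your \kanal argument is false, though harmlessly so: quasi-smooth morphisms are \emph{not} boundaryless over the target (embeddings of affinoid domains are quasi-étale, hence quasi-smooth, and have nonempty relative boundary in general; boundarylessness is precisely what separates smooth from quasi-smooth). So the equality $\Int(V_i/S)=g_i^{-1}(\Int(Y/S))$ you claim need not hold. What saves the argument is that you only use the inclusion $\Int(V_i/S)\subseteq g_i^{-1}(\Int(Y/S))$, and that inclusion holds unconditionally by \cite[Proposition 2.5.8(iii)]{Berkovich_Spectral_1990}, which gives $\Int(V_i/S)=\Int(V_i/Y)\cap g_i^{-1}(\Int(Y/S))$ with no hypothesis on $g_i$; this is exactly how the paper argues. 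Replace your equality by this one-sided inclusion and the step is fully justified.
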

\begin{proof}
The ``if'' part is obvious.
Let us prove the ``only if'' part.

In the \canal case, by definition $f \colon X \to Y$ is an open immersion.
Let $\{U_i\}_{i\in I}$, $\{V_i\}_{i\in I}$ be the atlases in \cref{def:canal_weakly_proper_pair}.
Denote by $u_i \colon U_i \to X$ and by $v_i \colon V_i \to Y$ the given maps.
Let $u\colon\coprod U_i\to X$ and $v\colon\coprod V_i\to Y$.
We claim that $v(\widebar{U}) = \widebar{X}$. Clearly, $v(\widebar{U}) \subset \widebar{v(U)} = \widebar{X}$.
Fix now $y \in \widebar{X}$ and assume by contradiction that $y \not \in v(\widebar{U})$.
Since $\widebar{U}$ is compact, the same goes for $v(\widebar{U})$.
In particular, this is a closed subset of $Y$.
We can therefore find a neighborhood $W$ of $y$ satisfying $W \cap v(\widebar{U}) = \emptyset$.
However, we can also choose $x \in W \cap X$ and $p \in U$ such that $v(p) = u(p) = x$.
Since $p \in U \subset \widebar{U}$, this provides a contradiction.

In the \kanal case, by definition, $X$ is an affinoid domain in $Y$.
Let $\{U_i\}_{i\in I}$, $\{V_i\}_{i\in I}$ be the atlases in \cref{def:kanal_weakly_proper_pair}.
Let $x$ be a point in $X$.
Choose a point $\tilde x$ in $U_i$ for some $i\in I$ such that $\tilde x$ maps to $x$.
By definition, $\tilde x\in\Int(V_i/S)$.
Let $\psi$ denote the map from $V_i$ to $Y$.
By \cite[Proposition 2.5.8(iii)]{Berkovich_Spectral_1990}, we have
\[\Int(V_i/S) = \Int(V_i/Y)\cap\psi\inv(\Int(Y/S)).\]
Therefore, $\tilde x\in\psi\inv(\Int(Y/S))$.
So $x=\psi(\tilde x)\in\Int(Y/S)$ for any $x\in X$.
In other words, we have proved that $X\Subset_S Y$.
\end{proof}

\begin{notation}
From now on, we are allowed to use the same symbol $X\Subset_S Y$ to denote a weakly proper pair of analytic stacks $X$, $Y$ over a representable analytic stack $S$.
We will simply write $X \Subset Y$ in the absolute case.
\end{notation}

\begin{lem} \label{lem:wpp_base_change}
Let $X,Y,S$ be analytic stacks.
Assume that $S$ is representable and that $X\Subset_S Y$.
Then for any representable stack $T$ and any morphism $T\to S$,
we have $X \times_S T \Subset_T Y \times_S T$.
\end{lem}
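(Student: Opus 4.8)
The plan is to unwind the definition of a weakly proper pair, base-change the chosen atlases along $T\to S$, and thereby reduce to the case of representable stacks, which is then settled by the standard base-change behaviour of properness (in the \canal setting) and of the relative interior (in the \kanal setting).

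First I would choose, as in \cref{def:canal_weakly_proper_pair} (resp.\ \cref{def:kanal_weakly_proper_pair}), a finite atlas $\{V_i\}_{i\in I}$ of $Y$ together with, for each $i$, an open subspace (resp.\ an affinoid domain) $U_i$ of $V_i\times_Y X$ satisfying $U_i\Subset_S V_i$ in the representable sense of \cref{def:canal_relatively_compact} (resp.\ \cref{def:kanal_relatively_compact}), with $\{U_i\}_{i\in I}$ an atlas of $X$. Since $\cC$ admits fibre products, each $V_i\times_S T$ is representable; since effective epimorphisms and $\bP$-morphisms are stable under pullback (\cref{prop:basic_properties}), the family $\{V_i\times_S T\}_{i\in I}$ is a finite atlas of $Y\times_S T$ and $\{U_i\times_S T\}_{i\in I}$ is an atlas of $X\times_S T$. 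Moreover $U_i\times_S T$ is an open subspace (resp.\ affinoid domain) of $(V_i\times_Y X)\times_S T\simeq(V_i\times_S T)\times_{Y\times_S T}(X\times_S T)$, and $X\times_S T\to Y\times_S T$, being the base change of $X\to Y$, is still representable by open immersions (resp.\ by embeddings of affinoid domains), cf.\ \cref{rem:property_of_morphisms}. By \cref{def:canal_weakly_proper_pair} (resp.\ \cref{def:kanal_weakly_proper_pair}) it therefore suffices to verify $U_i\times_S T\Subset_T V_i\times_S T$ for each $i$, so from now on I may assume that $X$ and $Y$ are representable.

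In the \canal case $X\hookrightarrow Y$ is then an open immersion whose closure $\overline X$ in $Y$ is proper over $S$. The base change $X\times_S T\hookrightarrow Y\times_S T$ is again an open immersion, and if $p\colon Y\times_S T\to Y$ denotes the projection, then $p\inv(\overline X)=\overline X\times_S T$ is a closed subset of $Y\times_S T$ containing $X\times_S T$, so the closure of $X\times_S T$ in $Y\times_S T$ lies inside $\overline X\times_S T$. Since properness is stable under base change, $\overline X\times_S T$ is proper over $T$, and a closed subset of a space proper over $T$ is proper over $T$; hence the closure of $X\times_S T$ is proper over $T$ and $X\times_S T\Subset_T Y\times_S T$. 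In the \kanal case $X\hookrightarrow Y$ is an embedding of an affinoid domain with $X\subset\Int(Y/S)$; the base change $X\times_S T\hookrightarrow Y\times_S T$ is again an embedding of an affinoid domain, and with $p\colon Y\times_S T\to Y$ the projection we have $X\times_S T=p\inv(X)$. Using the compatibility of the relative interior with base change (\cite[§2.5]{Berkovich_Spectral_1990}, in the same spirit as the transitivity formula \cite[Proposition 2.5.8]{Berkovich_Spectral_1990} used above), one gets $p\inv(\Int(Y/S))\subset\Int\big((Y\times_S T)/T\big)$, so that $X\times_S T=p\inv(X)\subset p\inv(\Int(Y/S))\subset\Int\big((Y\times_S T)/T\big)$, i.e.\ $X\times_S T\Subset_T Y\times_S T$.

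The only non-formal ingredients are the two base-change statements invoked in the last paragraph — stability of properness under base change for complex analytic spaces, and compatibility of the Berkovich relative interior with base change — and I expect the \kanal one to be the main point where a precise reference must be located; everything else is bookkeeping with atlases and the stability properties already recorded in \cref{prop:basic_properties} and \cref{rem:property_of_morphisms}.
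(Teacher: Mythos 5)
Your proof is correct, and its skeleton --- reduce to the case where $X$ and $Y$ are representable by base-changing a finite double atlas along $T\to S$, then treat the two geometries separately --- is exactly the paper's; the paper dispatches that reduction in one sentence, so your bookkeeping is just a more explicit version of it. The differences lie in how the representable case is settled. In the \canal case the paper does not invoke stability of properness under base change as a black box: it verifies directly that $(g')\inv(K)\cap\overline{T\times_S X}$ is compact for every compact $K\subset T$, by writing $(g')\inv(K)=K\times_S g\inv(f(K))$ and using the separatedness of $S$ to see that $K\times_S\bigl(g\inv(f(K))\cap\overline X\bigr)$ is closed in the ordinary product of two compacta. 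Your packaging (topological properness is stable under base change, and a closed subspace of a space proper over $T$ is proper over $T$) is the same argument in disguise, and is legitimate because the underlying topological space of an analytic fibre product is the topological fibre product. In the \kanal case the routes genuinely diverge: the paper unwinds $U\Subset_S V$ into a Kiehl-style presentation $U\hookrightarrow\bD^n_S(\epsilon)$, $V\hookrightarrow\bD^n_S(1)$ with the latter a closed immersion, and observes that this whole picture base-changes along $T\to S$; you instead quote the compatibility of the relative interior with base change, $p\inv(\Int(Y/S))\subset\Int\bigl((Y\times_S T)/T\bigr)$, which is indeed one of the assertions of \cite[Proposition 2.5.8]{Berkovich_Spectral_1990} (the paper only ever cites part (iii), the transitivity formula, so you are right that the precise sub-item needs to be pinned down). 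Your route is shorter and stays entirely within Berkovich's formalism; the paper's polydisc presentation has the advantage that it is exactly the device reused later, e.g.\ in the \kanal half of \cref{lem:product_of_wpp_representable}, which is presumably why the authors set it up here.
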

\begin{proof}
We can assume without loss of generality that $X$ and $Y$ are representable as well.
First we prove the \canal case.
Denote by $f \colon T \to S$ and $g \colon Y \to S$ the given morphisms.
Consider the pullback diagram
\[
\begin{tikzcd}
Y \times_S T \arrow{d}{g'} \arrow{r} & Y \arrow{d}{g} \\
T \arrow{r} & S.
\end{tikzcd}
\]
If $K$ is a subset of $T$, then
\[
(g')^{-1}(K) = K \times_S g^{-1}(f(K)).
\]
If $K$ is a compact subset of $T$, we have
\begin{align*}
(g')^{-1}(K) \cap \big( T \times_S \widebar{X} \big) & = \big(K \times_S g^{-1}(f(K))\big) \cap \big(T \times_S \widebar{X} \big) \\
& = K \times_S \big(g^{-1}(f(K)) \cap \widebar{X} \big).
\end{align*}
Since $f(K)$ is compact, by hypothesis we see that $g^{-1}(f(K)) \cap \widebar{X}$ is compact.
Since $S$ is separated, the natural map
\[
K \times_S (g^{-1}(f(K)) \cap \widebar{X}) \to K \times (g^{-1}(f(K)) \cap \widebar{X})
\]
is a closed immersion.
We conclude that $K \times_S (g^{-1}(f(K)) \cap \widebar{X})$ is compact as well.
Observe finally that $\widebar{T \times_S X} \subset T \times_S \widebar{X}$, so that
\[
(g')^{-1}(K) \cap \widebar{T \times_S X} = (g')^{-1}(K) \cap (T \times_S \widebar{X}) \cap \widebar{T \times_S X}
\]
is closed in a compact, hence it is compact itself, completing the proof in the \canal case.

Now we prove the \kanal case.
The assumption $X\Subset_S Y$ implies that there is a positive real number $\epsilon<1$, a positive integer $n$ and a commutative diagram
\[\begin{tikzcd}
X \arrow[hook]{d} \arrow[hook]{r} & \bD^n_S(\epsilon) \arrow[hook]{d} \\
Y \arrow[hook, closed]{r} & \bD^n_S(1),
\end{tikzcd}\]
where the bottom line denotes a closed immersion.
Taking fiber product with $T$ over $S$, we obtain
\[\begin{tikzcd}
X\times_S T \arrow[hook]{d} \arrow[hook]{r} & \bD^n_T(\epsilon) \arrow[hook]{d} \\
Y\times_S T \arrow[hook, closed]{r} & \bD^n_T(1),
\end{tikzcd}\]
where the bottom line denotes a closed immersion.
So we have proved the lemma in the \kanal case.
\end{proof}

\begin{prop} \label{prop:base_change_separatedness_properness}
\begin{enumerate}[(i)]
\item Separated maps are stable under base change.
\item Proper maps are stable under base change.
\end{enumerate}
\end{prop}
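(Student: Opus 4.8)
The plan is to prove both parts together by induction on the level $n$ of $n$-representability. The base case concerns $(-1)$-representable morphisms: here separatedness and properness are, via the mechanism of \cref{rem:property_of_morphisms}, tested after pulling back to a representable target, where they reduce to the classical notions for schemes, for complex analytic spaces, and for Berkovich spaces; in all three settings proper and separated morphisms are well known to be stable under base change, which settles the base case. For the inductive step I will use three facts: that the class of $n$-representable morphisms is stable under base change (\cref{prop:basic_properties}), that the diagonal of a base change is a base change of the diagonal, and that weak properness is stable under base change; the last of these is level-independent, so the induction closes cleanly.

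First I would treat part (1) at level $n$, assuming part (2) at level $n-1$. Let $f\colon X\to Y$ be an $n$-representable separated morphism and $g\colon Y'\to Y$ arbitrary; set $X'\coloneqq X\times_Y Y'$ and $f'\colon X'\to Y'$, which is again $n$-representable by \cref{prop:basic_properties}. There is a canonical equivalence $X'\times_{Y'}X'\simeq(X\times_Y X)\times_Y Y'$ under which the diagonal $\Delta_{f'}\colon X'\to X'\times_{Y'}X'$ becomes the pullback of $\Delta_f\colon X\to X\times_Y X$ along the projection $(X\times_Y X)\times_Y Y'\to X\times_Y X$. Since $\Delta_f$ is a proper $(n\!-\!1)$-representable morphism, the inductive hypothesis for part (2) at level $n-1$ shows $\Delta_{f'}$ is proper, hence $f'$ is separated.

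Next I would treat part (2) at level $n$. Since properness is by definition separatedness together with weak properness, and part (1) at level $n$ has just been established, it remains to show weak properness is stable under base change. Keep $f\colon X\to Y$, $g\colon Y'\to Y$, $X'=X\times_Y Y'$. In the analytic case, pick an atlas $\{Y_i\}_{i\in I}$ of $Y$ witnessing weak properness of $f$ as in \cref{def:analytic_weakly_proper}; the pullbacks $Y_i\times_Y Y'$ cover $Y'$, so I can choose an atlas $\{Y'_j\}_{j\in J}$ of $Y'$ such that each $Y'_j$ maps over $Y$ to some $Y_{i(j)}$. Then $X\times_Y Y'_j\simeq(X\times_Y Y_{i(j)})\times_{Y_{i(j)}}Y'_j$, and applying \cref{lem:wpp_base_change} to the weakly proper pair $X\times_Y Y_{i(j)}\Subset_{Y_{i(j)}}X\times_Y Y_{i(j)}$ along the morphism $Y'_j\to Y_{i(j)}$ (a morphism with representable source, as \cref{lem:wpp_base_change} requires) shows that the identity $X\times_Y Y'_j\to X\times_Y Y'_j$ defines a weakly proper pair over $Y'_j$; thus $\{Y'_j\}$ witnesses weak properness of $f'$. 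In the algebraic case one argues in the same way, choosing $\{Y'_j\}$ as above and replacing the appeal to \cref{lem:wpp_base_change} by the classical stability under base change of proper schemes and of proper surjective morphisms: then $P_{i(j)}\times_{Y_{i(j)}}Y'_j$ is a scheme proper over $Y'_j$ mapping properly and surjectively onto $X\times_Y Y'_j$, as demanded by \cref{def:algebraic_weakly_proper}.

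The main obstacle I anticipate is bookkeeping rather than anything conceptual: one has to make the identification of $\Delta_{f'}$ with a base change of $\Delta_f$ precise at the level of sheaves (formal, since the relevant finite limits exist in $\Sh(\cC,\tau)$), and, more delicately, in the weak-properness step one must refine the pullback of the chosen atlas of $Y$ by an honest atlas $\{Y'_j\}$ of $Y'$ consisting of representables --- it is exactly the representability of the $Y'_j$ that makes \cref{lem:wpp_base_change} applicable. One should also double-check that the two reductions respect the level count, namely that part (1) at level $n$ invokes only part (2) at level $n-1$ while part (2) at level $n$ invokes only part (1) at level $n$ together with the level-independent base-change stability of weak properness, so that the induction indeed closes.
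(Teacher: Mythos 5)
Your proposal is correct and is essentially the paper's own argument, which the authors compress into one line: induction on the geometric level, with \cref{lem:wpp_base_change} supplying the base-change stability of weakly proper pairs and the identification of $\Delta_{f'}$ with a pullback of $\Delta_f$ closing the loop between the two parts. Your write-up merely makes explicit the bookkeeping (the refinement of the pulled-back atlas by representables, and the level count (1)$_n \Leftarrow$ (2)$_{n-1}$, (2)$_n \Leftarrow$ (1)$_n$) that the paper leaves implicit.
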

\begin{proof}
Using \cref{lem:wpp_base_change}, the proposition follows from induction on the geometric level of the stacks.
\end{proof}

\begin{lem} \label{lem:shrink_wpp}
Let $S, U, V, W, Y$ be representable analytic stacks such that
$W \Subset_S Y$ and $U \Subset_Y V$.
Then we have $U \times_Y W \Subset_S V$.
\end{lem}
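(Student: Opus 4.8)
The plan is to check, directly and in parallel for the \canal and \kanal cases, the two conditions of Definitions \ref{def:canal_relatively_compact} and \ref{def:kanal_relatively_compact} defining $U\times_Y W\Subset_S V$, reducing everything to formal manipulations with the structure map $\pi\colon V\to Y$ (here $U$ is over $Y$ via $U\to V\xrightarrow{\pi}Y$, and $W$ over $Y$ as in $W\Subset_S Y$, so $U\times_Y W$ makes sense and maps to $V$ via the first projection followed by $U\to V$). Throughout, I would use the identification of $U\times_Y W$ with the subset $U\cap\pi\inv(W)$ of $V$: the projection $U\times_Y W\to U$ is the base change of $W\to Y$ along $U\to V\xrightarrow{\pi}Y$, so it is an open immersion in the \canal case and an embedding of an affinoid domain in the \kanal case; composing with $U\to V$, which has the same property by $U\Subset_Y V$, and using stability of these classes of maps under composition, one gets condition (i) for $U\times_Y W\Subset_S V$.

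For condition (ii) in the \canal case, the key point is to locate the closure $\overline{U\times_Y W}$ of $U\times_Y W$ in $V$. Since $U\times_Y W\subset U$ we have $\overline{U\times_Y W}\subset\overline U$, and since $U\times_Y W\subset\pi\inv(W)\subset\pi\inv(\overline W)$ and the latter is closed in $V$ we also have $\overline{U\times_Y W}\subset\pi\inv(\overline W)$. Being a closed subset of $\overline U$, which is proper over $Y$ by $U\Subset_Y V$, the space $\overline{U\times_Y W}$ is itself proper over $Y$; and because its image under $\pi$ lies in $\overline W$, the resulting map $\overline{U\times_Y W}\to\overline W$ is proper. Composing with $\overline W\to S$, which is proper by $W\Subset_S Y$, shows that $\overline{U\times_Y W}$ is proper over $S$, which is condition (ii).

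For condition (ii) in the \kanal case, I would invoke \cite[Proposition 2.5.8(iii)]{Berkovich_Spectral_1990}, which gives $\Int(V/S)=\Int(V/Y)\cap\pi\inv(\Int(Y/S))$. For a point $x\in U\times_Y W\subset V$, the inclusion $U\subset\Int(V/Y)$ coming from $U\Subset_Y V$ gives $x\in\Int(V/Y)$, while $\pi(x)\in W\subset\Int(Y/S)$ coming from $W\Subset_S Y$ gives $x\in\pi\inv(\Int(Y/S))$; hence $x\in\Int(V/S)$, so $U\times_Y W\subset\Int(V/S)$, which is condition (ii).

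I do not expect a genuine obstacle: the argument is bookkeeping with the maps $U\to Y$, $V\to Y$, $W\to Y$ and the induced maps to $S$, the only inputs being stability of open immersions and of affinoid domain embeddings under base change and composition, Berkovich's formula for the relative interior, and — in the complex case — the elementary topological facts that a closed subspace of a space proper over a base is again proper over that base and that properness is preserved under composition.
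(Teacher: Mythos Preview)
Your proposal is correct. The \kanal case is essentially verbatim the paper's argument: identify $U\times_Y W$ with $U\cap\pi^{-1}(W)$ and apply \cite[Proposition 2.5.8(iii)]{Berkovich_Spectral_1990}.

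In the \canal case your argument differs from the paper's in presentation. The paper verifies the definition of properness directly at the level of topological spaces: for a compact $K\subset S$ it rewrites
\[
(f\circ g)^{-1}(K)\cap\overline{U\times_Y W}
=\bigl(g^{-1}(f^{-1}(K)\cap\overline W)\cap\overline U\bigr)\cap\overline{U\times_Y W}
\]
(where $f\colon Y\to S$, $g\colon V\to Y$), and then uses $W\Subset_S Y$ to get $f^{-1}(K)\cap\overline W$ compact, followed by $U\Subset_Y V$ to get $g^{-1}(\cdot)\cap\overline U$ compact. You instead package the same two inputs as abstract properties of proper morphisms: closedness in $\overline U$ plus properness of $\overline U\to Y$ gives properness of $\overline{U\times_Y W}\to Y$; the inclusion of the image in $\overline W$ plus separatedness of $\overline W\hookrightarrow Y$ lets you factor to a proper map $\overline{U\times_Y W}\to\overline W$; and then you compose with the proper map $\overline W\to S$. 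Your route is cleaner and avoids the explicit set-theoretic manipulation; the paper's route makes the role of the compactness hypotheses more visible. They are of course the same argument unwound.
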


\begin{proof}
We first prove the \canal case.
Denote by $f \colon Y \to S$ and by $g \colon V \to Y$ the given morphisms.
We have open immersions $W \times_Y U \to U$ and $W \times_Y U \to V$.
We observe that the closure of $W \times_Y U$ in $V$ coincides with the closure of $W \times_Y U$ in $\widebar{U}$,
which we denote by $\widebar{W \times_Y U}$.

We claim that $\widebar{W \times_Y U} \subset g^{-1}(\widebar{W})$.
Let $p \in \widebar{W \times_Y U}$. If $\Omega'$ is an open neighborhood of $g(p)$ in $Y$, we can find an open neighborhood $\Omega$ of $p$ in $V$ such that $g(\Omega) \subset \Omega'$.
By hypothesis we can find $q \in \Omega \cap (W \times_Y U)$.
We observe that $g(q) \in W$.
It follows that $g(p) \in \widebar{W}$, completing the proof of the claim.

Let $K$ be a compact subset of $S$.
We have
\begin{align*}
g^{-1}(f^{-1}(K)) \cap \widebar{W \times_Y U} & = g^{-1}(f^{-1}(K)) \cap g^{-1}(\widebar{W}) \cap \widebar{W \times_Y U} \\
& = g^{-1}(f^{-1}(K) \cap \widebar{W}) \cap \widebar{W \times_Y U} \\
& = (g^{-1}(f^{-1}(K) \cap \widebar{W}) \cap \widebar{U}) \cap \widebar{W \times_Y U}.
\end{align*}
We remark that $f^{-1}(K) \cap \widebar{W}$ is a compact subset of $Y$ by hypothesis.
Therefore $K' \coloneqq g^{-1}(f^{-1}(K) \cap \widebar{W}) \cap \widebar{U}$ is a compact subset of $V$. Since $\widebar{W \times_Y U}$ is a closed subset of $V$, we see that $K' \cap \widebar{W \times_Y U}$ is a closed subset of a compact and hence it is itself compact, completing the proof in the \canal case.

Now we turn to the \kanal case.
By the assumptions, we have $W\subset\Int(Y/S)$ and $U\subset\Int(V/Y)$.
Let $\psi$ denote the map from $V$ to $Y$.
We have
\[U\times_Y W \simeq U\cap\psi\inv(W)\subset \Int(V/Y)\cap\psi\inv(\Int(Y/S))=\Int(V/S),\]
the last equality being \cite[Proposition 2.5.8(iii)]{Berkovich_Spectral_1990}.
So we have proved the lemma.
\end{proof}

\begin{lem} \label{lem:variant_wpp_stability}
Let $U,V,W,W',Y$ be representable analytic stacks over a representable analytic stack $S$.
Assume that
$W \Subset_S W' \subset Y$ and $U \Subset_Y V$.
Then we have $U \times_Y W\Subset_S V \times_Y W'$.
\end{lem}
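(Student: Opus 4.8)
The plan is to deduce this formally from \cref{lem:wpp_base_change} and \cref{lem:shrink_wpp}, avoiding any separate treatment of the \canal and \kanal cases and any topological arguments. Throughout, one first observes that since $U,V,W,W',Y$ are all representable, every fiber product that appears below is again representable, so that all the invoked lemmas apply.

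The first step is a base change. Applying \cref{lem:wpp_base_change} to the weakly proper pair $U\Subset_Y V$ over the representable stack $Y$ and to the morphism $W'\to Y$ (recall $W'\subset Y$), one obtains
\[ U\times_Y W' \Subset_{W'} V\times_Y W'. \]
The second step is to feed this, together with the given datum $W\Subset_S W'$, into \cref{lem:shrink_wpp}, applied with the base stack $S$, with $W'$ playing the role of the intermediate stack ``$Y$'' there, with $W$ in the role of ``$W$'', and with $U\times_Y W'$ and $V\times_Y W'$ in the roles of ``$U$'' and ``$V$''. The two hypotheses of that lemma are precisely $W\Subset_S W'$ and $U\times_Y W'\Subset_{W'} V\times_Y W'$, both now available, so it yields
\[ (U\times_Y W')\times_{W'} W \Subset_S V\times_Y W'. \]

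The final step is the identification of the source: by the pasting law for pullbacks, since the structure map $W\to Y$ factors as $W\to W'\subset Y$, there is a canonical equivalence $(U\times_Y W')\times_{W'} W \simeq U\times_Y W$. Substituting this into the displayed relation gives $U\times_Y W\Subset_S V\times_Y W'$, which is the assertion of the lemma. I do not anticipate any real obstacle; the only points requiring attention are the bookkeeping of which stack serves as the base in each application of the two preceding lemmas, and the (purely formal) fiber-product identification in the last step.
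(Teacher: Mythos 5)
Your proposal is correct and is essentially identical to the paper's own proof: both first apply \cref{lem:wpp_base_change} to obtain $U\times_Y W'\Subset_{W'} V\times_Y W'$ and then feed this together with $W\Subset_S W'$ into \cref{lem:shrink_wpp}, concluding via the identification $(U\times_Y W')\times_{W'} W \simeq U\times_Y W$. No changes needed.
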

\begin{proof}
By \cref{lem:wpp_base_change}, the assumption that $U\Subset_Y V$ implies that
\[
U \times_{Y} W' \Subset_{W'} V \times_{Y} W'.
\]
Then \cref{lem:shrink_wpp} shows that
\[
U \times_Y W = (U \times_Y W') \times_{W'} W \Subset_S V \times_Y W'.
\]
\end{proof}

\begin{lem}\label{lem:product_of_wpp_representable}
Let $S$ be a representable analytic stack and consider a commutative diagram of representable analytic stacks over $S$
\[
\begin{tikzcd}
U \arrow{d} \arrow{r} &  X \arrow{d} & U' \arrow{d} \arrow{l} \\
V \arrow{r} & Y & V' \arrow{l}.
\end{tikzcd}
\]
Assume that $U \Subset_S V$, $U' \Subset_S V'$ and that $Y$ is separated over $S$.
Moreover, assume that $X\to Y$ is an open immersion in the \canal case and an affinoid domain in the \kanal case.
Then we have $U \times_X U' \Subset_S V \times_Y V'$.
\end{lem}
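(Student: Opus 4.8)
The plan is to reduce the fiber product over $X$ to one over $Y$ and then to verify the two defining conditions of $\Subset_S$ separately in the \canal and \kanal cases, the crucial extra ingredient in both being the separatedness of $Y$ over $S$. First I would observe that $X\to Y$ is a monomorphism --- an open immersion in the \canal case, an embedding of an affinoid domain in the \kanal case --- so the canonical map $U\times_X U'\to U\times_Y U'$ is an isomorphism; hence it suffices to prove $U\times_Y U'\Subset_S V\times_Y V'$, where $U\to V$ and $U'\to V'$ are morphisms over $Y$ by the commutativity of the diagram. Since $Y$ is separated over $S$, the diagonal $Y\to Y\times_S Y$ is a closed immersion, and therefore so is each of its base changes; in particular $V\times_Y V'\hookrightarrow V\times_S V'$, and in the \canal case also $\overline U\times_Y\overline{U'}\hookrightarrow\overline U\times_S\overline{U'}$, are closed immersions.

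In the \canal case, $U\times_Y U'\hookrightarrow V\times_Y V'$ is an open immersion, being the intersection $(U\times_Y V')\cap(V\times_Y U')$ inside $V\times_Y V'$. For the properness condition, $\overline U\times_Y\overline{U'}$ is a closed subset of $V\times_Y V'$ containing $U\times_Y U'$, so $\overline{U\times_Y U'}\subset\overline U\times_Y\overline{U'}$; now $\overline U\times_S\overline{U'}$ is proper over $S$ (a product of proper morphisms), so its closed subspace $\overline U\times_Y\overline{U'}$ is proper over $S$, and hence the closed subset $\overline{U\times_Y U'}$ of it is proper over $S$. This is condition (ii) of \cref{def:canal_relatively_compact}.

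In the \kanal case, the representable stacks involved are affinoid, so $V\times_Y V'$ is affinoid and $U\times_Y U'=(U\times_Y V')\cap(V\times_Y U')$ is an affinoid domain in it (affinoid domains being stable under base change and intersection), which is condition (i) of \cref{def:kanal_relatively_compact}. For the interior condition I would argue as in the proof of \cref{lem:shrink_wpp}: since $\Int(\,\cdot\,/S)$ is compatible with base change and closed immersions are boundaryless, we have $\Int((V\times_Y V')/S)=(V\times_Y V')\cap\Int((V\times_S V')/S)=(V\times_Y V')\cap\big(\Int(V/S)\times_S\Int(V'/S)\big)$; and since $U\times_Y U'\subset U\times_S U'\subset\Int(V/S)\times_S\Int(V'/S)$ while also $U\times_Y U'\subset V\times_Y V'$, we conclude $U\times_Y U'\subset\Int((V\times_Y V')/S)$. (Alternatively one may simply pull back over $Y$ the polydisc presentations of $U\Subset_S V$ and $U'\Subset_S V'$ used in the proof of \cref{lem:wpp_base_change}.)

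The step I expect to demand the most care is the bookkeeping around the separatedness hypothesis: checking that the relevant base change of the diagonal $Y\to Y\times_S Y$ is genuinely a closed immersion in each geometry, and that being proper over $S$ (\canal), respectively lying in the relative interior over $S$ (\kanal), is inherited by the resulting closed subspace of $V\times_S V'$. The remainder is a routine combination of the stability properties of open immersions, affinoid domains and proper morphisms with \cref{lem:wpp_base_change} and \cref{lem:shrink_wpp}.
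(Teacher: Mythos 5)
Your proof is correct, and in the \canal half it is essentially the paper's own argument: both reduce $U\times_X U'$ to $U\times_Y U'$ via the monomorphism $X\to Y$, use the separatedness of $Y$ over $S$ to realize $V\times_Y V'$ (and the product of closures) as a closed subspace of the fibre product over $S$, and then conclude by the same compactness computation, the paper just carrying it out by hand on preimages of compacta where you invoke that a fibre product over the Hausdorff space $S$ of maps proper over $S$ is proper over $S$. The genuine divergence is in the \kanal half: the paper pulls back the polydisc presentations $U\hookrightarrow\bD^n_S(\epsilon)$, $V\hookrightarrow\bD^n_S(1)$ (the device already used in \cref{lem:wpp_base_change}), takes their fibre product over $S$, and composes with the closed immersion $V\times_Y V'\hookrightarrow V\times_S V'$; you instead work directly with Berkovich's relative interior, combining the composition formula $\Int(Z/S)=\Int(Z/Z')\cap\psi^{-1}(\Int(Z'/S))$ with the boundarylessness of closed immersions and the compatibility of $\Int(-/S)$ with base change --- the same toolkit the paper itself uses in \cref{lem:shrink_wpp}. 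Both routes are valid; yours avoids choosing presentations at the cost of invoking the base-change property of the interior, and the only imprecision is that your middle identity $\Int((V\times_S V')/S)=\Int(V/S)\times_S\Int(V'/S)$ is in general only a containment $\supseteq$, which is all your argument actually needs.
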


\begin{proof}
Consider the following diagram
\[
\begin{tikzcd}
U \times_{Y} U' \arrow{d} \arrow{r} & V \times_{Y} V' \arrow{d} \arrow{r} & Y \arrow{d}{\Delta_{Y/S}} \\
U \times_S U' \arrow{r} & V \times_S V' \arrow{r} & Y \times_S Y.
\end{tikzcd}
\]
The right and the outer squares are pullbacks.
It follows that the left square is a pullback as well.
By hypothesis the diagonal morphism $\Delta_{Y/S}\colon Y\to Y\times_S Y$ is a closed immersion.
Therefore, both morphisms $U\times_Y U'\to U\times_S U'$ and $V\times_Y V'\to V\times_S V'$ are closed immersions.

Let us first consider the \canal case.
The assumption that $X\to Y$ is an open immersion implies that it is a monomorphism.
So the map $U \times_X U' \to U \times_Y U'$ is an isomorphism.
Therefore the map $U \times_X U' \to V \times_Y V'$ is an open immersion.
Since the morphisms $U\times_Y U'\to U\times_S U'$ and $V\times_Y V'\to V\times_S V'$ are closed immersions, we have
\[
\widebar{U \times_Y U'} = (V \times_Y V') \cap \widebar{U \times_S U'} = (V \times_Y V') \cap (\widebar{U} \times_S \widebar{U'}).
\]
Let $q \colon V \times_S V' \to S$ denote the natural map and $p \colon V \times_Y V' \to S$ its restriction.
For every subset $K \subset S$ we have
\[
p^{-1}(K) = (V \times_Y V') \cap q^{-1}(K).
\]
Therefore
\[
p^{-1}(K) \cap \widebar{U \times_Y U'} = q^{-1}(K) \cap (\widebar{U} \times_S \widebar{U'}) \cap (V \times_Y V').
\]
Since $V \times_Y V'$ is closed, it suffices show that $q^{-1}(K) \cap (\widebar{U} \times_S \widebar{U'})$ is compact whenever $K$ is compact.
Let $f \colon V \to S$ and $g \colon V' \to S$ denote the given maps, we have $q^{-1}(K) = f^{-1}(K) \times_S g^{-1}(K)$ and therefore
\[
q^{-1}(K) \cap (\widebar{U} \times_S \widebar{U'}) = (f^{-1}(K) \cap \widebar{U}) \times_S (g^{-1}(K) \cap \widebar{U'}),
\]
which is compact (because $S$ is Hausdorff).
So we have proved the lemma in the \canal case.

Now let us turn to the \kanal case.
The assumption that $X\to Y$ is an affinoid domain implies that it is a monomorphism.
So the map $U \times_X U' \to U \times_Y U'$ is an isomorphism.
Therefore the map $U \times_X U' \to V \times_Y V'$ is an embedding of an affinoid domain.

By the assumptions, there exists a positive real number $\epsilon<1$, positive integers $n$, $n'$, and commutative diagrams
\begin{equation*}
\begin{tikzcd}
U \arrow[hook]{d} \arrow[hook]{r} & \bD^n_S(\epsilon) \arrow[hook]{d} \\
V \arrow[hook, closed]{r} & \bD^n_S(1),
\end{tikzcd}
\hspace{50pt}
\begin{tikzcd}
U' \arrow[hook]{d} \arrow[hook]{r} & \bD^{n'}_S(\epsilon) \arrow[hook]{d} \\
V' \arrow[hook, closed]{r} & \bD^{n'}_S(1),
\end{tikzcd}
\end{equation*}
where $\bD^n_S(\epsilon)$ denotes the $n$-dimensional closed polydisc with radius $\epsilon$, similar for the others,
and the two arrows on the bottom denote closed immersions.

Taking fiber product of the two commutative diagrams above over $S$, we obtain
\begin{equation}\label{eq:product_of_wpp_representable}
\begin{tikzcd}
U\times_S U' \arrow[hook]{d} \arrow[hook]{r} & \bD^{n+n'}_S(\epsilon) \arrow[hook]{d} \\
V\times_S V' \arrow[hook, closed]{r} & \bD^{n+n'}_S(1),
\end{tikzcd}
\end{equation}
where the bottom line denotes a closed immersion.
So we have proved that $U\times_S U'\Subset_S V\times_S V'$.

Combining the closed immersions $U\times_Y U'\to U\times_S U'$ and $V\times_Y V'\to V\times_S V'$ with \cref{eq:product_of_wpp_representable}, we obtain
\[\begin{tikzcd}
U\times_Y U' \arrow[hook]{d} \arrow[hook, closed]{r} & U\times_S U' \arrow[hook]{d} \arrow[hook]{r} & \bD^{n+n'}_S(\epsilon) \arrow[hook]{d} \\
V\times_Y V' \arrow[hook, closed]{r} & V\times_S V' \arrow[hook, closed]{r} & \bD^{n+n'}_S(1).
\end{tikzcd}\]
Since the composition of the bottom line is a closed immersion, we have proved that $U\times_X U'\simeq U\times_Y U'\Subset_S V\times_Y V'$, completing the proof.
\end{proof}

\begin{prop} \label{prop:product_of_wpp}
Let $S$ be a representable analytic stack and let $X$ be an analytic stack separated over $S$.
Let $U,U',V,V'$ be representable analytic stacks over $X$, such that $U \Subset_S V$ and $U' \Subset_S V'$.
Then we have $U \times_X U' \Subset_S V \times_X V'$.
\end{prop}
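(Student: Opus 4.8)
The plan is to reduce everything to the representable case already treated in \cref{lem:product_of_wpp_representable}. The first step is to note that, since $X$ is separated over $S$, the diagonal $\Delta_{X/S}\colon X\to X\times_S X$ is a closed immersion (a proper monomorphism is a closed immersion). Using the identity $U\times_X U'\simeq(U\times_S U')\times_{X\times_S X}X$ and the analogous one for $V\times_X V'$, base change along $\Delta_{X/S}$ yields closed immersions
\[ U\times_X U'\hookrightarrow U\times_S U',\qquad V\times_X V'\hookrightarrow V\times_S V'. \]
Fiber products of representable analytic stacks over a representable base exist (complete tensor products of affinoid algebras in the \kanal case, closed subspaces of products of Stein spaces in the \canal case), so $U\times_S U'$ and $V\times_S V'$ are representable, and hence so are $U\times_X U'$ and $V\times_X V'$. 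By \cref{lemma wpp representable case} it then suffices to verify that $U\times_X U'$ is relatively compact in $V\times_X V'$ over $S$ in the sense of \cref{def:canal_relatively_compact}, resp.\ \cref{def:kanal_relatively_compact}.

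Next, apply \cref{lem:product_of_wpp_representable} with its $X$ and $Y$ both taken to be $S$ and the map $X\to Y$ equal to $\id_S$; the hypotheses hold trivially ($\id_S$ is both an open immersion and an affinoid domain embedding, and $S$ is separated over $S$), so we get $U\times_S U'\Subset_S V\times_S V'$. The square
\[
\begin{tikzcd}
U\times_X U' \arrow{r}\arrow{d} & V\times_X V' \arrow{d} \\
U\times_S U' \arrow{r} & V\times_S V'
\end{tikzcd}
\]
is cartesian, so $U\times_X U'\to V\times_X V'$ is the base change of $U\times_S U'\to V\times_S V'$; since open immersions, resp.\ embeddings of affinoid domains, are stable under base change, this gives condition (i).

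It remains to check condition (ii). In the \canal case, $V\times_X V'$ is closed in $V\times_S V'$, so the closure of $U\times_X U'$ in $V\times_X V'$ equals its closure in $V\times_S V'$, which is a closed subspace of the closure of $U\times_S U'$ in $V\times_S V'$; the latter is proper over $S$ by the previous paragraph, and a closed subspace of a stack proper over $S$ is proper over $S$. In the \kanal case, writing $\phi\colon V\times_X V'\hookrightarrow V\times_S V'$ for the closed immersion, $\phi$ is finite hence boundaryless, so $\Int\big(V\times_X V'/(V\times_S V')\big)=V\times_X V'$; by \cite[Proposition~2.5.8(iii)]{Berkovich_Spectral_1990},
\[ \Int(V\times_X V'/S)=V\times_X V'\cap\phi^{-1}\big(\Int(V\times_S V'/S)\big), \]
and since $\phi$ sends $U\times_X U'$ into $U\times_S U'\subset\Int(V\times_S V'/S)$, we obtain $U\times_X U'\subset\Int(V\times_X V'/S)$.

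The step I expect to be the main obstacle is the first one: establishing representability of $U\times_X U'$ and $V\times_X V'$, which relies on a separated morphism of analytic stacks having closed-immersion diagonal and on this property being preserved under base change along maps from representables. After that, the closure computation in the \canal case and the relative-interior bookkeeping in the \kanal case run exactly as in the proof of \cref{lem:product_of_wpp_representable}.
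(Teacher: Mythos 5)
Your first step contains a genuine gap that the rest of the argument is built on. You claim that, since $X$ is separated over $S$, the diagonal $\Delta_{X/S}\colon X\to X\times_S X$ is a closed immersion because ``a proper monomorphism is a closed immersion.'' But $X$ here is a general (higher) analytic stack, and the diagonal of a non-representable stack is not a monomorphism: by the paper's definition, separatedness of an $n$-representable morphism only says that the diagonal, as an $(n-1)$-representable morphism, is \emph{proper}. (Compare \cref{lem:product_of_wpp_representable}, where the closed-immersion conclusion does hold, but precisely because there $Y$ is representable, so its diagonal is automatically a monomorphism.) Consequently $U\times_X U'$ and $V\times_X V'$ need not be representable, your reduction to \cref{lemma wpp representable case} and to the pointwise definitions of relative compactness does not apply, and the subsequent verification of conditions (i) and (ii) has no meaning as stated. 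Note also that the conclusion $U\times_X U'\Subset_S V\times_X V'$ must be read in the extended sense of the Notation following \cref{lemma wpp representable case}, i.e.\ as a weakly proper pair of (possibly non-representable) stacks, which requires exhibiting a finite double atlas rather than checking closures or relative interiors directly.

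The part of your argument that does survive is the application of \cref{lem:product_of_wpp_representable} to get $U\times_S U'\Subset_S V\times_S V'$, and the observation that $U\times_X U'\to U\times_S U'$ and $V\times_X V'\to V\times_S V'$ are obtained by base change from $\Delta_{X/S}$ --- except that these maps are \emph{proper} morphisms of stacks, not closed immersions. The paper's proof proceeds from exactly this point: since $V\times_X V'$ is proper, in particular weakly proper, over the representable stack $V''\coloneqq V\times_S V'$, one can choose a finite double atlas $\{\Omega_i\Subset_{V''}\Omega_i'\}$ of $V\times_X V'$, and then \cref{lem:shrink_wpp} (applied with $U''\coloneqq U\times_S U'\Subset_S V''$) yields $\Omega_i\times_{V''}U''\Subset_S\Omega_i'$; these families form the required finite double atlas witnessing the weakly proper pair. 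You would need to replace your last two paragraphs with an argument of this atlas-based form.
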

\begin{proof}
As in the beginning of the proof of \cref{lem:product_of_wpp_representable}, we have the following pullback diagram
\[
\begin{tikzcd}
U \times_{X} U' \arrow{d} \arrow{r} & V \times_{X} V' \arrow{d} \arrow{r} & X \arrow{d}{\Delta_{X/S}} \\
U \times_S U' \arrow{r} & V \times_S V' \arrow{r} & X \times_S X.
\end{tikzcd}
\]
Since $X$ is separated over $S$, the diagonal morphism $\Delta_{X / S}$ is proper.
It follows from \cref{prop:base_change_separatedness_properness} that both $V \times_{X} V' \to V \times_S V'$ and $U \times_{X} U' \to U \times_S U'$ are proper morphisms.

Set $U'' \coloneqq U \times_S U'$ and $V'' \coloneqq V \times_S V'$.
\cref{lem:product_of_wpp_representable} implies that $U'' \Subset_S V''$.
Let $\Omega \Subset_{V''} \Omega'$ be a weakly proper pair of representable stacks over $V''$. 
Using \cref{lem:shrink_wpp}, we deduce that $\Omega \times_{V''} U'' \Subset_S \Omega'$.

Let us choose a finite double atlas $\{\Omega_i \Subset_{V''} \Omega_i'\}_{i\in I}$ of $V \times_{X} V'$.
Using \cref{lem:shrink_wpp}, we deduce that $\Omega_i \times_{V''} U'' \Subset_S \Omega'_i$ for every $i\in I$.
Moreover $\{\Omega_i \times_{V''} U''\}_{i\in I}$ gives a finite atlas of $U \times_{X} U'$.
Therefore, $U \times_{X} U' \to V \times_{X} V'$ is a weakly proper pair over $S$.
\end{proof}

\begin{cor}
Let $S$ be a representable analytic stack and consider a commutative diagram of analytic stacks over $S$
\[
\begin{tikzcd}
U \arrow{d} \arrow{r} &  X \arrow{d} & U' \arrow{d} \arrow{l} \\
V \arrow{r} & Y & V' \arrow{l}.
\end{tikzcd}
\]
Assume that $U \Subset_S V$, $U' \Subset_S V'$ and that $Y$ is separated over $S$.
Moreover, assume that $X \to Y$ is representable by open immersions in the \canal case and by embeddings of affinoid domains in the \kanal case.
Then we have $U \times_X U' \Subset_S V \times_Y V'$.
\end{cor}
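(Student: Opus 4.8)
The plan is to deduce this from \cref{prop:product_of_wpp} by passing to atlases of $U$, $U'$, $V$, $V'$. First I would eliminate the stack $X$: since $X\to Y$ is representable by open immersions in the \canal case (resp.\ by embeddings of affinoid domains in the \kanal case), it is a monomorphism, so the canonical morphism $U\times_X U'\to U\times_Y U'$ is an equivalence, and the morphism we must analyse is thereby identified with the canonical morphism $U\times_Y U'\to V\times_Y V'$. This latter morphism is representable by open immersions (resp.\ by embeddings of affinoid domains), being a composite of base changes of $U\to V$ and $U'\to V'$, which have this property because $U\Subset_S V$ and $U'\Subset_S V'$. Hence it remains to produce, for $U\times_Y U'\to V\times_Y V'$, the atlas data demanded by condition~(ii) of \cref{def:canal_weakly_proper_pair} (resp.\ \cref{def:kanal_weakly_proper_pair}).

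Next, using the weakly proper pairs $U\Subset_S V$ and $U'\Subset_S V'$, I would fix finite atlases $\{V_a\}_{a\in A}$ of $V$ and $\{V'_b\}_{b\in B}$ of $V'$ together with open subsets (resp.\ affinoid domains) $U_a\subset V_a\times_V U$ and $U'_b\subset V'_b\times_{V'}U'$ such that $U_a\Subset_S V_a$, $U'_b\Subset_S V'_b$, and $\{U_a\}$, $\{U'_b\}$ are atlases of $U$, $U'$. For each pair $(a,b)$ the stacks $V_a$, $V'_b$, $U_a$, $U'_b$ are representable and lie over $Y$, which is separated over $S$; \cref{prop:product_of_wpp}, applied with $Y$ in the role of its stack $X$, then yields a weakly proper pair $U_a\times_Y U'_b\Subset_S V_a\times_Y V'_b$, and with it a finite atlas $\{W_{abc}\}_c$ of $V_a\times_Y V'_b$, open subsets (resp.\ affinoid domains) $T_{abc}\subset W_{abc}\times_{V_a\times_Y V'_b}(U_a\times_Y U'_b)$ with $T_{abc}\Subset_S W_{abc}$, and an atlas $\{T_{abc}\}_c$ of $U_a\times_Y U'_b$.

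Then I would reassemble. Because $\{V_a\}$ and $\{V'_b\}$ are finite atlases, each $V_a\times_Y V'_b\to V\times_Y V'$ is a composite of base changes of atlas morphisms, hence a $\bP$-morphism, and $\coprod_{a,b}V_a\times_Y V'_b\to V\times_Y V'$ is an effective epimorphism (effective epimorphisms being stable under base change and composition); the analogous assertions hold with $U$ in place of $V$. Composing with the atlases of the previous step (and using \cref{prop:basic_properties}), $\{W_{abc}\}_{a,b,c}$ becomes a finite atlas of $V\times_Y V'$ and $\{T_{abc}\}_{a,b,c}$ an atlas of $U\times_Y U'$. Finally a routine computation of iterated fibre products identifies $(V_a\times_Y V'_b)\times_{V\times_Y V'}(U\times_Y U')\simeq(V_a\times_V U)\times_Y(V'_b\times_{V'}U')$, inside which $U_a\times_Y U'_b$ sits as an open subset (resp.\ affinoid domain); base changing this inclusion along $W_{abc}\to V_a\times_Y V'_b$ realizes $T_{abc}$ as an open subset (resp.\ affinoid domain) of $W_{abc}\times_{V\times_Y V'}(U\times_Y U')$, while $T_{abc}\Subset_S W_{abc}$ is retained. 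These are exactly the data of condition~(ii), so $U\times_Y U'\Subset_S V\times_Y V'$, and hence $U\times_X U'\Subset_S V\times_Y V'$.

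The step I expect to be the main obstacle is the passage to the representable level. Since $U$, $U'$, $V$, $V'$, $X$, $Y$ are arbitrary stacks, the fibre products $V_a\times_Y V'_b$ and $U_a\times_Y U'_b$ that appear after choosing atlases need not be representable, so — unlike in the proof of \cref{prop:product_of_wpp} — they cannot be used directly as atlas elements; one must first invoke \cref{prop:product_of_wpp} on each pair $(a,b)$ to equip them with genuine representable atlases, and then carefully track iterated fibre products in order to glue these local atlases into atlases of $V\times_Y V'$ and $U\times_Y U'$. Verifying the inclusion $T_{abc}\subset W_{abc}\times_{V\times_Y V'}(U\times_Y U')$ via the identification of iterated fibre products above is the one genuinely delicate point.
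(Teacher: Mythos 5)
Your proposal is correct and follows essentially the same route as the paper: reduce to the case $X=Y$ via the monomorphism property, choose finite double atlases from the weakly proper pairs $U\Subset_S V$ and $U'\Subset_S V'$, apply \cref{prop:product_of_wpp} to each pair of representable atlas pieces over $Y$, and glue the resulting finite double atlases into the data required for $U\times_Y U'\Subset_S V\times_Y V'$. Your version is merely more explicit about verifying condition (i) and the iterated fibre-product identifications, which the paper leaves implicit.
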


\begin{proof}
Since $X \to Y$ is in particular representable by monomorphisms, the canonical map
\[
U \times_X U' \to U \times_Y U'
\]
is an isomorphism.
We are therefore reduced to the case where $X \to Y$ is the identity map.
In this case, choose finite atlases $\{U_i\}_{i \in I}$, $\{V_i\}_{i \in I}$ of $U$ and $V$ and $ \{U_j'\}_{j \in J}$, $\{V_j'\}_{j \in J}$ of $U'$ and $V'$ satisfying the relations $U_i \Subset_S V_i$ and $U_j' \Subset V_j'$.
It follows from \cref{prop:product_of_wpp} that
\[
U_i \times_X U_j' \Subset V_i \times_X V_j'.
\]
Let $\{W_{ijk}\}$ and $\{W_{ijk}'\}$ be respectively finite atlases of $U_i \times_X U_j'$ and of $V_i \times_X V_j'$ satisfying $W_{ijk} \Subset_S W_{ijk}'$.
We see that the collection $\{W_{ijk}\}$ forms, as the indices $i$, $j$ and $k$ vary, an atlas for $U \times_X U'$, while $\{W_{ijk}'\}$ forms an atlas for $V_i \times_X V_j'$. This completes the proof.
\end{proof}

\section{Direct images of coherent sheaves} \label{sec:direct_images}

\subsection{Sheaves on geometric stacks} \label{sec:sheaves_on_stacks}

In this section, we study sheaves on geometric stacks and operations on these sheaves.

Let $(\cC,\tau,\bP)$ be a geometric context and let $X$ be a geometric stack with respect to this geometric context.

In order to speak of sheaves on $X$, we need to specify a site associated to $X$ on which the sheaves will live.
Our choice is an analog of the classical \emph{lisse-étale} site \cite{Laumon_Champs_2000,Olsson_Sheaves_2007}.
We remark that the functoriality problem associated with lisse-étale sites disappears in our $\infty$-categorical approach (see Olsson \cite{Olsson_Sheaves_2007} for a description of the problem and a different solution).
Other possible choices are analogs of the big sites as in \cite{stacks-project}.
However, as pointed out in Tag 070A loc.\ cit., the pushforward functor defined via the big sites does not preserve quasi-coherent sheaves.
This drawback would make the theory more complicated.

Let $\CXP$ denote the full subcategory of the overcategory ${\Sh(\cC,\tau)^\wedge}_{/X}$ spanned by $\bP$-morphisms from representable stacks to $X$.
The topology $\tau$ on $\cC$ induces a topology on $\CXP$ such that coverings in $\CX$ are coverings in $\cC$ after forgetting the maps to $X$.
We denote the induced topology again by $\tau$.
So we obtain an \infsite $(\CXP,\tau)$.

For any presentable \infcat $\cT$, we denote $\Sh_\cT(X)\coloneqq\Sh_\cT(\CXP,\tau)$, the \infcat of $\cT$-valued sheaves on $X$.
We denote by $\Sh_\cT(X)^\wedge$ the full subcategory spanned by hypercomplete $\cT$-valued sheaves.

Let $\Ab$ denote the category of abelian groups.
Let $\DAb$ be the unbounded derived \infcat of $\Ab$ (cf.\ \cite[1.3.5.8]{Lurie_Higher_algebra}).
It is a symmetric monoidal \infcat.

The natural t-structure on $\DAb$ induces a t-structure on $\Sh_{\DAb}(X)$ in the following way.
Let $\rH^n\colon\DAb\to\Ab$ denote the $n\textsuperscript{th}$  cohomology functor.
It induces a functor from $\Sh_{\DAb}(X)$ to the category of presheaves of abelian groups on $X$.
By sheafification, we obtain a functor
\[\rH^n\colon\Sh_{\DAb}(X)\to\Sh_\Ab(X).\]
Let $\Sh^{\le 0}_{\DAb} (X)$ (resp.\ $\Sh^{\ge 0}_{\DAb} (X)$) denote the full subcategory of $\Sh_{\DAb}(X)$ spanned by objects $\cF\in\Sh_{\DAb}(X)$ such that $\rH^n(\cF)=0$ for all $n>0$ (resp.\ $n<0$).
We have the following proposition.

\begin{prop}[{\cite[1.7]{DAG-VII}}] \label{prop:t-structure}
The full subcategories $\Sh^{\le 0}_{\DAb} (X)$ and $\Sh^{\ge 0}_{\DAb} (X)$ form a t-structure on $\Sh_{\DAb}(X)$. Moreover, the functor $\mathrm H^0 \colon \Sh_{\DAb}(X) \to \Sh_{\Ab}(X)$ induces an equivalence between the heart of this t-structure and the category $\Sh_{\Ab}(X)$.
\end{prop}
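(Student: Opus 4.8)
The plan is to recognize this as the standard construction of a t-structure on a category of sheaves valued in a presentable stable $\infty$-category equipped with a t-structure, and in fact to deduce it from \cite[1.7]{DAG-VII} applied to $\DAb$; but let me indicate how the argument goes. First I would record the formal framework: $\Sh_\DAb(X) = \Sh_\DAb(\CXP,\tau)$ is presentable and stable, being an accessible left-exact localization (sheafification $L$) of $\PSh_\DAb(\CXP) = \Fun((\CXP)\op,\DAb)$, which is itself presentable and stable because $\DAb$ is. Since $L$ restricts to the exact classical sheafification $\PSh_\Ab(\CXP)\to\Sh_\Ab(X)$ on the abelian level, the cohomology functors $\rH^n$ (objectwise $\rH^n$ followed by $L$) are homological, so every fiber sequence in $\Sh_\DAb(X)$ produces a long exact sequence of cohomology sheaves. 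The other formal input is that, because our sheaves are hypersheaves, $\Sh(X)$ is hypercomplete; I would use this in the form that $\{\rH^n\}_{n\in\bZ}$ is conservative on $\Sh_\DAb(X)$ and that evaluation at objects $U\in\CXP$ carries $\{\cF:\rH^n\cF=0\text{ for }n<0\}$ into $\DAb^{\ge 0}$ (descent spectral sequence $\rH^p(U,\rH^q\cF)\Rightarrow\rH^{p+q}(U,\cF)$).

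Next I would verify the t-structure axioms for the pair in the statement. The shift inclusions $\Sh^{\le 0}_\DAb(X)[1]\subseteq\Sh^{\le 0}_\DAb(X)$ and $\Sh^{\ge 0}_\DAb(X)[-1]\subseteq\Sh^{\ge 0}_\DAb(X)$ follow at once from the long exact sequence. For the truncation triangles, the key observation is that the objectwise connective truncation $\tau^{\le 0}_{\mathrm{obj}}\cF$ is again a sheaf: $\tau^{\le 0}\colon\DAb\to\DAb$ is right adjoint to the inclusion of $\DAb^{\le 0}$, hence preserves limits, and by \cref{prop:T-valued_sheaves} a $\DAb$-valued sheaf is precisely a limit-preserving functor $\Sh(X)\op\to\DAb$, a condition preserved under post-composition with limit-preserving functors. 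One then computes $\rH^n(\tau^{\le 0}_{\mathrm{obj}}\cF)=\rH^n\cF$ for $n\le 0$ and $0$ for $n>0$, so $\tau^{\le 0}_{\mathrm{obj}}\cF\in\Sh^{\le 0}_\DAb(X)$, while the cofiber $\cG$ of $\tau^{\le 0}_{\mathrm{obj}}\cF\to\cF$ has $\rH^n\cG=\rH^n\cF$ for $n\ge 1$ and $0$ otherwise, so $\cG\in\Sh^{\ge 1}_\DAb(X)$; this is the required triangle, and it identifies $\tau^{\le n}=\tau^{\le n}_{\mathrm{obj}}$. For the orthogonality $\Map_{\Sh_\DAb(X)}(\cG,\mathcal H)\simeq\ast$ when $\cG\in\Sh^{\le 0}_\DAb(X)$ and $\mathcal H\in\Sh^{\ge 1}_\DAb(X)$, I would argue that $\mathcal H$ has underlying presheaf objectwise in $\DAb^{\ge 1}$ by the descent spectral sequence, while $\Sh^{\le 0}_\DAb(X)$ is generated under colimits and extensions by the representable sheaves $\rL_\cC h_U$ placed in degree $0$ (its objects being exactly those with connective cohomology sheaves, using that $\rH^n$ commutes with cofibers and filtered colimits), and $\Map_{\Sh_\DAb(X)}(\rL_\cC h_U,\mathcal H)\simeq\Map_{\DAb}(\bZ,\mathcal H(U))\simeq\ast$ since $\bZ\in\DAb^{\le 0}$ and $\mathcal H(U)\in\DAb^{\ge 1}$; the general $\cG$ follows since $\Map(-,\mathcal H)$ turns colimits into limits and extensions into fiber sequences.

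For the statement about the heart, I would note that $\Sh^{\le 0}_\DAb(X)\cap\Sh^{\ge 0}_\DAb(X)=\{\cF:\rH^n\cF=0\text{ for }n\ne 0\}$, that $\rH^0$ sends this subcategory to $\Sh_\Ab(X)$, and that the assignment $G\mapsto L(G[0])$ — well defined into the heart because $L$ is t-exact for the objectwise t-structure on $\PSh_\DAb(\CXP)$ and $\Sh_\Ab(X)$ is the left-exact localization of $\PSh_\Ab(\CXP)$ at $\tau$ — is quasi-inverse to $\rH^0$; this last point uses conservativity of $\{\rH^n\}$ together with $\rH^0(L(G[0]))\simeq L(G)\simeq G$ for $G$ already a sheaf. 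I expect the main obstacle to be precisely the bookkeeping linking the objectwise t-structure on presheaves with the sheaf-level cohomology functors: that $\Sh^{\le 0}_\DAb(X)$ simultaneously admits the ``generated by representables'' and the ``connective cohomology sheaves'' descriptions, and that evaluation at objects of $\CXP$ preserves coconnectivity. Both rest on hypercompleteness of $\Sh(X)$, which is built into our convention that ``sheaf'' means ``hypersheaf''; with those in hand the remainder is formal.
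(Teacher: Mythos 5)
The paper does not actually prove this proposition; it is imported wholesale from \cite[1.7]{DAG-VII}, so the only thing to compare your argument with is Lurie's, whose overall shape (presentability, descent spectral sequence, hypercompleteness to match the two descriptions of each half of the t-structure) you do follow. There is, however, a genuine error at the step where you build the truncation triangles. You assert that the objectwise connective truncation $\tau^{\le 0}_{\mathrm{obj}}\cF$ of a sheaf is again a sheaf because $\tau^{\le 0}$ ``preserves limits''. The functor $\tau^{\le 0}\colon\DAb\to\DAb^{\le 0}$ is indeed right adjoint to the inclusion, but this makes $\DAb^{\le 0}$ a \emph{coreflective} subcategory: it is closed under colimits and extensions, not under limits (already the fiber of $0\to\bZ[0]$ is $\bZ[-1]$, and totalizations of connective objects need not be connective). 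Hence the endofunctor of $\DAb$ obtained by composing $\tau^{\le 0}$ with the inclusion does not preserve limits, and post-composing a limit-preserving functor $\Sh(X)\op\to\DAb$ with it does not yield a limit-preserving functor. The conclusion is in fact false: if $\cF$ lies in the heart and $\rH^0\cF$ has nonvanishing sheaf cohomology $H^1(U,\rH^0\cF)$ for some $U\in\CXP$ (which happens for $\cO_X$ on most $X$), then $\cF(U)\simeq\rR\Gamma(U,\rH^0\cF)$ is not connective, so the presheaf $U\mapsto\tau^{\le 0}(\cF(U))$ differs from $\cF$ objectwise while having $\cF$ as its sheafification; it cannot satisfy descent. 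The computation of $\rH^n(\tau^{\le 0}_{\mathrm{obj}}\cF)$ and the resulting fiber sequence therefore do not go through as written.

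The asymmetry you need is the opposite one: $\DAb^{\ge 0}$, being a right orthogonal complement, \emph{is} closed under limits in $\DAb$, so the objectwise coconnective sheaves form a full subcategory of $\Sh_\DAb(X)$ closed under limits and extensions; this is Lurie's route, via \cite[1.4.4.11]{Lurie_Higher_algebra}, which produces an accessible t-structure whose connective part is the left orthogonal of the objectwise $\DAb^{\ge 1}$-valued sheaves. Alternatively, define $\tau^{\le 0}\cF$ as the \emph{sheafification} $\rL_\cC\bigl(\tau^{\le 0}_{\mathrm{obj}}\,\iota_\cC\cF\bigr)$ of the objectwise truncation and use the $\DAb$-valued analogue of \cref{lem:sheafification_and_homotopy_groups} (sheafification commutes with the cohomology presheaves) to check that it lands in $\Sh^{\le 0}_\DAb(X)$ with cofiber in $\Sh^{\ge 1}_\DAb(X)$. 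Either repair also addresses the circularity in your orthogonality step: the claim that $\Sh^{\le 0}_\DAb(X)$ is generated under colimits and extensions by representables in degree $0$ is exactly the nontrivial inclusion (you only justify the easy direction, that generated objects have connective cohomology sheaves), and it is here that hypercompleteness must be spent. The remaining ingredients of your proposal — presentability, the descent spectral sequence showing coconnective sheaves are objectwise coconnective, and the identification of the heart — are sound.
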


We will denote by $\tau_{\le n}$ and $\tau_{\ge n}$ the truncation functors associated to the t-structure above.

\begin{rem}[\cite{DAG-VII}]
The $\infty$-symmetric monoidal structure on $\DAb$ induces an $\infty$-symmetric monoidal structure on $\Sh_{\DAb}(X)$, which is compatible with the t-structure in \cref{prop:t-structure}.
\end{rem}

For the next proposition we will denote by $t_{\le n}$ and $t_{\ge n}$ the truncation functors for the canonical $t$-structure on $\DAb$.
The dual of \cite[1.2.1.5]{Lurie_Higher_algebra} shows that each functor $t_{\le n}$ is a right adjoint.

\begin{prop} \label{prop:hypercompleteness_DAb}
	Let $(\cC, \tau)$ be an $\infty$-site and let $\cF$ be a $\DAb$-valued sheaf on $(\cC,\tau)$.
	The following conditions are equivalent:
	\begin{enumerate}[(i)]
		\item The sheaf $\cF$ is left $t$-complete for the $t$-structure on $\Sh_{\DAb}(\cC, \tau)$, in the sense that $\cF\simeq\lim_m\tau_{\ge m}\cF$.
		\item The Postnikov tower of the truncated sheaf $t_{\le 0} \circ \cF$ converges.
				\item The sheaf $\cF$ is a hypercomplete $\DAb$-valued sheaf.
		\item The truncated sheaf $t_{\le 0} \circ \cF$ is hypercomplete when seen as an $\cS$-valued sheaf.
			\end{enumerate}
\end{prop}

\begin{proof}
	We first address the equivalence between (i) and (ii).
	Form a fiber sequence
	\[ \cF' \to \cF \to \cF'' \]
	where $\cF' \in \Sh_{\DAb}^{\le 0}(\cC, \tau)$ and $\cF'' \in \Sh_{\DAb}^{\ge 1}(\cC, \tau)$.
	Since the object $\cF''$ is left bounded, it is also left complete.
	It follows that $\cF$ is left $t$-complete if and only if $\cF'$ is.
	On the other side, $t_{\le 0} \circ \cF'' = 0$
		and therefore $t_{\le 0} \circ \cF \simeq t_{\le 0} \circ \cF'$.
	In other words, we can replace $\cF$ by $\cF'$, or, equivalently, we can assume from the very beginning that $\cF$ belongs to $\Sh_{\DAb}^{\le 0}(\cC, \tau)$.
	In this case, we can use \cite[1.2.1.9]{Lurie_Higher_algebra} to see that the Postnikov tower of $\cF$ (computed in $\Sh_{\DAb}^{\le 0}(\cC, \tau)$) coincides with the tower of left $t$-truncations.
	Since $t_{\le 0}$ is a right adjoint, it commutes with limits and therefore we conclude that (i) implies (ii).
	For the other direction, if the Postnikov tower of $t_{\le 0} \circ \cF$ converges, so does the Postnikov tower of $t_{\le n} \circ \cF \simeq t_{\le 0} \circ (\cF[n])$ for every non-negative integer $n$.
	Let us denote by $\cF^\wedge$ the limit of the Postnikov tower of $\cF$.
	Since $t_{\le n}$ commutes with limits for every non-negative integer $n$, it follows that for each $U \in \cC$ the natural map $\cF(U) \to \cF^\wedge(U)$ induces equivalences
	\[ t_{\le n}(\cF(U)) \to t_{\le n}(\cF^\wedge(U)) \]
	for every non-negative integer $n$.
	We conclude that the map $\cF(U) \to \cF^\wedge(U)$ is an equivalence and therefore (ii) implies (i).
	
	Now we turn to the equivalence between (ii) and (iv).
	It suffices to prove that the hypercompleteness for an $\cS$-valued sheaf is equivalent to the convergence of its Postnikov tower.
	Let $G \in \Sh(\cC, \tau)$.
	If $G$ is hypercomplete, then it belongs to the hypercompletion $\Sh(\cC, \tau)^\wedge$, and so does every $\tau_{\le n}(G)$. Since $\Sh(\cC,\tau)^\wedge$ is hypercomplete, we see that
	\[ G \simeq \lim \tau_{\le n}(G) \]
	is in $\Sh(\cC, \tau)^\wedge$.
	Then it is enough to note that $\Sh(\cC, \tau)^\wedge$ is a localization of $\Sh(\cC, \tau)$, and therefore the inclusion functor preserves limits.
		For the other direction, if the Postnikov tower of $G$ converges, then $G$ is the limit of hypercomplete objects and it is therefore hypercomplete itself.
	
	Finally we prove the equivalence between (iii) and (iv).
	Since $t_{\le 0}$ is a right adjoint, we see immediately that (iii) implies (iv).
	Now assume that (iv) holds.
	Then the Postnikov tower of $t_{\le 0} \circ \cF$ converges and therefore the Postnikov tower of $t_{\le 0} \circ (\cF[n])$ converges for every non-negative integer $n$.
	It follows that each $t_{\le 0} \circ (\cF[n])$ takes hypercoverings to limit diagrams.
	Let $U^\bullet \to U$ be a hypercovering in $\cC$.
	We want to prove that the canonical map
	\[ \cF(U) \to \lim \cF(U^\bullet) \]
	is an equivalence.
	Since $t_{\le n}$ is a right adjoint, the hypothesis implies that each induced map
	\[ t_{\le n} \cF(U) \to t_{\le n} \left( \lim \cF(U^\bullet) \right) \simeq \lim t_{\le n} \cF(U^\bullet) \]
	is an equivalence, completing the proof.
\end{proof}

\begin{cor}\label{cor:hypercompleteness_of_bounded_below}
Every sheaf in $\Sh^{\ge 0}_{\DAb}(X)$ is hypercomplete.
\end{cor}
\begin{proof}
It follows from the implication (i)$\Rightarrow$(iii) in \cref{prop:hypercompleteness_DAb}.
\end{proof}

From now on, we restrict to one of the following geometric contexts: $(\mathrm{Aff},\tauet,\bPsm)$, $(\Stn_\C, \tauan, \bPsm)$ and $(\Afd_k, \tauqet,\bPqsm)$.

The site $(\cC,\tau)$ has a structure sheaf of ordinary rings $\cO_\cC$ defined by $\cO_\cC(S)=\Gamma(\cO_S)$ for every $S\in\cC$.

\begin{lem}\label{lem:structure_sheaf}
	Via composition with the inclusion $\Ab\to\DAb$, the sheaf $\cO_\cC$ is a hypercomplete sheaf in the heart $\Sh_{\DAb}^\heartsuit(\cC,\tau)$.
\end{lem}
\begin{proof}
	Since the restriction of $\cO_\cC$ to any objects in the category $\cC$ is acyclic, we see that via composition with the inclusion $\Ab\to\DAb$, $\cO_\cC$ belongs to $\Sh_{\DAb}(\cC,\tau)$.
		It lies obviously in the heart $\Sh_{\DAb}^\heartsuit(\cC,\tau)$.
	Therefore, it is hypercomplete by \cref{prop:descent_vs_hyperdescent}.
\end{proof}

Let $X$ be a geometric stack.
Composing with the forgetful functor $\CXP\to\cC$, we obtain from $\cO_\cC$ a structure sheaf $\cO_X$, which is a hypercomplete sheaf in the heart $\Sh_{\DAb}^\heartsuit(X)$.
Regarding $\cO_X$ as a commutative algebra object in $\Sh_{\DAb}(X)^\wedge$,  we define the derived \infcat $\cO_X\Mod$ as the \infcat of left $\cO_X$-module objects of $\Sh_{\DAb}(X)^\wedge$ (\cite[4.2.1.13]{Lurie_Higher_algebra}). It follows from \cite[3.4.4.2]{Lurie_Higher_algebra} that $\cO_X \Mod$ is a presentable \infcat.

\begin{rem}[{\cite[2.1.3]{DAG-VIII}}]
The \infcat $\cO_X\Mod$ is endowed with a t-structure and a symmetric monoidal structure induced by $\Sh_{\DAb}(X)$, which are compatible with each other.
We denote by $\cO_X\Modh$ the heart.
\end{rem}

\begin{rem}
Assume that $X$ is a representable stack.
Then $\cO_X\Modh$ coincides with the category of $\cO_X$-modules over the lisse-étale site of $X$ (cf.\ \cite[Proposition 2.1.3 and Remark 2.1.5]{DAG-VIII}).
Since $X$ is a representable stack, the lisse-étale site of $X$ is a $1$-category.
In particular, the associated $\infty$-topos is $1$-localic.
Thus, the hypotheses of \cite[Proposition 2.1.8]{DAG-VIII} are satisfied.
This allows us to identify the full subcategory of $\cO_X\Mod$ spanned by left-bounded objects with the derived $\infty$-category $\cD^+(\cO_X\Modh)$.
Since all the objects of $\cO_X\Mod$ are hypercomplete by definition, \cref{prop:hypercompleteness_DAb}  shows that the induced $t$-structure on $\cO_X\Mod$ is left $t$-complete.
Therefore, we obtain an equivalence $\cO_X\Mod \simeq \cD(\cO_X\Modh)$.
This last statement is false for a general geometric stack.
\end{rem}

For the functoriality of sheaves on geometric stacks, it is useful to introduce another \infsite.
Let $\GeomXP$ denote the full subcategory of the overcategory $\Sh(\cC,\tau)^\wedge_{/X}$ spanned by morphisms from geometric stacks to $X$ which are in $n$-$\bP$ for some $n$.
We consider the topology on $\GeomXP$ generated by coverings of the form $\{U_i/X\to U/X\}_{i\in I}$ such that every morphism $U_i\to U$ is in $n$-$\bP$ for some $n$ and that the morphism $\coprod U_i\to U$ is an effective epimorphism.
By an abuse of notation, we denote this topology again by $\bP$.
So we obtain an \infsite $(\GeomXP,\bP)$.

\begin{lem} \label{lem:small_sites}
Let $u\colon \CXP\to\GeomXP$ denote the inclusion functor.
For any presentable \infcat $\cT$, the functors $\Tulowersw$ and $\big(\Tuuppers\big)^\wedge$ introduced in \cref{sec:functoriality}
\[\Tulowersw \colon \Sh_\cT(\CXP,\tau)^\wedge \leftrightarrows\Sh_\cT(\GeomXP,\bP)^\wedge \colon \big(\Tuuppers\big)^\wedge \]
are equivalences of \infcats.
\end{lem}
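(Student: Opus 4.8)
The plan is to recognize the inclusion functor $u\colon\CXP\to\GeomXP$ as a morphism of \infsites inducing an equivalence of the associated $\infty$-topoi, to deduce this first for $\cS$-valued sheaves from the comparison result \cref{lem:equivalence_of_topoi}, and then to bootstrap to an arbitrary presentable $\cT$. The functor $u$ is fully faithful, being the inclusion of a full subcategory. The covering condition — that every object of $\GeomXP$ has a $\bP$-covering by objects in the image of $u$ — is exactly what the definition of $\GeomXP$ is built for: if $(V\to X)\in\GeomXP$ then $V$ is geometric, hence carries an $n$-atlas $\{U_i\}_{i\in I}$ by representable stacks with $(n\!-\!1)$-$\bP$-morphisms $U_i\to V$; the composites $U_i\to V\to X$ are again $\bP$-morphisms, so each $U_i$ defines an object of $\CXP$, and $\{U_i\to V\}_{i\in I}$ is a $\bP$-covering of $V$ in $\GeomXP$ since $\coprod_{i\in I}U_i\to V$ is an effective epimorphism.

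The substantive hypotheses are that $u$ is continuous and cocontinuous, and I expect matching the $\tau$-topology on representables with the $\bP$-topology on geometric stacks to be the main obstacle. For continuity: a $\tau$-covering $\{U_i\to U\}$ of an object of $\CXP$, regarded inside $\GeomXP$, refines by axiom~(iii) of \cref{def:context} to a covering by morphisms in $\overline{\bP}$, i.e.\ by $(-1)$-$\bP$ (a fortiori $0$-$\bP$) morphisms whose total morphism is an effective epimorphism of sheaves by \cref{prop:effective_epimorphism_of_sheaves_via_pi_0}; this is a $\bP$-covering, so $u$ sends covering sieves to covering sieves. For cocontinuity: given $(V\to X)\in\CXP$ and a $\bP$-covering $\{T_j\to V\}$ of $V$ in $\GeomXP$, choose for each $j$ an atlas $\{V_{jk}\}$ of the geometric stack $T_j$ by representables; the composites $V_{jk}\to T_j\to V$ are $\bP$-morphisms between representables, so $V_{jk}\in\CXP$, their total morphism $\coprod_{j,k}V_{jk}\to V$ is a composite of effective epimorphisms and hence an effective epimorphism, and $\{V_{jk}\to V\}$ is a $\tau$-covering of $V$ refining $\{T_j\to V\}$ after applying $u$. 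Together with full faithfulness, the covering condition, and the (routine, but in this lisse-étale-type setting somewhat delicate) compatibility with pullbacks, \cref{lem:equivalence_of_topoi} gives that $u_s$ and $u^s$ are mutually inverse equivalences between $\Sh(\CXP,\tau)$ and $\Sh(\GeomXP,\bP)$.

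It remains to upgrade to $\cT$-valued sheaves. Since $u$ is cocontinuous, \cref{lem:Tpu_preserves_sheaves} supplies a commutative square which, through the equivalences $\FunR(\Sh(-)\op,\cT)\xrightarrow{\sim}\Sh_\cT(-)$ of \cref{prop:T-valued_sheaves}, identifies $\Tsu$ with precomposition along $u^s$; as $u^s$ is an equivalence, so is $\Tsu$. By \cref{lem:adjunction_Tu^s_Tsu}, $\Tuuppers$ is left adjoint to $\Tsu$, hence is itself an equivalence (the left adjoint of an equivalence is a quasi-inverse). Finally $\Tulowers$ is left adjoint to $\Tuuppers$ (cf.\ \cref{lem:adjunction_u_s_u^s}; this only uses that $u$ is continuous, so that $\Tuupperp$ preserves sheaves), so $\Tulowers$ is an equivalence as well, which proves the lemma.
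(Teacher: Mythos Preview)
Your overall strategy coincides with the paper's: verify the hypotheses of \cref{lem:equivalence_of_topoi} for the inclusion $u$, and transfer from $\cS$-valued sheaves to $\cT$-valued ones. The paper does the transfer in one line via \cref{prop:T-valued_sheaves}; your route through \cref{lem:Tpu_preserves_sheaves} and \cref{lem:adjunction_Tu^s_Tsu} is a valid alternative, and has the merit of pinning down that the specific functors $\Tulowers$, $\Tuuppers$ (not merely some equivalence) are equivalences.

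There is, however, a real gap in your cocontinuity argument. From ``$\coprod_{j,k} V_{jk}\to V$ is an effective epimorphism of sheaves'' you conclude ``$\{V_{jk}\to V\}$ is a $\tau$-covering of $V$''. This inference fails: in the algebraic context, a smooth surjective morphism of affine schemes is an effective epimorphism of \'etale sheaves but is not an \'etale covering. What is true, and what you need, is that the sieve generated by $\{V_{jk}\to V\}$ \emph{contains} a $\tau$-covering. This is exactly the content of the paper's one geometric input: ``for all the geometric contexts discussed in \cref{sec:examples}, surjective morphisms in $\bP$ have sections locally with respect to the topology $\tau$'' (e.g.\ smooth surjections admit \'etale-local sections, quasi-smooth surjections admit quasi-\'etale-local sections). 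Equivalently, you can extract the refinement abstractly: by \cref{prop:effective_epimorphism_of_sheaves_via_pi_0} the effective epimorphism onto the representable $h_V$ gives an epimorphism of sheaves of sets, and evaluating at $\mathrm{id}_V$ produces a $\tau$-covering of $V$ each of whose members factors through some $V_{jk}$ (then pass to a subcovering in $\overline{\bP}$ via axiom~(iii) of \cref{def:context} to land in $\CXP$). Either way, this step is not automatic and must be supplied.

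A smaller point: you flag the pullback hypothesis of \cref{lem:equivalence_of_topoi} as ``somewhat delicate'' and leave it there. Since morphisms in $\CXP$ are arbitrary (not required to be in $\bP$), fiber products in $\CXP$ need not exist, so this hypothesis genuinely deserves care; the paper is equally silent here.
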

\begin{proof}
By \cite[1.1.12]{DAG-V}, it suffices to prove the statement for $\cT=\cS$.
We note that for each of the geometric contexts $(\mathrm{Aff},\tauet,\bPsm)$, $(\Stn_\C, \tauan, \bPsm)$ and $(\Afd_k, \tauqet,\bPqsm)$, surjective morphisms in $\bP$ have sections locally with respect to the topology $\tau$.
Therefore, we conclude by \cref{prop:equivalence_of_topoi}.
\end{proof}

Let $f\colon X\to Y$ be a morphism of geometric stacks.
It induces a continuous functor
\[ v\colon \GeomYP\to\GeomXP,\qquad U\mapsto U\times_Y X.\]
The functor $v$ commutes with pullbacks, so by Lemmas \ref{lem:adjunction_u_s_u^s} and \ref{lem:small_sites}, we obtain a pair of adjoint functors $(\tensor*[^{\DAb}]{v}{_s^\wedge},\big(\tensor*[^{\DAb}]{v}{^s}\big)^\wedge)$, which we denote for simplicity as
\[\Dfpull\colon\Sh_{\DAb}(Y)^\wedge \leftrightarrows\Sh_{\DAb}(X)^\wedge \colon\Dfpush.\]

Via the natural map $\cO_Y\to\Dfpush\cO_X$, the functor $\Dfpush$ induces a composite functor
\[\cO_X\Mod\to\Dfpush\cO_X\Mod\to\cO_Y\Mod,\]
which we denote by
\[\rR f_*\colon\cO_X\Mod\longrightarrow\cO_Y\Mod.\]

By base change along the natural map $\Dfpull\cO_Y\to\cO_X$, the functor $\Dfpull$ induces a composite functor
\[\cO_Y\Mod\to\Dfpull\cO_Y\Mod\to\cO_X\Mod,\]
which we denote by
\[\rL f^*\colon\cO_Y\Mod\longrightarrow\cO_X\Mod.\]

The notations $\rL f^*$, $\rR f_*$ are chosen in accordance with the classical terminology.
We denote $\rL^i f^* \coloneqq \rH^i\circ\rL f^*$, $\rR^i f_* \coloneqq \rH^i\circ\rR f_*$ for every $i\in\Z$.

\begin{prop} \label{prop:adjunction_Lfpull_Rfpush}
The functor $\rL f^*$ is left adjoint to the functor $\rR f_*$.
\end{prop}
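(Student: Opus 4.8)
The plan is to realize both $\rL f^*$ and $\rR f_*$ as composites of two functors and to exhibit $\rR f_*$ as the composite of the right adjoints of the two factors of $\rL f^*$, taken in the opposite order. Recall first the elementary adjunction: for a morphism of commutative algebra objects $A\to B$ in a presentable symmetric monoidal \infcat, the restriction of scalars functor $\mathrm{Mod}_B\to\mathrm{Mod}_A$ admits a left adjoint, given by extension of scalars $B\otimes_A(-)$ (see \cite{Lurie_Higher_algebra}). Applying this to the algebra map $\Dfpull\cO_Y\to\cO_X$ in $\Sh_\DAb(X)$ produces an adjunction
\[\cO_X\otimes_{\Dfpull\cO_Y}(-)\colon\Dfpull\cO_Y\Mod\leftrightarrows\cO_X\Mod\colon\rho,\]
where $\rho$ is restriction of scalars along $\Dfpull\cO_Y\to\cO_X$. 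By the very construction preceding the statement, $\rL f^*$ is the composite of $\cO_X\otimes_{\Dfpull\cO_Y}(-)$ with the functor $\cO_Y\Mod\to\Dfpull\cO_Y\Mod$ induced by the (lax symmetric monoidal) functor $\Dfpull$.

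Next I would show that the adjunction $\Dfpull\dashv\Dfpush$ of \cref{sec:sheaves_on_stacks} lifts to an adjunction on module categories
\[\Dfpull\colon\cO_Y\Mod\leftrightarrows\Dfpull\cO_Y\Mod\colon\Dfpush',\]
where, for a $\Dfpull\cO_Y$-module $M$, the object $\Dfpush M$ carries (via the lax symmetric monoidal structure on $\Dfpush$) a $\Dfpush\Dfpull\cO_Y$-module structure, and $\Dfpush' M$ is its restriction of scalars along the unit $\cO_Y\to\Dfpush\Dfpull\cO_Y$. This is an instance of the general principle that adjunctions between symmetric monoidal \infcats with lax symmetric monoidal right adjoint descend to modules over a fixed algebra; it may be quoted from \cite{Lurie_Higher_algebra} or checked by hand by observing that, for $N\in\cO_Y\Mod$ and $M\in\Dfpull\cO_Y\Mod$, the space $\Map_{\Dfpull\cO_Y\Mod}(\Dfpull N,M)$ is computed as the totalization of a cosimplicial space assembled from the spaces $\Map_{\Sh_\DAb(X)}(\Dfpull(\cO_Y^{\otimes\bullet}\otimes N),M)$, which under $\Dfpull\dashv\Dfpush$ is identified with the totalization computing $\Map_{\cO_Y\Mod}(N,\Dfpush' M)$.

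It remains to identify $\rR f_*$ with $\Dfpush'\circ\rho$. Two facts are needed. A lax symmetric monoidal functor is compatible with restriction of scalars: applying $\Dfpush$ to an $\cO_X$-module first restricted to a $\Dfpull\cO_Y$-module agrees with first applying $\Dfpush$, landing in $\Dfpush\cO_X$-modules, and then restricting along $\Dfpush\Dfpull\cO_Y\to\Dfpush\cO_X$. Moreover the composite algebra map $\cO_Y\to\Dfpush\Dfpull\cO_Y\to\Dfpush\cO_X$ coincides with the natural map $\cO_Y\to\Dfpush\cO_X$ used to define $\rR f_*$, by the triangle identity, because $\Dfpull\cO_Y\to\cO_X$ is by construction adjoint to $\cO_Y\to\Dfpush\cO_X$. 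Since restriction of scalars along a composite of algebra maps is the composite of restrictions, $\Dfpush'\circ\rho$ is exactly restriction along $\cO_Y\to\Dfpush\cO_X$ applied after $\Dfpush$ on modules, i.e. $\rR f_*$. Composing the two adjunctions then gives that $\rL f^*=(\cO_X\otimes_{\Dfpull\cO_Y}(-))\circ\Dfpull$ is left adjoint to $\Dfpush'\circ\rho=\rR f_*$.

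The main obstacle is the last identification: carrying out this bookkeeping coherently at the level of \infcats, that is, producing genuine equivalences of functors (compatibility of the lax monoidal $\Dfpush$ with restriction of scalars, and the triangle identity for the adjoint algebra maps) rather than object-wise isomorphisms. If one prefers to avoid the explicit description of $\Dfpush'$, an alternative is to construct directly the unit $\id\to\rR f_*\rL f^*$ and counit $\rL f^*\rR f_*\to\id$ by pasting the units and counits of the three adjunctions above and to verify the triangle identities; the difficulty is then concentrated in the same coherence check.
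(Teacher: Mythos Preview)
Your argument is correct and, in a sense, dual to the paper's. Both proofs work by passing through an intermediate module category and composing two adjunctions; the difference is which intermediate category is chosen. You keep the defining factorization of $\rL f^*$ through $\Dfpull\cO_Y\Mod$ and then re-express $\rR f_*$ as the composite of the two right adjoints (restriction of scalars followed by the pushforward $\Dfpush'$ lifted to modules via the lax monoidal structure on $\Dfpush$). The paper does the opposite: it keeps the defining factorization of $\rR f_*$ through $\Dfpush\cO_X\Mod$ and re-expresses $\rL f^*$ as the composite of the two left adjoints, invoking the Lurie--Barr--Beck theorem to produce the left adjoint $\alpha\colon\Dfpush\cO_X\Mod\to\cO_X\Mod$ and to obtain the conservativity needed to check that the two descriptions of $\rL f^*$ agree.

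Your route is arguably more direct: the two adjunctions you use (extension versus restriction of scalars, and the lift of a strong monoidal adjunction to module categories) are standard and require no monadicity input. The paper's route has the virtue of making the existence of $\alpha$ a soft consequence of presentability plus Barr--Beck, and its diagram-chase avoids having to explicitly match up the composite algebra map $\cO_Y\to\Dfpush\Dfpull\cO_Y\to\Dfpush\cO_X$ with the given map $\cO_Y\to\Dfpush\cO_X$. One small point: you call $\Dfpull$ ``lax symmetric monoidal'', but for the lift $\cO_Y\Mod\to\Dfpull\cO_Y\Mod$ to have the right adjoint you describe you are really using that $\Dfpull$ is \emph{strong} symmetric monoidal (so that its right adjoint $\Dfpush$ is automatically lax); this holds here since $\Dfpull$ is a pullback of sheaves, but it is worth stating explicitly.
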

\begin{proof}
	Observe that the map $\cO_Y \to \Dfpush \cO_X$ factors canonically as
	\[ \cO_Y \to \Dfpush \Dfpull \cO_Y \to \Dfpush \cO_X .  \]
	Therefore, $\rR f_*$ can be factored as the composition
	\[ \cO_X \Mod \to \Dfpush \cO_X \Mod \to \Dfpush \Dfpull \cO_Y \Mod \to \cO_Y \Mod . \]
	Observe now that there is a commutative diagram
	\begin{equation} \label{eq:adjunction_Lfpull_Rfpush}
		\begin{tikzcd}
			{} & \Dfpull \cO_Y \Mod \arrow{dl} \arrow{d} & \cO_X \Mod \arrow{d} \arrow{l} \\
			\cO_Y \Mod & \Dfpush \Dfpull \cO_Y \Mod \arrow{l} & \Dfpush \cO_X \Mod \arrow{l} .
		\end{tikzcd}
	\end{equation}
	The horizontal functors are forgetful functors and therefore they have left adjoints.
	We claim that the functors $\Dfpull \cO_Y \Mod \to \Dfpush \Dfpull \cO_Y \Mod$ and $\cO_X \Mod \to \Dfpush \cO_X \Mod$ also have left adjoints.
	Up to replacing $\cO_X$ with $\Dfpull \cO_Y$, we see that it is enough to prove the existence of left adjoint in the second case.
	
	Using \cite[4.3.3.2, 4.3.3.13]{Lurie_Higher_algebra}, the forgetful functors
	\[
	\cO_X \Mod \to \Sh_{\DAb}(X)^\wedge, \quad \Dfpush \cO_X \Mod \to \Sh_{\DAb}(Y)^\wedge
	\]
	admit left adjoints and are conservative.
	Moreover, \cite[4.2.3.5]{Lurie_Higher_algebra} shows that they commute with arbitrary limits and colimits.
	Consider the diagram
	\[ \begin{tikzcd}
	\cO_X \Mod \arrow{r} \arrow{d} & \Sh_{\DAb}(X)^\wedge \arrow{d}{\Dfpush} \\
	\Dfpush \cO_X \Mod \arrow{r} & \Sh_{\DAb}(Y)^\wedge
	\end{tikzcd} \]
	and observe that it is commutative by definition of the functor $\cO_X \Mod \to \Dfpush \cO_X \Mod$.
	Furthermore, $\Dfpush$ is by definition a morphism in $\RPr$, the $\infty$-category of presentable $\infty$-categories with morphisms given by right adjoints.
	In particular, there exists a regular cardinal $\kappa \gg 0$ such that $\Dfpush$ commutes with $\kappa$-filtered colimits.
	These observations imply that the induced functor $\cO_X \Mod \to \Dfpush \cO_X \Mod$ commutes with limits and $\kappa$-filtered colimits.
	It follows from the adjoint functor theorem that this functor admits a left adjoint.
	
	Passing to left adjoints in the diagram \eqref{eq:adjunction_Lfpull_Rfpush}, we obtain another commutative diagram
	\[ \begin{tikzcd}
	{} & \Dfpull \cO_Y \Mod \arrow{r} & \cO_X \Mod \\
	\cO_Y \Mod \arrow{r} \arrow{ur} & \Dfpush \Dfpull \cO_Y \Mod \arrow{r} \arrow{u} & \Dfpush \cO_X \Mod . \arrow{u}
	\end{tikzcd} \]
	To complete the proof, it is now enough to observe that $\rR f^*$ is by definition the composition
	\[ \cO_Y \Mod \to \Dfpull \cO_Y \Mod \to \cO_X \Mod , \]
	and that the composition
	\[ \cO_Y \Mod \to \Dfpush \Dfpull \cO_Y \Mod \to \Dfpush \cO_X \to \cO_X \Mod \]
	is by construction a left adjoint of $\rR f_*$.
\end{proof}

\begin{rem} \label{rem:t_exactness_Dfpush}
By \cref{prop:adjunction_Lfpull_Rfpush}, the functors $\rL f^*$ and $\rR f_*$ are exact functors between stable \infcats.
Moreover, concerning t-structures, the functor $\rR f_*$ is left t-exact, while the functor $\rL f^*$ is right t-exact.
Indeed, the functor $\Dfpush$ is left t-exact by construction, and therefore the functor $\rR f_*$ is also left t-exact.
It follows by adjunction that the functor $\rL f^*$ is right t-exact.
\end{rem}

For the purpose of cohomological descent, we will consider (augmented) simplicial geometric stacks,
that is, (augmented) simplicial objects in the \infcat of geometric stacks.

Let $f^\bullet \colon X^\bullet \to X$ be an augmented simplicial (analytic or algebraic) stack.
The functors $\rL f^*$ induce a functor
\[
\rL f^{\bullet *} \colon \cO_X \Mod \to \varprojlim \cO_{X^\bullet} \Mod
\]
where the limit is taken in the $\infty$-category $\LPr$ of presentable $\infty$-categories with morphisms given by left adjoints.
It admits a right adjoint which we denote by
\[
\rR f^\bullet_* \colon \varprojlim \cO_{X^\bullet} \Mod \to \cO_X \Mod.
\]
We refer to \cref{sec:spectral_sequence} for a detailed discussion on the functor $\rR f^\bullet_*$.

We denote $\rL^i f^{\bullet*} \coloneqq \rH^i\circ \rL f^{\bullet*}$, $\rR^i f^\bullet_* \coloneqq \rH^i\circ \rR f^\bullet_*$ for every $i\in\Z$.

\begin{defin} \label{def:coherent_sheaf_on_stacks}
Let $X$ be either an algebraic stack or an analytic stack.
We denote by $\Coh(X)$ the full subcategory of $\cO_X\Mod$ spanned by $\cF\in\cO_X\Mod$ for which there exists an atlas $\{\pi_i\colon U_i\to X\}_{i\in I}$ such that for every $i\in I$, $j\in\Z$, the $\cO_{U_i}$-modules $\rL^j\pi_i^*(\cF)$ are coherent sheaves.

We denote by $\Cohh$ (resp.\ $\Cohb(X)$, $\Coh^+(X)$, $\Coh^-(X)$) the full subcategory of $\Coh(X)$ spanned by objects cohomologically concentrated in degree 0 (resp.\ locally cohomologically bounded, bounded below, bounded above).
\end{defin}

\subsection{Coherence of derived direct images for algebraic stacks}

\begin{lem}[devissage] \label{lem:devissage}
Let $\cT$ be a stable $\infty$-category equipped with a $t$-structure.
Let $\cA_0$ be a full subcategory of the heart $\cT^\heartsuit$.
Let $\cT_0^+$ be the full subcategory of $\cT$ spanned by connective objects whose cohomologies are in $\cA_0$.
Let $\cT_0^b$ be the full subcategory of $\cT_0^+$ consisting of objects which are also coconnective.
Let $\cK$ be a full subcategory of $\cT$ containing $\cA_0$ which is closed under equivalences, loops, suspensions and extensions, then $\cK$ contains $\cT_0^b$.
Moreover, assume that for any object $\cF\in\cT$ such that $\tau_{\le n} \cF\in\cK$ for every $n\ge 0$, we have $\cF\in\cK$.
Then $\cK$ contains $\cT_0^+$.
\end{lem}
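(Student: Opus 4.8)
The plan is to prove the two conclusions in turn: first that $\cK \supseteq \cT_0^b$, using only the stated closure properties, and then that $\cK \supseteq \cT_0^+$, invoking in addition the hypothesis that $\cK$ contains any object all of whose truncations $\tau_{\le n}\cF$ ($n \ge 0$) lie in $\cK$.

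For the inclusion $\cT_0^b \subseteq \cK$ I would argue by induction on the cohomological amplitude of an object $\cF \in \cT_0^b$. Since $\cF$ is bounded, only finitely many of the objects $\rH^n(\cF)$ are nonzero; discarding the trivial case $\cF \simeq 0$, let $a \le b$ be the least and greatest degrees with $\rH^n(\cF) \ne 0$ and induct on $b - a$. In the base case $a = b$, the object $\cF$ has cohomology concentrated in a single degree, so it is obtained from the object $\rH^a(\cF)$ of the heart by an iterate of $\Omega$ or of $\Sigma$; since $\rH^a(\cF) \in \cA_0 \subseteq \cK$ and $\cK$ is closed under loops and suspensions, $\cF \in \cK$. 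For the inductive step, use the truncation fiber sequence
\[
\tau_{\le b-1}\cF \longrightarrow \cF \longrightarrow \tau_{\ge b}\cF
\]
associated with the $t$-structure. The object $\tau_{\ge b}\cF$ has cohomology concentrated in degree $b$, equal there to $\rH^b(\cF) \in \cA_0$, hence lies in $\cK$ by the base case; the object $\tau_{\le b-1}\cF$ again lies in $\cT_0^b$ and has strictly smaller amplitude, hence lies in $\cK$ by the inductive hypothesis. As $\cK$ is closed under extensions --- i.e.\ in any fiber sequence $X \to Y \to Z$ with $X, Z \in \cK$ one has $Y \in \cK$ --- we conclude $\cF \in \cK$.

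For the inclusion $\cT_0^+ \subseteq \cK$, take $\cF \in \cT_0^+$. For every $n \ge 0$ the truncation $\tau_{\le n}\cF$ is bounded, and its nonzero cohomology objects are among $\rH^0(\cF), \ldots, \rH^n(\cF)$, all of which lie in $\cA_0$; hence $\tau_{\le n}\cF \in \cT_0^b$, and therefore $\tau_{\le n}\cF \in \cK$ by the first part. The additional hypothesis on $\cK$ now yields $\cF \in \cK$, completing the proof.

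The whole argument is formal, so I do not expect a genuine obstacle; what needs care is only the cohomological bookkeeping --- checking that $\tau_{\le b-1}\cF$ really does drop the amplitude while remaining inside $\cT_0^b$, that $\tau_{\ge b}\cF$ is the expected shift of $\rH^b(\cF)$, and that for $\cF \in \cT_0^+$ the truncations $\tau_{\le n}\cF$ are bounded with all cohomology in $\cA_0$ --- together with the routine fact that the truncation fiber sequence above is part of the data of any $t$-structure on a stable \infcat.
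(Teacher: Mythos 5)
Your proof is correct and follows essentially the same route as the paper's: the same induction over the number of nonvanishing cohomology objects, using the truncation fiber sequence $\tau_{\le b-1}\cF \to \cF \to \tau_{\ge b}\cF$ with the top piece concentrated in one degree, and the same reduction of the second claim to the first via the truncations $\tau_{\le n}\cF$. The only difference is cosmetic (you index by amplitude $b-a$ rather than the number of nonzero cohomologies), and your extra bookkeeping remarks are all routine consequences of the $t$-structure axioms.
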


\begin{proof}
Let $\cF \in \cT_0^b$.
If $\cF$ is concentrated in one degree, then $\cF \in \cK$, because $\cK$ is closed under loops and suspensions.
In general, let $i$ be the biggest index such that $\mathrm H^{i}(\cF) \ne 0$.
We have a fiber sequence
\[
\tau_{\le i-1} \cF \to \cF \to \tau_{\ge i} \cF,
\]
where $\tau_{\ge i} F$ is concentrated in one degree.
Since $\cK$ is closed under extensions, it follows from induction that $\cF\in\cK$.
The last statement of the lemma follows from the definition of $\cT_0^+$.
\end{proof}

\begin{thm} \label{thm:proper_direct_image_alg}
Let $f \colon X \to Y$ be a proper morphism of locally noetherian algebraic stacks.
The derived pushforward functor
\[
\rR f_* \colon \cO_X\Mod \longrightarrow \cO_Y\Mod
\]
sends the full subcategory $\Coh^+(X)$ to the full subcategory $\Coh^+(Y)$.
\end{thm}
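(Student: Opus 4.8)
The plan is to reduce to the classical coherence theorem for proper morphisms of noetherian schemes and then induct on the geometric level of $X$. First I would reduce to the case where $Y$ is an affine noetherian scheme: membership in $\Coh^+(Y)$ is checked on a smooth atlas $\{Y_\alpha\to Y\}$ by affines, and by smooth base change one has $(\rR f_*\cF)|_{Y_\alpha}\simeq\rR(f_\alpha)_*(\cF|_{X_\alpha})$ with $f_\alpha\colon X_\alpha=X\times_Y Y_\alpha\to Y_\alpha$ still proper by \cref{prop:base_change_separatedness_properness}, $X_\alpha$ locally noetherian and of the same geometric level. Optionally, one can invoke the devissage \cref{lem:devissage} with $\cT=\cO_X\Mod$, $\cA_0=\Cohh(X)$, and $\cK$ the preimage of $\Coh^+(Y)$ under $\rR f_*$ — which is closed under equivalences, shifts and extensions because $\rR f_*$ is exact and $\Coh^+(Y)$ has those closure properties, and satisfies the Postnikov condition because $\rR f_*$ commutes with filtered colimits for $f$ quasi-compact — to reduce further to the case where $\cF$ is a single coherent sheaf concentrated in degree $0$; this step is not strictly necessary, as the spectral sequence below handles $\Coh^+(X)$ directly.

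Next, induct on the integer $n$ with $X$ being $n$-geometric. For $n=-1$, $X$ is a locally noetherian scheme (algebraic space); here I would first record that a morphism of such which is proper in the sense of \cref{sec:proper_morphisms} is a proper morphism of schemes — separatedness is immediate, and finite type and universal closedness descend along the proper surjection $P\to X$ provided by the weakly-proper condition — and then quote the classical theorem of Grothendieck that $\rR f_*$ preserves coherence for proper morphisms of noetherian schemes. For $n\ge 0$, note first that $X$ is quasi-compact, since it receives a surjection from the quasi-compact scheme $P$; so with $Y$ affine the weakly-proper condition supplies a scheme $P$ proper over $Y$ together with a proper surjective $Y$-morphism $p\colon P\to X$, which for the argument I would arrange to be moreover flat (see the obstacle below). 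Form the \v{C}ech nerve $P_\bullet\to X$ of $p$. For $m\ge 1$, the comparison map $P_m=P\times_X\cdots\times_X P$ to $P\times_Y\cdots\times_Y P$ ($m+1$ factors) is a base change of the relative diagonal $\Delta_{X/Y}$; since $X$ is separated and $n$-geometric, $\Delta_{X/Y}$ is $(n\!-\!1)$-representable and proper, so $P_m$ is $(n\!-\!1)$-geometric and proper over $Y$ (being proper over the proper $Y$-scheme $P\times_Y\cdots\times_Y P$), while $P_0=P$ falls under the base case. Hence by the inductive hypothesis $\rR(f_m)_*(p_m^*\cF)\in\Coh^+(Y)$ for every $m$, where $f_m\colon P_m\to Y$ and $p_m\colon P_m\to X$.

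It remains to assemble these. Using the cohomological descent statement $\rR f_*\cF\simeq\rR f^\bullet_*(p_\bullet^*\cF)=\mathrm{Tot}_{[m]\in\bDelta}\rR(f_m)_*(p_m^*\cF)$, together with the formalism of $\rR f^\bullet_*$ and the associated spectral sequence from \cref{sec:spectral_sequence}, one obtains a convergent spectral sequence $E_1^{s,t}=\rR^t(f_s)_*(p_s^*\cF)\Rightarrow\rR^{s+t}f_*\cF$ with all $E_1$ terms coherent. In each total degree $q$ only finitely many $(s,t)$ with $s+t=q$, $s\ge 0$, $t\ge 0$ contribute, so $\rR^q f_*\cF$ is a finite iterated extension of subquotients of coherent $\cO_Y$-modules, hence coherent because $Y$ is noetherian; left $t$-exactness of $\rR f_*$ gives vanishing in negative degrees, so $\rR f_*\cF\in\Coh^+(Y)$. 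The main obstacle is the cohomological descent along $p$, i.e.\ that $\cF\simeq\mathrm{Tot}_{[m]\in\bDelta}\rR(p_m)_*p_m^*\cF$ in $\cO_X\Mod$: this is \emph{not} formal, since $p$ is not a cover for the smooth topology defining $\cO_X\Mod$, and descent for $\cO$-modules genuinely fails along arbitrary proper surjections (for instance along a nilpotent thickening). The natural remedy is to take $P\to X$ in addition flat — so that it is an fppf cover, and $p$ flat also makes $p_\bullet^*$ exact so that the pieces stay in $\Coh^+$ — which reduces the problem to ordinary faithfully flat descent; producing such a flat proper surjective scheme-cover from the weakly-proper datum (e.g.\ via generic flatness and noetherian induction on $X$, handling the smaller closed complement separately) is the delicate point, and is where I expect the bulk of the work to lie.
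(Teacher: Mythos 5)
Your skeleton --- reduce to $Y$ affine, induct on the geometric level, pull back along the \v{C}ech nerve of the proper surjection $p\colon P\to X$ supplied by \cref{def:algebraic_weakly_proper}, note that the $P^s$ drop in geometric level because $\Delta_{X/Y}$ is $(n\!-\!1)$-representable, and reassemble with the spectral sequence of \cref{thm:spectral_sequence} --- is exactly the paper's, and you have correctly located the one genuinely hard point: descent of $\cO$-modules fails along the non-flat $p$. But your proposed remedy, arranging $P\to X$ to be flat, is not available. The weakly-proper datum hands you only a proper surjection, and there is no general procedure upgrading it to a flat one: generic flatness yields flatness of $p$ only over a dense open $U\subset X$, never over all of $X$, and you cannot simply restrict the problem to $U$ because $U\to Y$ is no longer proper, so $\rR f_*$ does not localize there.

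The paper's resolution keeps $p$ non-flat and instead controls the failure of descent. After reducing by devissage to $\cF\in\Cohh(X)$ and, by filtering along powers of the nilradical, to $X$ reduced (this reduction is needed before \cref{thm:generic_flatness} even applies to $p$), one sets $\cF'\coloneqq\rR p^\bullet_*\,\rL^0 p^{\bullet*}\cF$ --- note the $\rL^0$, since $p^*$ is not exact. This $\cF'$ need not agree with $\cF$, but fppf descent over $U$ shows $\cF\to\cF'$ is an equivalence there; the spectral sequence and the geometric-level induction applied to $p^s\colon P^s\to X$ give $\cF'\in\Coh^+(X)$, and applied to $g^\bullet=f\circ p^\bullet$ they give $\rR f_*\cF'=\rR g^\bullet_*\,\rL^0 p^{\bullet*}\cF\in\Coh^+(Y)$. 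The fiber of $\cF\to\cF'$ is then a coherent bounded-below complex supported on the proper closed complement of $U$, and is handled by a noetherian induction on closed substacks of $X$ --- a second, outer induction that your proof never sets up. Your phrase ``handling the smaller closed complement separately'' gestures at this, but the object living on the complement is the fiber of $\cF\to\cF'$, not a restriction of $\cF$ or of $p$; without introducing $\cF'$ the argument does not close. (A minor further slip: the Postnikov condition of \cref{lem:devissage} is verified via left $t$-exactness of $\rR f_*$, not commutation with filtered colimits --- $\cF$ is the limit, not the colimit, of its truncations $\tau_{\le n}\cF$.)
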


\begin{proof}
The question being local on the target, we can assume that $Y$ is representable.
Moreover we can assume that there exists a scheme $P$ proper over $Y$ and a surjective $Y$-morphism $p\colon P\to X$.

We proceed by induction on the geometric level $n$ of the stack $X$.
The case $n=-1$ is classical.
Assume that the statement holds when $X$ is $k$-geometric for $k<n$.
Let us prove the case when $X$ is $n$-geometric.

By noetherian induction, we can assume that the statement holds for any closed substack of $X$ not equal to $X$.
Let $\cK$ denote the full subcategory of $\Coh^+(X)$ spanned by the objects whose image under $\rR f_*$ belongs to $\Coh^+(Y)$.
Since $\rR f_*$ is an exact functor of stable \infcats, the subcategory $\cK$ is closed under equivalences, loops, suspensions and extensions.
Moreover, since $\rR f_*$ is left t-exact, the subcategory $\cK$ verifies the last condition of \cref{lem:devissage}.
Therefore, by \cref{lem:devissage}, it suffices to prove that $\cF\in K$ for any $\cF\in\Cohh(X)$.

Let $\cJ$ be the nilradical ideal sheaf of $X$.
Let $\cF\in\Cohh(X)$.
It is killed by a power $\cJ^m$ for some $m$.
For $1\le l\le m$, we have a short exact sequence
\[0\to \cJ^{l-1}\cF/\cJ^l\cF\to \cF/\cJ^l\cF\to \cF/\cJ^{l-1}\cF\to 0.\]
By induction on $l$, in order to prove that $\cF\in\cK$, it suffices to prove that $\cJ\cF\in\cK$.
In other words, we can assume that $X$ is reduced.

Let $\cF' \coloneqq \rR p^\bullet_* \;\rL^0 p^{\bullet *} \cF$.
\cref{thm:generic_flatness} implies the existence of a dense open substack $U$ of $X$ such that the pullback $P \times_X U \to U$ is flat.
By cohomological descent \cite[1.3.7.2]{HAG-II}, the natural morphism $\cF\to\cF'$ is an equivalence over $U$.
The spectral sequence of \cref{thm:spectral_sequence} reads off as
\[
\rR^t p^s_* \;\rL^0 p^{s,*} \cF \Rightarrow \rR^{t+s} p^\bullet_* \;\rL^0 p^{\bullet *} \cF.
\]
The induction hypothesis on the geometric level $n$ shows that each $\rR^t p^s_* \;\rL^0 p^{s,*} \cF$ is coherent.
It follows that $\cF' \in \Coh^+(X)$.

Let $\cG$ be the fiber of the morphism $\cF\to\cF'$.
We deduce that $\cG \in \Coh^+(X)$.
The noetherian induction hypothesis implies that $\cG \in \cK$.
Let $g^\bullet$ denote the induced morphism $P^\bullet \to Y$.
Once again, the induction hypothesis on the geometric level, together with the spectral sequence of \cref{thm:spectral_sequence}, shows that $\rR g^\bullet_* \;\rL^0 p^{\bullet *} \cF \in \Coh^+(Y)$.
Since $\rR g^\bullet_* = \rR f_* \circ \rR p^\bullet_*$, we deduce that $\cF'=\rR p^\bullet_* \;\rL^0 p^{\bullet *} \cF \in\cK$.
We conclude that $\cF\in\cK$, completing the proof.
\end{proof}

\subsection{Coherence of derived direct images for analytic stacks} \label{sec:coherence_of_direct_images}

\begin{lem} \label{lem:double_hypercover}
Let $f \colon X \to Y$ be a proper morphism of analytic stacks, with $Y$ representable.
Assume moreover that the identity $X \to X$ defines a weakly proper pair over $Y$.
Then there exists two hypercoverings $\mathcal U^\bullet$ and $\mathcal V^\bullet$ of $X$ such that $\mathcal U^n \Subset_Y \mathcal V^n$ for every $n\ge 0$.
\end{lem}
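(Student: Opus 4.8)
The plan is to construct $\mathcal U^\bullet$ and $\mathcal V^\bullet$ together by induction on the simplicial degree, selecting at each stage a common atlas of the two matching objects and exploiting the stability of weakly proper pairs under fibre products (\cref{prop:product_of_wpp}). First, unpack the hypothesis: applying \cref{def:canal_weakly_proper_pair} (resp.\ \cref{def:kanal_weakly_proper_pair}) to the identity $\id_X\colon X\to X$ over $Y$ furnishes a finite family $\{V_i\}_{i\in I}$ of representable stacks forming an atlas of $X$, together with open subsets (resp.\ affinoid domains) $U_i\subset V_i$ with $U_i\Subset_Y V_i$, such that $\{U_i\}_{i\in I}$ is again an atlas of $X$. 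Set $\mathcal U^0\coloneqq\coprod_{i\in I}U_i$ and $\mathcal V^0\coloneqq\coprod_{i\in I}V_i$; both are coverings of $X$, and the disjoint union of the pairs $U_i\Subset_Y V_i$ is a weakly proper pair $\mathcal U^0\Subset_Y\mathcal V^0$.

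For the inductive step, suppose we have constructed $(n-1)$-truncated simplicial objects over $X$ whose terms $\mathcal U^k$, $\mathcal V^k$ ($0\le k\le n-1$) are finite disjoint unions of representable stacks, with morphisms $\mathcal U^k\to\mathcal V^k$ compatible with the simplicial structure and defining weakly proper pairs $\mathcal U^k\Subset_Y\mathcal V^k$, and such that $\mathcal U^k\to(\cosk_{k-1}(\mathcal U^\bullet/X))^k$ and $\mathcal V^k\to(\cosk_{k-1}(\mathcal V^\bullet/X))^k$ are coverings. Form the matching objects $M^U_n\coloneqq(\cosk_{n-1}(\mathcal U^\bullet/X))^n$ and $M^V_n\coloneqq(\cosk_{n-1}(\mathcal V^\bullet/X))^n$, together with the induced morphism $M^U_n\to M^V_n$, which is representable by open immersions (resp.\ embeddings of affinoid domains). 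The crucial point is that $M^U_n\to M^V_n$ again defines a weakly proper pair over $Y$, i.e.\ $M^U_n\Subset_Y M^V_n$. Indeed, the relative coskeleton $M^V_n$ is a finite limit assembled from the $\mathcal V^k$ ($k<n$) by iterated fibre products taken over $X$ and over the stacks $\mathcal V^k$, and $M^U_n$ is the analogous limit of the $\mathcal U^k$; all the bases occurring are separated over $Y$ — the stack $X$ because $f$ is proper, hence separated, and each $\mathcal V^k$ because it is a disjoint union of representable stacks, all objects of $\cC$ being separated. Decomposing these limits into their representable pieces and applying \cref{prop:product_of_wpp} (together with \cref{lem:shrink_wpp} and \cref{lem:variant_wpp_stability}) repeatedly propagates the relations $\mathcal U^k\Subset_Y\mathcal V^k$ to $M^U_n\Subset_Y M^V_n$.

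Granting this, \cref{def:canal_weakly_proper_pair} (resp.\ \cref{def:kanal_weakly_proper_pair}) applied to $M^U_n\to M^V_n$ over $Y$ provides a finite atlas $\{V^n_j\}_{j\in J}$ of $M^V_n$ and open subsets (resp.\ affinoid domains) $U^n_j\subset V^n_j\times_{M^V_n}M^U_n$ with $U^n_j\Subset_Y V^n_j$ such that $\{U^n_j\}_{j\in J}$ is an atlas of $M^U_n$. Put $\mathcal V^n\coloneqq\coprod_{j\in J}V^n_j$ and $\mathcal U^n\coloneqq\coprod_{j\in J}U^n_j$; by \cref{prop:effective_epimorphism_of_sheaves_via_pi_0} the maps $\mathcal V^n\to M^V_n$ and $\mathcal U^n\to M^U_n$ are effective epimorphisms, hence coverings. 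This gives $n$-truncated simplicial objects prolonging the previous ones, which meet the covering condition of \cref{def:hypercovering} in degree $n$, and $\mathcal U^n\Subset_Y\mathcal V^n$ by construction. Iterating over all $n\ge 0$, we obtain simplicial objects $\mathcal U^\bullet$ and $\mathcal V^\bullet$ over $X$ which satisfy the conditions of \cref{def:hypercovering} at every level, hence are $\tau$-hypercoverings of $X$, and for which $\mathcal U^n\Subset_Y\mathcal V^n$ for every $n\ge 0$.

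The main obstacle is the crucial point above: one has to write the relative coskeleton out explicitly as an iterated fibre product and keep careful track of the bases so that every application of \cref{prop:product_of_wpp} (or of its variants) occurs over a base separated over $Y$ — this is where the properness of $f$ and the separatedness of the objects of $\cC$ enter. The remaining verifications — that a disjoint union of weakly proper pairs is a weakly proper pair, that being representable by open immersions (resp.\ embeddings of affinoid domains) is stable under the fibre products and compositions involved, and the standard coskeletal bookkeeping in the construction of hypercoverings — are routine.
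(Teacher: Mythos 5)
Your proposal is correct and follows essentially the same route as the paper: a finite double atlas extracted from the weakly-proper-pair hypothesis in degree $0$, then an inductive propagation of the relation $\Subset_Y$ to the coskeletal matching objects via \cref{prop:product_of_wpp} and its variants, using separatedness of $X$ over $Y$. The only (harmless) divergence is that for $n\ge 1$ the paper takes $\mathcal U^{n+1}$ to be the matching object $(\cosk_n(\mathcal U^\bullet_{\le n}))^{n+1}$ itself --- already representable, being an iterated fibre product of representables --- which sidesteps the need to re-atlas at every level and to adjoin degenerate summands so that the degeneracy maps exist, a point your construction defers to ``routine bookkeeping'' and which the paper handles explicitly only in degree $1$.
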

\begin{proof}
We construct the two hypercoverings by successive refinements.
Since the identity of $X$ is a weakly proper pair over $Y$, we can choose a finite double atlas $\{U_i^0 \Subset_Y V_i^0\}_{i \in I_0}$ of $X$.
Denote by $U^0$ (resp.\ $V^0$) the disjoint union of $U_i^0$ (resp.\ $V_i^0$) over $i\in I_0$.
We have $U^0 \Subset_Y V^0$.
Since proper morphisms are separated, \cref{prop:product_of_wpp} shows that $U^0 \times_{X} U^0 \to V^0 \times_{X} V^0$ is a weakly proper pair over $Y$.
Choose finite atlases $\{U_i^1\}_{i \in I_1}$ and $\{V_i^1\}_{i \in I_1}$ of $U^0 \times_{X} U^0$ and $V^0 \times_{X} V^0$ respectively such that $U_i^1 \Subset_Y V_i^1$.
Set
\begin{gather*}
U^1 \coloneqq U^0 \sqcup \coprod_{i \in I_1} U_i^1, \\
V^1 \coloneqq V^0 \sqcup \coprod_{i \in I_1} V_i^1.
\end{gather*}
We have $U^1 \Subset_Y V^1$.
We define a morphism $u^1 \colon U^1 \to U^0 \times_{X} U^0$ (resp.\ $v^1 \colon V^1 \to V^0 \times_{X} V^0$) by taking the disjoint union of the atlas map with the diagonal embedding $U^0 \to U^0 \times_{X} U^0$ (resp.\ $V^0 \to V^0 \times_{X} V^0$).
We define the face maps $U^1 \rightrightarrows U^0$ (resp.\ $V^1 \rightrightarrows V^0$) to be the composition of the face maps $U^0\times_X U^0\rightrightarrows U^0$ (resp.\ $V^0\times_X V^0\rightrightarrows V^0$) with the map $u^1$ (resp.\ $v^1$).

Suppose now that we have already built two $n$-truncated augmented simplicial objects $\mathcal U^\bullet_{\le n}$ and $\mathcal V^\bullet_{\le n}$ such that $\mathcal U^m_{\le n} \Subset_Y \mathcal V^m_{\le n}$ for every $m \le n$.
Set $U^m \coloneqq \mathcal U^m_{\le n}$, $V^m \coloneqq \mathcal V^m_{\le n}$, and
\[
U^{n+1} \coloneqq \mathrm{cosk}_n(\mathcal U^\bullet_{\le n})^{n+1} = U^n \times_{U^{n-1}} U^n \times_{U^{n-1}} \cdots \times_{U^{n-1}} U^n .
\]
We define $V^{n+1}$ in a similar way.
Both are representable stacks.
Moreover, an iterated application of \cref{prop:product_of_wpp} shows that $U^{n+1} \Subset_Y V^{n+1}$. Proceeding by induction, we obtain the hypercovering we need.
\end{proof}

\begin{rem} \label{rem:triple_hypercover}
The same reasoning in the proof of \cref{lem:double_hypercover} shows the existence of three hypercoverings $\cU^\bullet, \cV^\bullet, \cW^\bullet$ of $X$ such that $\cU^n\Subset_Y \cV^n\Subset_Y \cW^n$ for every $n\ge 0$.
\end{rem}

\begin{prop} \label{prop:proper_direct_image_kanal_absolute}
Let $f\colon X\to Y$ be a proper morphism of \kanal stacks.
Assume $Y=\Sp A$ for a $k$-affinoid algebra $A$.
Assume moreover that the identity $X\to X$ defines a weakly proper pair over $Y$.
Let $\cF\in\Cohh(X)$.
Then the $A$-module $H^n(X,\cF)$ is of finite type for any integer $n$.
It is zero for $n<0$.
\end{prop}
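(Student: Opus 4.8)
\emph{Proof strategy.} The plan is to reduce the statement, by means of hypercoverings, to Kiehl's finiteness theorem for affinoid spaces. Since the identity of $X$ defines a weakly proper pair over $Y=\Sp A$, \cref{lem:double_hypercover} --- in the refined form of \cref{rem:triple_hypercover} --- provides hypercoverings $\cU^\bullet\Subset_Y\cV^\bullet\Subset_Y\cW^\bullet$ of $X$ together with simplicial maps $\cU^\bullet\to\cV^\bullet\to\cW^\bullet$ over $X$, all of whose terms are representable. Being finite disjoint unions and iterated fibre products of $k$-affinoid spaces over $k$-affinoid spaces, each of these terms is itself $k$-affinoid, and the construction realizes $\cU^n$ as an affinoid subdomain relatively compact in $\cV^n$ over $Y$, and $\cV^n\Subset_Y\cW^n$ likewise. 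Because $\bP=\bPqsm$ consists of flat morphisms, the pullback of $\cF\in\Cohh(X)$ to each term is again a coherent sheaf concentrated in degree $0$; as these terms are affinoid, coherent sheaves on them are acyclic (Tate), and $\Gamma(\cU^n,\cF)$ is a finite module over the affinoid algebra $\cO(\cU^n)$, in particular a Banach $A$-module, and similarly for $\cV^\bullet$ and $\cW^\bullet$.

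Next I would apply cohomological descent. The analytic analogue of \cite[1.3.7.2]{HAG-II}, valid because $\cF$ is a hypersheaf, together with the acyclicity just noted, collapses the descent spectral sequence $E_1^{s,t}=H^t(\cU^s,\cF)\Rightarrow H^{s+t}(X,\cF)$ and identifies $\rR\Gamma(X,\cF)$ with the totalisation $C^\bullet_\cU$ of the cosimplicial $A$-module $[n]\mapsto\Gamma(\cU^n,\cF)$; this is a complex of Banach $A$-modules concentrated in degrees $\ge 0$, whence already $H^n(X,\cF)=0$ for $n<0$. The same description holds with $\cU^\bullet$ replaced by $\cV^\bullet$ or $\cW^\bullet$, and restriction along the simplicial maps over $X$ produces morphisms of complexes $C^\bullet_\cW\to C^\bullet_\cV\to C^\bullet_\cU$, each of which refines the identity of $\rR\Gamma(X,\cF)$ and is therefore a quasi-isomorphism.

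It remains to prove finiteness of $H^n(X,\cF)$, and this is the heart of the matter. The crucial analytic input is that each component $\Gamma(\cV^n,\cF)\to\Gamma(\cU^n,\cF)$ is a \emph{completely continuous} map of Banach $A$-modules: using the commutative diagram attached to $\cU^n\Subset_Y\cV^n$ in the proof of \cref{lem:wpp_base_change}, the restriction $\cO(\cV^n)\to\cO(\cU^n)$ is a quotient of the completely continuous restriction $\cO(\bD^d_Y(1))\to\cO(\bD^d_Y(\epsilon))$ for suitable $d$ and $\epsilon<1$, and complete continuity then propagates to the induced map on sections of the coherent sheaf $\cF$ (complete continuity is stable under passage to quotients and composition with continuous maps). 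From the triple hypercovering one extracts a completely continuous endomorphism of $C^\bullet_\cV$ chain-homotopic to the identity --- equivalently, one appeals directly to Kiehl's finiteness lemma \cite{Kiehl_Endlichkeitssatz_1967}, to the effect that a completely continuous quasi-isomorphism of bounded-below complexes of $A$-Banach modules has finitely generated cohomology --- and concludes that $H^n(X,\cF)=H^n(C^\bullet_\cU)$ is a finite $A$-module for every $n$. The case of representable $X$ is the base of this argument: then $X$ is affinoid and proper over $Y$, so $\cO(X)$ is a finite $A$-module and the assertion follows from the acyclicity of coherent sheaves on affinoids, recovering Kiehl's theorem \cite{Kiehl_Endlichkeitssatz_1967} itself. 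The main obstacle is this last step --- verifying complete continuity of the restriction maps on coherent sections over the affinoid terms, and invoking Kiehl's functional-analytic criterion in the correct form; once this is in place, the stacky content of the statement has been entirely absorbed into the passage to an affinoid hypercovering, and no separate induction on the geometric level of $X$ is needed.
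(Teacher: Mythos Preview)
Your proposal is correct and follows essentially the same route as the paper: pass to an affinoid hypercovering, identify $\rR\Gamma(X,\cF)$ with its \v{C}ech complex via Tate acyclicity, and invoke Kiehl's finiteness criterion for the nuclear restriction quasi-isomorphism. The paper uses only the double hypercover $\cU^\bullet\Subset_Y\cV^\bullet$ of \cref{lem:double_hypercover} rather than the triple --- Kiehl's Korollar~1.5 in \cite{Kiehl_Endlichkeitssatz_1967} applies directly to a nuclear quasi-isomorphism between two complexes, so no third layer (and no endomorphism homotopic to the identity) is needed.
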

\begin{proof}
We follow closely the proof of \cite[Satz 2.6]{Kiehl_Endlichkeitssatz_1967} (see also \cite{Houzel_Espaces_analytiques_rigides_1995}). 
Let $U^\bullet \Subset_Y V^\bullet$ be the double hypercovering of $X$ constructed in \cref{lem:double_hypercover}.
For every $n\ge 0$, let $B^n$, $C^n$ be the $k$-affinoid algebras corresponding to $U^n$ and $V^n$ respectively.
Since $\cF\in\Cohh(X)$, by Tate's acyclicity theorem and \cite[§2.1]{Conrad_Non-archimedean_analytification_2009}, the sections $\cF(U^n)$ and $\cF(V^n)$ are respectively $B^n$-modules and $C^n$-modules of finite type.
Since $U^n\Subset_Y V^n$, there exists a Banach $A$-module $D^n$ and an epimorphism $D^n\to\cF(V^n)$ such that the composition of
\[ D^n\twoheadrightarrow\cF(V^n)\to\cF(U^n)\]
is a nuclear map.

Let $\Cech(U^\bullet,\cF)$ and $\Cech(V^\bullet,\cF)$ denote the Čech complexes of the sheaf $\cF$ with respect to the hypercoverings $U^\bullet$ and $V^\bullet$ respectively.
We deduce that for every $n\ge 0$, there exists a Banach $A$-module $E^n$ and an epimorphism $E^n\to\Cech^n(V^\bullet,\cF)$ such that the composition of
\[E^n\twoheadrightarrow\Cech^n(V^\bullet,\cF)\to\Cech^n(U^\bullet,\cF)\]
is a nuclear map.
Since the sheaf $\cF$ is hypercomplete by definition, the Čech complexes $\Cech(U^\bullet,\cF)$ and $\Cech(V^\bullet,\cF)$ both compute the cohomology of $\cF$.

So we have an isomorphism $H^n(V^\bullet,\cF)\xrightarrow{\sim} H^n(U^\bullet,\cF)$.
Therefore, using \cite[Korollar 1.5]{Kiehl_Endlichkeitssatz_1967}, both are $A$-modules of finite type.
Since the Čech complexes vanish in negative degrees, the $A$-module $H^n(X,\cF)$ vanishes in negative degrees as well.
\end{proof}

\begin{prop} \label{prop:proper_direct_image_kanal}
Let $f\colon X\to Y$ be a proper morphism of \kanal stacks.
Let $\cF\in\Cohh(X)$.
Then $\rR f_*(\cF)\in\Coh^+(Y)$.
\end{prop}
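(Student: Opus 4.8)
The plan is to mirror the proof of \cref{thm:proper_direct_image_alg} and to reduce everything to the absolute statement \cref{prop:proper_direct_image_kanal_absolute}. Since the formation of $\Coh^+$ and of $\rR f_*$ can both be tested on an atlas of the target, the assertion is local on $Y$, so I may assume $Y=\Sp A$ for a $k$-affinoid algebra $A$. Next, by \cref{def:analytic_weakly_proper} there is an atlas $\{Y_i\}_{i\in I}$ of $Y$, which I may take to consist of affinoid spaces (each $Y_i\to Y$ being quasi-smooth), such that the identity of $X\times_Y Y_i$ defines a weakly proper pair over $Y_i$. Replacing $Y$ by $Y_i$, $X$ by $X\times_Y Y_i$, and $\cF$ by its (flat, hence underived) pullback — which still lies in $\Coh^+$ — I reduce to the case in which $Y=\Sp A$ and the identity $X\to X$ defines a weakly proper pair over $Y$.

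In this situation I would run the same dévissage as in the algebraic case. Let $\cK\subseteq\Coh^+(X)$ be the full subcategory of objects $\cG$ with $\rR f_*\cG\in\Coh^+(Y)$. As $\rR f_*$ is an exact functor between stable $\infty$-categories, $\cK$ is closed under equivalences, loops, suspensions and extensions, and as $\rR f_*$ is left $t$-exact it satisfies the last hypothesis of \cref{lem:devissage}. Hence, applying \cref{lem:devissage} with $\cA_0=\Cohh(X)$ (up to a shift), it suffices to prove that $\rR f_*\cF\in\Coh^+(Y)$ for every $\cF\in\Cohh(X)$.

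For such an $\cF$, \cref{prop:proper_direct_image_kanal_absolute} tells us that $H^n(X,\cF)=\rH^n\bigl((\rR f_*\cF)(Y)\bigr)$ is a finite $A$-module for every $n$ and vanishes for $n<0$. To upgrade this to $\rR f_*\cF\in\Coh^+(Y)$, I would restrict along the atlas $\{\id_Y\}$ of the representable stack $Y$ and show that each cohomology sheaf $\rR^n f_*\cF$ is a coherent $\cO_Y$-module. Given a quasi-smooth morphism $\Sp C\to Y=\Sp A$, an application of \cref{lem:wpp_base_change} to a finite double atlas $\{U_i\Subset_Y V_i\}$ of $X$ shows that $X\times_Y\Sp C\to\Sp C$ is again proper with the identity of $X\times_Y\Sp C$ defining a weakly proper pair over $\Sp C$, so \cref{prop:proper_direct_image_kanal_absolute} again gives finiteness of the $C$-modules $H^n(X\times_Y\Sp C,\cF_{\Sp C})$, where $\cF_{\Sp C}$ denotes the pullback of $\cF$. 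Combining this with flat base change along $\Sp C\to\Sp A$ identifies $(\rR^n f_*\cF)(\Sp C)$ with $H^n(X,\cF)\otimes_A C$, so that $\rR^n f_*\cF$ is the coherent sheaf associated to the finite $A$-module $H^n(X,\cF)$ and vanishes for $n<0$; this is exactly $\rR f_*\cF\in\Coh^+(Y)$.

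The formal ingredients — locality on the target, stability of weak properness under base change, and the $\infty$-categorical dévissage — I expect to be routine. The main obstacle is the step in the previous paragraph: passing from the abstract finiteness of the cohomology modules $H^n(X,\cF)$ provided by \cref{prop:proper_direct_image_kanal_absolute} to the coherence of $\rR f_*\cF$ as an object of $\cO_Y\Mod$ requires a flat (equivalently, quasi-smooth) base-change property for $\rR f_*$. I would establish this separately, either by a direct \v{C}ech computation with the double hypercover of \cref{lem:double_hypercover} — whose terms are affinoid and whose sections are finite modules — or by invoking the coherence part of Kiehl's theorem for proper morphisms of affinoid spaces. Along the way one should also check that coherence of a sheaf on a stack may be tested on an arbitrary $\bP$-morphism from a representable, not only on a prescribed atlas, which is what legitimizes restricting $\cF$ to the affinoid terms of such a hypercover.
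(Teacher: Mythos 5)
Your overall strategy coincides with the paper's: localize on $Y$ to reduce to $Y=\Sp A$ with $\id_X$ a weakly proper pair over $Y$, invoke \cref{prop:proper_direct_image_kanal_absolute} for finiteness of the $A$-modules $H^n(X,\cF)$, and then use a base-change statement to identify $\rR f_*\cF$ with the coherent sheaves attached to these modules. (The dévissage paragraph is superfluous here, since the proposition is stated only for $\cF\in\Cohh(X)$; the extension to $\Coh^+$ happens later, in \cref{thm:proper_direct_image_anal}.)

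The issue is that the step you defer --- the isomorphism $H^n(X\times_Y U,\cF)\simeq H^n(X,\cF)\otimes_A B$ for affinoid domains (and quasi-smooth maps) $U=\Sp B\to\Sp A$ --- is the entire mathematical content of the proposition beyond \cref{prop:proper_direct_image_kanal_absolute}, and the first route you suggest for it does not work as stated. A ``direct \v{C}ech computation with the double hypercover'' runs into the problem that the terms $\Cech^n(U^\bullet,\cF)$ are finite modules over the affinoid algebras $B^n$ of $U^n$, not over $A$, so base change to $\Sp B$ replaces them by completed tensor products $\Cech^n(U^\bullet,\cF)\,\hat{\otimes}_A B$, and completed tensor products do not commute with taking cohomology of a complex in general. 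This is precisely the difficulty Kiehl confronts: the paper resolves it by running an induction on the dimension of $Y$ following the proof of Kiehl's Satz 3.5, using the finiteness already established in \cref{prop:proper_direct_image_kanal_absolute} to control the failure of exactness. Your second suggested route (quoting Kiehl's theorem for proper morphisms of affinoid spaces) is closer in spirit, but Kiehl's theorem is stated for rigid spaces rather than stacks, so what is really needed is to adapt his dimension-induction argument to the \v{C}ech complexes of the hypercovers of \cref{lem:double_hypercover} --- which is what the paper does. Until that base-change isomorphism is actually proved, the argument is incomplete.
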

\begin{proof}
The statement being local on $Y$, we can assume that $Y=\Sp A$ for some $k$-affinoid algebra $A$ and that the identity $X\to X$ defines a weakly proper pair over $Y$.
By induction on the dimension of $Y$ as in the proof of \cite[Satz 3.5]{Kiehl_Endlichkeitssatz_1967},
for any affinoid domain $U = \Sp B$ in $Y$, any integer $n$, we have
\[ H^n(X\times_Y U,\cF)\simeq H^n(X,\cF)\otimes_A B.\]
Combining with \cref{prop:proper_direct_image_kanal_absolute}, we have proved the statement.
\end{proof}

\begin{lem} \label{lem:absolute_double_hypercover}
Let $f \colon X \to Y$ be a proper morphism of $\mathbb C$-analytic stacks, with $Y$ representable.
Assume moreover that the identity $X\to X$ defines a weakly proper pair over $Y$.
For every point $y_0 \in Y$ and every triple of open neighborhoods $W_2 \Subset W_1 \Subset W_0$ of $y_0$, there exist hypercoverings $\mathcal U_i^\bullet$ of $X \times_Y W_i$ for $i = 0,1,2$ such that $\mathcal U_2^\bullet \Subset \mathcal U_1^\bullet \Subset \mathcal U_0^\bullet$.
\end{lem}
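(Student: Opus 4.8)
The plan is to obtain the three nested hypercovers by pulling back, along the open immersions $W_i\hookrightarrow Y$, a single triple hypercover of $X$ over $Y$, pairing its $i$-th term with $W_i$. First I would observe that the hypotheses of \cref{lem:double_hypercover}, hence of \cref{rem:triple_hypercover}, are in force ($f$ is proper, $Y$ is representable, and $\id_X$ defines a weakly proper pair over $Y$), so there are hypercoverings $\cU^\bullet$, $\cV^\bullet$, $\cW^\bullet$ of $X$ together with maps of simplicial objects $\cU^\bullet\to\cV^\bullet\to\cW^\bullet$ which are levelwise open immersions and satisfy $\cU^n\Subset_Y\cV^n$ and $\cV^n\Subset_Y\cW^n$ for every $n\ge 0$; in particular the closures $\overline{\cU^n}$ of $\cU^n$ in $\cV^n$, and $\overline{\cV^n}$ of $\cV^n$ in $\cW^n$, are proper over $Y$ (\cref{def:canal_relatively_compact}). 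I would then set
\[ \mathcal U_2^\bullet\coloneqq\cU^\bullet\times_Y W_2,\qquad \mathcal U_1^\bullet\coloneqq\cV^\bullet\times_Y W_1,\qquad \mathcal U_0^\bullet\coloneqq\cW^\bullet\times_Y W_0, \]
the fiber products being formed over the structure maps $\cU^n\to X\xrightarrow{f}Y$, and so on. As $\mathcal U_i^\bullet$ is the base change of one of $\cU^\bullet,\cV^\bullet,\cW^\bullet$ along $X\times_Y W_i\to X$, and hypercoverings are stable under base change — relative coskeleta commute with base change, and effective epimorphisms of sheaves are stable under pullback (\cref{prop:effective_epimorphism_of_sheaves_via_pi_0}) — each $\mathcal U_i^\bullet$ is a hypercovering of $X\times_Y W_i$. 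The maps $\cU^\bullet\to\cV^\bullet\to\cW^\bullet$ together with the inclusions $W_2\hookrightarrow W_1\hookrightarrow W_0$ induce maps of simplicial objects $\mathcal U_2^\bullet\to\mathcal U_1^\bullet\to\mathcal U_0^\bullet$, which are levelwise open immersions (fiber products of open immersions).

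The last step is to verify $\mathcal U_2^n\Subset\mathcal U_1^n$ and $\mathcal U_1^n\Subset\mathcal U_0^n$ for each $n$; I would argue the first, the second being identical. Since $\cU^n\times_Y W_2\to\cV^n\times_Y W_1$ is already an open immersion, it is enough that the closure of $\cU^n\times_Y W_2$ in $\cV^n\times_Y W_1$ be compact. A point of this closure has first coordinate in $\overline{\cU^n}$ and second coordinate in the closure $\overline{W_2}$ of $W_2$ in $W_1$, so the closure lies in $\overline{\cU^n}\times_Y\overline{W_2}\cong\overline{\cU^n}\cap p^{-1}(\overline{W_2})$, where $p\colon\overline{\cU^n}\to Y$ is the proper structure morphism. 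As $W_2\Subset W_1$, the set $\overline{W_2}$ is a compact — hence closed — subset of $Y$, so properness of $p$ makes $\overline{\cU^n}\cap p^{-1}(\overline{W_2})$ compact; being closed therein, the closure of $\cU^n\times_Y W_2$ is compact as well. Substituting $(\cV^n,\cW^n,W_1,W_0)$ for $(\cU^n,\cV^n,W_2,W_1)$ yields $\mathcal U_1^n\Subset\mathcal U_0^n$.

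The real work sits inside the appeal to \cref{rem:triple_hypercover}, and this is where I expect the main obstacle: one must run the successive-refinement construction of \cref{lem:double_hypercover} with three levels, building the face and degeneracy maps of $\cU^\bullet$, $\cV^\bullet$, $\cW^\bullet$ in parallel, interpolating relatively compact pairs over the Stein space $Y$ (this is where the exhaustion of $Y$ enters), and invoking \cref{prop:product_of_wpp} to propagate the relations $\cU^n\Subset_Y\cV^n\Subset_Y\cW^n$ from simplicial degree to simplicial degree. Granting the triple hypercover together with its simplicial maps, the base-change-and-shift and the compactness checks above are elementary, and the only remaining technical point, the stability of hypercoverings under base change, is routine. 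Alternatively one could bypass \cref{rem:triple_hypercover} entirely and rerun the construction of \cref{lem:double_hypercover} directly over $X\times_Y W_0$, carrying the shifted neighborhoods $W_2\Subset W_1\Subset W_0$ along at each degree; the base-change route above is simply shorter.
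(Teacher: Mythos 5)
Your proposal is correct and follows the paper's proof essentially verbatim: the paper likewise takes the triple hypercover $\mathcal V_2^\bullet \Subset_Y \mathcal V_1^\bullet \Subset_Y \mathcal V_0^\bullet$ of $X$ from \cref{rem:triple_hypercover} and sets $\mathcal U_i^\bullet \coloneqq \mathcal V_i^\bullet \times_Y W_i$. The only difference is that your hand-done compactness check for $\mathcal U_2^n \Subset \mathcal U_1^n$ is exactly the content of \cref{lem:variant_wpp_stability}, which the paper simply cites at that step.
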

\begin{proof}
In virtue of \cref{lem:double_hypercover} and \cref{rem:triple_hypercover} we can choose a triple hypercovering $\mathcal V_2^\bullet \Subset_Y \mathcal V_1^\bullet \Subset_Y \mathcal V_0^\bullet$ of $X$.
Let $W_2 \Subset W_1 \Subset W_0$ be arbitrary open neighborhoods of $y_0$ in $Y$, \cref{lem:variant_wpp_stability} shows that
\begin{gather*}
\mathcal V_2^\bullet \times_Y W_2 \Subset \mathcal V_1^\bullet \times_Y W_1, \\
\mathcal V_1^\bullet \times_Y W_1 \Subset \mathcal V_0^\bullet \times_Y W_0.
\end{gather*}
Setting $\mathcal U_i^\bullet \coloneqq \mathcal V_i^\bullet \times_Y W_i$, the lemma is proven.
\end{proof}

\begin{prop} \label{prop:proper_direct_image_canal}
Let $f\colon X\to Y$ be a proper morphism of \canal stacks.
Let $\cF\in\Cohh(X)$.
Then $\rR f_*(\cF)\in\Coh^+(Y)$.
\end{prop}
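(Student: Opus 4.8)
The plan is to transport the non-archimedean argument of Propositions~\ref{prop:proper_direct_image_kanal_absolute} and~\ref{prop:proper_direct_image_kanal} to the complex setting, with Tate's acyclicity theorem replaced by Cartan's Theorem~B for Stein spaces and Kiehl's finiteness criterion replaced by the Fréchet-sheaf finiteness theorem underlying the Kiehl--Verdier proof of Grauert's coherence theorem \cite{Grauert_Theorem_der_analytischen_1960,Kiehl_Verdier_1971,Forster_Knorr_1971}. First I would reduce to $Y$ representable: the conclusion $\rR f_*\cF\in\Coh^+(Y)$ may be tested after pulling back along a smooth atlas of $Y$, and since $\rR f_*$ is compatible with smooth base change and $\Cohh$ is preserved by smooth pullback, we may assume $Y$ is a Stein space. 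As $f$ is proper, hence weakly proper, \cref{def:analytic_weakly_proper} lets me shrink $Y$ further so that the identity $X\to X$ defines a weakly proper pair over $Y$. Coherence being local on $Y$, it then suffices to show that for every $y_0\in Y$ and every $q$ the sheaf $\rR^q f_*\cF$ is coherent near $y_0$; vanishing for $q<0$ is automatic from left $t$-exactness of $\rR f_*$.

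Next I would set up two Čech computations and a nuclear comparison between them. Fix $y_0$, choose Stein open neighborhoods $W_2\Subset W_1\Subset W_0$ of $y_0$, and apply \cref{lem:absolute_double_hypercover} (which rests on \cref{lem:double_hypercover} and \cref{rem:triple_hypercover}) to get hypercoverings $\cU_2^\bullet\Subset\cU_1^\bullet\Subset\cU_0^\bullet$ of $X\times_Y W_i$ with each $\cU_i^n$ representable, hence a Stein space. Since $\cF\in\Cohh(X)$, its restriction to each $\cU_i^n$ is a coherent sheaf, so by Cartan's Theorems~A and~B the module $\Gamma(\cU_i^n,\cF)$ is a finitely generated Fréchet module over $\cO(\cU_i^n)$ and the $\cU_i^n$ have no higher coherent cohomology. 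Arguing as in the proof of \cref{prop:proper_direct_image_kanal_absolute} --- $\cF$ is a hypersheaf and the hypercover terms are acyclic --- the Čech complexes $\Cech^\bullet(\cU_i^\bullet,\cF)$ compute $\rR\Gamma(X\times_Y W_i,\cF)=\rR\Gamma(W_i,\rR f_*\cF)$ and vanish in negative degrees. The relations $\cU_2^n\Subset\cU_1^n$ (and $\cU_1^n\Subset\cU_0^n$) of relatively compact Stein spaces, together with coherence of $\cF$ and the Montel property of spaces of holomorphic functions, furnish for each $n$ nuclear, $\cO(W_1)$-linear (resp.\ $\cO(W_0)$-linear) restriction maps between the corresponding Čech terms, compatible with the differentials --- the exact analogue of the nuclear maps produced by relative compactness in the non-archimedean case.

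Then I would invoke the finiteness theorem of Grauert/Kiehl--Verdier/Forster--Knorr type \cite{Kiehl_Verdier_1971,Forster_Knorr_1971,Houzel_Espaces_analytiques_relatifs_1973}: a bounded-below complex of $\cO_Y$-modules near $y_0$, levelwise a quotient of a free Fréchet sheaf, equipped with a nuclear comparison to a second such complex as above, has coherent cohomology sheaves on a neighborhood of $y_0$, and there the comparison becomes an isomorphism. Applied --- degree by degree, after a stupid truncation of the Čech complex if one prefers a bounded statement --- to the complexes above, which compute $\rR\Gamma(W_i,\rR f_*\cF)$ levelwise over the Stein base, this shows that each $\rR^q f_*\cF$ is coherent near $y_0$ and zero for $q<0$. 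Since $y_0$ was arbitrary and the assertion was local on $Y$, this gives $\rR f_*\cF\in\Coh^+(Y)$ in the sense of \cref{def:coherent_sheaf_on_stacks}.

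The main obstacle, as in the classical proof but now filtered through the hypercover, is making the nuclear comparison precise: one must check that the $\Gamma(\cU_i^n,\cF)$ organize into a complex of topological $\cO_Y$-modules with continuous $\cO_Y$-linear differentials and restriction maps, that the relative compactness $\cU_2^n\Subset\cU_1^n$ genuinely yields a \emph{nuclear} restriction in this $\cO_Y$-parametrized setting --- this is where coherence of $\cF$ and the local structure of Stein spaces enter, as in \cite{Grauert_Theorem_der_analytischen_1960,Forster_Knorr_1971} --- and that the finiteness theorem is available with parameters in $Y$ rather than merely over $\C$. All of this is classical for complex spaces; the content of the proposition is that the hypercover reduces the stacky situation to precisely this input, uniformly in the cohomological degree.
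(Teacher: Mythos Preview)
Your proposal is correct and follows essentially the same route as the paper: reduce to a representable Stein base with the identity of $X$ a weakly proper pair, build the triple hypercover via \cref{lem:absolute_double_hypercover}, compute cohomology by the \v{C}ech complexes using Cartan's Theorem~B, and conclude by a parametrized nuclear-finiteness argument. The one substantive difference is precisely where you flag the obstacle: rather than invoking Kiehl--Verdier or Forster--Knorr in black-box form, the paper embeds the base locally as a closed subspace of a polydisk $D\subset\mathbb{C}^m$, pushes the hypercover pieces forward along closed immersions into Stein opens of $\mathbb{C}^n\times D$, and then applies Douady's framework of \emph{fully transverse} $\mathcal O(D)$-modules and $\mathcal O(D)$-\emph{subnuclear} restriction maps \cite{Douady_Proper_1973}; Douady's Th\'eor\`eme~2 then supplies the bounded complex of free $\mathcal O$-modules quasi-isomorphic to the truncated \v{C}ech complex, which is the parametrized finiteness statement you were looking for.
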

\begin{proof}
The statement being local on $Y$, we can assume that $Y=S$ is representable and that the identity $X\to X$ defines a weakly proper pair over $S$.
Fix a point $s_0 \in S$, it suffices to prove that $\rR^n f_* \cF$ is coherent in a neighborhood of $s_0$.
We can assume that $S_0 \coloneqq S$ admits a closed embedding in an open subset $\Omega$ of $\mathbb C^m$ for some $m$.
Let $\rB(s_0, R_2) \Subset \rB(s_0, R_1) \Subset \Omega$ be polydisks and set $S_i \coloneqq S \cap \rB(s_0, R_i)$ for $i = 1,2$.
We have $S_1 \Subset S_0$.
Invoking \cref{lem:absolute_double_hypercover}, we obtain hypercoverings $\mathcal U_i^\bullet$ of $S_i \times_Y X$ for $i = 0,1,2$ which satisfy $\mathcal U_2^\bullet \Subset \mathcal U_1^\bullet \Subset \mathcal U_0^\bullet$.

Now fix a degree $r$ and let $U_\alpha'' \Subset U_\alpha' \Subset U_\alpha$ be connected components of $\mathcal U_2^r$, $\mathcal U_1^r$ and $\mathcal U_0^r$ respectively.
Choose a closed embedding $i_\alpha \colon U_\alpha \to \Omega_\alpha$, where $\Omega_\alpha$ is an Stein open subset of $\C^n$ for some $n$. Since $U_\alpha' \Subset U_\alpha$, we can find an open subset $\Omega_\alpha' \Subset \Omega_\alpha$ such that $U_\alpha' = \Omega_\alpha' \cap U_\alpha$.
Observe that in this way $U_\alpha'$ becomes a closed subspace of $\Omega_\alpha'$.
We can therefore find a Stein neighborhood $W$ of $U_\alpha'$ contained in $\Omega_\alpha'$.
Since the closure of such a neighborhood in $\Omega_\alpha$ is closed inside $\widebar{\Omega_\alpha'}$, we see that $W \Subset \Omega_\alpha$.
In other words, we can assume that $\Omega_\alpha'$ is a Stein neighborhood of $U_\alpha'$.
Reasoning in the same way, we find a third Stein open subset $\Omega_\alpha'' \Subset \Omega_\alpha'$ such that $U_\alpha'' = \Omega_\alpha'' \cap U_\alpha$.

Denote by $j \colon S_2 \to B(s_0, R_2)$ the given embeddings and introduce the sheaf
\[
\cF_\alpha \coloneqq (i_\alpha \times (j \circ f))_*(\cF) .
\]
Observe that $\cF_\alpha$ is coherent because $i_\alpha \times (j \circ f)$ is a closed immersion of \emph{representable} \canal stacks.
Let $D \Subset B(s_0,R_2)$ be a concentric polydisk.
Using \cite[Proposition 2]{Douady_Proper_1973} we conclude that
\[
\cF(U_\alpha'' \times_X f^{-1}(D)) = \cF_\alpha(\Omega''_\alpha \times D)
\]
is a fully transverse $\mathcal O(D)$-module,  and the same goes for
\[\cF(U_\alpha' \times_X f^{-1}(D)) = \cF_\alpha(\Omega'_\alpha \times D).\]
Moreover, the restriction map
\[
\cF(U_\alpha' \times_X f^{-1}(D)) \to \cF(U_\alpha'' \times_X f^{-1}(D))
\]
is $\mathcal O(D)$-subnuclear by \cite[Proposition 4]{Douady_Proper_1973}. For every Stein open subset $V \subset D$, we deduce from the proof of \cite[Proposition 2]{Douady_Proper_1973} that \[
\mathcal O(V) \hat{\otimes}_{\mathcal O(D)} \cF(U_\alpha \times_X f^{-1}(D)) = \cF(U_\alpha \times_X f^{-1}(V)) .
\]
It follows that the \v{C}ech complex $\check{\mathcal C}(\mathcal U_2^\bullet \times_X f^{-1}(D), \cF)$ is a fully transverse $\mathcal O(D)$-module, and the same goes for $\check{\mathcal C}(\mathcal U_1^\bullet \times_X f^{-1}(D), \cF)$.

Let $V \subset S_2$ be a Stein open subset.
Since both $V \times_S \mathcal U_2^\bullet$ and $V \times_S \mathcal U_1^\bullet$ are acyclic hypercoverings, we obtain
isomorphisms
\[
\mathrm H^\bullet (\check{\mathcal C}(V \times_S \mathcal U_1^\bullet, \cF)) \simeq \mathrm H^\bullet(V \times_S X, \cF) \simeq \mathrm H^\bullet(\check{\mathcal C}(V \times_S \mathcal U_2^\bullet, \cF)).
\]
It follows that the restriction map
\[
\cF((D \cap S) \times_S \mathcal V^k) \to \cF((D \cap S) \times_S \mathcal U^k)
\]
is a quasi-isomorphism which is $\mathcal O(D)$-subnuclear in every degree.

Set $D = D(s_0,R)$ and $D_t = D(s_0, tR)$ for $0 < t < 1$.
\cite[Théorème 2]{Douady_Proper_1973}  shows that for every integer $N$ and every $t < 1$, there is a complex of finitely generated free $\mathcal O$-modules $\mathcal L_N^\bullet$ and a $\mathcal O(D_t)$-linear quasi-isomorphism of complexes
\[
\mathcal L_N^\bullet(D_t) \to \tau_{\le N} \check{\mathcal C}((D_t \cap W) \times_Y \mathcal V^\bullet, \cF).
\]
When $V \subset D_t$ is an arbitrary Stein open subset, it follows from \cite[Proposition 3]{Douady_Proper_1973}  that
\[
\mathcal L_N^\bullet(V) \to \tau_{\le N} \check{\mathcal C}((V \cap W) \times_Y \mathcal U_2^\bullet, \cF)
\]
is a quasi-isomorphism.
At this point, we conclude along the same lines as \cite[§7]{Douady_Proper_1973}.

\end{proof}

%

\begin{thm} \label{thm:proper_direct_image_anal}
Let $f \colon X \to Y$ be a proper morphism of analytic stacks.
The derived pushforward functor
\[
\rR f_* \colon \cO_X\Mod \longrightarrow \cO_Y\Mod
\]
sends the full subcategory $\Coh^+(X)$ to the full subcategory $\Coh^+(Y)$.
\end{thm}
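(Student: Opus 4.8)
The plan is to deduce the general statement from the case of a coherent sheaf concentrated in degree zero by a dévissage argument, exactly as in the algebraic case of \cref{thm:proper_direct_image_alg}, now feeding in Propositions~\ref{prop:proper_direct_image_kanal} and \ref{prop:proper_direct_image_canal} as the input in the heart. Concretely: for $\cF\in\Cohh(X)$ one already has $\rR f_*\cF\in\Coh^+(Y)$ --- this is \cref{prop:proper_direct_image_kanal} in the non-archimedean case and \cref{prop:proper_direct_image_canal} in the complex-analytic case. Both propositions are stated for an arbitrary proper morphism of analytic stacks, so no preliminary reduction (to $Y$ representable, or to the identity of $X$ defining a weakly proper pair) needs to be made here; those reductions are carried out inside the two propositions.

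Next I would set $\cK\subseteq\Coh^+(X)$ to be the full subcategory spanned by the objects $\cF$ with $\rR f_*\cF\in\Coh^+(Y)$, and verify the hypotheses of \cref{lem:devissage}. Since $\rR f_*$ is an exact functor of stable $\infty$-categories (\cref{prop:adjunction_Lfpull_Rfpush} and the remark following it), $\cK$ is stable under equivalences, loops, suspensions and extensions. Since $\rR f_*$ is moreover left $t$-exact, the same reasoning as in \cref{thm:proper_direct_image_alg} shows that $\cK$ satisfies the final hypothesis of \cref{lem:devissage}: for a bounded-below object the truncations $\tau_{\le n}\cF$ converge to $\cF$, and left $t$-exactness gives $\rH^j(\rR f_*\tau_{\le n}\cF)\xrightarrow{\sim}\rH^j(\rR f_*\cF)$ for $n\ge j$, so if every $\tau_{\le n}\cF$ lies in $\cK$ then so does $\cF$. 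Finally, every cohomology sheaf of an object of $\Coh^+(X)$ lies in $\Cohh(X)$ (flat pullback along an atlas commutes with the formation of cohomology), so \cref{lem:devissage}, applied with $\cT=\cO_X\Mod$, $\cA_0=\Cohh(X)$ and $\cT_0^+=\Coh^+(X)$ up to shift ($\cK$ being shift-stable), together with $\cK\supseteq\cA_0$ from the first paragraph, yields $\cK=\Coh^+(X)$. This is the assertion.

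I do not expect a genuine obstacle in this argument: all the hard analytic input --- the nuclear and subnuclear restriction maps, the double and triple hypercovers $\cU^\bullet\Subset_Y\cV^\bullet\Subset_Y\cW^\bullet$, and the finiteness theorems of Kiehl and Douady --- has already been absorbed into Propositions~\ref{prop:proper_direct_image_kanal} and \ref{prop:proper_direct_image_canal}, and what remains is the purely formal dévissage already performed for algebraic stacks. The single point deserving care is the verification that $\cK$ meets the last hypothesis of \cref{lem:devissage}, i.e.\ that $\rR f_*$ is compatible with the filtration of a bounded-below complex by its truncations; this is exactly where left $t$-exactness of $\rR f_*$ (rather than plain exactness) is used, and it is the reason the theorem is stated for $\Coh^+$ rather than for a more restrictive class of sheaves.
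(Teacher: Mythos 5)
Your proposal is correct and follows essentially the same route as the paper: the paper's proof likewise reduces to Propositions~\ref{prop:proper_direct_image_kanal} and \ref{prop:proper_direct_image_canal} for objects of $\Cohh(X)$ and then applies the dévissage of \cref{lem:devissage}, with the last hypothesis verified exactly as you describe via the left $t$-exactness of $\rR f_*$ and the induced isomorphisms $\rH^j(\rR f_*\tau_{\le i}\cF)\simeq\rH^j(\rR f_*\cF)$ for $j\le i$. Your observation that the preliminary localization on $Y$ is already absorbed into the two propositions is accurate and only a cosmetic difference from the paper's presentation.
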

\begin{proof}
The statement being local on the target, we can assume that $Y$ is representable and that the identity morphism $X\to X$ defines a weakly proper pair over $Y$.
Then the theorem follows from \cref{lem:devissage}, \cref{prop:proper_direct_image_canal} and \cref{prop:proper_direct_image_kanal}.
\end{proof}

\section{Analytification functors} \label{sec:analytification}

\subsection{Analytification of algebraic stacks}

In this section, we define the analytification of algebraic stacks locally finitely presented over $A$, where $A$ is either a Stein algebra or a $k$-affinoid algebra.
We use the various geometric contexts introduced in \cref{sec:examples}.

Let $A$ be a Stein algebra, that is, the algebra of functions on a Stein \canal space $S$.
The analytification functor in \cite[§ VIII]{Hakim_Topos_1972} induces a morphism of geometric contexts
\[(-)^\mathrm{an} \colon (\Afflfp_A, \tauet,\bPsm) \to (\Stn_S, \tauet,\bPsm),\]
where $\Stn_S$ denotes the category of Stein \canal spaces over $S$.

By \cref{lem:u_s_preserves_geometric_stacks}, we obtain a complex analytification functor for sheaves
\[
(-)^\mathrm{an} \colon \Sh(\Afflfp_A, \tauet)^\wedge \to \Sh(\Stn_S, \tauet)^\wedge
\]
which preserves geometric stacks.
We obtain the absolute case by setting $A=\C$.

In the \kanal case, we let $A$ be a $k$-affinoid algebra and $S=\Sp A$.
Similarly, the non-archimedean analytification functor in \cite{Berkovich_Spectral_1990} induces a non-archimedean analytification functor for sheaves
\[
(-)^\mathrm{an} \colon \Sh(\Afflfp_A, \tauet)^\wedge \to \Sh(\An_S, \tauqet)^\wedge
\]
which preserves geometric stacks,
where $\An_S$ denotes the category of \kanal spaces over $S$.
We obtain the absolute case by setting $A=k$.

\begin{lem} \label{lem:smooth_surjective_effective}
Let $f \colon X \to Y$ be a smooth (resp.\ quasi-smooth) and surjective morphism of \canal (resp.\ \kanal) stacks.
Then $f$ is an effective epimorphism.
\end{lem}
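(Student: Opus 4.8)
The plan is to combine \cref{prop:effective_epimorphism_of_sheaves_via_pi_0} and \cref{cor:lifting}. First I observe that it suffices to produce a morphism of sheaves $W\to X$ for which the composite $W\to X\xrightarrow{f}Y$ is an effective epimorphism. Indeed, applying $\pi_0^s$ and \cref{prop:effective_epimorphism_of_sheaves_via_pi_0}, the induced map $\pi_0^s(W)\to\pi_0^s(Y)$ is then an epimorphism of sheaves of sets; as it factors through $\pi_0^s(X)$, the map $\pi_0^s(X)\to\pi_0^s(Y)$ is an epimorphism, and \cref{prop:effective_epimorphism_of_sheaves_via_pi_0} shows that $f$ is an effective epimorphism.

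To build such a $W$ I would use the definition of surjectivity of $f$: choose an atlas $\{Y_i\}_{i\in I}$ of $Y$ and, for each $i$, an atlas $\{U_{ij}\}_{j\in J_i}$ of $X\times_Y Y_i$ with $\coprod_j U_{ij}\to Y_i$ surjective on points. Since $f$ is smooth (resp.\ quasi-smooth), the ``for any atlas'' clause of \cref{rem:property_of_morphisms} forces every structure morphism $U_{ij}\to Y_i$ to be smooth (resp.\ quasi-smooth). Set $W\coloneqq\coprod_{i\in I}\coprod_{j\in J_i}h_{U_{ij}}$, equipped with the map $W\to X$ induced by $U_{ij}\to X\times_Y Y_i\xrightarrow{\mathrm{pr}_X}X$. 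The composite $W\to X\to Y$ then also factors as $W\xrightarrow{c}\coprod_i h_{Y_i}\xrightarrow{b}Y$, where $b$ is the total morphism of the atlas $\{Y_i\}$ and is therefore an effective epimorphism. Since effective epimorphisms are stable under coproducts and composition (\cite[\S 6.2.3]{HTT}), it is enough to check that each $c_i\colon\coprod_{j}h_{U_{ij}}\to h_{Y_i}$ is an effective epimorphism.

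For this I would invoke \cref{cor:lifting}. Fix $i$, an object $T\in\cC$, and a morphism $h_T\to h_{Y_i}$, that is, a morphism $T\to Y_i$ in $\cC$ (the site being subcanonical). Given a point $t_0\in T$ with image $y_0\in Y_i$, joint surjectivity of $\{U_{ij}\to Y_i\}_j$ provides an index $j$ and a point $u_0\in U_{ij}$ over $y_0$. Because $\bP$ is stable under pullback, the projection $T\times_{Y_i}U_{ij}\to T$ is smooth (resp.\ quasi-smooth), and it has a point, namely $(t_0,u_0)$, lying over $t_0$; hence it admits a section over a $\tau$-neighbourhood $T_{t_0}\to T$ of $t_0$ --- this is precisely the local-section property of $\bP$-morphisms already used in the proof of \cref{lem:small_sites}. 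Composing with $T\times_{Y_i}U_{ij}\to U_{ij}$ yields a factorization of $h_{T_{t_0}}\to h_T\to h_{Y_i}$ through $h_{U_{ij}}$, and hence through $\coprod_j h_{U_{ij}}$. In the complex case the $T_{t_0}$ may be taken Stein, so $\{T_{t_0}\to T\}_{t_0\in T}$ is a $\tauan$-covering of $T$ in $\cC$; in the non-archimedean case $T$ is compact, so finitely many of the $T_{t_0}$ cover $T$ and constitute a $\tauqet$-covering. \Cref{cor:lifting} then gives that $c_i$ is an effective epimorphism, which completes the proof.

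The point I expect to require the most care is the local-section statement of the third paragraph: that a smooth (resp.\ quasi-smooth) morphism of representable analytic stacks admits a section over a $\tau$-neighbourhood of any point of the target that lies in its image. This rests on the local structure of smooth (resp.\ quasi-smooth) morphisms of analytic spaces, but it is exactly the ingredient already used in the proof of \cref{lem:small_sites}, so it may simply be cited; the remainder is formal manipulation of atlases and of the stability properties of effective epimorphisms.
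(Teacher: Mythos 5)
Your argument is correct and rests on exactly the same two ingredients as the paper's proof: the lifting criterion of \cref{cor:lifting} together with the existence of étale (resp.\ quasi-étale) local sections of smooth (resp.\ quasi-smooth) surjective morphisms of \canal (resp.\ strictly \kanal) spaces. The paper applies \cref{cor:lifting} directly to $f\colon X\to Y$ (pulling back along $S\to Y$ and taking an atlas of $X\times_Y S$ there), so your extra reduction through the atlas of $Y$ and the $\pi_0^s$-factorization is a harmless reorganization rather than a different route.
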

\begin{proof}
It suffices to show that for any representable stack $S$ and any morphism $S \to Y$, there exists an \'etale (resp.\ quasi-étale) covering $\{S_i\}_{i \in I}$ of $S$ and factorizations of $S_i \to Y$ through $X$.
This follows from the existence of étale (resp.\ quasi-étale) sections of smooth (resp.\ quasi-smooth) and surjective morphisms of \canal spaces (resp.\ strictly \kanal spaces).
\end{proof}

\begin{lem} \label{lem:characterising_proper_morphisms}
Let $f \colon X \to Y$ be a morphism of \canal stacks, with $Y$ representable.
Then $f$ is weakly proper if and only if for every Stein open subset $W \Subset Y$ and every atlas $\{U_i\}_{i \in I}$ of $X$ there exists a finite subset $I' \subset I$ such that $\{W \times_Y U_i\}_{i \in I'}$ is an atlas for $W \times_Y X$.
\end{lem}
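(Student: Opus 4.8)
The plan is to prove the two implications separately, in each case translating the assertion into relative-compactness statements about Stein spaces and then invoking the base-change and effective-epimorphism lemmas already available.

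\emph{``If''.} I would cover $Y$ by relatively compact Stein open subsets $\{W_a\}$; each inclusion $W_a\hookrightarrow Y$ is a smooth open immersion and the $W_a$ cover $Y$, so $\{W_a\}$ is an atlas of $Y$ by \cref{lem:smooth_surjective_effective}. Pick any atlas $\{V_j\}$ of $X$ and refine it to a ``doubled'' one: cover each Stein space $V_j$ by Stein opens $U_{jk}$ with $U_{jk}\Subset U_{jk}^+\Subset V_j$, obtaining two atlases $\{U_{jk}\}$, $\{U_{jk}^+\}$ of $X$. The key point is that $U_{jk}\times_Y W_a\Subset_{W_a}U_{jk}^+\times_Y W_a$: the closure $\overline{U_{jk}}^{U_{jk}^+}$ is compact, hence $\overline{U_{jk}}^{U_{jk}^+}\to Y$ is proper, so its base change $\overline{U_{jk}}^{U_{jk}^+}\times_Y W_a\to W_a$ is proper, and the closure of $U_{jk}\times_Y W_a$ in $U_{jk}^+\times_Y W_a$ is a closed subspace of $\overline{U_{jk}}^{U_{jk}^+}\times_Y W_a$, hence also proper over $W_a$. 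Applying the hypothesis with $W=W_a$ and the atlas $\{U_{jk}\}$ yields a finite $F_a$ with $\{U_{jk}\times_Y W_a\}_{(j,k)\in F_a}$ an atlas of $X\times_Y W_a$; together with $\{U_{jk}^+\times_Y W_a\}_{(j,k)\in F_a}$ this is a finite double atlas, so the identity of $X\times_Y W_a$ defines a weakly proper pair over $W_a$. Since $\{W_a\}$ is an atlas of $Y$, $f$ is weakly proper.

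\emph{``Only if''.} I would first make weak properness local on $Y$: from a smooth atlas $\{Y_a\}$ of $Y$ witnessing weak properness, every $y\in Y$ lies in the image of some $Y_a\to Y$, and smoothness provides an open neighbourhood $N\ni y$ and a section $N\to Y_a$; base-changing the finite double atlas of $X\times_Y Y_a$ along it (\cref{lem:wpp_base_change}) shows that the identity of $X\times_Y N$ defines a weakly proper pair over $N$. Now fix $W\Subset Y$ and an atlas $\{U_i\}_{i\in I}$ of $X$. As $\overline W$ is compact, finitely many such neighbourhoods $N_1,\dots,N_k$ cover it; each $X\times_Y N_l$ then carries a finite double atlas $\{\tilde P_{lb}\Subset_{N_l}\tilde Q_{lb}\}_b$, and by a shrinking argument I may choose relatively compact opens $N'_l\Subset N_l$ that still cover $\overline W\supseteq W$. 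Setting $P'_{lb}:=\tilde P_{lb}\times_{N_l}N'_l\subset\tilde Q_{lb}$, the decisive gain is that $\overline{P'_{lb}}^{\tilde Q_{lb}}$ is now \emph{compact}: it is a closed subset of the preimage of the compact set $\overline{N'_l}^{N_l}$ under the proper map $\overline{\tilde P_{lb}}^{\tilde Q_{lb}}\to N_l$. Moreover $\{P'_{lb}\times_Y W\}_{l,b}$ is a finite atlas of $X\times_Y W$.

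For the last step, fix $l$ and $b$. Pulling $\{U_i\}$ back along $\tilde Q_{lb}\to X$ and then choosing atlases of the resulting geometric stacks $U_i\times_X\tilde Q_{lb}$ gives smooth morphisms from representables onto $\tilde Q_{lb}$ whose open images cover $\tilde Q_{lb}$; since $\overline{P'_{lb}}^{\tilde Q_{lb}}$ is compact, finitely many of them --- hence finitely many indices $i\in I_0^{lb}\subset I$ --- already cover $\overline{P'_{lb}}^{\tilde Q_{lb}}$, in particular $P'_{lb}$. By \cref{lem:smooth_surjective_effective} the finitely many morphisms $(U_i\times_Y W)\times_{X\times_Y W}(P'_{lb}\times_Y W)\to P'_{lb}\times_Y W$ ($i\in I_0^{lb}$) form an effective epimorphism. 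Taking $I':=\bigcup_{l,b}I_0^{lb}$, which is finite, and using that the $P'_{lb}\times_Y W$ cover $X\times_Y W$, one concludes that $\coprod_{i\in I'}(W\times_Y U_i)\to W\times_Y X$ is an effective epimorphism, i.e. $\{W\times_Y U_i\}_{i\in I'}$ is an atlas of $W\times_Y X$. I expect the main obstacle to be precisely this passage to compact closures: the weak-properness data only provides relative compactness \emph{over the charts $N_l$}, and one must combine the compactness of $\overline W$ with the shrinkings $N'_l\Subset N_l$ to upgrade it to honest compactness of the $\overline{P'_{lb}}^{\tilde Q_{lb}}$, which is what forces the covering to be finite.
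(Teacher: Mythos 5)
Your proof is correct and follows essentially the same route as the paper's: both implications rest on producing double atlases whose smaller members acquire honestly compact closures after restricting over a relatively compact piece of the base, and then extracting finite subcovers using the openness of smooth maps together with the fact that smooth surjections are effective epimorphisms. The only notable organizational difference is in the forward direction, where the paper first reduces to the case that the identity of $X$ is a weakly proper pair over all of $Y$ and then refines the given atlas $\{U_i\}$ so that each chart factors through one of the $V_j'$, whereas you keep the local charts $N_l$ (with an explicit shrinking $N_l'\Subset N_l$) and pull $\{U_i\}$ back to the $\tilde Q_{lb}$, choosing atlases of the resulting geometric stacks --- both devices serve the same purpose of reducing to open images meeting a compact set.
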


\begin{proof}
First we assume that $f$ is weakly proper. Since $W$ is relatively compact in $Y$ we can find a finite family $\{Y_j \to Y\}_{j \in J}$ of smooth morphisms such that $W$ is contained in the union of the images of $Y_j$ in $Y$ and that every identity map $X \times_Y Y_j \to X \times_Y Y_j$ defines a weakly proper pair over $Y_j$.
Let $W'$ be the union of the images of $Y_j$ in $Y$, then $W \Subset W'$.
We claim that the identity map $X \times_Y W' \to X \times_Y W'$ also defines a weakly proper pair over $W'$.
Indeed, let us denote by $\{V_{jk} \Subset_{Y_j} V_{jk}'\}_{k \in K}$ the finite double atlas of $X \times_Y Y_j$ in the definition of weakly proper pair. Then the families $\{V_{jk}\}_{j,k}$ and $\{V_{jk}'\}_{j,k}$ form atlases of $X \times_Y W'$ and they are finite, so that the claim is proved.

We can therefore assume that $Y = W'$ and that the identity map $X \to X$ is a weakly proper pair from the very beginning. Let $\{V_j \Subset_Y V_j'\}_{j \in J}$ be the finite double atlas in the definition of weakly proper pair.
\cref{lem:shrink_wpp} shows that
\[
V_j \times_Y W \Subset V_j'.
\]
If $\{U_i\}_{i \in I}$ is any atlas of $X$, up to refining it we can assume that every $U_i \to X$ factors through $V_j'$ for some $j$.
Let us denote by $I_j$ the subset of $I$ consisting of indices $i$ for which $U_i$ factors precisely through $V_j'$.
We conclude that there always exists a finite subset $I_j'$ of $I_j$ such that $V_j \times_Y W$ is contained in the union of the images of $U_i \to V_j'$. Taking $I'$ to be the union of all the $I_j$, we conclude the proof of this implication.

For the converse, it suffices to exhibit a smooth atlas $\{Y_i\}_{i \in I}$ of $Y$ such that each identity map $X \times_Y Y_i \to X \times_Y Y_i$ is a weakly proper pair.
We can build in fact an analytic atlas of $Y$ with the desired property.
Indeed, let $y \in Y$ be any point and let $W \Subset Y$ be a Stein open neighborhood of $y$.
Let $\{V_j \Subset V_j'\}_{j \in J}$ be a double atlas of $X$.
By hypothesis, we can find a finite subset $J' \subset J$ such that $\{V_j \times_Y W\}_{j \in J'}$ is an atlas for $X \times_Y W$. Then the same will be true for $\{V_j' \times_Y W\}_{j \in J'}$. Moreover, \cref{lem:wpp_base_change} shows that $V_j \times_Y W \Subset_W V_j' \times_Y W$, thus completing the proof.
\end{proof}

\begin{prop}
Let $S$ be a Stein \canal space and let $A = \Gamma(\mathcal O_S)$.
Let $f \colon X \to Y$ be a proper morphism of algebraic stacks relative to $A$ in $\Sh(\Afflfp_A,\tauet)^\wedge$.
Then the analytification $f^\mathrm{an}\colon X\an \to Y\an$ is a proper morphism of \canal stacks over $S$.
\end{prop}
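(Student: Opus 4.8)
The plan is to prove, by induction on the geometric level $n$ of $f$, that $f\an$ is separated and weakly proper. Throughout I would use that the relative analytification $(-)\an\colon(\Afflfp_A,\tauet,\bPsm)\to(\Stn_S,\tauet,\bPsm)$ is a morphism of geometric contexts which commutes with pullbacks (\cite[\S VIII]{Hakim_Topos_1972}); hence on sheaves $(-)\an=u_s$ is left exact, as observed in the proof of \cref{lem:u_s_preserves_geometric_stacks}, so it preserves finite limits (in particular diagonals and fibre products), effective epimorphisms, representable stacks, atlases, and the classes of $n$-$\bPsm$-morphisms; it also preserves surjectivity of morphisms. For separatedness: since $f$ is separated, its diagonal $\Delta_f$ is an $(n\!-\!1)$-representable proper morphism of algebraic stacks relative to $A$, and as analytification commutes with diagonals we have $\Delta_{f\an}=(\Delta_f)\an$, which is proper by the inductive hypothesis. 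The base case, with $X$ and $Y$ representable, is the classical fact that the analytification of a separated (resp.\ proper) morphism of schemes locally finitely presented over $A$ is separated (resp.\ proper); for properness over a Stein-type base one may reduce via Chow's lemma to a closed subspace of $\mathbb{P}^N_{Y\an}$, or cite \cite[\S VIII]{Hakim_Topos_1972}.

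For weak properness of $f\an$ — which will not use the inductive hypothesis — I would apply \cref{def:algebraic_weakly_proper}: there is an atlas $\{Y_i\}_{i\in I}$ of $Y$ by affine schemes locally finitely presented over $A$, schemes $P_i$ proper over $Y_i$, and surjective $Y_i$-morphisms $p_i\colon P_i\to X\times_Y Y_i$. Analytifying, $\{Y_i\an\}_{i\in I}$ is an atlas of $Y\an$ by representable Stein stacks, $(X\times_Y Y_i)\an=X\an\times_{Y\an}Y_i\an$, and $p_i\an$ is still surjective. By \cref{def:analytic_weakly_proper} it suffices to prove that each $X\an\times_{Y\an}Y_i\an\to Y_i\an$ is weakly proper; since its target is representable, I would verify the criterion of \cref{lem:characterising_proper_morphisms}. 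Fix a Stein open $W\Subset Y_i\an$ and an atlas $\{U_j\}_{j\in J}$ of $X'\coloneqq X\an\times_{Y\an}Y_i\an$; I must produce a finite $J'\subset J$ with $\{W\times_{Y_i\an}U_j\}_{j\in J'}$ an atlas of $W\times_{Y_i\an}X'$. Now $P_i\to Y_i$ is proper, so $g_i\an\colon P_i\an\to Y_i\an$ is a proper morphism of complex analytic spaces (relative GAGA over a Stein base), whence $K\coloneqq(g_i\an)^{-1}(\overline W)$ is compact in $P_i\an$. Each base change $P_i\an\times_{X'}U_j\to P_i\an$ is a smooth morphism of analytic stacks with representable target, hence has open image; as $\{U_j\}$ is jointly surjective over $X'$ these images cover $P_i\an$, so finitely many, indexed by a finite $J'\subset J$, already cover $K$.

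Finally, given a point $z$ of $W\times_{Y_i\an}X'$ lying over $w\in W$, let $z'$ be its image in $X'$; by surjectivity of $p_i\an$ there is $\tilde z\in P_i\an$ over $z'$, and $g_i\an(\tilde z)=w\in\overline W$, so $\tilde z\in K$ and hence $\tilde z$ lies in the image of $P_i\an\times_{X'}U_j\to P_i\an$ for some $j\in J'$; chasing the two projections, $z'$ lies in the image of $U_j\to X'$, so $z$ lies in the image of $W\times_{Y_i\an}U_j\to W\times_{Y_i\an}X'$. Therefore $\coprod_{j\in J'}W\times_{Y_i\an}U_j\to W\times_{Y_i\an}X'$ is surjective, hence an effective epimorphism by \cref{lem:smooth_surjective_effective}, i.e.\ the desired finite atlas. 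Combining this with separatedness shows $f\an$ is proper.

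The step I expect to be the main obstacle is the last two paragraphs: translating the algebraic datum of a \emph{proper surjective cover by a scheme} into the Kiehl-type finiteness condition of \cref{lem:characterising_proper_morphisms}. This requires the relative analytic GAGA input that properness of schemes over a Stein-type base is preserved by analytification, together with a careful treatment of points of (non-representable) higher stacks and of the nested base changes; rigorously, the surjectivity assertion above should be checked after pulling back along the representable atlas $\{W\times_{Y_i\an}U_j\}_{j\in J}$, where it reduces to the point-set argument given.
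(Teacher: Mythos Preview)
Your proof is correct and follows essentially the same strategy as the paper: induction on the geometric level for separatedness, and for weak properness the combination of the finite-extraction criterion of \cref{lem:characterising_proper_morphisms}, the compactness coming from properness of the analytification of the scheme cover $P_i\to Y_i$ (via \cite[\S VIII, Prop.~2.6(iii)]{Hakim_Topos_1972}), and the passage from surjectivity to effective epimorphism via \cref{lem:smooth_surjective_effective}.

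The only differences are organisational. The paper, rather than using the specific analytic atlas $\{Y_i\an\}$, checks weak properness after base change to an \emph{arbitrary} Stein $T$ smooth over $Y\an$, and reduces to $Y=\Spec A$ by replacing $A$ with $\Gamma(\cO_T)$ and invoking compatibility of relative analytification; this is slightly cleaner but not essential. The paper also inserts a further atlas $\{V_{ij}\}$ for the fibre products $P\an\times_{X\an}U_i$ before extracting the finite subfamily, because these fibre products need not be representable; your phrase ``hence has open image'' implicitly requires exactly this step (choose atlases for the non-representable $P_i\an\times_{X'}U_j$ to get open images in $P_i\an$). You acknowledge this kind of issue in your final caveat, so the argument is complete once that is made explicit.
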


\begin{proof}
Arguing by induction on the geometric level, it suffices to prove that $f\an$ is weakly proper.
Since the question is local on $Y\an$, we can assume that there exists a scheme $P$ proper over $Y$ and a surjective $Y$-morphism $p\colon P\to X$.
Let $T$ be any Stein space and let $T\to Y\an$ be any smooth morphism.
Let us prove that the induced morphism $X\an \times_{Y\an} T \to T$ is weakly proper. Let $B \coloneqq \Gamma(\mathcal O_T)$, and consider the proper morphism of algebraic stacks relative to $B$
\[
X \times_Y \Spec(B) \to \Spec(B)
\]
The analytification relative to $B$ of this morphisms coincides with the analytification relative to $A$.
So we can reduce to the case $Y = \Spec(A)$.

Since $f\circ p\colon P\to Y$ is proper, \cite[Proposition 2.6.(iii)]{Hakim_Topos_1972} shows that $(f \circ p)^\mathrm{an}$ is a proper morphism of complex analytic spaces.
Now we use the equivalent formulation given in \cref{lem:characterising_proper_morphisms} to show that $f^\mathrm{an}$ is a weakly proper morphism. Let $W \Subset Y$ be an open subset and let $\{U_i\}_{i \in I}$ be any smooth atlas for $X^{\mathrm{an}}$. If we base change to $P$ we obtain an effective epimorphism
\[
\coprod_{i \in I} P^\mathrm{an} \times_X U_i \to P^\mathrm{an}.
\]
If $\{V_{ij}\}_{j \in J_i}$ is a smooth atlas for $P^\mathrm{an} \times_X U_i$, using the properness of $(f \circ p)^{\mathrm{an}}$, we can find a finite subset $J'$ of $\bigcup_{i \in I} J_i$ such that $\{V_{ij}\}_{j \in J'}$ is an atlas for $W \times_{Y^\mathrm{an}} P^\mathrm{an}$.
This finite subset $J'$ induces a finite subset $I'$ of $I$ having the property that
\[
\coprod_{i \in I'} W \times_{Y^\mathrm{an}} P^\mathrm{an} \times_X U_i \to W \times_{Y^\mathrm{an}} P^\mathrm{an}
\]
is an effective epimorphism.
It follows that the morphism
\[
\mathcal U \coloneqq \coprod_{i \in I'} W \times_{Y^\mathrm{an}} U_i \to W \times_{Y^\mathrm{an}} X^\mathrm{an}
\]
is a smooth and surjective.
So we conclude by Lemmas \ref{lem:smooth_surjective_effective} and \ref{lem:characterising_proper_morphisms}.
\end{proof}

\begin{prop}\label{prop:properness_in_kanal_analytification}
Let $A$ be a $k$-affinoid algebra.
Let $f\colon X\to Y$ be a proper morphism of algebraic stacks relative to $A$ in $\Sh(\Afflfp_A,\tauet)^\wedge$.
Then the analytification $f\an\colon X\an\to Y\an$ is a proper morphism of \kanal stacks over $\Sp A$.
\end{prop}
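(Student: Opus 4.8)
The plan is to follow the argument of the preceding proposition step by step, substituting Berkovich's relative interior for relative compactness wherever the complex‑analytic topology was used. First I would reduce to showing that $f\an$ is weakly proper: arguing by induction on the geometric level, the separatedness of the diagonal of $f\an$ follows from that of the diagonal of $f$ together with (the algebraic analogue of) \cref{prop:base_change_separatedness_properness}, so only weak properness remains. Since weak properness is local on the target, I would pass to a strictly $k$-affinoid atlas of $Y\an$; each chart $\Sp B \to Y\an$ is fibered over the structural map $Y\an \to \Sp A$, hence determines a morphism $A \to B$ of $k$-affinoid algebras and, by the universal property of relative analytification, a morphism $\Spec B \to Y$, and the analytification of $X \times_Y \Spec B \to \Spec B$ relative to $B$ is the base change of $f\an$ along the chart. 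This reduces the problem to $Y = \Spec A$, so that $Y\an = \Sp A$, and by \cref{def:algebraic_weakly_proper} we may fix a scheme $P$ proper over $\Spec A$ and a proper surjective $\Spec A$-morphism $p \colon P \to X$.

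Next I would record that $f \circ p \colon P \to \Spec A$ is a proper morphism of schemes, so its analytification $(f\circ p)\an \colon P\an \to \Sp A$ is a proper morphism of $k$-analytic spaces in Berkovich's sense (see \cite{Berkovich_Spectral_1990}, \cite{Lutkebohmert_Formal_1990}, \cite{Conrad_Relative_2006}); in particular $P\an$ is compact and boundaryless over $\Sp A$. From the local structure of proper $k$-analytic spaces over an affinoid base one then obtains that $P\an \to \Sp A$ is itself weakly proper as a morphism of $k$-analytic stacks: there is a finite atlas $\{P_l\}$ of $P\an$ by affinoid domains and affinoid domains $P_l \Subset_{\Sp A} P_l'$ in $P\an$ with $\{P_l\}$ again an atlas, using \cite[Proposition 2.5.8(iii)]{Berkovich_Spectral_1990} to identify $\Int(P_l'/\Sp A)$ with $\Int(P_l'/P\an)$ since $\Int(P\an/\Sp A) = P\an$. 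The same statement applied after base change along any affinoid domain $W \subseteq \Sp A$ gives the $k$-analytic analogue of \cref{lem:characterising_proper_morphisms} (its proof being identical once Berkovich's interior replaces the relative compact closure): out of any prescribed atlas of $W \times_{\Sp A} P\an$ only finitely many members are needed.

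Finally I would run the descent exactly as in the proof of the complex-analytic counterpart: given any strictly $k$-affinoid atlas $\{U_i\}_{i\in I}$ of $X\an$, the pullback family $\{P\an \times_{X\an} U_i \to P\an\}$ is a quasi-smooth surjective cover, hence an effective epimorphism by \cref{lem:smooth_surjective_effective}; refining it by affinoid domains and invoking the compactness of $P\an$ together with the characterization above, one finds a finite $I' \subset I$ such that $\coprod_{i \in I'} W \times_{\Sp A} (P\an \times_{X\an} U_i) \to W \times_{\Sp A} P\an$ is again surjective, and therefore $\coprod_{i \in I'} W \times_{\Sp A} U_i \to W \times_{\Sp A} X\an$ is quasi-smooth and surjective, hence effective again by \cref{lem:smooth_surjective_effective}. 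Using \cref{lem:wpp_base_change}, \cref{lem:shrink_wpp} and \cref{lem:variant_wpp_stability} to keep track of the relative-interior conditions, this produces a finite atlas of $X\an$ exhibiting the identity of $X\an$ as a weakly proper pair over $\Sp A$, so $f\an$ is weakly proper. I expect this last step — transporting the relatively compact affinoid double atlas of $P\an$ down to $X\an$ across the proper \emph{surjection} $p\an$ while preserving the relative interiors — to be the main obstacle; the essential inputs there are Berkovich's formula for relative interiors and the G-locality of the relative boundary in $k$-analytic geometry.
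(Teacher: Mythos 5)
Your overall skeleton matches the paper's: reduce to weak properness by induction on the geometric level, localize to $Y=\Spec A$, fix a scheme $P$ proper over $\Spec A$ with a proper surjection $p\colon P\to X$, and exploit the compactness of $P\an$ together with the surjectivity of $p\an$ to extract a finite atlas. However, there is a genuine gap at the decisive step, which you yourself flag as ``the main obstacle'' without resolving it: producing the finite \emph{double} atlas $\{V_i\Subset_{\Sp A} W_i\}$ of $X\an$ required by \cref{def:kanal_weakly_proper_pair}. Your plan is to build a relatively compact affinoid double atlas on $P\an$ (which is fine, since $P\an$ is compact and boundaryless over $\Sp A$) and then ``transport'' it down to $X\an$ across $p\an$. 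This does not work as stated: $p\an$ is merely proper and surjective, so affinoid domains of $P\an$ do not push forward to quasi-smooth affinoid charts of $X\an$, and extracting a finite subfamily from an \emph{arbitrary} atlas $\{U_i\}$ of $X\an$ (via pullback to $P\an$ and compactness) yields finiteness but produces no $\Subset_{\Sp A}$ pairs whatsoever. The lemmas you cite (\cref{lem:wpp_base_change}, \cref{lem:shrink_wpp}, \cref{lem:variant_wpp_stability}) only manipulate relatively compact pairs that already exist; they cannot create them.

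The paper's mechanism, which is the missing idea, is to manufacture the pairs directly on the charts of $X\an$ rather than on $P\an$: take an atlas $U=\coprod U_i$ of the \emph{algebraic} stack $X$; since $U\to X$ is smooth, $U\an\to X\an$ is smooth, hence boundaryless, so every point $u\in U\an$ admits affinoid neighborhoods $V_u\Subset_{\Sp A} W_u$. Compactness of $P\an$ then enters only to select finitely many of these pairs: choosing an atlas $U'$ of $U\times_X P$ and, for each $j\in P\an$, a preimage $x(j)\in (U')\an$ with image $\bar x(j)\in U\an$, the images in $P\an$ of the sets $V_{\bar x(j)}\times_{U\an}(U')\an$ are open (smoothness again), so finitely many cover $P\an$; surjectivity of $P\an\to X\an$ and \cref{lem:smooth_surjective_effective} then show that the corresponding finite families $\{V_{\bar x(j)}\}$ and $\{W_{\bar x(j)}\}$ are both atlases of $X\an$, giving the weakly proper pair. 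Also note that your detour through a $k$-analytic analogue of \cref{lem:characterising_proper_morphisms} is not needed and is not cost-free: that lemma and its proof are specific to the $\mathbb C$-analytic topology (relatively compact opens, closures), and the paper deliberately avoids it in the non-archimedean case.
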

\begin{proof}
We denote $S=\Spec A$ and $S\an=\Sp A$.
By induction on the geometric level, it suffices to prove that $f\an$ is weakly proper.
The statement being local on $Y$, without loss of generality, we can assume that $Y=S$ and that there exists a scheme $P$ proper over $S$ and a surjective $S$-morphism $p\colon P\to X$.
Then $P\an\to X\an$ is surjective and $P\an\to S\an$ is proper.
In particular, $P\an$ is compact as a topological space.

Let $\{U_i\}_{i\in I}$ be an atlas for $X$.
Put $U \coloneqq \coprod U_i$.
Since the morphism $U\to X$ is smooth, the analytification $U\an\to X\an$ is also smooth, in particular boundaryless.
Thus for any point $u\in U\an$, there exists two affinoid neighborhoods $V_u$ and $W_u$ of $u$ in $U\an$ such that $V_u\Subset_{S\an} W_u$.

Let $\{U'_i\}_{i\in I'}$ be an atlas for $U\times_P X$.
Put $U' \coloneqq \coprod U'_i$.
For every point $j\in P\an$, choose a point $x(j)\in (U')\an$ which projects to $j$.
Let $\widebar x (j)$ denote the image of $x(j)$ under the composition $(U')\an\to U\an\times_{P\an} X\an\to U\an$.
Let $V_{x(j)} \coloneqq V_{\widebar x(j)}\times_{U\an} (U')\an$ and let $V'_j$ be the image of $V_{x(j)}$ under the morphism $(U')\an\to P\an$.
Since $V_{\widebar x(j)}$ is a neighborhood of $\widebar x(j)$ in $U\an$, $V_{x(j)}$ is a neighborhood of $x(j)$ in $(U')\an$.
Since $(U')\an\to P\an$ is smooth, $V'_j$ is a neighborhood of $j$ in $P\an$.
By the compactness of $P\an$, there exists a finite set of points $J\subset P\an$ such that $\coprod_{j\in J} V'_j$ covers $P\an$.
By \cref{lem:smooth_surjective_effective}, $\{V_{\widebar x(j)}\}_{j\in J}$ and $\{W_{\widebar x(j)}\}_{j\in J}$ are two atlases for the stack $X\an$.
So we have proved that $X\an$ is weakly proper over $S\an$.
\end{proof}

\subsection{Analytification of coherent sheaves} \label{sec:analytification_of_sheaves}

Let $A$ be a Stein algebra or a $k$-affinoid algebra.
Let $X$ be an algebraic stack locally finitely presented over $A$.

Let $(\Afflfp_A,\tauet,\bPsm)$, $(\An_\C,\tauet,\bPsm)$ and $(\An_k,\tauqet,\bPqsm)$ be as in \cref{sec:examples}.
Let $\big({((\Afflfp_A)_{/X})}_{\bPsm},\tauet\big)$ and $((\Geom_{/X})_{\bPsm},\bPsm)$  be the \infsites introduced in \cref{sec:sheaves_on_stacks} with respect to the geometric context $(\Afflfp_A,\tauet,\bPsm)$.

For the analytification $X\an$ of $X$, we denote by $((\An_{/X\an})_\bP,\tauet)$ and $(\Geom_{/X\an})_{\bP},\bP)$ the corresponding \infsites similar as above.

The analytification functor induces a continuous functor
\[
u_X \colon {((\Afflfp_A)_{/X})}_{\bPsm} \rightarrow (\An_{/X\an})_\bP,
\]
which satisfies the assumption of \cref{lem:u^p_preserves_sheaves}.
By \cref{lem:adjunction_u_s_u^s}, we obtain a pair of adjoint functors $\big(\tensor*[^{\DAb}]{u}{_{X,s}^\wedge}, \big(\tensor*[^{\DAb}]{u}{_X^s}\big)^{\wedge} \big)$, which we denote for simplicity as
\[ \tensor*[^\cD]{u}{_s^\wedge} \colon\Sh_{\DAb}(X)^\wedge\leftrightarrows\Sh_{\DAb}(X\an)^\wedge\colon \big(\Duuppers\big)^{\wedge}.\]

Let $\cO_X$ and $\cO_{X\an}$ denote respectively the structure sheaves of $X$ and $X\an$.
We have a morphism
\[
\cO_X \to \big( \Duuppers \big)^{\wedge} \cO_{X\an}
\]
defined by the morphism
\[
\cO_X(U) = \cO_U(U) \to \cO_{U\an}(U\an)= \cO_{X\an}(U\an) = \big( \Duuppers \big)^{\wedge} \cO_{X\an}(U)
\]
for every representable stack $U$ and every morphism $U\to X$.
By adjunction, this corresponds to a morphism $\tensor*[^\cD]{u}{_s^\wedge} \cO_X \to \cO_{X\an}$, and therefore defines via base change a functor
\[
(-)\an \colon \cO_X \Mod \longrightarrow \cO_{X\an} \Mod.
\]
We remark that the functor above preserves coherent sheaves.

\section{GAGA theorems} \label{sec:GAGA}

\subsection{Comparison of derived direct images}

In this section, we compare algebraic derived direct images with analytic derived direct images.
We prove the analog of \cite[Theorem 1]{Serre_GAGA} for higher stacks.

Let $A$ be either the field of complex numbers or a $k$-affinoid algebra.
Let $f\colon X\to Y$ be a morphism of algebraic stacks locally finitely presented over $A$.

Using the notations in \cref{sec:analytification_of_sheaves}, the following commutative diagram
\[
\begin{tikzcd}
{((\Afflfp_A)_{/X})}_{\bPsm} \arrow{r}{u_X} & (\An_{/X\an})_\bP \\
{((\Afflfp_A)_{/Y})}_{\bPsm} \arrow{u}{v}\arrow{r}{u_Y} & (\An_{/Y\an})_\bP \arrow{u}{v\an}
\end{tikzcd}
\]
induces a canonical comparison morphism
\[(\rR f_*\cF)\an\longrightarrow \rR f\an_*\cF\an\]
for any $\cF\in\cO_X\Mod$.

\begin{thm}\label{thm:GAGA1}
Let $f\colon X\to Y$ be a proper morphism of algebraic stacks locally finitely presented over $\Spec A$, where $A$ is either the field of complex numbers or a $k$-affinoid algebra.
The canonical comparison morphism
\[ (\rR f_*\cF)\an\longrightarrow \rR f\an_*\cF\an\]
in $\Coh^+(Y\an)$ is an isomorphism for all $\cF \in\Coh^+(X)$.
\end{thm}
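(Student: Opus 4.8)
The plan is to run the proof of \cref{thm:proper_direct_image_alg} once more, now carrying along the comparison morphism. The statement is local on $Y$: analytification, smooth base change and $\rR f_*$ all commute with restriction to a smooth atlas, and a morphism in $\Coh^+(Y\an)$ is an isomorphism as soon as it becomes one after pullback to an atlas of $Y\an$. So we may assume $Y$ is representable, and then, applying \cref{def:algebraic_weakly_proper} to the trivial atlas of $Y$, we may fix a scheme $P$ proper over $Y$ together with a proper surjective $Y$-morphism $p\colon P\to X$. We argue by induction on the geometric level $n$ of $X$; the base case $n=-1$ is a (necessarily affine, hence finite) proper morphism of affine schemes locally of finite presentation over $\Spec A$, for which the statement is classical. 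We also note, by the same localness, that the induction hypothesis for levels $<n$ yields GAGA for \emph{every} proper morphism whose source has geometric level $<n$, regardless of its target: base-changing along an atlas of the target keeps the source of level $<n$ while making the target representable.

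For the inductive step let $\cK\subseteq\Coh^+(X)$ be the full subcategory of those $\cF$ for which the comparison morphism $(\rR f_*\cF)\an\to\rR f\an_*\cF\an$ is an isomorphism. Since $(\rR f_*-)\an$ and $\rR f\an_*(-)\an$ are exact functors between stable \infcats and the comparison is a natural transformation between them, $\cK$ is closed under equivalences, loops, suspensions and extensions; and since $\rR f_*$ and $\rR f\an_*$ are left $t$-exact while objects of $\Coh^+$ are bounded below, $\cK$ satisfies the last condition of \cref{lem:devissage} as well. Hence by \cref{lem:devissage} it suffices to show $\cF\in\cK$ for $\cF\in\Cohh(X)$. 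We now run a noetherian induction on $X$, assuming GAGA for $\Coh^+$ of every closed substack of $X$ distinct from $X$; filtering $\cF$ by the powers of the nilradical ideal sheaf and using that $\cK$ is closed under extensions, we reduce to the case $X$ reduced.

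Set $\cF'\coloneqq\rR p^\bullet_*\,\rL^0 p^{\bullet *}\cF$, where $p^\bullet\colon P^\bullet\to X$ is the Čech nerve of $p$, whose terms $P^s$ are a scheme for $s=0$ and $(n-1)$-geometric stacks for $s\ge 1$. As in the proof of \cref{thm:proper_direct_image_alg}, $\cF'\in\Coh^+(X)$, and by \cref{thm:generic_flatness} and cohomological descent \cite[1.3.7.2]{HAG-II} the natural map $\cF\to\cF'$ is an equivalence over a dense open substack of $X$; hence its fiber $\cG$ lies in $\Coh^+(X)$ and is supported on a closed substack $i\colon Z\hookrightarrow X$ with $Z\neq X$, say $\cG=i_*\cG_0$ with $\cG_0\in\Coh^+(Z)$. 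Since $\rR f_*\circ i_*=\rR(f|_Z)_*$, the morphism $f|_Z$ is proper, and analytification commutes with $i_*$, the noetherian induction hypothesis gives $\cG\in\cK$. For $\cF'$, writing $g^\bullet=f\circ p^\bullet\colon P^\bullet\to Y$, one has $\rR f_*\cF'=\rR g^\bullet_*\,\rL^0 p^{\bullet *}\cF$, and the spectral sequence of \cref{thm:spectral_sequence},
\[
\rR^t g^s_*\,\rL^0 p^{s*}\cF \Rightarrow \rR^{t+s} g^\bullet_*\,\rL^0 p^{\bullet *}\cF,
\]
has coherent $E_2$-terms to which the induction hypothesis applies: for $s\ge 1$ the map $g^s$ is proper with representable target and source of level $\le n-1$, and $g^0\colon P\to Y$ is a proper morphism of schemes, for which GAGA is the classical relative theorem of Serre \cite{Serre_GAGA}, Grothendieck \cite{SGA1} in the complex-analytic case and of Köpf \cite{Kopf_Eigentliche_1974}, Conrad \cite{Conrad_Formal_GAGA} in the non-archimedean case. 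Since analytification is flat, it is $t$-exact on coherent sheaves and commutes with $\rL p^*$, so $(\rL^0 p^{s*}\cF)\an\simeq\rL^0 (p\an)^{s*}(\cF\an)$; hence it commutes with this bounded-below convergent spectral sequence, and likewise with the analogous one for $p^\bullet\colon P^\bullet\to X$ (the $p^s$ have sources of level $<n$, so GAGA applies to them even though $X$ is only $n$-geometric). This gives $(\cF')\an\simeq\rR (p\an)^\bullet_*\,\rL^0 (p\an)^{\bullet *}(\cF\an)$ and $(\rR f_*\cF')\an\simeq\rR (g\an)^\bullet_*\,\rL^0 (p\an)^{\bullet *}(\cF\an)\simeq\rR f\an_*(\cF')\an$, i.e.\ $\cF'\in\cK$. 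As $\cK$ is closed under extensions, the fiber sequence $\cG\to\cF\to\cF'$ forces $\cF\in\cK$, completing both inductions.

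The step I expect to be the main obstacle is the last paragraph. On the one hand one must know that analytification is flat, so that it is $t$-exact on coherent sheaves, commutes with derived pullback, and passes through the cohomological-descent spectral sequences; on the other hand, and more delicately, one must check that every equivalence obtained there is compatible with the \emph{canonical} comparison morphism, i.e.\ that this morphism is natural with respect to $\rL^0 p^{\bullet*}$, $\rR p^\bullet_*$, $\rR g^\bullet_*$, $i_*$ and smooth base change. Granting these compatibilities, the argument is a faithful transcription of the proof of \cref{thm:proper_direct_image_alg}.
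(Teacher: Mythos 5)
Your proposal is correct and follows essentially the same route as the paper's own proof: localization to representable $Y$, the double induction on geometric level and by noetherianity, devissage to $\Cohh(X)$, reduction to $\cF'=\rR p^\bullet_*\,\rL^0 p^{\bullet*}\cF$ via generic flatness and cohomological descent, and the spectral sequence of \cref{thm:spectral_sequence} combined with classical relative GAGA for the scheme $P$. The points you single out at the end (the $s=0$ term of the \v{C}ech nerve, flatness of analytification, and naturality of the comparison morphism through $i_*$, $\rL^0 p^{\bullet*}$ and the spectral sequence) are exactly the details the paper leaves implicit, so nothing further is needed.
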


\begin{proof}
The question being local on the target, we can assume that $Y$ is representable.
Moreover we can assume that there exists a scheme $P$ proper over $Y$ and a surjective $Y$-morphism $p\colon P\to X$.

We proceed by induction on the geometric level $n$ of the stack $X$.
The case $n=-1$ is classical \cite{SGA1,Kopf_Eigentliche_1974}.
Assume that the statement holds when $X$ is $k$-geometric for $k<n$.
Let us prove the case when $X$ is $n$-geometric.

We use noetherian induction as in the proof of \cref{thm:proper_direct_image_alg}.
So we can assume that $X$ is reduced.
By devissage (\cref{lem:devissage}), it suffices to prove for $\cF\in\Cohh(X)$.

By \cref{thm:generic_flatness}, there exists a dense open substack $U\subset X$ over which the morphism $p\colon P\to X$ is flat.
Let $p^\bullet\colon P^\bullet\to X$ be the simplicial nerve of $p$ and put $\cF' \coloneqq \rR p^\bullet_*\;\rL^0 p^{\bullet *}(\cF)$.
By \cref{thm:proper_direct_image_alg}, we have $\cF'\in\Coh^+(X)$.

Cohomological descent implies that the canonical morphism $\cF\to\cF'$ restricts to an isomorphism over $U$.
Therefore, by the noetherian induction hypothesis, it suffices to prove that the theorem holds for $\cF'$.

By the induction hypothesis on the geometric level and the spectral sequence of \cref{thm:spectral_sequence}, we have an isomorphism
\[(\rR p^\bullet_*\;\rL^0 p^{\bullet *} \cF)\an\xrightarrow{\ \sim\ }\rR p^{\bullet \text{an}}_* (\rL^0 p^{\bullet *}\cF)\an.\]
So we have isomorphisms
\begin{multline}\label{eq:GAGA1_eq1}
\rR(f\circ p^\bullet)\an_* (\rL^0 p^{\bullet *}\cF)\an\simeq \rR (f\an\circ p^{\bullet \text{an}})_* (\rL^0 p^{\bullet *}\cF)\an \\
\simeq \rR f\an_* \;\rR p^{\bullet \text{an}}_*(\rL^0 p^{\bullet *}\cF)\an\simeq \rR f\an_* (\rR p^\bullet_* \;\rL^0 p^{\bullet *}\cF)\an.
\end{multline}
By the induction hypothesis on the geometric level and \cref{thm:spectral_sequence} again, we have an isomorphism
\[\big(\rR(f\circ p^\bullet)_*\; \rL^0 p^{\bullet *}\cF\big)\an\simeq  \rR(f\circ p^\bullet)\an_* (\rL^0 p^{\bullet *}\cF)\an.\]
Combining with \cref{eq:GAGA1_eq1}, we obtain isomorphisms
\begin{multline*}
\big(\rR f_*(\rR p^\bullet_* \rL^0 p^{\bullet *}\cF)\big)\an\simeq \big(\rR(f\circ p^\bullet)_* \;\rL^0 p^{\bullet *}\cF\big)\an \\
\simeq \rR(f\circ p^\bullet)\an_* (\rL^0 p^{\bullet *}\cF)\an \simeq \rR f\an_* (\rR p^\bullet_* \;\rL^0 p^{\bullet *}\cF)\an.
\end{multline*}
In other words, the theorem holds for $\cF'=\rR p^\bullet_* \;\rL^0 p^{\bullet *}(\cF)\in\Coh^+(X)$.
So we have completed the proof.
\end{proof}

\subsection{The existence theorem}

In this section, we compare algebraic coherent sheaves with analytic coherent sheaves.
We prove the analog of \cite[Theorems 2 and 3]{Serre_GAGA} for higher stacks.

\begin{lem} \label{lem:analytification_inverse_limits}
	Let $X$ be an algebraic stack locally of finite presentation over $\Spec(A)$, where $A$ is either the field of complex numbers or a $k$-affinoid algebra.
	Let $\cF \in \cO_\cX \Mod$ and assume we have
	\[ \cF \simeq \lim \tau_{\ge -n} \cF. \]
	Then the analytification functor $(-)\an$ commutes with this limit.
\end{lem}

\begin{proof}
	Since the analytification functor is $t$-exact, we see that $(\tau_{\ge n} \cF)\an \simeq \tau_{\ge n} \cF\an$.
	Therefore we have
	\[ \cF\an \simeq \lim \tau_{\ge -n} \cF\an \simeq \lim (\tau_{\ge -n} \cF)\an , \]
	completing the proof.
\end{proof}

\begin{prop}\label{prop:GAGA_fully_faithful}
Let $X$ be an algebraic stack proper over $\Spec A$, where $A$ is either the field of complex numbers or a $k$-affinoid algebra.
Let $\cF, \cG\in\Coh(X)$.
Then the natural map
\[ \Map_{\Coh(X)}(\cF,\cG) \longrightarrow \Map_{\Coh(X\an)}(\cF\an,\cG\an)\]
is an equivalence.
\end{prop}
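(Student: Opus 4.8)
The plan is to rewrite both mapping spaces as derived global sections of an internal hom and to reduce to GAGA-1 (\cref{thm:GAGA1}) for the structure morphism $g\colon X\to\Spec A$. Since $\cO_X\Mod$ is a presentable stable symmetric monoidal $\infty$-category with unit $\cO_X$, it is closed, with internal hom $\RcHom_{\cO_X}$, and the tensor--hom adjunction gives for $\cF,\cG\in\Cohb(X)$ a natural equivalence $\Map_{\Cohb(X)}(\cF,\cG)\simeq\Map_{\cO_X\Mod}(\cO_X,\RcHom_{\cO_X}(\cF,\cG))$, whose target is the underlying space of the complex $\rR\Gamma(X,\RcHom_{\cO_X}(\cF,\cG))=\rR g_*\RcHom_{\cO_X}(\cF,\cG)$, viewed as a complex of $A$-modules because $\Spec A$ is affine; the analogous description holds on $X\an$. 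Unwinding the construction of $(-)\an$, which is base change along $u_{X,s}\cO_X\to\cO_{X\an}$ precomposed with a pullback, one sees that the comparison map of the statement is the one induced on mapping spaces by the composite
\[
\rR\Gamma\bigl(X,\RcHom_{\cO_X}(\cF,\cG)\bigr)\longrightarrow\rR\Gamma\bigl(X\an,(\RcHom_{\cO_X}(\cF,\cG))\an\bigr)\longrightarrow\rR\Gamma\bigl(X\an,\RcHom_{\cO_{X\an}}(\cF\an,\cG\an)\bigr),
\]
the first arrow being the GAGA-type comparison for $g$ and the second being induced by the canonical map $(\RcHom_{\cO_X}(\cF,\cG))\an\to\RcHom_{\cO_{X\an}}(\cF\an,\cG\an)$. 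It therefore suffices to prove that each of these two arrows is an equivalence.

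For the second arrow, I would first show that $\mathcal H\coloneqq\RcHom_{\cO_X}(\cF,\cG)$ lies in $\Coh^+(X)$ and that the canonical map $\mathcal H\an\to\RcHom_{\cO_{X\an}}(\cF\an,\cG\an)$ is an equivalence. Both assertions are local on an atlas: for a smooth atlas $\{\pi_i\colon U_i\to X\}$ the family $\{\pi_i\an\colon U_i\an\to X\an\}$ is an atlas of $X\an$ by \cref{lem:u_s_preserves_geometric_stacks}; since the $\pi_i$ are flat one has $\rL\pi_i^*\RcHom_{\cO_X}(\cF,\cG)\simeq\RcHom_{\cO_{U_i}}(\rL\pi_i^*\cF,\rL\pi_i^*\cG)$, and analytification commutes with restriction along atlas maps by functoriality of $(-)\an$. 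One is thus reduced to the case where $X=U$ is a noetherian affine scheme locally finitely presented over $A$, where $\cF$ admits a bounded-above resolution $P^\bullet$ by finite free $\cO_U$-modules; then $\RcHom_{\cO_U}(\cF,\cG)\simeq\Hom^\bullet(P^\bullet,\cG)$ has coherent cohomology and is bounded below because $\cG$ is bounded, so $\mathcal H\in\Coh^+(X)$. Since $\cO_U\to\cO_{U\an}$ is flat and carries finite free modules to finite free modules, $(P^\bullet)\an$ is a bounded-above finite free resolution of $\cF\an$, whence $\mathcal H\an|_{U\an}\simeq\RcHom_{\cO_{U\an}}(\cF\an,\cG\an)$.

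For the first arrow, $g\colon X\to\Spec A$ is proper, and by \cref{thm:proper_direct_image_alg} together with the $\mathcal H\in\Coh^+(X)$ just established, \cref{thm:GAGA1} yields an equivalence $(\rR g_*\mathcal H)\an\xrightarrow{\ \sim\ }\rR g\an_*(\mathcal H\an)$ in $\Coh^+(S)$, where $S=(\Spec A)\an$. Applying $\rR\Gamma(S,-)$ and using that $S$ is a point when $A=\C$ and an affinoid space when $A$ is $k$-affinoid — so that $S$ carries no higher coherent cohomology and global sections of the analytification of a finite $A$-module $M$ recover $M$ — together with $\Spec A$ being affine, we obtain
\[
\rR\Gamma(X,\mathcal H)\simeq\rR g_*\mathcal H\simeq\rR\Gamma\bigl(S,(\rR g_*\mathcal H)\an\bigr)\simeq\rR\Gamma\bigl(S,\rR g\an_*(\mathcal H\an)\bigr)\simeq\rR\Gamma(X\an,\mathcal H\an),
\]
and this chain is compatible with the natural maps. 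Combining the two paragraphs, the displayed composite is an equivalence, hence so is the map of the proposition on mapping spaces. The main obstacle is the local comparison $\mathcal H\an\simeq\RcHom_{\cO_{X\an}}(\cF\an,\cG\an)$: since $X$ is only assumed locally noetherian, the complexes $\cF,\cG$ need not be perfect, so one must genuinely use flatness of analytification and bounded-above (rather than bounded) finite free resolutions, while carefully tracking boundedness so as to land in $\Coh^+$ and be entitled to invoke \cref{thm:GAGA1}.
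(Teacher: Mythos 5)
Your overall strategy — rewrite both mapping spaces as derived global sections of an internal Hom and invoke GAGA-1 for the structure morphism — is exactly the strategy of the paper's proof, but the paper only runs it when \emph{both} $\cF$ and $\cG$ lie in the heart $\Cohh(X)$, and this restriction is not cosmetic. The step of your argument that fails in general is the claim that $\mathcal H\coloneqq\RcHom_{\cO_X}(\cF,\cG)$ belongs to $\Coh^+(X)$ (together with the comparison $\mathcal H\an\xrightarrow{\sim}\RcHom_{\cO_{X\an}}(\cF\an,\cG\an)$) for arbitrary $\cF,\cG\in\Cohb(X)$. The authors note explicitly that for a higher stack $X$ this can fail once $\cF$ and $\cG$ are not concentrated in degree $0$; the references they rely on (EGA III, Chap.~0, 12.3.3 and 12.3.4, and their analytic analogues) are only available for coherent sheaves in the heart. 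Your justification of the general claim hides the difficulty in the sentence ``since the $\pi_i$ are flat one has $\rL\pi_i^*\RcHom_{\cO_X}(\cF,\cG)\simeq\RcHom_{\cO_{U_i}}(\rL\pi_i^*\cF,\rL\pi_i^*\cG)$'': on the lisse-étale-type sites used here, restriction to an atlas is \emph{not} the inverse image along a morphism of topoi (the paper points this out when constructing $\Dfpull$), the slice of the site of $X$ over $U_i$ is strictly larger than the site of $U_i$, and compatibility of the internal Hom with this restriction is not a formal consequence of flatness for objects outside the heart. So the reduction to a noetherian affine scheme, where your finite-free-resolution computation is indeed valid, is not established.

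What the paper does instead is to prove the statement for $\cF,\cG\in\Cohh(X)$ exactly as you propose (internal Hom is in $\Coh^+(X)$ by EGA III 0.12.3.3, analytification of the internal Hom compares correctly by 0.12.3.4, then global sections and \cref{thm:GAGA1}), and then to extend to $\Cohb(X)$ by a two-variable dévissage that never forms $\RcHom$ of two non-heart objects: one replaces the mapping space by the $\DAb$-enriched mapping object $\Map^{\DAb}$, which is exact in each variable, so that for fixed $\cF$ the full subcategory of $\cG$ for which the comparison is an equivalence is closed under loops, suspensions and extensions, and \cref{lem:devissage} reduces to $\cG\in\Cohh(X)$; one then repeats the dévissage in the first variable. (Passing from $\Map$ to $\Map^{\DAb}$ and recovering $\Map\simeq\tau_{\ge 0}\Map^{\DAb}$ at the end is what makes the extension argument work; a dévissage directly on the space-valued $\Map$ would not go through.) If you want to salvage your uniform approach, you would have to supply a proof that the internal Hom on the site of a higher geometric stack restricts correctly along atlas maps for bounded coherent complexes, which is precisely the point the authors believe admits counterexamples; the dévissage route avoids the question entirely.
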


\begin{proof}
Let us first prove the result when both $\cF$ and $\cG$ belong to $\Cohh(X)$.
In this case, it follows from \cite[Chap.\ 0, 12.3.3]{EGA3-1} that the internal Hom $\RcHom_X(\cF,\cG)$ belongs to $\Coh^+(X)$.
Moreover, we have an isomorphism for internal Hom's
\[\big(\RcHom_X(\cF,\cG)\big)\an\xrightarrow{\ \sim\ }\RcHom_{X\an}(\cF\an,\cG\an)\]
by Proposition 12.3.4 loc.\ cit.
Taking global sections and using \cref{thm:GAGA1},
we have proved the statement for the case $\cF,\cG\in\Cohh(X)$.

Recall that a stable $\mathbb Z$-linear $\infty$-category $\cC$ is canonically enriched in $\DAb$ (\cite[Examples 7.4.14, 7.4.15]{Gepner_Enriched_2013}).
For every $X \in \cC$, we denote by $\Map_{\cC}^{\DAb}(X, -) \colon \cC \to \DAb$ the induced exact functor of stable $\infty$-categories.
We have an equivalence
\[
\Map_{\cC}(X,Y) \simeq \tau_{\ge 0} \Map_{\cC}^{\DAb}(X,Y)
\]
in $\cS$.
Given $\cF, \cG \in \Coh(X)$, we denote by $\psi_{\cF, \cG}$ the natural map
\[ \Map_{\Coh(X)}^{\DAb}(\cF, \cG) \to \Map_{\Coh(X\an)}^{\DAb}(\cF\an, \cG\an) . \]

We now deal with the bounded case.
Fix $\cF \in \Cohb(X)$ and let $\mathcal K_\cF$ be the full subcategory of $\Cohb(X)$ spanned by those $\cG \in \Cohb(X)$ such that $\psi_{\cF, \cG}$ is an equivalence in $\DAb$.
It suffices to show that $\mathcal K_\cF = \Cohb(X)$.
Observe that $\mathcal K_\cF$ is closed under extensions because $\Map_{\Cohb(X)}^{\DAb}(\cF,-) \colon\Cohb(X)\to \DAb$ preserves fiber sequences.
Since $\mathcal K_\cF$ is also closed under loops and suspensions, by \cref{lem:devissage}, it suffices to show that $\cK_\cF$ contains $\Cohh(X)$.
Let $\cG \in \Cohh(X)$ and let $\mathcal K_{\cG}$ be the full subcategory of $\Cohb(X)$ spanned by those $\mathcal H \in \Cohb(X)$ such that the natural map
\[
\Map_{\Cohb(X)}^{\DAb}(\mathcal H, \cG) \to \Map_{\Cohb(X\an)}^{\DAb}(\cF\an, \cG\an)
\]
is an equivalence in $\DAb$.
By \cref{lem:devissage} again, it suffices to show that $\cK_\cG$ contains $\Cohh(X)$, which is the result of the first paragraph of the proof.

We now turn to the general case. Since $\DAb$ is both left and right $t$-complete, it is enough to prove that for every integer $n \in \mathbb Z$, $\pi_{-n} \psi_{\cF, \cG}$ is an isomorphism of abelian groups.
Since
\begin{gather*}
	\pi_{-n} \Map^{\DAb}_{\cO_X}(\cF, \cG) = \pi_0 \Map^{\DAb}_{\cO_X}(\cF, \cG[n]) , \\
	\pi_{-n} \Map^{\DAb}_{\cO_{X\an}}(\cF\an, \cG\an) = \pi_0 \Map^{\DAb}_{\cO_{X\an}}(\cF\an, \cG\an[n]) ,
\end{gather*}
it suffices to treat the case $n = 0$.
Observe that $\pi_0 \Map^{\DAb}_{\cO_X}(\cF, \cG)$ can be identified with the global sections of the cohomology sheaf $\mathcal H^0( \RcHom_{\cO_X}(\cF, \cG))$.
We claim that there are canonical equivalences
\[ \cF \simeq \colim_n \tau_{\le n} \cF, \qquad \cG \simeq \lim_m \tau_{\ge m} \cG , \]
where the limits and colimits are computed in $\cO_X \Mod$.
Indeed, the first one is a consequence of the right completeness of the $t$-structure on $\cO_X \Mod$ (cf.\ \cite[Proposition 2.1.3]{DAG-VIII}).
The second one is a consequence of the hypercompleteness of $\cG$ and of \cref{prop:hypercompleteness_DAb}.
Using the fact that $(-)\an$ commutes with any colimits and invoking \cref{lem:analytification_inverse_limits}, we conclude that
\[ \cF\an \simeq \colim_n (\tau_{\le n} \cF)\an, \qquad \cG\an \simeq \lim_m (\tau_{\ge m} \cG)\an . \]
Moreover, the $t$-exactness of the analytification shows that $(\tau_{\le n} \cF)\an \simeq \tau_{\le n} \cF\an$ and $(\tau_{\ge m} \cG)\an \simeq \tau_{\ge m} \cG\an$.
We are therefore reduced to the case where $\cF \in \Coh^-(X)$ and $\cG \in \Coh^+(X)$.
Assume that $\mathcal H^i(\cF) = 0$ for $i \ge n_0$ and $\mathcal H^j(\cG) = 0$ for $j \le m_0$.
Then the same bounds hold for $\cF\an$ and $\cG\an$.
So we obtain:
\begin{gather*}
	\pi_0 \Map^{\DAb}_{\cO_X}(\cF, \cG) \simeq \pi_0 \Map^{\DAb}_{\cO_X}(\tau_{\ge m_0 - 1} \cF, \tau_{\le n_0 + 1} \cG) \\
	\pi_0 \Map^{\DAb}_{\cO_{X\an}}(\cF\an, \cG\an) \simeq \pi_0 \Map^{\DAb}_{\cO_X}(\tau_{\ge m_0 - 1} \cF\an, \tau_{\le n_0 + 1} \cG\an).
\end{gather*}
Since both $\tau_{\ge m_0 - 1} \cF$ and $\tau_{\le n_0 + 1} \cG$ belong to $\Cohb(X)$, we know that the canonical map
\[ \Map^{\DAb}_{\cO_X}(\tau_{\ge m_0 - 1} \cF, \tau_{\le n_0 + 1} \cG) \to \Map^{\DAb}_{\cO_X}(\tau_{\ge m_0 - 1} \cF\an, \tau_{\le n_0 + 1} \cG\an) \]
is an equivalence.
In conclusion, $\psi_{\cF, \cG}$ is an equivalence for every $\cF, \cG \in \Coh(X)$, completing the proof.
\end{proof}

\begin{thm} \label{thm:GAGA2}
Let $X$ be a proper algebraic stack over $\Spec A$, where $A$ is either the field of complex numbers or a $k$-affinoid algebra.
The analytification functor on coherent sheaves induces an equivalence of 1-categories
\begin{equation} \label{eq:GAGA2}
\Cohh(X)\longrightarrow\Cohh(X\an).
\end{equation}
\end{thm}

\begin{proof}
The full faithfulness follows from \cref{prop:GAGA_fully_faithful}.

Let us prove the essential surjectivity.
By descent of coherent sheaves, the statement is local on $\Spec A$.
So we can assume that there exists a scheme $P$ proper over $\Spec A$ and a surjective $A$-morphism $p\colon P\to X$.

We proceed by induction on the geometric level $n$ of $X$.
The case $n=-1$ is classical \cite{SGA1,Kopf_Eigentliche_1974,Conrad_Relative_2006}.
Now assume that the functor \eqref{eq:GAGA2} is an equivalence when the geometric level of $X$ is less than $n$.

Then we use noetherian induction as in the proof of \cref{thm:proper_direct_image_alg}.
So we can assume that $X$ is reduced.

By \cref{thm:generic_flatness}, there exists a dense open substack $U_0 \subset X$ over which the map $p$ is flat.
Let $\cF$ be a coherent sheaf on $X\an$.
Let $p^\bullet\colon P^\bullet\to X$ be the simplicial nerve of $p$ and put $\cF' \coloneqq \rR^0 p^{\bullet\text{an}}_* \;\rL^0 p^{\bullet\text{an} *}(\cF)$.
\cref{thm:proper_direct_image_anal} plus the spectral sequence of \cref{thm:spectral_sequence} shows that $\cF'$ is coherent.
By the induction hypothesis on the geometric level and \cref{thm:GAGA1}, we see that $\cF'$ is algebraizable.
By the noetherian induction hypothesis, it suffices to prove that the canonical morphism $\cF\to\cF'$ restricts to an isomorphism over the dense open substack $U_0$.

Let us consider the \kanal case first.
Since the question is local on $U_0$, it suffices to prove that for any $k$-affinoid algebra $B$ and any quasi-smooth morphism $V\coloneqq\Sp B\to U\an_0$, the pullback of the morphism $\cF\to\cF'$ to $V$ is an isomorphism.
Let $V\alg \coloneqq \Spec B$.
Let $\cF_V \coloneqq \cF\times_{X\an} V$ and $p_V \coloneqq p\times_X V\alg$.
Since $V$ is affinoid, the analytic coherent sheaf $\cF_V$ over $V$ can be regarded as an algebraic coherent sheaf over $V\alg$, which we denote by $\cF_V\alg$.

Since $p_V$ is proper and faithfully flat, the canonical morphism
\[\cF\alg_V\to \rR^0 p^\bullet_{V*} \;\rL^0 p^{\bullet *}_V (\cF\alg_V)\]
is an isomorphism by fppf descent.
By \cref{thm:GAGA1} and the spectral sequence of \cref{thm:spectral_sequence}, the same holds for the canonical morphism $\cF_V\to \rR^0 p^{\bullet\text{an}}_{V*} \; \rL^0 p^{\bullet\text{an}*}_V(\cF_V)$.
So we have proved the \kanal case.

Let us turn to the \canal case.
Since the question is local on $U_0$, it suffices prove that for any smooth morphism $U_1 \to U_0$ with $U_1$ representable, the pullback of $\cF \to \cF'$ to $U_1$ is an isomorphism.
For this, we only need to show that for every relatively compact Stein open $V \Subset U_1\an$, the pullback of $\cF \to \cF'$ to $V$ is an isomorphism.

Set $\cG^n \coloneqq \rL^0 (p^n)^{\mathrm{an} *}(\cF)$ and $\cG^\bullet \coloneqq \rL^0 p^{\bullet \mathrm{an} *}(\cF)$.
We have $\cF' = \rR^0 p^{\bullet \mathrm{an}}_* (\cG^\bullet)$.
Let $\cF_V \coloneqq \cF\times_{X\an} V$ and $\cG^\bullet_V \coloneqq \cG^\bullet \times_{X\an} V$.
By \cref{lem:canal_relatively_compact_globally_presented}, $\cF_V$ is of global finite presentation (cf. \cref{def:globally_presented}).
By \cref{lem:canal_global_sections_equivalence}, it determines a coherent sheaf on $\Spec(B)$, which we denote by $\cF_V^\mathrm{alg}$.
The induction hypothesis on the geometric level shows the existence of
\[
\cE^\bullet \in \varprojlim \Coh^+(P^\bullet)
\]
such that $\cE^{\bullet \mathrm{an}} = \cG^\bullet$.

Set $A \coloneqq \Gamma(\cO_{U_1\an})$ and $B \coloneqq \Gamma(\cO_V)$.
\cref{lem:open_immersions_are_flat} shows that $B$ is a flat $A$-algebra.
Form the pullback diagram of simplicial \emph{algebraic} stacks
\[
\begin{tikzcd}
P_V^\bullet \arrow{r}{p_V^\bullet} \arrow{d}[swap]{j^\bullet} & \Spec(B) \arrow{d}{i} \\
P_{U_1}^\bullet \arrow{r} & U_1
\end{tikzcd}
\]

For every $n \in \Z_{\ge 0}$, we consider the following diagram:
\[
\begin{tikzcd}[column sep=small, row sep=small]
{} & \Coh^+(P_{U_1}^n) \arrow{rr} \arrow{dd} \arrow{dl} & & \Coh^+((P_{U_1}^n)\an) \arrow{dd} \arrow{dl} \\
\Coh^+(P_V^n) \arrow[crossing over]{rr} \arrow{dd} & & \Coh^+((P_V^n)\an) \\
& \Coh^+(U_1) \arrow{rr} \arrow{dl} & & \Coh^+(U_1\an) \arrow{dl} \\
\Coh^+(\Spec(B)) \arrow{rr} & & \Coh^+(V) \arrow[crossing over, leftarrow]{uu}
\end{tikzcd}
\]

The square on the left side commutes by flat base change.
The square on the right side commutes because $V \to U_1\an$ is an open immersion.
The square in the back commutes by \cref{thm:GAGA1}.
The top and bottom squares commute by construction.
As a result, for every $\mathcal H \in \Coh^+(P^n_V)$ which is the pullback of an element in $\Coh^+(P^n_{U_1})$, one has
\[
(\rR^0 p^n_* (\mathcal H))\an \simeq \rR^0 (p^n)\an_* (\mathcal H\an).
\]
Using the spectral sequence of \cref{thm:spectral_sequence}, we conclude that for every $\mathcal H^\bullet \in \varprojlim \Coh^+(P_V^\bullet)$ which is the pullback of an element in $\varprojlim \Coh^+(P_{U_1}^\bullet)$, one has
\begin{equation}\label{eq:GAGA2_isom}
(\rR^0 p^\bullet_* (\mathcal H^\bullet))\an \simeq \rR^0 p^{\bullet \mathrm{an}}_* (\mathcal H^{\bullet \mathrm{an}}).
\end{equation}

Observe that $\cG_V^\bullet\coloneqq \rL^0 p^{\bullet *}_V (\cF_V^\mathrm{alg})$ is the pullback of $\cE^\bullet$.
Therefore the isomorphism \eqref{eq:GAGA2_isom} holds for $\mathcal H^\bullet = \cG^\bullet_V$.
Now the proof proceeds as in the \kanal case.
\end{proof}

\begin{cor} \label{cor:unbounded_GAGA2}
	Let $X$ be a proper algebraic stack over $\Spec A$, where $A$ is either the field of complex numbers or a $k$-affinoid algebra.
	The analytification functor on coherent sheaves induces an equivalence of \infcats
	\[ \Coh(X)\xrightarrow{\ \sim\ }\Coh(X\an). \]
\end{cor}

\begin{proof}
We know from \cref{prop:GAGA_fully_faithful} that the functor is fully faithful.
Let us prove the essential surjectivity.
By devissage (\cref{lem:devissage}), we see that \cref{thm:GAGA2} implies that
\begin{equation}
\Cohb(X)\longrightarrow\Cohb(X\an)
\end{equation}
is an equivalence.
Let $\cF \in \Coh^-(X\an)$.
By the hypercompleteness of $\cF$ and by \cref{prop:hypercompleteness_DAb}, we can write
\[ \cF \simeq \lim \tau_{\ge n} \cF . \]
Since the analytification functor $(-)\an$ is fully faithful and essentially surjective on $\Cohb(X\an)$, the diagram $\{\tau_{\ge n} \cF\}$ is the analytification of a tower $\{\cG_n\}$ in $\Cohb(X)$.
Moreover, using the fact that the functor $(-)\an$ is conservative and $t$-exact, we deduce that $\cG_n \in \Coh^{\ge n}(X) \cap \Cohb(X)$.
Set $\cG \coloneqq \lim \cG_n$.
We claim that $\cG \in \Coh^-(X)$ and that $\tau_{\le n} \cG \simeq \cG_n$.
In order to prove this we consider the canonical map $\cG_n \to \cG_{n+1}$.
Since $\cG_{n+1}$ belongs to $\Coh^{\ge n +1}(X)$, we can canonically factor this map as
\[ \begin{tikzcd}
	\cG_n \arrow{r} & \tau_{\ge n +1} \cG_n \arrow{r}{\alpha_n} & \cG_{n+1}
\end{tikzcd} \]
Using once more the fact that the functor $(-)\an$ is conservative and $t$-exact, we deduce that $\alpha_n$ is an equivalence.
Since the $t$-structure on $\Coh(X)$ is left complete, it follows from \cite[1.2.1.18]{Lurie_Higher_algebra} that $\tau_{\le n} \cG \simeq \cG_n$.
In particular, $\cG \in \Coh^-(X)$.
Now using \cref{lem:analytification_inverse_limits}, we obtain
\[ \cG\an \simeq \lim \cG_n\an \simeq \lim \tau_{\ge n} \cF \simeq \cF . \]
This proves that $(-)\an$ is essentially surjective on $\Coh^-(X\an)$.
	
Finally, let $\cF \in \Coh(X\an)$.
The same argument as above, applying to the truncations $\tau_{\le n} \cF$ and using the fact that $(-)\an$ commutes with colimits, shows that there exists $\cG \in \Coh(X)$ such that $\cG\an \simeq \cF$.
The proof is therefore complete.
\end{proof}

\section{Appendices}

\subsection{Generic flatness for higher algebraic stacks}

The goal of this section is to generalize the generic flatness theorem to higher algebraic stacks.

We use the geometric context $(\Aff,\tauet,\bPsm)$ for algebraic stacks introduced in \cref{sec:algebraic_stacks}.
Let $f \colon X \to Y$ be a morphism of algebraic stacks.
We define an $\cS$-valued presheaf $\widebar{Y}_{\mathrm{flat},f}$ on the category $\Aff$ as follows.
For any $S\in\Aff$, let
\[
\widebar{Y}^{\mathrm{disc}}_{\mathrm{flat},f}(S) \coloneqq \{g \in \pi_0 \mathrm{Map}_{\Sh(\Aff,\tauet)}(S,Y) \text{ such that } S \times_Y X \to S \textrm{ is flat}\}.
\]
We define $\widebar{Y}_{\mathrm{flat},f}(S)$ to be the pullback
\[
\begin{tikzcd}
\widebar{Y}_{\mathrm{flat},f}(S) \arrow{r} \arrow{d} & \mathrm{Map}(S,Y) \arrow{d} \\
\widebar{Y}^{\mathrm{disc}}_{\mathrm{flat},f}(S) \arrow{r} & \pi_0 \mathrm{Map}(S,Y).
\end{tikzcd}
\]
Let $Y_{\mathrm{flat},f}$ denote the sheafification of $\widebar{Y}_{\mathrm{flat},f}$.

\begin{lem} \label{lem:generic_flatness_representable}
Assume that $Y$ is a reduced noetherian scheme.
For every morphism of finite presentation $f \colon X \to Y$ with $X$ a geometric stack, the presheaf $\widebar{Y}_{\mathrm{flat},f}$ is a stack.
Moreover, it is representable by a dense open subscheme of $Y$.
\end{lem}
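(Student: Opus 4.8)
The plan is to reduce, by replacing $X$ with a finite atlas, to the classical generic flatness theorem for a finite-type morphism of noetherian schemes, and then to identify $\overline{Y}_{\mathrm{flat},f}$ with the maximal open subscheme of $Y$ over which $f$ becomes flat. First I would check that $\overline{Y}_{\mathrm{flat},f}$ is a sheaf. Since $Y$ is a scheme and the site is subcanonical, $\Map_{\Sh(\Aff,\tauet)}(-,Y)=h_Y$ is a discrete sheaf, so the cartesian square in the definition identifies $\overline{Y}_{\mathrm{flat},f}$ with the subpresheaf $\overline{Y}^{\mathrm{disc}}_{\mathrm{flat},f}\subseteq h_Y$. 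It therefore suffices to see that the condition ``$S\times_Y X\to S$ is flat'' descends along étale coverings of $S$; this follows from faithfully flat descent of flatness applied on a smooth atlas of $S\times_Y X$, together with the characterization of flat morphisms of geometric stacks via atlases recalled in \cref{rem:property_of_morphisms}.

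Next I would reduce to the case where $X$ is a scheme. Because $f$ is of finite presentation it is quasi-compact and $Y$ is noetherian, so $X$ is quasi-compact; choose a finite atlas $\{U_i\to X\}_{i=1}^{n}$ with each $U_i$ affine and set $P\coloneqq\coprod_{i=1}^{n}U_i$, a scheme of finite type over $Y$ via $g\colon P\to Y$, with $P\to X$ a smooth effective epimorphism. For any $S\to Y$ the base change $P\times_Y S\to X\times_Y S$ is again a smooth atlas, and since flatness of a morphism of geometric stacks is tested on (and is independent of) an atlas of the source, $X\times_Y S\to S$ is flat if and only if $P\times_Y S\to S$ is flat. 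Hence $\overline{Y}_{\mathrm{flat},f}=\overline{Y}_{\mathrm{flat},g}$, and it remains to prove: for a finite-type morphism $g\colon P\to Y$ of schemes with $Y$ reduced and noetherian, $\overline{Y}_{\mathrm{flat},g}$ is representable by a dense open subscheme of $Y$.

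For this, let $U\subseteq Y$ be the maximal open over which $g$ is flat; it exists because flatness is Zariski-local on the target, and it is governed by the openness of the flat locus of $g$ inside $P$ (Grothendieck's openness theorem for morphisms of finite presentation). One then shows $\overline{Y}_{\mathrm{flat},g}=h_U$: the inclusion $h_U\subseteq\overline{Y}_{\mathrm{flat},g}$ is stability of flatness under base change, and the reverse inclusion is deduced from the openness just quoted together with faithfully flat descent. Density of $U$ in $Y$ is then the classical generic flatness theorem: $U$ contains the generic point of every irreducible component of $Y$ (near such a point $Y$ is locally irreducible and reduced, and generic flatness makes $g$ flat over a dense open there), so $U$ is dense; equivalently one runs a noetherian induction on the irreducible components of $Y$. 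I expect the representability assertion to be the main obstacle — producing $U$ and checking that its functor of points is exactly $\overline{Y}_{\mathrm{flat},g}$; the ingredients (openness of the flat locus in the source and flat descent of flatness) are standard, but assembling them so as to exhibit $\overline{Y}_{\mathrm{flat},g}$ as an open subfunctor of $h_Y$ is where the care lies.
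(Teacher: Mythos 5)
Your proof follows the same route as the paper's: pass to a smooth atlas $U=\coprod U_i\to X$, apply classical generic flatness to the finite-type morphism $U\to Y$ of noetherian schemes to obtain a dense open $W\subseteq Y$, and identify $\overline{Y}_{\mathrm{flat},f}$ with $h_W$ via the equivalence ``$X\times_Y S\to S$ is flat if and only if $U\times_Y S\to S$ is flat, since $U$ is an atlas.'' The paper's version is terser --- it omits the preliminary sheaf check and the discussion of openness of the flat locus, asserting directly that factoring through $W$ is equivalent to flatness of the pullback --- but the decomposition and the key inputs are identical, and you correctly isolate the identification of the functor of points of $W$ as the delicate step.
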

\begin{proof}
Let $\{U_i\to X\}$ be a smooth atlas for $X$, let $U \coloneqq \coprod U_i$ and let $p\colon U\to X$.
By the generic flatness theorem for schemes, the flat locus of $f \circ p$ is a dense open subscheme of $Y$, which we denote by $W$.
A map from a scheme $S$ to $Y$ factors through $W$ if and only if the pullback $U\times_Y S \to S$ is flat.
This pullback is flat if and only if $X\times_Y S\to S$ is flat, because $U$ is an atlas for $X$.
Therefore, we have proved that $\widebar{Y}_{\mathrm{flat},f} = W$.
\end{proof}

\begin{lem} \label{lem:generic_flatness}
Let $f \colon X \to Y$ be a finitely presented morphism of geometric stacks and let $V \to Y$ be a morphism from a scheme to $Y$. Let $g \colon V \times_Y X \to V$ be the morphism induced by base change. Then the natural diagram
\[
\begin{tikzcd}
\widebar{V}_{\mathrm{flat},g} \arrow{r} \arrow{d} & \widebar{Y}_{\mathrm{flat},f} \arrow{d} \\
V \arrow{r} & Y
\end{tikzcd}
\]
is a pullback diagram in $\PSh(\Aff)$.
\end{lem}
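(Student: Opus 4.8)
The plan is to reduce to a pointwise statement and then to the cancellation (pasting) law for fibre products. Since limits in $\PSh(\Aff)$ are computed objectwise, it suffices to produce, for every $S\in\Aff$, a natural equivalence
\[
\overline{V}_{\mathrm{flat},g}(S)\;\simeq\;\Map(S,V)\times_{\Map(S,Y)}\overline{Y}_{\mathrm{flat},f}(S).
\]
First I would unwind the definitions. Because the map $\overline{Y}^{\mathrm{disc}}_{\mathrm{flat},f}(S)\to\pi_0\Map(S,Y)$ is an inclusion of sets, the induced map $\overline{Y}_{\mathrm{flat},f}(S)\to\Map(S,Y)$ is a monomorphism in $\cS$; concretely it is the inclusion of the union of those connected components of $\Map(S,Y)$ whose members $y\colon S\to Y$ satisfy that $S\times_{Y,y}X\to S$ is flat. (That this condition depends only on the component is precisely flatness being invariant under equivalence of stacks over $S$.) Pasting the two defining pullback squares of $\overline{Y}_{\mathrm{flat},f}(S)$ then identifies
\[
\Map(S,V)\times_{\Map(S,Y)}\overline{Y}_{\mathrm{flat},f}(S)\;\simeq\;\Map(S,V)\times_{\pi_0\Map(S,Y)}\overline{Y}^{\mathrm{disc}}_{\mathrm{flat},f}(S),
\]
which, since the right-hand map is again an inclusion of sets, is the union of the components of $\Map(S,V)$ whose image in $\pi_0\Map(S,Y)$ lies in $\overline{Y}^{\mathrm{disc}}_{\mathrm{flat},f}(S)$. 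In the same way $\overline{V}_{\mathrm{flat},g}(S)$ is the union of the components of $\Map(S,V)$ lying over $\overline{V}^{\mathrm{disc}}_{\mathrm{flat},g}(S)$.

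Hence both sides are unions of components of $\Map(S,V)$, and the equality reduces to the following assertion: for $s\colon S\to V$ with composite $\bar s\colon S\to Y$, the morphism $S\times_{V,s}(V\times_Y X)\to S$ is flat if and only if $S\times_{Y,\bar s}X\to S$ is flat. This is immediate from the cancellation law for pullback squares in the $\infty$-topos $\Sh(\Aff,\tauet)$: the square defining $V\times_Y X$ is a pullback, so pasting it with $s\colon S\to V$ gives $S\times_{V,s}(V\times_Y X)\simeq S\times_{Y,\bar s}X$, compatibly with the projections to $S$; thus the two morphisms to $S$ are equivalent as morphisms of geometric stacks, and since flatness is defined via atlases and is invariant under equivalence (\cref{rem:property_of_morphisms}), the two flatness conditions coincide. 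Note also that $g$, being the base change of $f$ along $V\to Y$, is finitely presented, and $V\times_Y X$ is geometric by \cref{prop:basic_properties}, so $\overline{V}_{\mathrm{flat},g}$ is a presheaf of the same kind and the statement makes sense.

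Putting this together: a component of $\Map(S,V)$, represented by $s$ with image $\bar s$, lies in $\overline{V}^{\mathrm{disc}}_{\mathrm{flat},g}(S)$ iff $S\times_{V,s}(V\times_Y X)\to S$ is flat iff $S\times_{Y,\bar s}X\to S$ is flat iff the image component lies in $\overline{Y}^{\mathrm{disc}}_{\mathrm{flat},f}(S)$; so the two unions of components of $\Map(S,V)$ agree, which yields the desired equivalence, and naturality in $S$ is clear since both sides are cut out by the same flatness condition transported through the canonical base-change equivalences. The argument is entirely formal; the only points requiring care are bookkeeping — recording that the relevant maps to sets of connected components are monomorphisms, so that the objectwise fibre products are honest unions of components, and that base change of $g$ along $s$ literally reproduces the base change of $f$ along $\bar s$ together with its structure morphism. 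I do not anticipate a genuine obstacle.
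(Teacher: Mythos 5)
Your proof is correct and follows essentially the same route as the paper's: reduce to an objectwise statement via Yoneda, observe that both sides embed in $\Map(S,V)$ as unions of connected components so the comparison happens on $\pi_0$, and conclude from the identification $S\times_{V}(V\times_Y X)\simeq S\times_Y X$ given by pasting pullback squares. The paper's version is terser but contains exactly these steps.
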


\begin{proof}
By the Yoneda lemma, it suffices to prove that for every affine scheme $S$, the canonical map
\[
\mathrm{Map}(S, \widebar{V}_{\mathrm{flat},g}) \to \mathrm{Map}(S, V) \times_{\mathrm{Map}(S, Y)} \mathrm{Map}(S, \widebar{Y}_{\mathrm{flat},f})
\]
is an equivalence.
By the definition of $\widebar{V}_{\mathrm{flat},g}$ and $\widebar{Y}_{\mathrm{flat},f}$, both the source and the target of the above map can be embedded in $\mathrm{Map}(S,V)$.
Therefore, it suffices to check that this map is an isomorphism on $\pi_0$.
Indeed, an element in
\[
\pi_0 \mathrm{Map}(S,V) \times_{\pi_0 \mathrm{Map}(S,Y)} \pi_0 \mathrm{Map}(S, \widebar{Y}_{\mathrm{flat},f})
\]
is just a morphism $\varphi \colon S \to V$ with the property that $S \times_Y X \to S$ is flat.
We note that a morphism $\varphi\colon S\to V$ factors through $\pi_0 \mathrm{Map}(S, \widebar{V}_{\mathrm{flat},g})$ if and only if $S \times_V (V \times_Y X)\to S$ is flat.
So we have completed the proof.
\end{proof}

\begin{thm} \label{thm:generic_flatness}
Let $f \colon X \to Y$ be a morphism of finite type between noetherian algebraic stacks, with $Y$ being reduced.
Then $Y_{\mathrm{flat},f}$ is a geometric stack and the natural morphism $Y_{\mathrm{flat},f} \to Y$ is a dense open immersion.
\end{thm}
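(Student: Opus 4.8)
The plan is to realize $Y_{\mathrm{flat},f}$ as an open substack of $Y$ obtained by gluing the flat loci over the members of a smooth atlas, and then to verify that this open substack agrees with the sheafification defining $Y_{\mathrm{flat},f}$. Since $Y$ is reduced and noetherian, I would first choose a smooth atlas $\{V_i\to Y\}_{i\in I}$ in which each $V_i$ is a reduced noetherian affine scheme. Writing $g_i\colon V_i\times_Y X\to V_i$ for the base change of $f$, the morphism $g_i$ is of finite presentation and $V_i\times_Y X$ is a geometric stack, so \cref{lem:generic_flatness_representable} applies and exhibits $\overline{V_i}_{\mathrm{flat},g_i}$ as a \emph{dense} open subscheme $W_i\hookrightarrow V_i$.

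The next step is to check that the $W_i$ glue along $Y$. The overlaps $V_{ij}\coloneqq V_i\times_Y V_j$ are geometric stacks but not schemes in general, so to compare the two induced open substacks $W_i\times_{V_i}V_{ij}$ and $W_j\times_{V_j}V_{ij}$ of $V_{ij}$ I would pull back further along an atlas $\{T_\alpha\to V_{ij}\}$ by schemes. For each $\alpha$, the two composites $T_\alpha\to V_{ij}\to V_i\to Y$ and $T_\alpha\to V_{ij}\to V_j\to Y$ coincide (they are the two projections to $Y$), hence the base changes of $g_i$ and of $g_j$ along $T_\alpha$ agree; then \cref{lem:generic_flatness}, applied through each projection, gives $T_\alpha\times_{V_i}W_i=T_\alpha\times_{V_j}W_j$ inside $\PSh(\Aff)$. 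Since $\coprod_\alpha T_\alpha\to V_{ij}$ is an effective epimorphism and both sides are monomorphisms into $V_{ij}$, the two open substacks coincide. By smooth descent of open immersions, the $W_i$ therefore glue to an open substack $j\colon U\hookrightarrow Y$ with $U\times_Y V_i=W_i$ for every $i$.

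Then I would show that $U$ is geometric and that $U\simeq Y_{\mathrm{flat},f}$. If $Y$ is $n$-geometric, then $\{W_i\}$ is an $n$-atlas of $U$: each $W_i\to U$ is the base change of the $(n\!-\!1)$-$\bP$ morphism $V_i\to Y$, and $\coprod W_i\to U$ is the base change along $j$ of the effective epimorphism $\coprod V_i\to Y$. Moreover $j$ is a monomorphism---this can be tested after pulling back along $\coprod V_i\to Y$, where it becomes $\coprod W_i\hookrightarrow\coprod V_i$---so the diagonal of $U$ is the base change of $\Delta_Y$ along $U\times U\to Y\times Y$ and is therefore $(n\!-\!1)$-representable by \cref{prop:basic_properties}; hence $U$ is $n$-geometric. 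For the identification, I would use that the sheafification $\rL_\cC$ is left exact, so that together with \cref{lem:generic_flatness} one has $V_i\times_Y Y_{\mathrm{flat},f}\simeq \rL_\cC(V_i\times_Y\overline{Y}_{\mathrm{flat},f})\simeq\rL_\cC\overline{V_i}_{\mathrm{flat},g_i}\simeq W_i\simeq V_i\times_Y U$; as both $Y_{\mathrm{flat},f}\to Y$ and $U\to Y$ are monomorphisms that become equal after pulling back along the cover $\coprod V_i\to Y$, they are canonically identified over $Y$.

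Finally, the morphism $Y_{\mathrm{flat},f}\simeq U\hookrightarrow Y$ is $(-1)$-representable (open immersions of schemes satisfy smooth descent on the target, so every base change to a scheme is again an open immersion), and it is dense: by \cref{rem:property_of_morphisms} density of open immersions may be tested on a single atlas of $Y$---density of open subschemes being étale-local on the target, since étale morphisms are open---and on $\{V_i\}$ we have already seen $W_i=U\times_Y V_i$ dense in $V_i$. I expect the main difficulty to be the gluing together with the identification $U\simeq Y_{\mathrm{flat},f}$: one must carry the sheafification and the $\pi_0$ used in the definition of $\overline{Y}_{\mathrm{flat},f}$ through the various base changes (this is exactly what \cref{lem:generic_flatness} is designed for), and one must keep track of the fact that reducedness---and hence density---is available only on the smooth atlas and not on arbitrary objects over $Y$.
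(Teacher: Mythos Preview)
Your proof is correct and rests on the same two lemmas as the paper's argument, together with the left exactness of sheafification. The organization differs, however. You first construct an open substack $U\hookrightarrow Y$ by gluing the $W_i$ over overlaps, then verify $U$ is geometric, and only afterwards identify $U$ with $Y_{\mathrm{flat},f}$ via the pullback square from \cref{lem:generic_flatness}. The paper reverses this: it applies sheafification directly to the pullback square of \cref{lem:generic_flatness} (with $V=\coprod V_i$), obtaining a pullback diagram
\[
\begin{tikzcd}
V_{\mathrm{flat},g} \arrow{r} \arrow{d} & Y_{\mathrm{flat},f} \arrow{d} \\
V \arrow{r} & Y
\end{tikzcd}
\]
in $\Sh(\Aff,\tauet)$; since $V\to Y$ is a smooth effective epimorphism, so is its base change $V_{\mathrm{flat},g}\to Y_{\mathrm{flat},f}$, which is therefore immediately an atlas. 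Geometricity, the open-immersion property, and density are then read off from this single square. Your explicit gluing step and the separate verification over double intersections $V_{ij}$ are thus subsumed by the observation that the pullback square already identifies $V_i\times_Y Y_{\mathrm{flat},f}$ with $W_i$. Both routes are sound; the paper's is shorter because it never materializes $U$ as an independent object.
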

\begin{proof}
Let $\{V_i\to Y\}$ be an atlas for $Y$ and let $V \coloneqq \coprod V_i$.
Since the sheafification functor is a left exact localization, applying the sheafification functor to the pullback square of \cref{lem:generic_flatness}, we get a pullback diagram in the \infcat of sheaves $\Sh(\Aff,\tauet)$.
\cref{lem:generic_flatness_representable} implies that the sheafification of $\widebar{V}_{\mathrm{flat},g}$ coincides with itself, and that the map $V_{\mathrm{flat},g} \to V$ is representable by a dense open immersion.

Moreover, we note that the morphism $V_{\mathrm{flat},g} \to Y_{\mathrm{flat},f}$ is a smooth effective epimorphism, so it defines a smooth atlas for $Y_{\mathrm{flat},f}$.
It follows that $Y_{\mathrm{flat},f}$ is a geometric stack and that the natural morphism $Y_{\mathrm{flat},f} \to Y$ is a dense open immersion.
\end{proof}

\subsection{A spectral sequence for descent} \label{sec:spectral_sequence}

Suppose we are given a coaugmented cosimplicial diagram $\cC_+^\bullet$ in the $\infty$-category of presentable stable $\infty$-categories.
Denote by $\cD$ the object $\cC^{-1}$ and by $\cC^\bullet$ the underlying cosimplicial object of $\cC_+^\bullet$.
There is a natural morphism
\[
f^\bullet_* \colon \varprojlim \cC^\bullet \to \cD.
\]
Suppose that the categories $\cC^n$ for $n \ge -1$ are equipped with $t$-structures.
The main goal of this section is to construct a spectral sequence converging to the homotopy groups of $f^\bullet_*(\cF)$ for every $\cF \in \varprojlim \cC^\bullet$.

Let $\LPr$ (resp.\ $\RPr$) denote the $\infty$-category of presentable $\infty$-categories with morphisms given by left adjoints (resp.\ right adjoints) (cf.\ \cite[5.5.3.1]{HTT}).
The first part of the construction can be performed in a greater generality for diagrams $K \to \LPr$, where $K$ is a (small) simplicial set.
Since $\RPr \simeq (\LPr)^\mathrm{op}$, any diagram $Z \colon K \to \LPr$ gives rise to another diagram $Z' \colon K^\mathrm{op} \to \RPr$, informally by passing to right adjoints.
For this reason, any functor $Z \colon K \to \LPr$ determines a presentable fibration $\mathcal Z \to K$ via Grothendieck construction (cf.\ \cite[5.5.3.3]{HTT}).
This fibration gives rise to two different objects in the category $\sSet^+_{/K}$ of marked simplicial sets over $K$:
$\mathcal Z^\natural_{\mathrm{cocart}} \coloneqq (\mathcal Z, \mathcal E_{\mathrm{cocart}})$, 
and $\mathcal Z^\natural_{\mathrm{cart}} \coloneqq (\mathcal Z, \mathcal E_{\mathrm{cart}})$,
where $\mathcal E_{\mathrm{cocart}}$ (resp.\ $\mathcal E_{\mathrm{cart}}$) denotes the collection of cocartesian (resp.\ cartesian) edges of $\mathcal Z \to K$.
We refer to \cite[§3]{HTT} for the theory of marked simplicial sets.
By \cite[3.3.3.2]{HTT}, we have
\[
\varprojlim_K Z \simeq \Map_{K}(K^\sharp, \mathcal Z^\natural_{\mathrm{cocart}}).
\]

Let $X, Y \colon K \to \LPr$ be two $K$-diagrams in $\LPr$ and let $\varphi \colon Y \to X$ be a natural transformation.
Passing to right adjoints, we obtain an induced natural transformation $\psi \colon X' \to Y'$ in $\RPr$, with the property that for every $s \in K$, $\psi_s$ is a right adjoint of $\varphi_s$.
Let $p \colon \cX \to K$ and $q \colon \cY \to K$ be the presentable fibrations determined by $X$ and $Y$.
The Grothendieck construction converts $\varphi$ into a functor $F_\mathrm{cocart} \colon \cY^\natural_{\mathrm{cocart}} \to \cX^\natural_{\mathrm{cocart}}$.
Dually, $\psi$ determines a functor $G_\mathrm{cart} \colon \cX^\natural_{\mathrm{cart}} \to \cY^\natural_{\mathrm{cart}}$.
Observe that $F_\mathrm{cocart}$ does not respect the cartesian structure on $\cY$ and $G_\mathrm{cart}$ does not respect the cocartesian structure on $\cX$.
Nevertheless, after forgetting the markings, the two morphisms $F \colon \cY \leftrightarrows \cX \colon G$ are adjoint to each other. 
To see this, it is convenient to interpret $\varphi \colon Y \to X$ as a functor $Z \colon K \times \Delta^1 \to \LPr$. Let $\pi \colon \cZ \to K \times \Delta^1$ be the presentable fibration associated to $Z$.
Since $Z |_{K \times \{0\}} \simeq X$ and $Z |_{K \times \{1\}} \simeq Y$ by definition, we obtain canonical equivalences
\[ \cX \simeq \cZ \times_{K \times \{0\}} (K \times \Delta^1), \qquad \cY \simeq \cZ \times_{K \times \{1\}} (K \times \Delta^1) \]
Let $\mathrm{pr} \colon K \times \Delta^1 \to \Delta^1$ be the canonical projection.
We remark that for every object $s \in K$ there is a canonical morphism $(s,0) \to (s,1)$ in $K \times \Delta^1$ which is both cartesian and cocartesian.
We claim that the composition
\[ \begin{tikzcd}
	\cZ \arrow{r}{\pi} & K \times \Delta^1 \arrow{r}{\mathrm{pr}} & \Delta^1
\end{tikzcd} \]
is both a cartesian and cocartesian fibration.
By symmetry it will be sufficient to prove that it is a cocartesian one.
Let therefore $x \in \cZ$ and suppose that $\mathrm{pr}(\pi(x)) = 0 \in \Delta^1$.
Let $s$ be the image of $\pi(x)$ in $K$ under the projection $K \times \Delta^1 \to K$ and denote by $e \colon (s,0) \to (s,1)$ the canonical morphism in $K \times \Delta^1$.
Since $\pi$ is a cocartesian fibration, we can choose a $\pi$-cocartesian lift $f \colon x \to y$ in $\cZ$ lying over $e$.
As we already remarked that $e$ is $(\mathrm{pr})$-cocartesian, we can now invoke \cite[2.4.1.3.(3)]{HTT} to conclude that $f$ is $(\mathrm{pr} \circ \pi)$-cocartesian.
At this point, since the functor $\mathrm{pr} \circ \pi \colon \cZ \to \Delta^1$ is corepresented by $F \colon \cY \to \cX$ and represented by $G \colon \cX \to \cY$ by construction,
we can conclude that $F$ and $G$ are adjoint to each other.

By composing with $F_\mathrm{cocart}$, we obtain a functor
\[
f \colon \varprojlim_K Y \simeq \Map^\flat_K(K^\sharp, \cY^\natural_{\mathrm{cocart}}) \to \Map^\flat_K(K^\sharp, \cX^\natural_{\mathrm{cocart}}) \simeq \varprojlim_K X.
\]
Since the limits are computed in $\LPr$, the functor $f$ admits a right adjoint
\[
g \colon \varprojlim_K X \to \varprojlim_K Y.
\]
The first goal of this section is to provide a useful factorization of the functor $g$.

The inclusion $K^\flat \to K^\sharp$ in $\sSet^+$ induces a natural transformation of functors $\Map^\flat_K(K^\sharp, -) \to \Map^\flat_K(K^\flat, -)$.
Evaluating this natural transformation on the morphism $F$ produces the following commutative diagram
\[
\begin{tikzcd}
\Map^\flat_K(K^\flat, \cX^\natural_{\mathrm{cocart}}) & \Map^\flat_K(K^\flat, \cY^\natural_{\mathrm{cocart}}) \arrow{l}[swap]{\widetilde{F}} \\
\Map^\flat_K(K^\sharp, \cX^\natural_{\mathrm{cocart}}) \arrow{u} & \Map^\flat_K(K^\sharp, \cY^\natural_{\mathrm{cocart}}). \arrow{u} \arrow{l}[swap]{f}
\end{tikzcd}
\]
Since
\[
\Map^\flat_K(K^\flat, \cX^\natural_{\mathrm{cocart}}) \simeq \Fun_K(K, \cX),
\]
the above diagram can be rewritten as
\begin{equation}
\begin{tikzcd} \label{diagram:spectral_sequence}
\Fun_K(K, \cX) & \Fun_K(K, \cY) \arrow{l}[swap]{\widetilde{F}} \\
\Map^\flat_K(K^\sharp, \cX^\natural_{\mathrm{cocart}}) \arrow{u} & \Map^\flat_K(K^\sharp, \cY^\natural_{\mathrm{cocart}}). \arrow{u} \arrow{l}[swap]{f}
\end{tikzcd}
\end{equation}
We see that $\widetilde F$ has a right adjoint
\[
\widetilde{G} \colon \Fun_K(K, \cX) \to \Fun_K(K,\cY)
\]
induced by composition with $G\colon\cX\to\cY$.

\begin{lem} \label{lem:limit_lax_limit}
Let $K$ be a small simplicial set and let $p \colon \cX \to K$ be a presentable fibration.
The functors of $\infty$-categories
\begin{gather*}
\Map^\flat_K(K^\sharp, \cX^\natural_{\mathrm{cart}}) \to \Map^\flat_K(K^\flat, \cX^\natural_{\mathrm{cart}})\\
\Map^\flat_K(K^\sharp, \cX^\natural_{\mathrm{cocart}}) \to \Map^\flat_K(K^\flat, \cX^\natural_{\mathrm{cocart}})
\end{gather*}
are fully faithful and admit right adjoints.
\end{lem}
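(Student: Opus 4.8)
The plan is to identify every mapping space appearing in the statement with an $\infty$-category of sections of $p$, and then to produce the adjoint by the adjoint functor theorem. First I would observe that, since $K^\flat$ carries no marked edges besides the degenerate ones, $\Map^\flat_K(K^\flat, \cX^\natural_{\mathrm{cart}}) = \Map^\flat_K(K^\flat, \cX^\natural_{\mathrm{cocart}}) = \Fun_K(K, \cX)$, the $\infty$-category of sections of $p$; whereas $\Map^\flat_K(K^\sharp, \cX^\natural_{\mathrm{cocart}})$ is the sub-simplicial set of $\Fun_K(K, \cX)$ spanned by those sections sending every edge of $K$ to a $p$-cocartesian edge, and $\Map^\flat_K(K^\sharp, \cX^\natural_{\mathrm{cart}})$ is the analogous one for $p$-cartesian edges. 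An $n$-simplex $\sigma \colon K \times \Delta^n \to \cX$ of $\Fun_K(K, \cX)$ belongs to the first of these subcategories precisely when, for each vertex $m$ of $\Delta^n$, the restriction $\sigma|_{K \times \{m\}}$ sends every edge to a $p$-cocartesian edge; since this is a condition on the vertices of $\sigma$ alone, the inclusion $\Map^\flat_K(K^\sharp, \cX^\natural_{\mathrm{cocart}}) \hookrightarrow \Fun_K(K, \cX)$ is fully faithful, and the same argument applies verbatim to the cartesian inclusion.

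For the existence of the adjoint I would first treat the cocartesian functor. By \cite[3.3.3.2]{HTT} its source is $\varprojlim_K Z$, a small limit in $\LPr$, hence presentable; and the $\infty$-category $\Fun_K(K, \cX)$ of sections of a presentable fibration over a small simplicial set is itself presentable. It then suffices to check that the inclusion preserves small colimits and to conclude by the adjoint functor theorem \cite[5.5.2.9]{HTT}. Colimits in $\varprojlim_K Z$ are computed objectwise in the fibers, because the limit is formed in $\LPr$; and colimits in $\Fun_K(K, \cX)$ are likewise computed objectwise, because the cocartesian transition functors of $p$ are left adjoints — this is exactly the point at which the hypothesis that $p$ is a \emph{presentable} fibration, and not merely a cocartesian one, is used — so the two objectwise descriptions agree and the inclusion is colimit-preserving. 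I would also record an explicit description of the resulting right adjoint: it carries a section $s$ to the cocartesian section whose value at $k \in K$ is a $p$-limit (relative limit) of the diagram $K_{k/} \to \cX$ determined by $s$, such $p$-limits existing because $p$ is in particular a cartesian fibration whose fibers are complete and whose transition functors preserve limits.

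The cartesian functor is then handled by the formally dual argument, applied to the presentable fibration $p\op \colon \cX\op \to K\op$: under the identification $\Fun_K(K, \cX)\op \simeq \Fun_{K\op}(K\op, \cX\op)$ the cartesian sections of $p$ correspond to the cocartesian sections of $p\op$, so the previous case applies. The step I expect to be the main obstacle is the objectwise computation of colimits in the section $\infty$-category $\Fun_K(K, \cX)$, together with the bookkeeping needed to keep the two straightenings of $p$ — the one valued in $\LPr$ and its dual valued in $\RPr$ — consistent throughout; once that structural input is in place, full faithfulness, presentability, and the appeal to the adjoint functor theorem are purely formal.
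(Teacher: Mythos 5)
Your handling of full faithfulness and of the cocartesian inclusion is sound, and it takes a genuinely different route from the paper: the paper disposes of full faithfulness by noting that the marked mapping space is a sub-simplicial set of an $\infty$-category (your vertex-wise analysis of which simplices it contains is the careful version of that remark), and it obtains the adjoint by citing \cite[5.5.3.17]{HTT} outright, whereas you run the adjoint functor theorem using that colimits in $\varprojlim_K Z$ and in $\Fun_K(K,\cX)$ are both computed fiberwise. That is a legitimate alternative; the two structural inputs you isolate — presentability of the section category and the fiberwise computation of its colimits, both of which hold precisely because the cocartesian transition functors are left adjoints — are exactly the content of the results surrounding \cite[5.5.3.17]{HTT}, so nothing essential is missing from that half.

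The cartesian case is where your argument breaks, for two independent reasons. First, $p^{\mathrm{op}} \colon \cX^{\mathrm{op}} \to K^{\mathrm{op}}$ is not a presentable fibration: its fibers are opposites of presentable $\infty$-categories, which are not presentable except in degenerate cases, so the cocartesian case cannot be applied to it. Second, and more seriously, the identification $\Fun_K(K,\cX)^{\mathrm{op}} \simeq \Fun_{K^{\mathrm{op}}}(K^{\mathrm{op}},\cX^{\mathrm{op}})$ converts a right adjoint into a left adjoint, so even granting the first point your dualization would produce a \emph{left} adjoint to the inclusion of cartesian sections, not a right adjoint. This variance problem is substantive, not cosmetic: over $K=\Delta^1$ the inclusion of cartesian sections is $y \mapsto (g(y),y,\mathrm{id})$ into the category of triples $(x_0,x_1,x_0 \to g(x_1))$, and a right adjoint would have to corepresent $y \mapsto \Map(g(y),x_0)\times_{\Map(g(y),g(x_1))}\Map(y,x_1)$; since the cartesian transition functor $g$ is only a right adjoint and need not preserve colimits (take $g$ to be the free loop space functor on $\cS$ and $x_1$ a point), this functor need not carry colimits to limits, so no right adjoint exists there — what exists is the left adjoint, namely evaluation at the terminal vertex. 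You should be aware that the paper's own one-line proof makes the mirror-image move, reducing the cocartesian case to the cartesian one via the same op-duality, and so faces the same variance issue; only the cocartesian half of the lemma is actually used downstream (via \cref{lem:lax_limit_cosimplicial_objects} and the identification of the right adjoint with $\lim$), and that is the half your adjoint-functor-theorem argument genuinely establishes.
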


\begin{proof}
Since a cartesian fibration over $K$ is the same as a cocartesian fibration over $K^\mathrm{op}$, it suffices to prove the statement for $\cX^\natural_{\mathrm{cart}}$.
Since $\Map^\flat_K(K^\sharp, \cX^\natural)$ is a sub-simplicial set of $\Map^\flat_K(K^\flat, \cX^\natural)$, and both are \infcats, this inclusion is fully faithful.
The existence of right adjoint is the content of \cite[5.5.3.17]{HTT}.
\end{proof}

From now on we assume that $Y$ is the constant diagram $K \to \LPr$ associated to a presentable $\infty$-category $\cD$.
In this case, the presentable fibration $\cY\to K$ associated to $Y$ is simply the projection $K \times \cD \to K$.

\begin{lem} \label{lem:lax_limit_cosimplicial_objects}
There exists a commutative diagram in $\mathrm h(\mathrm{Cat}_\infty)$\[
\begin{tikzcd}
\Map^\flat_K(K^\flat, \cY^\natural_{\mathrm{cocart}}) \arrow{r} & \Fun(K, \cD) \\
\Map^\flat_K(K^\sharp, \cY^\natural_{\mathrm{cocart}}) \arrow{r} \arrow{u} & \cD, \arrow{u}[swap]{c}
\end{tikzcd}
\]
where $c \colon \cD \to \Fun(K, \cD)$ is the functor induced by composition with $K \to \Delta^0$ and the horizontal morphisms are isomorphisms in $\mathrm h(\mathrm{Cat}_\infty)$.
\end{lem}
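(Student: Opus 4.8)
The plan is to make all four corners of the square explicit as (full subcategories of) $\Fun(K,\cD)$ and then to read off the comparison maps. Throughout write $\cY = K \times \cD$ with $\cY \to K$ the projection, and recall that in the cases we care about ($K = \bDelta$ or $K = \bDelta_+$, both of which have an initial object) the simplicial set $K$ is weakly contractible.

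First I would identify the top-left corner. Using the identification $\Map^\flat_K(K^\flat, \cX^\natural_{\mathrm{cocart}}) \simeq \Fun_K(K, \cX)$ recalled just before diagram~\eqref{diagram:spectral_sequence}, applied to $\cX = \cY$, we obtain $\Map^\flat_K(K^\flat, \cY^\natural_{\mathrm{cocart}}) \simeq \Fun_K(K, K \times \cD)$; and a section of a projection $K \times \cD \to K$ is precisely a functor $K \to \cD$, so $\Fun_K(K, K \times \cD) \simeq \Fun(K,\cD)$. This equivalence is the top horizontal arrow of the diagram, and the right vertical arrow is $c$.

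Next I would unwind the bottom-left corner. An edge $(e,\sigma)$ of $K \times \cD$ is cocartesian over $K$ exactly when $\sigma$ is an equivalence in $\cD$, so $\cY^\natural_{\mathrm{cocart}} = K^\sharp \times \cD^\natural$ in $\sSet^+$. Unwinding the definition of $\Map^\flat_K$, an $n$-simplex of $\Map^\flat_K(K^\sharp, \cY^\natural_{\mathrm{cocart}})$ is a map $h \colon \Delta^n \times K \to \cD$ each of whose vertices $h_i \colon K \to \cD$ carries every edge of $K$ to an equivalence; hence $\Map^\flat_K(K^\sharp, \cY^\natural_{\mathrm{cocart}})$ is the full subcategory $\Fun^W(K,\cD) \subseteq \Fun(K,\cD)$ spanned by the functors inverting every edge of $K$. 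By the universal property of localization this is $\Fun(L_W K, \cD)$, where $L_W K$ denotes the $\infty$-groupoid obtained from $K$ by inverting all its edges; since $K$ is weakly contractible, $L_W K \simeq \Delta^0$, so $\Fun^W(K,\cD) \simeq \cD$. This equivalence is the bottom horizontal arrow. (Alternatively one may invoke $\Map^\flat_K(K^\sharp, \cY^\natural_{\mathrm{cocart}}) \simeq \varprojlim_K Y$ from \cite[3.3.3.2]{HTT} together with the fact that the limit of a constant diagram indexed by a weakly contractible simplicial set is its value.)

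Finally I would check commutativity. The left vertical map, induced by $K^\flat \to K^\sharp$, becomes under these identifications the inclusion $\Fun^W(K,\cD) \hookrightarrow \Fun(K,\cD)$, while the right vertical $c$, being restriction along $K \to \Delta^0$, factors through the constant functors --- each of which inverts every edge of $K$ --- hence through $\Fun^W(K,\cD)$, and the composite $\cD \to \Fun^W(K,\cD) \xrightarrow{\ \sim\ } \cD$ (second arrow the bottom horizontal equivalence) is the identity. Since moreover every edge-inverting functor out of the weakly contractible $K$ is equivalent to the constant functor at any of its values, the square commutes up to natural equivalence, i.e.\ in $\mathrm{h}(\mathrm{Cat}_\infty)$. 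I expect the one genuinely non-formal point to be the identification of $\Map^\flat_K(K^\sharp, \cY^\natural_{\mathrm{cocart}})$ with $\Fun^W(K,\cD)$ together with the resulting appeal to weak contractibility of $K$; everything else is a matter of matching definitions on objects and morphisms, which is precisely why the statement is asserted only up to isomorphism in $\mathrm{h}(\mathrm{Cat}_\infty)$ --- in $\mathrm{Cat}_\infty$ the top comparison is merely a zigzag, as the footnote observes.
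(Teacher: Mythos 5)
Your proof is correct in substance but follows a genuinely different route from the paper's. The paper establishes the bottom equivalence by a zigzag through the right cone $K^\triangleright$: it extends the fibration to $K^\triangleright \times \cD \to K^\triangleright$, invokes the proof of \cite[3.3.3.2]{HTT} to see that restriction of cocartesian sections from $(K^\triangleright)^\sharp$ to $K^\sharp$ and to the cone point are both categorical equivalences, and then identifies the image in $\Fun(K^\triangleright,\cD)$ with the constant diagrams. You instead compute $\Map^\flat_K(K^\sharp, \cY^\natural_{\mathrm{cocart}})$ directly as the full subcategory $\Fun^W(K,\cD)$ of edge-inverting functors and then apply the universal property of localization together with weak contractibility of $K$. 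Your route is more transparent about the mechanism, and it has the merit of making explicit a hypothesis the paper's statement suppresses: for a general small simplicial set $K$ the bottom arrow is \emph{not} an equivalence (take $K = \Delta^0 \sqcup \Delta^0$, where one gets $\cD \times \cD$); one really needs $|K| \simeq *$, which holds in the only case the lemma is used, $K = \mathrm N(\bDelta)$. The price is that the argument is tied to that hypothesis, whereas the paper's cone argument is the one that generalizes to non-constant diagrams.

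One justification in your write-up is wrong and should be repaired: $\bDelta$ does \emph{not} have an initial object ($[0]$ admits two distinct maps to $[1]$), so you cannot deduce weak contractibility of $\mathrm N(\bDelta)$ that way; only $\bDelta_+$ has an initial object, namely $\emptyset$. The conclusion is nevertheless true: $\mathrm N(\bDelta)$ is weakly contractible by the standard argument that the endofunctor $[n] \mapsto [0] \star [n]$ receives natural transformations from both the identity and the constant functor at $[0]$, giving a contracting zigzag of simplicial homotopies. Since your entire identification of the bottom-left corner with $\cD$, as well as your commutativity argument (that every edge-inverting functor out of $K$ is naturally equivalent to a constant one), hinges on this contractibility, the correct justification should be supplied; with it, the proof goes through.
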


\begin{proof}
Since $\cD$ is an $\infty$-category, the functor $\cD \to \Delta^0$ is both a cartesian and a cocartesian fibration.
Moreover, cocartesian edges in $\cD$ are precisely the equivalences in $\cD$.
Base change induces a presentable fibration $K \times \cD \to K$ whose cocartesian edges are precisely those morphisms that project to equivalences in $\cD$.
To simplify notation, we will write $\cY^\natural$ instead of $\cY^\natural_{\mathrm{cocart}}$.
Let $K^\triangleright$ denote the right cone and $\{v\}$ the vertex of the cone.
Consider the commutative diagram in $\sSet$
\[
\begin{tikzcd}
\Map^\flat_K(K^\flat, \cY^\natural) & \Map^\flat_{K^\triangleright}((K^\triangleright)^\flat, \cY^\natural) \arrow{l} \arrow{r} & \Map^\flat_{K^\triangleright}(\{v\}^\flat, \cY^\natural) \\
\Map^\flat_K(K^\sharp, \cY^\natural) \arrow{u} & \Map^\flat_{K^\triangleright}((K^\triangleright)^\sharp, \cY^\natural) \arrow{l} \arrow{u} \arrow{r} & \Map^\flat_{K^\triangleright}(\{v\}^\sharp, \cY^\natural). \arrow{u}
\end{tikzcd}
\]
Since $\{v\}^\sharp = \{v\}^\flat$, the vertical morphism on the right is the identity.
The proof of \cite[3.3.3.2]{HTT} shows that the morphisms on the bottom of the diagram are categorical equivalences.
Moreover, $\Map^\flat_{K^\triangleright}(\{v\}^\flat, \cY^\natural) \simeq \cD$.
Observe that
\[
\Map^\flat_K(K^\flat, \cY^\natural) \simeq \Map_K(K, \cD \times K) \simeq \Fun(K, \cD).
\]
Similarly, we have the identification
\[
\Map^\flat_{K^\triangleright}((K^\triangleright)^\flat, \cY^\natural) \simeq \Fun(K^\triangleright, \cD).
\]
To conclude the proof, it suffices to note that the image of
\[
\Map^\flat_{K^\triangleright}((K^\triangleright)^\sharp, \cY^\natural) \to \Map^\flat_{K^\triangleright}((K^\triangleright)^\sharp, \cY^\natural) \simeq \Fun(K^\triangleright, \cD)
\]
consists precisely of the constant diagrams from $K$ to $\cD$. \end{proof}

\begin{cor}\label{cor:limit}
Let $Y \colon K \to \LPr$ be the constant diagram associated to the presentable $\infty$-category $\cD$ and let $\cY = K \times \cD \to K$ be the presentable fibration classified by $Y$. The right adjoint to the inclusion
\[
\Map^\flat_K(K^\sharp, \cY^\natural_{\mathrm{cocart}}) \to \Map^\flat_K(K^\flat, \cY^\natural_{\mathrm{cocart}})
\]
can be identified with the limit functor
\[
\lim \colon \Fun(K, \cD) \to \cD.
\]
\end{cor}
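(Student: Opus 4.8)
The plan is to obtain the statement by simply stacking the two preceding lemmas. First I would record what \Cref{lem:limit_lax_limit} already gives: the inclusion
\[
\iota \colon \Map^\flat_K(K^\sharp, \cY^\natural_{\mathrm{cocart}}) \to \Map^\flat_K(K^\flat, \cY^\natural_{\mathrm{cocart}})
\]
is fully faithful and admits a right adjoint, which I will call $R$. Thus the only remaining content is to identify $R$ with $\lim \colon \Fun(K, \cD) \to \cD$.

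Next I would invoke \Cref{lem:lax_limit_cosimplicial_objects}, which produces a commutative square in $\mathrm h(\mathrm{Cat}_\infty)$ whose horizontal arrows are isomorphisms, whose left vertical arrow is $\iota$ (after the identification $\Map^\flat_K(K^\flat, \cY^\natural_{\mathrm{cocart}}) \simeq \Fun(K,\cD)$), and whose right vertical arrow is the constant-diagram functor $c \colon \cD \to \Fun(K, \cD)$ induced by $K \to \Delta^0$. Conjugating by the two horizontal equivalences, this square exhibits $\iota$ and $c$ as equivalent functors. Since a right adjoint of a functor is determined up to equivalence and equivalences transport adjunctions, it follows that $c$ admits a right adjoint which, under the same horizontal equivalences, corresponds to the functor $R$ of the previous step.

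Finally I would recall that the right adjoint of the constant-diagram functor $c \colon \cD \to \Fun(K,\cD)$ is, by its defining universal property, precisely the limit functor $\lim \colon \Fun(K, \cD) \to \cD$ (this is one of the standard descriptions of the limit functor; see \cite[§4.2]{HTT}). Combining the last two steps identifies $R$ with $\lim$, which is exactly the assertion of the corollary.

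The only point requiring a little care — and the main, rather minor, obstacle — is that \Cref{lem:lax_limit_cosimplicial_objects} supplies the comparison only as a commutative diagram in the homotopy category $\mathrm h(\mathrm{Cat}_\infty)$, because the relevant identification of $\infty$-categories is realized by a zigzag rather than a single functor. One therefore has to check that this level of information is enough to transport the right adjoint. This is indeed the case: being a right adjoint is detected by an equivalence of mapping spaces natural in both variables, a condition that is insensitive to replacing a functor by an equivalent one, and the commutative square in $\mathrm h(\mathrm{Cat}_\infty)$ expresses exactly that $\iota$ and $c$ become equivalent after the prescribed conjugation. With that observation in hand the argument goes through without further difficulty.
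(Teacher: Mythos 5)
Your proposal is correct and coincides with the argument the paper intends: the corollary is stated without proof precisely because it follows by combining \cref{lem:limit_lax_limit} (existence of the right adjoint to the fully faithful inclusion) with \cref{lem:lax_limit_cosimplicial_objects} (identification of that inclusion with the constant-diagram functor $c$), together with the standard fact that the right adjoint of $c$ is $\lim$. Your added remark that commutativity in $\mathrm h(\mathrm{Cat}_\infty)$ suffices to transport the adjunction is a legitimate and welcome clarification of the one subtle point, and it is handled correctly.
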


Combining \cref{lem:limit_lax_limit} and \cref{lem:lax_limit_cosimplicial_objects}, we can pass to the right adjoints in Diagram \eqref{diagram:spectral_sequence} and obtain the following commutative diagram of $\infty$-categories
\[
\begin{tikzcd}
\Fun_K(K, \cX) \arrow{r}{\widetilde{G}} \arrow{d} & \Fun(K, \cD) \arrow{d}{\lim}  \\
\Map^\flat_K(K^\sharp, \cX^\natural_{\mathrm{cocart}}) \arrow{r}{g} & \cD.
\end{tikzcd}
\]
Since the left vertical map is right adjoint to a fully faithful inclusion functor, we obtain the following factorization of the functor $g$
\begin{equation} \label{eq:factorization_descending_spectral_sequence}
\begin{tikzcd}
{} & \Fun(K, \cD) \arrow{dr}{\lim} \\
\varprojlim X \arrow{ur}{\theta} \arrow{rr}{g} & & \cD.
\end{tikzcd}
\end{equation}

The case of main interest for us is when $K = \mathrm{N}(\bDelta)$.
In this case, we can combine the Dold-Kan correspondence together with the spectral sequence of \cite[§1.2.2]{Lurie_Higher_algebra} to produce a spectral sequence for the functor $g$.

To fix notations, let $\cC^\bullet_+ \colon \bDelta_+ \to \LPr_{\mathrm{stab}}$ be a coaugmented cosimplicial presentable stable $\infty$-category. Let $\cD \coloneqq \cC^{-1}$ and let $\cC^\bullet$ be the underlying cosimplicial object of $\cC^\bullet_+$.
Let $\cD^\bullet$ denote the constant cosimplicial object associated to $\cD$.
We obtain a canonical map $f^\bullet \colon \cD^\bullet \to \cC^\bullet$.
Passing to limits, we obtain a functor
\[
f \colon \cD \to \varprojlim \cC^\bullet .
\]
Let $g$ denote its right adjoint as in the previous discussion.

Let $\cX \to \bDelta$ (resp.\ $\cY \to \bDelta$) be the presentable fibration associated to $\cC^\bullet$ (resp.\ $\cD^\bullet$) as before.
Recall that the morphism $f^\bullet$ induces a functor $F \colon \cX \to \cY$ whose right adjoint relative to $\bDelta$ is denoted by $G \colon \cY \to \cX$.
Moreover, we can canonically identify the functor induced by $G$ between the fibers over $[n]$ with the right adjoint of $f^n$, which we denote by $g^n$. We observe that the inclusion $[n] \to \bDelta$ induces canonical projection maps for every $n$
\[
p^n \colon \varprojlim \cC^\bullet \simeq \Map^\flat_{\bDelta}(\bDelta^\sharp, \cX^\natural_{\mathrm{cocart}}) \to \Map^\flat_{\bDelta}([n]^\sharp, \cX^\natural_{\mathrm{cocart}})  \simeq \cC^n.
\]
Unravelling the definitions, we obtain the following commutative diagram
\[
\begin{tikzcd}
\varprojlim \cC^\bullet \arrow{d}[swap]{p^n} \arrow{r}{\theta} & \Fun(\bDelta, \cD) \arrow{d} \\
\cC^n \arrow{r}{g^n} & \Fun ([n], \cD) \simeq \cD .
\end{tikzcd}
\]

\begin{rem}
	The factorization of \eqref{eq:factorization_descending_spectral_sequence} is stated in \cite[Lemma 1.3.13]{Drinfeld_On_some_finiteness_2013} in the special case where $\cC^\bullet$ is the cosimplicial diagram of the categories of quasi-coherent sheaves associated to a simplicial derived algebraic stack.
	Even in that case, the construction of the functor $\theta$ is nontrivial.
	Our considerations in this section provide a detailed uniform explanation.
\end{rem}

\begin{thm}\label{thm:spectral_sequence}
With the above notations, assume that $\cD$ admits a t-structure compatible with sequential limits (cf.\ \cite[1.2.2.12]{Lurie_Higher_algebra}).
For every $\cF \in \varprojlim \cC^\bullet$ there exists a converging spectral sequence
\[
E^{s,t}_1 = \pi_t(g^s(p^s(\cF))) \Rightarrow \pi_{s + t}(g(\cF)).
\]
\end{thm}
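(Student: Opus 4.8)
The plan is to reduce the statement to the spectral sequence of a filtered object of a stable $\infty$-category equipped with a $t$-structure, as in \cite[§1.2.2]{Lurie_Higher_algebra}. Recall from the discussion preceding the theorem that the right adjoint $g$ factors as
\[
g \colon \varprojlim \cC^\bullet \xrightarrow{\ \theta\ } \Fun(\bDelta, \cD) \xrightarrow{\ \lim\ } \cD ,
\]
and that the last commutative square above identifies, for every $\cF \in \varprojlim\cC^\bullet$ and every $n$, the value $\theta(\cF)([n])$ with $g^n(p^n(\cF))$. Thus $g(\cF) \simeq \varprojlim_{\bDelta}\theta(\cF)$ is the totalization of the cosimplicial object $X^\bullet \coloneqq \theta(\cF)$ of $\cD$, with $X^n \simeq g^n(p^n(\cF))$; so it suffices to construct, for any cosimplicial object $X^\bullet$ of a stable $\infty$-category $\cD$ whose $t$-structure is compatible with sequential limits, a convergent spectral sequence $E_1^{s,t} = \pi_t(X^s) \Rightarrow \pi_{s+t}(\varprojlim_\bDelta X^\bullet)$.

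First I would rewrite $\varprojlim_\bDelta X^\bullet$ as the limit of the tower of partial totalizations $\{\mathrm{Tot}_n(X^\bullet)\}_{n\ge 0}$, where $\mathrm{Tot}_n(X^\bullet) \coloneqq \varprojlim_{\bDelta_{\le n}} X^\bullet$, using that $\bDelta$ is the filtered union of its truncations $\bDelta_{\le n}$. Next, by the $\infty$-categorical Dold--Kan correspondence \cite[§1.2.4]{Lurie_Higher_algebra} applied in $\cD$ in its cosimplicial form, each fiber $\mathrm{fib}\bigl(\mathrm{Tot}_n(X^\bullet) \to \mathrm{Tot}_{n-1}(X^\bullet)\bigr)$ is identified with $\Omega^n N^n(X^\bullet)$, where $N^\bullet(X^\bullet)$ is the normalized cochain object of $X^\bullet$; moreover the inclusion $N^\bullet(X^\bullet) \hookrightarrow (n \mapsto X^n)$ is a chain homotopy equivalence of cochain objects, so it induces an isomorphism of the resulting spectral sequences from the $E_1$-page on, allowing us to replace the normalized cochains by the unnormalized ones $X^n \simeq g^n(p^n(\cF))$. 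In this way $\{\mathrm{Tot}_n(X^\bullet)\}$ is exhibited as a filtered object of $\cD$, complete because the $t$-structure is compatible with sequential limits, whose limit is $g(\cF)$ and whose associated graded is a shift of the cochain object $n \mapsto g^n(p^n(\cF))$.

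Finally I would invoke the spectral sequence of this filtered object \cite[§1.2.2]{Lurie_Higher_algebra}: completeness of the filtration gives convergence to $\pi_{\ast}(g(\cF))$, and reading off the $E_1$-page from the associated graded computed above yields $E_1^{s,t} = \pi_t(g^s(p^s(\cF)))$, with $d_1$ the alternating sum of the coface maps, abutting to $\pi_{s+t}(g(\cF))$. The only genuinely delicate points are the convergence assertion --- which is exactly where the compatibility of the $t$-structure with sequential limits enters --- and keeping track of the homological degrees through the Dold--Kan identification; the rest is a direct assembly of the constructions of \cite[§1.2.2, §1.2.4]{Lurie_Higher_algebra}.
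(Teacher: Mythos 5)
Your proposal follows essentially the same route as the paper: factor $g$ as $\lim \circ\, \theta$, apply the $\infty$-categorical Dold--Kan correspondence to the cosimplicial object $\theta(\cF)$ to obtain a filtered object of $\cD$, and invoke the spectral sequence of \cite[§1.2.2]{Lurie_Higher_algebra}, with convergence supplied by compatibility of the $t$-structure with sequential limits and the $E_1$-page read off from \cite[1.2.4.4]{Lurie_Higher_algebra}. Your explicit handling of the passage between the normalized cochain object (which is what the Tot-tower's associated graded actually produces) and the unnormalized terms $g^s(p^s(\cF))$ is a point the paper's proof elides, and is a welcome clarification rather than a divergence.
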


\begin{proof}
Let $\cF \in \varprojlim \cC^\bullet$.
Then $\theta(\cF)$ is a cosimplicial object in the stable $\infty$-category $\cD$. Using the $\infty$-categorical Dold-Kan correspondence, we can associate to $\theta(\cF)$ a filtered object in $\cD$.
Its associated spectral sequence $\{E^{s,t}_r\}$ converges in virtue of \cite[1.2.2.14]{Lurie_Higher_algebra}.
By \cite[1.2.4.4]{Lurie_Higher_algebra}, we can identify the complex $\{E^{*,t}_1, d_1\}$ with the normalized chain complex associated to the cosimplicial object $\pi_t(\theta(\cF))$ of the abelian category $\cD^\heartsuit$. Moreover, in degree $s$ the cosimplicial object $\theta(\cF)$ coincides simply with $g^s p^s(\cF)$.
It follows that we have a canonical identification
\[
E^{s,t}_1 \simeq \pi_t(g^s(p^s(\cF))),
\]
completing the proof.
\end{proof}

\begin{rem}
The constructions performed in this section seem to heavily depend on the model of $\infty$-categories via quasi-categories.
In fact, the models were mainly used to introduce the adjunctions in \cref{lem:limit_lax_limit}.
We remark that the adjunction
\[\Map^\flat_K(K^\sharp, \cX^\natural_{\mathrm{cart}}) \leftrightarrows \Map^\flat_K(K^\flat, \cX^\natural_{\mathrm{cart}})\]
can be understood in a model-independent as an adjunction between limits and lax limits
\[
\lim_K X \leftrightarrows \mathop{\mathrm{lax.lim}}_K X.
\]
The same holds for the other adjunction concerning cocartesian fibrations.
\end{rem}

\subsection{Complements on Stein complex analytic spaces}

In this section, we collect several results concerning Stein complex analytic spaces which we were not able to find in the literature.

\begin{defin} \label{def:globally_presented}
Let $X$ be a Stein complex analytic space.
A coherent sheaf $\cF \in \Cohh(X)$ is said to be of \emph{global finite presentation} if there exists an exact sequence in $\Cohh(X)$ of the form
\begin{equation} \label{eq:global_presentation}
\cO_X^m \to \cO_X^n \to \cF \to 0.
\end{equation}
We denote by $\Cohh_{\mathrm{gfp}}(X)$ the full subcategory of $\Cohh(X)$ spanned by coherent sheaves of global finite presentation.
\end{defin}

\begin{lem} \label{lem:canal_global_sections_equivalence}
Let $X$ be a Stein space and let $A \coloneqq \Gamma(\cO_X)$ be the algebra of global functions.
The global section functor $\Gamma \colon \Cohh(X) \to A \Mod$ restricts to an equivalence of categories
\[
\Cohh_{\mathrm{gfp}}(X) \simeq A \Mod^{\mathrm{fp}},
\]
where $A \Mod^{\mathrm{fp}}$ denotes the full subcategory of $A \Mod$ spanned by modules of finite presentation.
\end{lem}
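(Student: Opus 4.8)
The plan is to derive the statement from Cartan's vanishing theorem (Theorem B) for the Stein space $X$, which is the only nontrivial analytic input. Recall that it gives $H^i(X,\cG)=0$ for every coherent sheaf $\cG$ on $X$ and every $i>0$; consequently the global section functor $\Gamma\colon\Cohh(X)\to A\Mod$ is exact. In particular, if $\cF\in\Cohh_{\mathrm{gfp}}(X)$ is equipped with a presentation $\cO_X^m\to\cO_X^n\to\cF\to0$ as in \eqref{eq:global_presentation}, then applying $\Gamma$ yields an exact sequence $A^m\to A^n\to\Gamma(\cF)\to0$, so that $\Gamma(\cF)$ is a finitely presented $A$-module. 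Thus $\Gamma$ does restrict to a functor $\Cohh_{\mathrm{gfp}}(X)\to A\Mod^{\mathrm{fp}}$, and it remains to prove that this restriction is fully faithful and essentially surjective.

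For full faithfulness, fix $\cF,\cG\in\Cohh_{\mathrm{gfp}}(X)$. When $\cF=\cO_X^n$ the comparison map is the tautological identification $\Hom_{\cO_X}(\cO_X^n,\cG)=\Gamma(\cG)^n=\Hom_A(A^n,\Gamma(\cG))$, compatibly with the standard bases. For general $\cF$, I would choose a presentation $\cO_X^m\xrightarrow{\psi}\cO_X^n\to\cF\to0$, where $\psi$ is given by a matrix with entries in $A=\Gamma(\cO_X)$. Applying the left-exact contravariant functor $\Hom_{\cO_X}(-,\cG)$ gives an exact sequence
\[
0\to\Hom_{\cO_X}(\cF,\cG)\to\Gamma(\cG)^n\to\Gamma(\cG)^m,
\]
the last map being precomposition with $\psi$. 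Since $\Gamma$ is exact, it carries the chosen presentation of $\cF$ to the presentation $A^m\xrightarrow{\psi}A^n\to\Gamma(\cF)\to0$; applying $\Hom_A(-,\Gamma(\cG))$ then gives
\[
0\to\Hom_A(\Gamma(\cF),\Gamma(\cG))\to\Gamma(\cG)^n\to\Gamma(\cG)^m
\]
with exactly the same last map. Identifying the two kernels, and noting that all identifications are natural in $\cF$ and $\cG$, proves full faithfulness.

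For essential surjectivity, let $M$ be a finitely presented $A$-module and fix a presentation $A^m\xrightarrow{\psi}A^n\to M\to0$. Because $\Hom_{\cO_X}(\cO_X^m,\cO_X^n)=\mathrm{Mat}_{n\times m}(A)=\Hom_A(A^m,A^n)$, the matrix $\psi$ also defines a morphism of sheaves $\cO_X^m\to\cO_X^n$; let $\cF$ be its cokernel, formed in the abelian category of coherent $\cO_X$-modules. By construction $\cF\in\Cohh_{\mathrm{gfp}}(X)$, and since $\Gamma$ is exact it preserves this cokernel, whence $\Gamma(\cF)=\mathrm{coker}(A^m\xrightarrow{\psi}A^n)=M$ naturally. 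Together with full faithfulness, this yields the asserted equivalence.

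The main, and essentially the only, obstacle is the appeal to Cartan's Theorem B underlying the exactness of $\Gamma$; everything afterwards is formal homological algebra. Two points deserve an explicit remark in the write-up: that Theorem B is available for arbitrary (in particular non-reduced) Stein spaces, and that the correct target is $A\Mod^{\mathrm{fp}}$ rather than the category of finitely generated modules, since a Stein algebra $A$ need not be noetherian.
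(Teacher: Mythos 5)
Your proof is correct, and the forward direction (that $\Gamma$ carries a global presentation of $\cF$ to a finite presentation of $\Gamma(\cF)$, via Cartan's Theorem B) is the same as the paper's. For the converse you take a genuinely different route. The paper builds an explicit quasi-inverse $\Psi$ by sending $M$ to the presheaf $U \mapsto M \widehat{\otimes}_A \cO_X(U)$ on the site of relatively compact Stein open subsets, and identifies it with the cokernel sheaf of $\cO_X^m \to \cO_X^n$ using Douady's transversality results (\cite[Proposition 2]{Douady_Proper_1973}) together with the site comparison of \cref{lem:equivalence_of_topoi}; the verification that $\Psi$ is a two-sided inverse is absorbed into the formula $\cF(U) \simeq \Gamma(\cF)\widehat{\otimes}_A \cO_X(U)$. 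You instead prove full faithfulness and essential surjectivity separately, using only the exactness of $\Gamma$ on $\Cohh(X)$ (Theorem B plus Oka coherence of kernels and images) and the tautological identification $\Hom_{\cO_X}(\cO_X^n,\cG) = \Hom_A(A^n,\Gamma(\cG))$, followed by a standard comparison of the two left-exact $\Hom$-sequences attached to a chosen presentation. This is more elementary and self-contained, and entirely adequate for the statement; note only that full faithfulness is a property of the fixed functor $\Gamma$, so the choice of presentation in your argument causes no naturality issue. What the paper's construction buys in exchange is precisely the formula $\cF(U)\simeq\Gamma(\cF)\widehat{\otimes}_A\cO_X(U)$ for $U\Subset X$ Stein, which is reused later (in \cref{lem:open_immersions_are_flat} and \cref{lem:open_cover_faithfully_flat_cover}) and does not follow from the bare equivalence of categories. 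Your closing remarks — that Theorem B holds for possibly non-reduced Stein spaces and that the correct target is $A\Mod^{\mathrm{fp}}$ rather than finitely generated modules since $A$ need not be noetherian — are both on point.
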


\begin{proof}
Let $\cF \in \Cohh_{\mathrm{gfp}}(X)$.
Taking global sections in \cref{eq:global_presentation} and applying Cartan's theorem B, we see that $\Gamma(\cF)$ is an $A$-module of finite presentation.
It follows that $\Gamma$ restricts to a functor $\Phi \colon \Cohh_{\mathrm{gfp}}(X) \to A \Mod^{\mathrm{fp}}$.

To construct a quasi-inverse for $\Phi$, let us denote temporarily by $\mathrm{Op}'(X)$ the category of relatively compact Stein open subsets $U \Subset X$.
We introduce a Grothendieck topology on $\mathrm{Op}'(X)$ generated by coverings of the form $\{U_i\to U\}_{i\in I}$ where $U_i \Subset U$ for every $i \in I$ and $U=\bigcup U_i$.
Since the inclusion $\mathrm{Op}'(X) \to \mathrm{Op}(X)$ satisfies the assumptions of \cref{prop:equivalence_of_topoi}, we can identify sheaves on $\mathrm{Op}(X)$ with sheaves on $\mathrm{Op}'(X)$.

Now let $M \in A \Mod^{\mathrm{fp}}$.
We define the following presheaf of sets on $\mathrm{Op}'(X)$
\[\cF \colon \mathrm{Op}'(X)^\mathrm{op} \to \mathrm{Set}, \qquad U\mapsto M \widehat{\otimes}_A \cO_X(U).\]
We claim that $\cF$ is a sheaf on $\mathrm{Op}'(X)$.
Let
\[
A^m \to A^n \to M \to 0
\]
be a presentation for $M$.
The map $A^m \to A^n$ uniquely determines a morphism of sheaves $\cO_X^m \to \cO_X^n$.
Let $\cF'$ denote the cokernel.
It follows from \cite[Proposition 2]{Douady_Proper_1973} that $\cF'$ satisfies
\[
\cF'(U) \simeq \Gamma(\cF') \widehat{\otimes}_A \cO_X(U),
\]
for every $U\in\mathrm{Op}'(X)$.
Since $X$ is Stein, Cartan's theorem B shows that $\Gamma(\cF') \simeq M$. Therefore, the restriction of $\cF'$ to $\mathrm{Op}'(X)$ coincides with $\cF$.
This proves the claim.

We note that the construction of $\cF'$ is functorial on $M$.
So we obtain a functor
\[
\Psi \colon A \Mod^{\mathrm{fp}} \to \Cohh_{\mathrm{gfp}}(X) ,
\]
which is a quasi-inverse for $\Phi$.
\end{proof}

\begin{lem} \label{lem:canal_relatively_compact_globally_presented}
Let $X$ be a Stein space and $U \Subset X$ a relatively compact Stein open subset of $X$.
Then the restriction functor
\[
\Cohh(X) \to \Cohh(U)
\]
factors through the full subcategory $\Cohh_{\mathrm{gfp}}(U)$.
\end{lem}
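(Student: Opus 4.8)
The plan is to prove the statement at the level of objects: it suffices to show that for every $\cF \in \Cohh(X)$ the restriction $\cF|_U$ admits a global presentation as in \eqref{eq:global_presentation}, for $\Cohh_{\mathrm{gfp}}(U)$ is by definition a \emph{full} subcategory of $\Cohh(U)$, and hence the factorization of the restriction functor through it is automatic once the objects land there. Throughout, set $K \coloneqq \overline{U}$, which is compact because $U \Subset X$, and recall that $X$ is Stein.

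The first step is a standard strengthening of Cartan's Theorem A: a coherent sheaf on a Stein space is generated, on a neighborhood of any given compact set, by \emph{finitely many} global sections. Concretely, for each $x \in K$ the stalk $\cF_x$ is a finitely generated $\cO_{X,x}$-module and, by Theorem A, is generated by germs of global sections; choose finitely many $t^x_1,\dots,t^x_{n_x} \in \Gamma(X,\cF)$ generating $\cF_x$. By coherence, the cokernel of the induced map $\cO_X^{n_x} \to \cF$ is a coherent sheaf with vanishing stalk at $x$, hence vanishes on an open neighborhood of $x$. Covering $K$ by finitely many such neighborhoods and collecting all the sections involved, we obtain finitely many $t_1,\dots,t_n \in \Gamma(X,\cF)$ whose induced morphism $\phi \colon \cO_X^n \to \cF$ is surjective on an open neighborhood $N$ of $K$, in particular on $U$.

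Next, let $\cK \coloneqq \ker(\phi)$, which is coherent on $X$ since kernels of morphisms of coherent sheaves are coherent. Applying the first step verbatim to $\cK$ in place of $\cF$, we obtain finitely many global sections $r_1,\dots,r_m \in \Gamma(X,\cK)$ whose induced morphism $\cO_X^m \to \cK$ is surjective on an open neighborhood $N'$ of $K$. Composing with the inclusion $\cK \hookrightarrow \cO_X^n$ yields a morphism $\psi \colon \cO_X^m \to \cO_X^n$ whose image equals $\cK = \ker(\phi)$ on $N'$. Therefore, on the open set $N \cap N' \supseteq K \supseteq U$ the sequence $\cO_X^m \xrightarrow{\psi} \cO_X^n \xrightarrow{\phi} \cF \to 0$ is exact. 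Restricting to $U$ produces the presentation $\cO_U^m \to \cO_U^n \to \cF|_U \to 0$ in $\Cohh(U)$, so $\cF|_U \in \Cohh_{\mathrm{gfp}}(U)$, which finishes the argument.

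I do not expect a genuine obstacle here: everything reduces to classical Stein theory, namely Cartan's Theorem A together with the compactness of $\overline{U}$, and Oka's coherence theorem for kernels. The only point deserving a little care is the passage from "generated stalkwise by global sections" to "globally finitely generated on a neighborhood of the compact $K$", which is precisely where relative compactness of $U$ in $X$ intervenes; the second, superficially harder, step of finitely generating the syzygy sheaf $\cK$ poses no extra difficulty, since $\cK$ is again coherent on the Stein space $X$ and the same reasoning applies.
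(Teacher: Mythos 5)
Your proof is correct and follows essentially the same route as the paper's: apply Cartan's Theorem A together with coherence and the compactness of $\overline{U}$ to get a surjection $\cO_X^n \to \cF$ over a neighborhood of $\overline{U}$ from finitely many global sections, then repeat the argument for the (coherent) kernel to produce the presentation. Your write-up merely spells out in more detail the standard step from stalkwise generation to generation on a neighborhood.
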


\begin{proof}
Let $\cF \in \Cohh(X)$.
It is generated by global sections, by Cartan's theorem A.
Therefore, for every point $x\in\widebar{U}$, there exists a neighborhood $V_x$ and a finite subset of $\Gamma(\cF)$ which generates $\cF_{|V_x}$.
By the compactness of $\widebar{U}$, we obtain a finite subset of $\Gamma(\cF)$ which generates $\cF_{|U}$.
In other words we obtain a morphism of sheaves $\varphi \colon \cO_X^n \to \cF$ which restricts to an epimorphism over $U$.
Let $\cG$ be the kernel of $\varphi$.
Since $\cG$ is also a coherent sheaf, we can repeat the same argument to find a morphism $\cO_X^m \to \cG$ which restricts to an epimorphism over $U$.
So we obtain an exact sequence in $\Cohh(V)$
\[\cO_V^m \to \cO_V^n \to \cF_{|V} \to 0,\]
completing the proof.
\end{proof}

\begin{lem} \label{lem:open_immersions_are_flat}
Let $X$ be a Stein space and $U \Subset X$ a relatively compact Stein open subset of $X$.
Then $\Gamma(\cO_U)$ is flat as $\Gamma(\cO_X)$-algebra.
\end{lem}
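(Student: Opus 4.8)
\emph{Proof proposal.} Write $A := \Gamma(\cO_X)$ and $B := \Gamma(\cO_U)$, with the restriction map $A \to B$. The plan is to verify the classical criterion that $B$ is flat over $A$ if and only if for every \emph{finitely generated} ideal $I \subseteq A$ the canonical map $I \otimes_A B \to B$ is injective, and to prove this injectivity using Cartan's Theorems A and B together with the compactness of $\overline U$. The key point, which makes the argument work without assuming that $A = \Gamma(\cO_X)$ is a coherent ring, is that the module of syzygies of the generators of $I$ is controlled entirely on the level of coherent sheaves.

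Concretely, I would fix a finitely generated ideal $I = (f_1, \dots, f_m) \subseteq A$ and consider the $A$-linear map $\varphi \colon A^m \to A$, $e_i \mapsto f_i$, whose image is $I$; write $R := \ker\varphi \subseteq A^m$ for its syzygy module. The same formula defines a morphism of coherent sheaves $\widetilde\varphi \colon \cO_X^m \to \cO_X$; let $\mathcal R := \ker\widetilde\varphi$, which is coherent (kernels of morphisms of coherent sheaves are coherent). Left-exactness of global sections gives $\Gamma(X, \mathcal R) = R$, and since restriction to an open subset is exact, $\mathcal R|_U = \ker(\cO_U^m \to \cO_U)$, hence $\Gamma(U, \mathcal R|_U) = \ker(B^m \xrightarrow{\varphi_B} B) =: R_B$, where $\varphi_B$ denotes the base change of $\varphi$ along $A \to B$.

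The heart of the matter is the claim that $R_B$ equals the image of $R \otimes_A B \to B^m$, i.e.\ the $B$-submodule of $B^m$ generated by $R$. The inclusion $\supseteq$ is formal: each element of $R$ restricts to a $B$-linear syzygy of the $f_i$, and $R_B$ is a $B$-module. For $\subseteq$, Cartan's Theorem A applied to the coherent sheaf $\mathcal R$ on the Stein space $X$ shows that $\mathcal R$ is generated by its global sections $\Gamma(X,\mathcal R) = R$; by coherence of $\mathcal R$ and compactness of $\overline U$, finitely many elements $r_1, \dots, r_k \in R$ already generate $\mathcal R$ over $\cO$ on a neighbourhood of $\overline U$ (this is precisely the argument in the proof of \cref{lem:canal_relatively_compact_globally_presented}). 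Thus $\cO_U^k \xrightarrow{(r_1,\dots,r_k)} \mathcal R|_U$ is an epimorphism; letting $\mathcal N$ be its kernel (coherent), Cartan's Theorem B gives $H^1(U, \mathcal N) = 0$, so $\Gamma(U,-)$ takes it to a surjection $B^k \twoheadrightarrow \Gamma(U,\mathcal R|_U) = R_B$. Hence $R_B$ is generated over $B$ by $r_1, \dots, r_k \in R$, which proves the claim.

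To conclude, tensoring the exact sequence $0 \to R \to A^m \to I \to 0$ with $B$ gives $I \otimes_A B \cong B^m/R_B$, and under this identification the canonical map $I \otimes_A B \to A \otimes_A B = B$ is the one induced by $\varphi_B$, whose kernel is $R_B$ by definition; therefore $I \otimes_A B \to B$ is injective. As $I$ was an arbitrary finitely generated ideal, $B$ is flat over $A$. The only real obstacle is expository care: one must use the flatness criterion phrased for finitely generated (not finitely presented) ideals, so that $R$ — which a priori need not be finitely generated — is never required to admit a finite presentation, its generation on $U$ being supplied instead by Cartan A/B and the compactness of $\overline U$.
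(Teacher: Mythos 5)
Your proof is correct, but it follows a genuinely different route from the paper's. The paper reduces flatness to the preservation of monomorphisms between \emph{finitely presented} $A$-modules, transports such a monomorphism to a map of globally presented coherent sheaves via \cref{lem:canal_global_sections_equivalence}, interposes an intermediate Stein open $V$ with $U \subset V \Subset X$ (using Siu's theorem), and concludes from Douady's transversality results for completed tensor products: the cokernel sheaf is transverse to $B$ over $\Gamma(\cO_V)$, and $\cF(V) = M \widehat{\otimes}_A \Gamma(\cO_V)$. You instead verify the finitely-generated-ideal criterion directly: Oka coherence makes the syzygy sheaf $\mathcal R = \ker(\cO_X^m \to \cO_X)$ coherent, Cartan A together with the compactness of $\overline{U}$ produces finitely many \emph{global} syzygies generating $\mathcal R$ near $\overline{U}$, and Cartan B identifies $\ker(B^m \to B)$ with their $B$-span, which is exactly the image of $R \otimes_A B$. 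What each approach buys: yours is purely sheaf-cohomological and avoids the Fréchet-module, nuclearity and $\widehat{\otimes}$ machinery entirely; it also navigates the non-noetherianity of $A$ on slightly firmer footing, since the ideal criterion asks nothing of the (possibly non-finitely-generated) syzygy module over $A$, whereas the paper's reduction to monomorphisms between finitely \emph{presented} modules leaves the case of a finitely generated but non-finitely-presented ideal implicit. The paper's route, on the other hand, is uniform with the surrounding lemmas of this appendix (\cref{lem:canal_global_sections_equivalence}, \cref{lem:open_cover_faithfully_flat_cover} all run on \cite[Proposition 2]{Douady_Proper_1973}) and establishes in passing the stronger exactness statement for completed tensor products that those lemmas also need.
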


\begin{proof}
Set $A \coloneqq \Gamma(\cO_X)$ and $B \coloneqq \Gamma(\cO_U)$.
Recall that $A \Mod$ is an $\omega$-presentable category and that $\omega$-presentable objects in $A \Mod$ are precisely the $A$-modules of finite presentation.
Therefore, in order to show that $B$ is flat as $A$-algebra, it suffices show that for every monomorphism $N\to M$ of finitely presented $A$-modules, the induced morphism
\[
M \otimes_A B \to N \otimes_A B
\]
is again a monomorphism.
Since both $M$ and $N$ are finitely presented, we have $M \otimes_A B = M \widehat{\otimes}_A B$ and $N \otimes_A B = N \widehat{\otimes}_A B$.
Let $\cF$ and $\cG$ be the globally presented coherent sheaves on $X$ associated to $M$ and $N$ respectively under the equivalence of \cref{lem:canal_global_sections_equivalence}.
Let $\cE$ be the cokernel of $\cF \to \cG$.
It is a coherent sheaf on $X$.
Since $\widebar{U}$ is a compact Stein subset, using Siu's theorem we can find a Stein open subset $V$ satisfying $U \subset V \Subset X$.
Invoking \cite[Proposition 2]{Douady_Proper_1973} we deduce that $\cE$ is transverse to $B$ over $\Gamma(\cO_V)$.
In particular, the morphism
\[
\cF(V) \widehat{\otimes}_{\Gamma(\cO_V)} B \to \cG(V) \widehat{\otimes}_{\Gamma(\cO_V)} B
\]
is a monomorphism. However, since $V \Subset X$ is Stein, \cite[Proposition 2]{Douady_Proper_1973} implies that
\[
\cF(V) = M \widehat{\otimes}_A \Gamma(\cO_V), \qquad \cG(V) = N \widehat{\otimes}_A \Gamma(\cO_V).
\]
We conclude that $M \otimes_A B \to N \otimes_A B$ is a monomorphism, completing the proof.
\end{proof}

\begin{lem} \label{lem:open_cover_faithfully_flat_cover}
Let $X$ be a Stein space and let $\{U_i\to X\}_{i \in I}$ be an open covering by relatively compact Stein open subsets.
Let $A \coloneqq \Gamma(\cO_X)$ and $B_i \coloneqq \Gamma(\cO_{U_i})$.
Then the family $\{A \to B_i\}_{i \in I}$ defines a faithfully flat covering of $A$.
\end{lem}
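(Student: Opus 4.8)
The plan is to combine the flatness statement of \cref{lem:open_immersions_are_flat} with the global--local dictionary for coherent sheaves on Stein spaces provided by Lemmas \ref{lem:canal_global_sections_equivalence} and \ref{lem:canal_relatively_compact_globally_presented}. Set $A \coloneqq \Gamma(\cO_X)$ and $B_i \coloneqq \Gamma(\cO_{U_i})$. By \cref{lem:open_immersions_are_flat} each structural morphism $A \to B_i$ is flat, so the only substance is the faithfulness of the cover. Since $X$ need not be compact and $A$ is not a noetherian ring, one cannot expect the family $\{\Spec B_i \to \Spec A\}$ to be jointly surjective on spectra; the content we prove is the form of faithful flatness relevant to descent of coherent sheaves, namely that a finitely presented $A$-module $M$ vanishes whenever $M \otimes_A B_i = 0$ for every $i \in I$ (equivalently, that a short exact sequence of finitely presented modules can be tested after base change to the $B_i$).

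First I would transport $M$ to the analytic side. By \cref{lem:canal_global_sections_equivalence} the global section functor is an equivalence $\Gamma \colon \Cohh_{\mathrm{gfp}}(X) \xrightarrow{\ \sim\ } A\Mod^{\mathrm{fp}}$, so $M = \Gamma(\cF)$ for a unique $\cF \in \Cohh_{\mathrm{gfp}}(X)$ of global finite presentation. Next I would identify $M \otimes_A B_i$ with the sections of $\cF$ over $U_i$. By \cref{lem:canal_relatively_compact_globally_presented} the restriction $\cF|_{U_i}$ lies in $\Cohh_{\mathrm{gfp}}(U_i)$, and by the computation in the proof of \cref{lem:canal_global_sections_equivalence} (which invokes \cite[Proposition 2]{Douady_Proper_1973}) one has $\cF(U_i) \simeq M \widehat{\otimes}_A B_i$; since $M$ is finitely presented, the completed tensor product coincides with the ordinary one, exactly as in the proof of \cref{lem:open_immersions_are_flat}. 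Hence $\Gamma(\cF|_{U_i}) \simeq M \otimes_A B_i$, and this is precisely the finitely presented $B_i$-module corresponding to $\cF|_{U_i}$ under the equivalence of \cref{lem:canal_global_sections_equivalence} applied to $U_i$.

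The conclusion is then immediate: if $M \otimes_A B_i = 0$ for all $i \in I$, then $\Gamma(\cF|_{U_i}) = 0$ for all $i$, hence $\cF|_{U_i} = 0$ for all $i$ since $\Gamma$ is an equivalence (in particular conservative) on $\Cohh_{\mathrm{gfp}}(U_i)$; as $\{U_i\}_{i \in I}$ is an open cover of $X$ and the vanishing of a sheaf is local, we obtain $\cF = 0$ and therefore $M = \Gamma(\cF) = 0$. The exactness criterion for complexes then follows formally, flatness of each $A \to B_i$ giving one implication and the vanishing criterion applied to the homology modules giving the other (using that these homology modules are again finitely presented, i.e.\ that Stein algebras are coherent rings).

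The main obstacle here is conceptual rather than computational: deciding what ``faithfully flat cover'' should mean in this non-noetherian setting — it must be the version conservative on coherent sheaves, not joint surjectivity on $\Spec$ — and then verifying that the coherent-sheaf translation is compatible with base change, which is the identification $\cF(U_i) \simeq M \otimes_A B_i$ above. Once Lemmas \ref{lem:canal_global_sections_equivalence} and \ref{lem:canal_relatively_compact_globally_presented} are in hand, together with Cartan's Theorems~A and~B which are used inside them, the remaining argument is pure bookkeeping.
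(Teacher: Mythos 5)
Your proof is correct and follows essentially the same route as the paper's: flatness of each $A \to B_i$ from \cref{lem:open_immersions_are_flat}, then faithfulness by passing from a finitely presented $A$-module $M$ to the globally presented coherent sheaf $\cF$ via \cref{lem:canal_global_sections_equivalence}, identifying $M \otimes_A B_i$ with $\cF(U_i)$ using \cite[Proposition 2]{Douady_Proper_1973}, and concluding $\cF = 0$ by locality of vanishing. The only difference is cosmetic: where you reinterpret ``faithfully flat cover'' as a statement about finitely presented modules, the paper instead reduces the general case to the finitely presented one by observing that $A\Mod$ is generated under filtered colimits by $A\Mod^{\mathrm{fp}}$.
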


\begin{proof}
\cref{lem:open_immersions_are_flat} shows that each morphism $A \to B_i$ is flat.
It suffices show that the functors
\[
- \otimes_A B_i \colon A \Mod \to B_i \Mod
\]
are jointly surjective. Since $A \Mod$ is generated under filtered colimits by $A \Mod^{\mathrm{fp}}$, it suffices show that if $M \in A \Mod^\mathrm{fp}$ becomes zero after tensoring with each $B_i$, then $M = 0$.
Let $\cF$ be the coherent sheaf corresponding to $M$ under the equivalence of \cref{lem:canal_global_sections_equivalence}.
If $M \otimes_A B_i = 0$, \cite[Proposition 2]{Douady_Proper_1973} shows that $\cF(U_i) = 0$.
Since $\cF_{|U_i}$ is globally presented, \cref{lem:canal_global_sections_equivalence} implies that $\cF_{|U_i} = 0$.
Since the $\{U_i\}$ is a covering of $X$, we deduce that $\cF = 0$, thus $M = 0$, completing the proof.
\end{proof}

\bibliographystyle{plain}
\bibliography{dahema}

\def\cprime{$'$}
\begin{thebibliography}{10}

\bibitem{Artin_Algebraization_II_1970}
M.~Artin.
\newblock Algebraization of formal moduli. {II}. {E}xistence of modifications.
\newblock {\em Ann. of Math. (2)}, 91:88--135, 1970.

\bibitem{Artin_Versal_1974}
M.~Artin.
\newblock Versal deformations and algebraic stacks.
\newblock {\em Invent. Math.}, 27:165--189, 1974.

\bibitem{SGA4}
Michael Artin, Alexander Grothendieck, and Jean-Louis Verdier.
\newblock {\em Th\'eorie de Topos et Cohomologie \'Etale des Sch\'emas {I},
  {II}, {III}}, volume 269, 270, 305 of {\em Lecture Notes in Mathematics}.
\newblock Springer, 1971.

\bibitem{Behrend_Uniformization_2006}
Kai Behrend and Behrang Noohi.
\newblock Uniformization of {D}eligne-{M}umford curves.
\newblock {\em J. Reine Angew. Math.}, 599:111--153, 2006.

\bibitem{Berkovich_Spectral_1990}
Vladimir~G. Berkovich.
\newblock {\em Spectral theory and analytic geometry over non-{A}rchimedean
  fields}, volume~33 of {\em Mathematical Surveys and Monographs}.
\newblock American Mathematical Society, Providence, RI, 1990.

\bibitem{Berkovich_Etale_1993}
Vladimir~G. Berkovich.
\newblock \'{E}tale cohomology for non-{A}rchimedean analytic spaces.
\newblock {\em Inst. Hautes \'Etudes Sci. Publ. Math.}, (78):5--161 (1994),
  1993.

\bibitem{Berkovich_Vanishing_1994}
Vladimir~G. Berkovich.
\newblock Vanishing cycles for formal schemes.
\newblock {\em Invent. Math.}, 115(3):539--571, 1994.

\bibitem{Conrad_Formal_GAGA}
Brian Conrad.
\newblock Formal {GAGA} for {A}rtin stacks.
\newblock Preprint, 2005.

\bibitem{Conrad_Relative_2006}
Brian Conrad.
\newblock Relative ampleness in rigid geometry.
\newblock {\em Ann. Inst. Fourier (Grenoble)}, 56(4):1049--1126, 2006.

\bibitem{Conrad_Non-archimedean_analytification_2009}
Brian Conrad and Michael Temkin.
\newblock Non-{A}rchimedean analytification of algebraic spaces.
\newblock {\em J. Algebraic Geom.}, 18(4):731--788, 2009.

\bibitem{Deligne_Irreducibility_1969}
P.~Deligne and D.~Mumford.
\newblock The irreducibility of the space of curves of given genus.
\newblock {\em Inst. Hautes \'Etudes Sci. Publ. Math.}, (36):75--109, 1969.

\bibitem{Douady_Proper_1973}
Adrien Douady.
\newblock Le théorème des images directes de {G}rauert [d'après
  {K}iehl-{V}erdier].
\newblock In A.~Dold and B.~Eckmann, editors, {\em Séminaire Bourbaki vol.
  1971/72 Exposés 400–417}, volume 317 of {\em Lecture Notes in
  Mathematics}, pages 73--87. Springer Berlin Heidelberg, 1973.

\bibitem{Drinfeld_On_some_finiteness_2013}
Vladimir Drinfeld and Dennis Gaitsgory.
\newblock On some finiteness questions for algebraic stacks.
\newblock {\em Geom. Funct. Anal.}, 23(1):149--294, 2013.

\bibitem{Ducros_Families_2011}
Antoine Ducros.
\newblock Families of {B}erkovich spaces.
\newblock {\em arXiv preprint arXiv:1107.4259}, 2011.

\bibitem{Forster_Knorr_1971}
O.~Forster and K.~Knorr.
\newblock Ein {B}eweis des {G}rauertschen {B}ildgarbensatzes nach {I}deen von
  {B}. {M}algrange.
\newblock {\em Manuscripta Math.}, 5:19--44, 1971.

\bibitem{Gepner_Enriched_2013}
David Gepner and Rune Haugseng.
\newblock Enriched $\infty$-categories via non-symmetric $\infty$-operads.
\newblock {\em arXiv preprint arXiv:1312.3178}, 2013.

\bibitem{Grauert_Theorem_der_analytischen_1960}
Hans Grauert.
\newblock Ein {T}heorem der analytischen {G}arbentheorie und die {M}odulr\"aume
  komplexer {S}trukturen.
\newblock {\em Inst. Hautes \'Etudes Sci. Publ. Math.}, (5):64, 1960.

\bibitem{EGA3-1}
A.~Grothendieck.
\newblock \'{E}l\'ements de g\'eom\'etrie alg\'ebrique. {III}. \'{E}tude
  cohomologique des faisceaux coh\'erents. {I}.
\newblock {\em Inst. Hautes \'Etudes Sci. Publ. Math.}, (11):167, 1961.

\bibitem{SGA1}
Alexander Grothendieck.
\newblock {\em Rev\^etements \'etales et groupe fondamental. {F}asc. {II}:
  {E}xpos\'es 6, 8 \`a 11}, volume 1960/61 of {\em S\'eminaire de G\'eom\'etrie
  Alg\'ebrique}.
\newblock Institut des Hautes \'Etudes Scientifiques, Paris, 1963.

\bibitem{Hakim_Topos_1972}
Monique Hakim.
\newblock {\em Topos annel\'es et sch\'emas relatifs}.
\newblock Springer-Verlag, Berlin-New York, 1972.
\newblock Ergebnisse der Mathematik und ihrer Grenzgebiete, Band 64.

\bibitem{Hall_GAGA_2011}
Jack Hall.
\newblock Generalizing the {GAGA} principle.
\newblock {\em arXiv preprint arXiv:1101.5123}, 2011.

\bibitem{Hollander_Homotopy_2008}
Sharon Hollander.
\newblock A homotopy theory for stacks.
\newblock {\em Israel J. Math.}, 163:93--124, 2008.

\bibitem{Houzel_Espaces_analytiques_relatifs_1973}
Christian Houzel.
\newblock Espaces analytiques relatifs et th\'eor\`eme de finitude.
\newblock {\em Math. Ann.}, 205:13--54, 1973.

\bibitem{Houzel_Espaces_analytiques_rigides_1995}
Christian Houzel.
\newblock Espaces analytiques rigides (d'apr\`es {R}. {K}iehl).
\newblock In {\em S\'eminaire {B}ourbaki, {V}ol.\ 10}, pages Exp.\ No.\ 327,
  215--235. Soc. Math. France, Paris, 1995.

\bibitem{Joyal_Quasi-categories_2002}
A.~Joyal.
\newblock Quasi-categories and {K}an complexes.
\newblock {\em J. Pure Appl. Algebra}, 175(1-3):207--222, 2002.
\newblock Special volume celebrating the 70th birthday of Professor Max Kelly.

\bibitem{Kiehl_Endlichkeitssatz_1967}
Reinhardt Kiehl.
\newblock Der {E}ndlichkeitssatz f\"ur eigentliche {A}bbildungen in der
  nichtarchimedischen {F}unktionentheorie.
\newblock {\em Invent. Math.}, 2:191--214, 1967.

\bibitem{Kiehl_Verdier_1971}
Reinhardt Kiehl and Jean-Louis Verdier.
\newblock Ein einfacher {B}eweis des {K}oh\"arenzsatzes von {G}rauert.
\newblock {\em Math. Ann.}, 195:24--50, 1971.

\bibitem{Kopf_Eigentliche_1974}
Ursula K{\"o}pf.
\newblock \"{U}ber eigentliche {F}amilien algebraischer {V}ariet\"aten \"uber
  affinoiden {R}\"aumen.
\newblock {\em Schr. Math. Inst. Univ. M\"unster (2)}, (Heft 7):iv+72, 1974.

\bibitem{Laumon_Champs_2000}
G{\'e}rard Laumon and Laurent Moret-Bailly.
\newblock {\em Champs alg\'ebriques}, volume~39 of {\em Ergebnisse der
  Mathematik und ihrer Grenzgebiete. 3. Folge. A Series of Modern Surveys in
  Mathematics}.
\newblock Springer-Verlag, Berlin, 2000.

\bibitem{Levy_Grauert_1987}
R.~Levy.
\newblock A new proof of the {G}rauert direct image theorem.
\newblock {\em Proc. Amer. Math. Soc.}, 99(3):535--542, 1987.

\bibitem{Lurie_Tannaka_duality}
Jacob Lurie.
\newblock Tannaka duality for geometric stacks.
\newblock {\em arXiv preprint math/0412266}, 2004.

\bibitem{HTT}
Jacob Lurie.
\newblock {\em Higher topos theory}, volume 170 of {\em Annals of Mathematics
  Studies}.
\newblock Princeton University Press, Princeton, NJ, 2009.

\bibitem{DAG-V}
Jacob Lurie.
\newblock {DAG V}: Structured spaces.
\newblock Preprint, 2011.

\bibitem{DAG-VII}
Jacob Lurie.
\newblock {DAG VII}: Spectral schemes.
\newblock Preprint, 2011.

\bibitem{DAG-VIII}
Jacob Lurie.
\newblock {DAG VIII}: Quasi-coherent sheaves and {T}annaka duality theorems.
\newblock Preprint, 2011.

\bibitem{Lurie_Higher_algebra}
Jacob Lurie.
\newblock Higher algebra.
\newblock Preprint, August 2012.

\bibitem{Lutkebohmert_Formal_1990}
Werner L{\"u}tkebohmert.
\newblock Formal-algebraic and rigid-analytic geometry.
\newblock {\em Math. Ann.}, 286(1-3):341--371, 1990.

\bibitem{Olsson_Sheaves_2007}
Martin Olsson.
\newblock Sheaves on {A}rtin stacks.
\newblock {\em J. Reine Angew. Math.}, 603:55--112, 2007.

\bibitem{Porta_DCAGI}
Mauro Porta.
\newblock Derived complex analytic geometry {I}: {GAGA} theorems.
\newblock {\em arXiv preprint arXiv:1506.09042}, 2015.

\bibitem{Porta_Yu_DNAnG_I}
Mauro Porta and Tony~Yue Yu.
\newblock Derived non-archimedean analytic spaces.
\newblock {\em arXiv preprint arXiv:1601.00859}, 2016.

\bibitem{Serre_GAGA}
Jean-Pierre Serre.
\newblock G\'eom\'etrie alg\'ebrique et g\'eom\'etrie analytique.
\newblock {\em Ann. Inst. Fourier, Grenoble}, 6:1--42, 1955--1956.

\bibitem{Simpson_Algebraic_1996}
Carlos Simpson.
\newblock Algebraic (geometric) $ n $-stacks.
\newblock {\em arXiv preprint alg-geom/9609014}, 1996.

\bibitem{stacks-project}
The {Stacks Project Authors}.
\newblock {Stacks Project}.
\newblock \url{http://stacks.math.columbia.edu}, 2013.

\bibitem{Toen_Algebrisation_2008}
Bertrand To{\"e}n and Michel Vaqui{\'e}.
\newblock Alg\'ebrisation des vari\'et\'es analytiques complexes et
  cat\'egories d\'eriv\'ees.
\newblock {\em Math. Ann.}, 342(4):789--831, 2008.

\bibitem{HAG-I}
Bertrand To{\"e}n and Gabriele Vezzosi.
\newblock Homotopical algebraic geometry. {I}. {T}opos theory.
\newblock {\em Adv. Math.}, 193(2):257--372, 2005.

\bibitem{HAG-II}
Bertrand To{\"e}n and Gabriele Vezzosi.
\newblock Homotopical algebraic geometry. {II}. {G}eometric stacks and
  applications.
\newblock {\em Mem. Amer. Math. Soc.}, 193(902):x+224, 2008.

\bibitem{Ulirsch_Geometric_2014}
Martin Ulirsch.
\newblock A geometric theory of non-archimedean analytic stacks.
\newblock {\em arXiv preprint arXiv:1410.2216}, 2014.

\bibitem{Vistoli_Grothendieck_2005}
Angelo Vistoli.
\newblock Grothendieck topologies, fibered categories and descent theory.
\newblock In {\em Fundamental algebraic geometry}, volume 123 of {\em Math.
  Surveys Monogr.}, pages 1--104. Amer. Math. Soc., Providence, RI, 2005.

\bibitem{Yu_Gromov_2014}
Tony~Yue Yu.
\newblock Gromov compactness in non-archimedean analytic geometry.
\newblock {\em arXiv preprint arXiv:1401.6452}, 2014.
\newblock To appear in \emph{Journal f\"ur die reine und angewandte Mathematik
  (Crelle)}.

\bibitem{Yu_Enumeration_cylinders_2015}
Tony~Yue Yu.
\newblock Enumeration of holomorphic cylinders in log {C}alabi-{Y}au surfaces.
  {I}.
\newblock {\em arXiv preprint arXiv:1504.01722}, 2015.
\newblock To appear in \emph{Mathematische Annalen}.

\end{thebibliography}

\end{document}